\let\to\rightarrow
\let\barrsquare\square
\let\square\undefined
\title{A rank inequality for the knot Floer homology of double branched covers}
\author{Kristen Hendricks}
\address{Department of Mathematics\\
Columbia University\\\newline
2990 Broadway\\
New York NY 10027\\
USA}
\email{hendricks@math.columbia.edu}
\urladdr{math.columbia.edu/~hendricks/}
\let\xysavmatrix\xymatrix
\def\xymatrix{\disablesubscriptcorrection\xysavmatrix}
\newcommand{\HFK}{\mathit{HFK}}
\newcommand{\CF}{\mathit{CF}}
\newcommand{\llparen}{(\!(}
\newcommand{\rrparen}{)\!)}
\newcommand{\lbracket}{[}
\newcommand{\rbracket}{]}
\newcommand{\inv}{\mathrm{inv}}
\newcommand{\anti}{\mathrm{anti}}
\newtheorem{theorem}{Theorem}[section]
\newtheorem{lemma}[theorem]{Lemma}
\newtheorem{corollary}[theorem]{Corollary}
\newtheorem{proposition}[theorem]{Proposition} 
\theoremstyle{definition}
\newtheorem{definition}[theorem]{Definition}
\newtheorem*{remark}{Remark}
\begin{document}

\begin{asciiabstract}
Given a knot K in S^3, let Sigma(K) be the double branched cover of
S^3 over K. We show there is a spectral sequence whose E^1 page is
(^HFK(Sigma(K),K) otimes V^{otimes(n-1)}) otimes Z_2((q)), for V a
Z_2-vector space of dimension two, and whose E^{infinity} page is
isomorphic to (^HFK(S^3,K) otimes V^{otimes(n-1)}) otimes Z_2((q)), as
Z_2((q))-modules. As a consequence, we deduce a rank inequality
between the knot Floer homologies ^HFK(Sigma(K),K) and ^HFK(S^3,K).
\end{asciiabstract}

\begin{abstract}  
Given a knot $K$ in $S^3$, let $\Sigma(K)$ be the double branched cover of
$S^3$ over $K$. We show there is a spectral sequence whose $E^1$ page is
$(\widehat{\mathit{HFK}}(\Sigma(K), K) \otimes V^{\otimes(n-1)}) \otimes
\mathbb Z_2(\!(q)\!)$, for $V$ a $\mathbb Z_2$--vector space of dimension two,
and whose $E^{\infty}$ page is isomorphic to $(\widehat{\mathit{HFK}}(S^3,
K) \otimes V^{\otimes(n-1)}) \otimes \mathbb Z_2(\!(q)\!)$, as $\mathbb
Z_2(\!(q)\!)$--modules. As a consequence, we deduce a rank inequality
between the knot Floer homologies $\widehat{\mathit{HFK}}(\Sigma(K),
K)$ and $\widehat{\mathit{HFK}}(S^3, K)$.
\end{abstract}

\maketitle

\section{Introduction}

Heegaard Floer knot homology is a powerful invariant of a knot $K$ in a
three-manifold $Y$ introduced in 2003 by Ozsv{\'a}th and
Szab{\'o}~\cite{MR2065507} and independently by
Rasmussen~\cite{MR2704683}. The theory associates to $(Y,K)$ a bigraded
abelian group $\widehat{\HFK}(Y,K)$ which arises as the homology
of the Floer chain complex of two Lagrangian tori in the symmetric product
of a punctured Heegaard surface for $(Y,K)$. Amongst other properties, for
a knot $K$ in the three-sphere this theory detects the genus of $K$ (see
Ozsv{\'a}th and Szab{\'o}~\cite{MR2023281}) and whether $K$ is fibred (see
Ghiggini~\cite{MR2450204} and Ni~\cite{MR2357503}); its graded Euler characteristic is the Alexander polynomial of $K$~\cite{MR2065507}.

If $K$ is a knot in $S^3$, then we may construct the double branched cover $\Sigma(K)$ of $S^3$ over $K$. The preimage of $K$ under the branched cover map $\pi \co \Sigma(K) \rightarrow S^3$ is a nullhomologous knot in $\Sigma(K)$, also called $K$. The relationship between the knot Floer homology groups $\widehat{\HFK}(S^3,K)$ and $\widehat{\HFK}(\Sigma(K),K)$ has been studied by Grigsby~\cite{MR2253451} and Levine~\cite{MR2443111,MR2794779}.

We prove the following theorem, conjectured by Levine~\cite[Conjecture
4.5]{MR2794779} after being proved in the case of two-bridge knots by
Grigsby~\cite[Theorem 4.3]{MR2253451}. Let $n$ be the number of basepoints
on some Heegaard diagram $\mathcal D$ for $(Y,K)$ whose underlying surface
is $S^2$. Let $\wtilde{\mathcal D}$ be a double branched cover of
$\mathcal D$ which is an $n$--pointed Heegaard diagram for
$(\Sigma(K),K)$. (We will introduce this construction more explicitly in
\fullref{HeegaardFloerSection}).  We will work with a variant of knot
Floer homology, $\widetilde{\HFK}(\wtilde{\mathcal D})$, which is dependent on $n$ and equal to $\widehat{\HFK}(Y,K) \otimes V^{\otimes (n-1)}$, for $V$ a dimension 2 vector space over $\mathbb F_2$.

\begin{theorem} \label{existsspecseq}
There is a spectral sequence whose $E^1$ page is
$$(\widehat{\HFK}(\Sigma(K),K) \otimes V^{\otimes (n-1)}) \otimes \mathbb
Z_2\llparen q\rrparen$$
and whose $E^{\infty}$ page is isomorphic to
$$(\widehat{\HFK}(S^3,K) \otimes V^{\otimes (n-1)}) \otimes \mathbb
Z_2\llparen q\rrparen$$
as $\mathbb Z_2\llparen q\rrparen$--modules.
\end{theorem}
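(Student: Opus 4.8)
The plan is to realize the double branched cover $\Sigma(K)$ as the fixed-point quotient of an involution and then apply the localization machinery of Seidel–Smith (and its refinement by Hendricks) in the knot Floer setting. Concretely, I would take the $n$–pointed genus-zero Heegaard diagram $\mathcal{D}$ for $(S^3,K)$ and its double branched cover $\wtilde{\mathcal D}$, which carries a natural $\mathbb{Z}/2$ deck transformation $\tau$. The associated Heegaard diagram sits inside the symmetric product $\Sym^n(\wtilde\Sigma_0)$ where $\wtilde\Sigma_0$ is the punctured double-cover surface, and $\tau$ induces a symplectic involution on this symmetric product preserving (up to Hamiltonian isotopy) the two Lagrangian tori $\mathbb{T}_\alpha, \mathbb{T}_\beta$ used to compute $\widetilde{\HFK}(\wtilde{\mathcal D})$. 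The first main step is to check that the hypotheses of the Seidel–Smith localization theorem hold: the ambient manifold, the Lagrangians, the involution and its fixed locus must satisfy the appropriate stable-normal-triviality and equivariant-$\mathrm{Pin}$ conditions, so that there is a spectral sequence from the equivariant Floer cohomology—which in the $\mathbb{Z}_2\llparen q\rrparen$–localized form computes $\HF(\mathbb{T}_\alpha,\mathbb{T}_\beta)\otimes\mathbb{Z}_2\llparen q\rrparen$—converging to the Floer cohomology of the fixed-point Lagrangians inside $\mathrm{Fix}(\phi)$.

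The second main step is to identify both ends of this spectral sequence with the knot Floer groups in the statement. The $E^1$ page is (a completed/localized form of) the Floer homology of $\mathbb{T}_\alpha$ and $\mathbb{T}_\beta$ in $\Sym^n(\wtilde\Sigma_0)$, which by construction is $\widetilde{\HFK}(\wtilde{\mathcal D}) = \widehat{\HFK}(\Sigma(K),K)\otimes V^{\otimes(n-1)}$, tensored with $\mathbb{Z}_2\llparen q\rrparen$. For the $E^\infty$ page I would analyze the fixed locus of the involution $\phi$ on $\Sym^n(\wtilde\Sigma_0)$: a point of $\Sym^n(\wtilde\Sigma_0)$ is fixed by $\tau_*$ precisely when it is a $\tau$–invariant divisor, and such divisors decompose into points lying over the branch locus together with pairs of points swapped by $\tau$; a dimension count (matching the number of branch points to the basepoint count $n$) should identify $\mathrm{Fix}(\phi)$ with $\Sym^n(\Sigma_0)$ for the base punctured surface $\Sigma_0$, with the fixed parts of $\mathbb{T}_\alpha,\mathbb{T}_\beta$ becoming the corresponding tori downstairs. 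Hence the Floer cohomology of the fixed Lagrangians is $\widetilde{\HFK}(\mathcal D)\otimes\mathbb{Z}_2\llparen q\rrparen = (\widehat{\HFK}(S^3,K)\otimes V^{\otimes(n-1)})\otimes\mathbb{Z}_2\llparen q\rrparen$. Since the localization spectral sequence degenerates to an isomorphism of the associated $\mathbb{Z}_2\llparen q\rrparen$–modules at $E^\infty$, this yields the claimed identification.

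I expect the main obstacle to be verifying the analytic and topological hypotheses of the localization theorem in this infinite-dimensional, non-exact (or at best monotone) setting: one must control holomorphic disk counts equivariantly, check that the relevant moduli spaces are cut out transversally by $\tau$–equivariant almost complex structures, and confirm that the stable normal bundle of $\mathrm{Fix}(\phi)\subset\Sym^n(\wtilde\Sigma_0)$ together with the Lagrangians admits the equivariant framing / $\mathrm{Pin}$ structure needed to apply Seidel–Smith (as upgraded by Hendricks to the symmetric-product context). A secondary technical point is handling the basepoints correctly—ensuring the involution is compatible with the choice of basepoint regions so that the $V^{\otimes(n-1)}$ factors match on the two pages—and checking that the spectral sequence is one of modules over $\mathbb{Z}_2\llparen q\rrparen$ rather than merely of $\mathbb{Z}_2$–vector spaces, which is what makes the rank inequality in the final corollary fall out.
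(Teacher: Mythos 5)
Your high-level strategy matches the paper's: realize the covering involution as a symplectic involution on the ambient symmetric product, apply Seidel--Smith localization, and identify the $E^1$ and $E^\infty$ pages with the two knot Floer groups. However, what you flag as ``the main obstacle'' --- verifying the stable normal triviality hypothesis --- is precisely the theorem's actual content, and you give no argument for it. The paper devotes Sections 4--7 to this. The key steps, none of which appear in your proposal, are: (i) showing that $M = \mathrm{Sym}^{2n-2}(\Sigma(S^2)\setminus\{\mathbf{w},\mathbf{z}\})$ carries an exact $\tau$--invariant symplectic form for which the tori are exact Lagrangians (via Perutz's smoothing of plurisubharmonic functions on symmetric products); (ii) computing the homotopy type of $\mathrm{Sym}^{n-1}(S^2\setminus\{\mathbf{w},\mathbf{z}\})$ and establishing that the restriction map $H^*(M^{\mathrm{inv}}\times[0,1]) \to H^*\bigl((L_0^{\mathrm{inv}}\times\{0\})\cup(L_1^{\mathrm{inv}}\times\{1\})\bigr)$ is surjective; (iii) observing that $N(M^{\mathrm{inv}})$ and $N(L_i^{\mathrm{inv}})$ are trivial bundles because the inclusion $\mathrm{Sym}^{n-1}(S^2\setminus\{\mathbf{w},\mathbf{z}\})\hookrightarrow\mathrm{Sym}^{n-1}(S^2)$ is nullhomotopic; and (iv) showing via the Chern character and an Atiyah--Hirzebruch torsion-freeness argument that the relative $K$-theory class of $\Upsilon(M^{\mathrm{inv}})$ in $\wtilde{K}^0\bigl((M^{\mathrm{inv}}\times[0,1])/X\bigr)$ vanishes. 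Without (ii)--(iv), the localization theorem simply cannot be applied, so the proposal as written does not prove the statement.

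Two additional inaccuracies are worth flagging. First, you worry that the setting is ``non-exact (or at best monotone)'': in fact the whole point of working with the punctured surface and the Perutz construction is that $M$ is exact and convex at infinity, so no monotonicity technology is needed; raising this as an obstacle suggests you have not worked out what makes the Heegaard Floer symmetric product tractable here. Second, your dimensions are off: with $n$ pairs of basepoints on a genus-zero Heegaard surface, the ambient space is $\mathrm{Sym}^{2n-2}$ of the branched double cover and the fixed locus is $\mathrm{Sym}^{n-1}$ of the base, not the same power of the symmetric product on both (this mismatch in dimension is essential, since it is what gives a nontrivial normal bundle to analyze). Also, the mention of ``equivariant-$\mathrm{Pin}$ conditions'' is not relevant: the theory is entirely over $\mathbb{Z}_2$, so there are no orientation or $\mathrm{Pin}$ issues to address.
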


Here $\mathbb Z_2\llparen q\rrparen$ denotes the ring $\mathbb Z_2\llbracket q\rrbracket[q^{-1}]$ of Laurent series in the variable $q$. In particular, we have the following rank inequality.

\begin{corollary} \label{main}
Given $K$ a knot in $S^3$ and $\Sigma(K)$ the double branched cover of $(S^3,K)$ then the following rank inequality holds:
\begin{align*}
\rk(\widehat{\HFK}(S^3,K)) \leq \rk(\widehat{\HFK}(\Sigma(K),K)).
\end{align*}
\end{corollary}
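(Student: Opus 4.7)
The plan is to deduce the corollary directly from \fullref{existsspecseq} by exploiting two standard facts: that in any spectral sequence the rank of the $E^\infty$ page is bounded above by the rank of the $E^1$ page (since each successive page is a subquotient of the previous one), and that $\mathbb Z_2\llparen q\rrparen$ is a field, so tensoring with it preserves $\mathbb F_2$-dimensions.

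Concretely, I would first observe that since each page of the spectral sequence is obtained as the homology of the previous page with respect to some differential, we have
\begin{align*}
\rk_{\mathbb Z_2\llparen q\rrparen}(E^\infty) \;\leq\; \rk_{\mathbb Z_2\llparen q\rrparen}(E^1).
\end{align*}
Plugging in the identifications of the two pages given by \fullref{existsspecseq} yields
\begin{align*}
\rk_{\mathbb Z_2\llparen q\rrparen}\bigl((\widehat{\HFK}(S^3,K) \otimes V^{\otimes(n-1)}) \otimes \mathbb Z_2\llparen q\rrparen\bigr) \leq \rk_{\mathbb Z_2\llparen q\rrparen}\bigl((\widehat{\HFK}(\Sigma(K),K) \otimes V^{\otimes(n-1)}) \otimes \mathbb Z_2\llparen q\rrparen\bigr).
\end{align*}

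Next, since $\mathbb Z_2\llparen q\rrparen$ is a field extension of $\mathbb F_2$, for any finite-dimensional $\mathbb F_2$-vector space $A$ we have $\rk_{\mathbb Z_2\llparen q\rrparen}(A \otimes_{\mathbb F_2} \mathbb Z_2\llparen q\rrparen) = \rk_{\mathbb F_2}(A)$. Applying this to both sides converts the inequality into a statement about $\mathbb F_2$-ranks of $\widehat{\HFK}(S^3,K) \otimes V^{\otimes(n-1)}$ and $\widehat{\HFK}(\Sigma(K),K) \otimes V^{\otimes(n-1)}$. Since $\rk(V^{\otimes(n-1)}) = 2^{n-1}$ appears as a common positive factor on both sides, dividing through gives the desired inequality
\begin{align*}
\rk(\widehat{\HFK}(S^3,K)) \;\leq\; \rk(\widehat{\HFK}(\Sigma(K),K)).
\end{align*}

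The only subtlety worth flagging is making sure that $\widehat{\HFK}$ is finite-dimensional (so that $E^1$ has finite rank and the subquotient argument is legitimate), which is a standard property of knot Floer homology; otherwise the deduction is essentially formal, with all the genuine content sitting inside \fullref{existsspecseq}. Thus there is no substantial obstacle at this stage — the corollary is immediate once the spectral sequence is in hand.
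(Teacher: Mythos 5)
Your proposal is correct and matches the paper's argument essentially step for step: observe that the $E^{\infty}$ page is a subquotient of the $E^1$ page (so has smaller or equal rank over the field $\mathbb Z_2\llparen q\rrparen$), plug in the identifications of the two pages from \fullref{existsspecseq}, and then cancel the common factor of $2^{n-1} = \rk(V^{\otimes(n-1)})$. The only cosmetic difference is that you spell out the subquotient observation and flag finite-dimensionality, which the paper leaves implicit.
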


Knot Floer homology admits two gradings, the Maslov or homological grading and the Alexander grading. We will see that the spectral sequence of \fullref{existsspecseq} is generated by a double complex whose two differentials each preserve the Alexander grading on the basepoint-dependent invariant $\widetilde{\HFK}(\mathcal D)$, inducing a splitting of the spectral sequence which the isomorphism of \fullref{existsspecseq} fails to disrupt. Knot Floer homology also splits along the ${\mathrm {spin}}^{\mathrm{c}}$ structures $\mathfrak s$ of $Y$, such that we have
\[
\widehat{\HFK}(Y,K) = \oplus_{\mathfrak s} \widehat{\HFK}(Y,K, \mathfrak s).
\]
The extra factors of $V$ in $\widetilde{\HFK}(D)$ respect this splitting, such that
\[
\widetilde{\HFK}(\mathcal D, \mathfrak s) = \widehat{\HFK}(Y,K, \mathfrak s) \otimes V^{\otimes (n-1)}.
\]
The differentials on the spectral sequence of \fullref{main} interchange pairs of conjugate ${\mathrm{spin}}^{\mathrm{c}}$ structures, preserving a single \textit{canonical ${\mathrm{spin}}^{\mathrm{c}}$ structure} $\mathfrak s_0$ on $\Sigma(K)$, which is moreover the only ${\mathrm{spin}}^{\mathrm{c}}$ structure to survive to the $E^{\infty}$ page of the spectral sequence. Therefore we can sharpen \fullref{main} to the following.

\begin{corollary} \label{sharper}
Given a knot $K$ in $S^3$ and $\mathfrak s_0$ the canonical ${\mathrm{spin}}^{\mathrm{c}}$ structure on its double branched cover $\Sigma(K)$, then we have the rank inequality
\[
\rk \big( \widehat{\HFK}(\Sigma(K), K, \mathfrak s_0)\big) \geq \rk
\big(\widehat{\HFK}(S^3, K)\big).
\]
\end{corollary}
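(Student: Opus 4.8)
The plan is to upgrade the argument for \fullref{main} by carrying the ${\mathrm{spin}}^{\mathrm{c}}$ decomposition of $\widehat{\HFK}(\Sigma(K),K)$ through the spectral sequence of \fullref{existsspecseq}. Using the identity $\widetilde{\HFK}(\widetilde{\mathcal D},\mathfrak s) = \widehat{\HFK}(\Sigma(K),K,\mathfrak s)\otimes V^{\otimes(n-1)}$, the $E^1$ page splits along the ${\mathrm{spin}}^{\mathrm{c}}$ structures $\mathfrak s$ of $\Sigma(K)$ as
$$E^1 \;=\; \bigoplus_{\mathfrak s}\big(\widehat{\HFK}(\Sigma(K),K,\mathfrak s)\otimes V^{\otimes(n-1)}\big)\otimes\mathbb Z_2\llparen q\rrparen.$$
By the structural property recorded above — each differential $d_r$ sends the summand indexed by $\mathfrak s\neq\mathfrak s_0$ into the one indexed by its conjugate $\bar{\mathfrak s}$ and preserves the summand indexed by $\mathfrak s_0$ — this splitting is inherited by every page (taking $d_r$--homology of a direct sum of $d_r$--invariant blocks), so the whole spectral sequence decomposes as a direct sum of sub-spectral-sequences: one supported on the $\mathfrak s_0$ summand, and, for each conjugate pair $\{\mathfrak s,\bar{\mathfrak s}\}$ with $\mathfrak s\neq\mathfrak s_0$, one supported on the two corresponding summands. (Recall that $\Sigma(K)$ is a $\mathbb Z_2$--homology sphere with $|H_1(\Sigma(K);\mathbb Z)| = |\det K|$ odd, so conjugation of ${\mathrm{spin}}^{\mathrm{c}}$ structures has the single fixed point $\mathfrak s_0$; this is exactly the structure singled out by the differentials, and for $\mathfrak s \neq \mathfrak s_0$ the two members of the pair $\{\mathfrak s,\bar{\mathfrak s}\}$ are genuinely distinct.)

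Next I would feed in the other property recorded above, that $\mathfrak s_0$ is the only ${\mathrm{spin}}^{\mathrm{c}}$ structure surviving to $E^\infty$: each sub-spectral-sequence supported on a conjugate pair $\{\mathfrak s,\bar{\mathfrak s}\}$ with $\mathfrak s\neq\mathfrak s_0$ converges to $0$, so the isomorphism of \fullref{existsspecseq},
$$E^\infty\;\cong\;\big(\widehat{\HFK}(S^3,K)\otimes V^{\otimes(n-1)}\big)\otimes\mathbb Z_2\llparen q\rrparen,$$
is accounted for entirely by the $\mathfrak s_0$ summand. In other words, the $\mathfrak s_0$ sub-spectral-sequence has
$$E^1_{\mathfrak s_0} = \big(\widehat{\HFK}(\Sigma(K),K,\mathfrak s_0)\otimes V^{\otimes(n-1)}\big)\otimes\mathbb Z_2\llparen q\rrparen,\qquad E^\infty_{\mathfrak s_0}\;\cong\;\big(\widehat{\HFK}(S^3,K)\otimes V^{\otimes(n-1)}\big)\otimes\mathbb Z_2\llparen q\rrparen$$
as $\mathbb Z_2\llparen q\rrparen$--modules.

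To finish, I would observe that since $\widehat{\HFK}$ is finite dimensional over $\mathbb Z_2$, each page $E^r_{\mathfrak s_0}$ is a finitely generated free module over the field $\mathbb Z_2\llparen q\rrparen$, and passing from $E^r_{\mathfrak s_0}$ to $E^{r+1}_{\mathfrak s_0}$ by taking $d_r$--homology can only decrease the rank; hence $\rk_{\mathbb Z_2\llparen q\rrparen}(E^\infty_{\mathfrak s_0})\le\rk_{\mathbb Z_2\llparen q\rrparen}(E^1_{\mathfrak s_0})$. Evaluating both sides gives $\rk(\widehat{\HFK}(S^3,K))\cdot 2^{n-1}\le\rk(\widehat{\HFK}(\Sigma(K),K,\mathfrak s_0))\cdot 2^{n-1}$, and cancelling $2^{n-1}$ yields the claimed inequality. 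Beyond \fullref{main} itself the only bookkeeping peculiar to this sharpening is the ${\mathrm{spin}}^{\mathrm{c}}$ splitting of the pages and the compatibility of the isomorphism of \fullref{existsspecseq} with the concentration of $E^\infty$ in the $\mathfrak s_0$ summand; the substantive input — that the equivariant construction behind \fullref{existsspecseq} respects the ${\mathrm{spin}}^{\mathrm{c}}$ decomposition of $\widehat{\HFK}(\Sigma(K),K)$ and that the induced action on ${\mathrm{spin}}^{\mathrm{c}}$ structures of $\Sigma(K)$ is conjugation, with $\mathfrak s_0$ as its unique fixed point — is where I expect the main difficulty to lie, though it is already established in the course of proving \fullref{existsspecseq}.
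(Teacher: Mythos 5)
Your proposal takes essentially the same route as the paper: split the spectral sequence along conjugate pairs of ${\mathrm{spin}}^{\mathrm{c}}$ structures, observe that only the $\mathfrak s_0$ block survives to $E^\infty$, and feed that back through the isomorphism of \fullref{existsspecseq}. Two small notes are worth making. First, your description of the differential is imprecise: the relevant chain-level map is $1+\tau^{\#}$, whose identity term preserves each $\mathfrak s$, so the induced $d_r$ maps the $\mathfrak s$-summand into $\mathfrak s\oplus\bar{\mathfrak s}$ rather than purely into $\bar{\mathfrak s}$ (a map sending $\mathfrak s$ into $\bar{\mathfrak s}$ via $\tau^*$ alone would fail to square to zero). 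The statement you actually use in the next line --- that each block $\{\mathfrak s,\bar{\mathfrak s}\}$ is $d_r$-invariant --- is the correct one, so the decomposition into sub-spectral-sequences and the rest of the argument stand unchanged. Second, your explicit observation that $\mathfrak s_0$ is the unique conjugation-fixed ${\mathrm{spin}}^{\mathrm{c}}$ structure because $|H_1(\Sigma(K))|=|\det K|$ is odd is a useful addition: the paper's parenthetical remark that ``Other ${\mathrm{spin}}^{\mathrm{c}}$ structures are exchanged in pairs'' depends on exactly this fact but leaves it tacit.
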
 

\begin{corollary} \label{Alexander corollary}
Given a knot $K$ in $S^3$ and $\mathfrak s_0$ the canonical ${\mathrm{spin}}^{\mathrm{c}}$ structure on its double branched cover $\Sigma(K)$, then we have the rank inequality
\[
\rk \big( \widetilde{\HFK}(\wtilde{\mathcal D}, \mathfrak s_0, i)\big)
\geq \rk \big(\widetilde{\HFK}({\mathcal D}, i)\big)
\]
for $i$ any Alexander grading. In particular, for $g$ the top Alexander grading such that $\widehat{\HFK}(\Sigma(K), K, \mathfrak s_0, i)$ is nonzero, this is an inequality of the hat invariant:
\[
\rk \big( \widehat{\HFK}(\Sigma(K), K, \mathfrak s_0, g)\big) \geq \rk
\big(\widehat{\HFK}(S^3, K, g)\big).
\]
\end{corollary}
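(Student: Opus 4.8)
The plan is to promote the spectral sequence of \fullref{existsspecseq} to a finer one that splits over Alexander gradings and isolates the canonical ${\mathrm{spin}}^{\mathrm{c}}$ structure $\mathfrak s_0$. As recorded in the discussion above, the double complex generating that spectral sequence has both of its differentials preserving the Alexander grading on $\widetilde{\HFK}$; moreover, with respect to the ${\mathrm{spin}}^{\mathrm{c}}$ decomposition of $\widehat{\HFK}(\Sigma(K),K)\otimes V^{\otimes(n-1)}$, each differential preserves the $\mathfrak s_0$ summand and its only other components interchange the two factors of a conjugate pair $\{\mathfrak s,\bar{\mathfrak s}\}$ with $\mathfrak s\neq\bar{\mathfrak s}$. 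Hence the double complex decomposes as a direct sum over Alexander gradings $i$, and within each grading as the direct sum of the subcomplex supported in $\mathfrak s_0$ together with the subcomplexes supported in the conjugate pairs. Because $\mathfrak s_0$ is the only ${\mathrm{spin}}^{\mathrm{c}}$ structure to survive to $E^{\infty}$, the conjugate-pair subcomplexes are acyclic, and so for each Alexander grading $i$ we obtain a spectral sequence over the field $\mathbb Z_2\llparen q\rrparen$ whose $E^1$ page is $\widetilde{\HFK}(\wtilde{\mathcal D},\mathfrak s_0,i)\otimes\mathbb Z_2\llparen q\rrparen$ and whose $E^{\infty}$ page is $\widetilde{\HFK}(\mathcal D,i)\otimes\mathbb Z_2\llparen q\rrparen$.

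From here the first displayed inequality is formal. Each page of a spectral sequence is a subquotient of the previous one, and since the $E^1$ page above is a finite-dimensional $\mathbb Z_2\llparen q\rrparen$--vector space the sequence stabilizes after finitely many steps, so $\dim_{\mathbb Z_2\llparen q\rrparen}E^{\infty}\le\dim_{\mathbb Z_2\llparen q\rrparen}E^1$. Together with the identity $\dim_{\mathbb Z_2\llparen q\rrparen}\big(M\otimes_{\mathbb Z_2}\mathbb Z_2\llparen q\rrparen\big)=\dim_{\mathbb Z_2}M$ for a finite-dimensional $\mathbb Z_2$--vector space $M$, this yields $\rk\big(\widetilde{\HFK}(\wtilde{\mathcal D},\mathfrak s_0,i)\big)\ge\rk\big(\widetilde{\HFK}(\mathcal D,i)\big)$ for every Alexander grading $i$.

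For the last sentence I would carry out the bookkeeping on the factors of $V$. Let $T\subset V^{\otimes(n-1)}$ be the summand of top Alexander grading; it is one-dimensional, spanned by the $(n-1)$--fold tensor product of the top generator of $V$, and every other graded summand of $V^{\otimes(n-1)}$ lies in strictly lower Alexander grading. With the paper's grading convention $T$ sits in Alexander grading $0$, so the top Alexander grading $i$ with $\widehat{\HFK}(\Sigma(K),K,\mathfrak s_0,i)\neq 0$ is also the top Alexander grading with $\widetilde{\HFK}(\wtilde{\mathcal D},\mathfrak s_0,i)\neq 0$, namely $g$, and $\widetilde{\HFK}(\wtilde{\mathcal D},\mathfrak s_0,g)=\widehat{\HFK}(\Sigma(K),K,\mathfrak s_0,g)\otimes T$ has rank $\rk\big(\widehat{\HFK}(\Sigma(K),K,\mathfrak s_0,g)\big)$. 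On the base side $\widehat{\HFK}(S^3,K,g)\otimes T$ is a direct summand of $\widetilde{\HFK}(\mathcal D,g)=\big[\widehat{\HFK}(S^3,K)\otimes V^{\otimes(n-1)}\big]_g$, so $\rk\big(\widetilde{\HFK}(\mathcal D,g)\big)\ge\rk\big(\widehat{\HFK}(S^3,K,g)\big)$ (trivially so, both sides vanishing, if $g$ exceeds the Seifert genus of $K$). Taking $i=g$ in the first inequality and chaining,
\[
\rk\big(\widehat{\HFK}(S^3,K,g)\big)\le\rk\big(\widetilde{\HFK}(\mathcal D,g)\big)\le\rk\big(\widetilde{\HFK}(\wtilde{\mathcal D},\mathfrak s_0,g)\big)=\rk\big(\widehat{\HFK}(\Sigma(K),K,\mathfrak s_0,g)\big),
\]
as required.

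The only ingredient that is not purely formal is the decomposition used in the first paragraph: one must verify, at the level of the double complex underlying \fullref{existsspecseq}, that both of its differentials really do respect the Alexander grading and that, relative to the ${\mathrm{spin}}^{\mathrm{c}}$ splitting, they have no components beyond those preserving $\mathfrak s_0$ and those interchanging a single conjugate pair, so that the $\mathfrak s_0$--part and the conjugate-pair parts form a genuine direct-sum decomposition into subcomplexes. These are exactly the structural properties asserted in the paragraphs following \fullref{existsspecseq}, and their verification belongs to the proof of that theorem; once they are in hand, the corollary is a formal consequence of \fullref{existsspecseq} via the rank argument above.
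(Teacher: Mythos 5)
Your proposal is correct and follows essentially the same route as the paper: split the spectral sequence of Theorem 1.1 along Alexander gradings and along $\mathfrak s_0$ versus conjugate ${\mathrm{spin}}^{\mathrm{c}}$ pairs, read off the first inequality from the rank drop $E^1 \to E^\infty$, and then specialize to the top Alexander grading using the bigradings of $V$. Your handling of the last step is slightly more careful than the paper's (which asserts $\widetilde{\HFK}(\mathcal D, g) = \widehat{\HFK}(S^3,K,g)$ outright, implicitly matching the two top gradings), whereas you only need the weaker direct-summand inequality $\rk\widetilde{\HFK}(\mathcal D, g)\geq \rk\widehat{\HFK}(S^3,K,g)$, which covers the degenerate case as well.
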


The key technical tool in these proofs is a recent result of Seidel and Smith concerning equivariant Floer cohomology. Let $M$ be an exact symplectic manifold, convex at infinity, containing exact Lagrangians $L_0$ and $L_1$ and equipped with an involution $\tau$ preserving $(M, L_0, L_1)$. Let $(M^{\inv}, \smash{L_0^{\inv}}, \smash{L_1^{\inv}})$ be the submanifolds of each space fixed by $\tau$. Then under certain stringent conditions on the normal bundle $N(M^{\inv})$ of $M^{\inv}$ in $M$, there is a rank inequality between the Floer cohomology $\mathit{HF}(L_0,L_1)$ of the two Lagrangians $L_0$ and $L_1$ in $M$ and the Floer cohomology $\mathit{HF}(\smash{L_0^{\inv}}, \smash{L_1^{\inv}})$ of $\smash{L_0^{\inv}}$ and $\smash{L_1^{\inv}}$ in $M^{\inv}$. More precisely, they consider the normal bundle $N(M^{\inv})$ to $M^{\inv}$ in $M$ and its pullback $\Upsilon(M^{\inv})$ to $M^{\inv} \times [0,1]$. We ask that $M$ satisfy a $K$--theoretic condition called \textit{stable normal triviality} relative to two Lagrangian subbundles over $\smash{L_0^{\inv}} \times \{0\}$ and $\smash{L_1^{\inv}} \times \{1\}$. Seidel and Smith prove the following.

\begin{theorem}[Seidel and Smith~{\cite[Section 3f]{MR2739000}}]
\label{SeidelSmith}
If $\Upsilon(M^{\inv})$ carries a stable normal trivialization, there is a spectral sequence whose $E^1$ page is $\mathit{HF}(L_0,L_1) \otimes \mathbb Z_2\llparen q\rrparen$ and whose $E^{\infty}$ page is isomorphic to $\mathit{HF}(\smash{L_0^{\inv}}, \smash{L_1^{\inv}}) \otimes \mathbb Z_2\llparen q\rrparen$ as $\mathbb Z_2\llparen q\rrparen$ modules. 
\end{theorem}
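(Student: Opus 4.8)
The plan is to prove a Floer-theoretic analogue of the classical Borel localization theorem for $\mathbb{Z}/2$--actions, with the $K$--theoretic hypothesis entering exactly at the one point where equivariant transversality breaks down. The finite-dimensional template to imitate: for a finite $\mathbb{Z}/2$--CW complex $X$ with involution $g$, the Borel cochain complex $C^*(X;\mathbb{Z}_2)\otimes\mathbb{Z}_2[q]$, with differential $d_X + q(1+g^*) + \cdots$, computes the equivariant cohomology $H^*_{\mathbb{Z}/2}(X;\mathbb{Z}_2)$, a module over $H^*(B\mathbb{Z}/2;\mathbb{Z}_2) = \mathbb{Z}_2[q]$; its $q$--adic filtration gives a spectral sequence with $E^1 = H^*(X;\mathbb{Z}_2)\otimes\mathbb{Z}_2[q]$ abutting to $H^*_{\mathbb{Z}/2}(X)$, while the Borel localization theorem identifies $q^{-1}H^*_{\mathbb{Z}/2}(X)\cong H^*(X^{\mathbb{Z}/2};\mathbb{Z}_2)\otimes\mathbb{Z}_2[q,q^{-1}]$ because $\mathbb{Z}/2$ acts trivially on the fixed point set. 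Since inverting $q$ is exact, one gets a spectral sequence from $H^*(X)\otimes\mathbb{Z}_2[q,q^{-1}]$ to $H^*(X^{\mathbb{Z}/2})\otimes\mathbb{Z}_2[q,q^{-1}]$. I would replay this story with $X$ replaced by the infinite-dimensional ``path space'' underlying the Floer complex of $(M,L_0,L_1)$.

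First I would build an equivariant Floer complex. Choosing a $\tau$--invariant Floer datum --- a $\tau$--equivariant family of compatible almost complex structures and Hamiltonian perturbations --- for which $L_0$ and $L_1$ meet transversally makes $\tau$ act on the finite generating set of $\CF(L_0,L_1)$. Then, fixing an exhaustion $S^0\subset S^1\subset\cdots$ of $E\mathbb{Z}/2 = S^\infty$ by free finite $\mathbb{Z}/2$--CW complexes together with a compatible $\tau$--equivariant family of perturbed almost complex structures parametrized over $S^\infty$, let $\CF_{\mathbb{Z}/2}(L_0,L_1)$ be the completed complex over $\mathbb{Z}_2\llbracket q\rrbracket$ whose differential counts rigid pairs consisting of a Floer strip in $M$ and a point of the parameter space, modulo the diagonal $\mathbb{Z}/2$--action. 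Exactness of $(M,L_0,L_1)$ and convexity at infinity supply the energy and $C^0$ bounds and rule out sphere and disc bubbling, so these parametrized moduli spaces are compact up to breaking; an equivariant continuation argument then shows that the $q$--localized homology $\mathit{HF}_{\mathbb{Z}/2}(L_0,L_1) := H_*(\CF_{\mathbb{Z}/2}(L_0,L_1))\otimes_{\mathbb{Z}_2\llbracket q\rrbracket}\mathbb{Z}_2\llparen q\rrparen$ is independent of the choices. The $q$--adic (equivalently skeletal) filtration on this complex yields a spectral sequence converging to $\mathit{HF}_{\mathbb{Z}/2}(L_0,L_1)$; because the parametrized equation restricted to $S^0\subset S^\infty$ is just the ordinary Floer equation, its $E^1$ page is $\mathit{HF}(L_0,L_1)\otimes\mathbb{Z}_2\llparen q\rrparen$, and convergence is automatic since over the field $\mathbb{Z}_2\llparen q\rrparen$ each internal degree is finite dimensional. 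This already produces the claimed $E^1$ page.

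The crux will be the localization isomorphism $\mathit{HF}_{\mathbb{Z}/2}(L_0,L_1) \cong \mathit{HF}(L_0^{\inv},L_1^{\inv})\otimes\mathbb{Z}_2\llparen q\rrparen$, the Floer analogue of Borel localization. I would take the $\tau$--equivariant Floer datum standard, that is, product-like along the normal directions, near $M^{\inv}$, and rescale those directions to zero: a $\tau$--\emph{invariant} Floer strip then degenerates to a Floer strip in $M^{\inv}$ coupled to a solution of the linearized Cauchy--Riemann operator on the pulled-back normal bundle $\Upsilon(M^{\inv})$, with boundary conditions the normal subbundles of $L_0^{\inv}$ and $L_1^{\inv}$. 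Equivariant transversality genuinely fails along the fixed locus --- generic equivariant perturbations cannot cut down the invariant strips, since the linearization in the normal directions is intrinsically $\tau$--anti-invariant --- and the deformation--obstruction data of this normal operator over the moduli space of $M^{\inv}$--strips organizes into an index bundle; a stable normal trivialization of $\Upsilon(M^{\inv})$ is exactly the hypothesis under which this index bundle is stably (and equivariantly) trivial, so that after a harmless stabilization the invariant-strip contribution to the equivariant differential is literally the Floer differential of $(M^{\inv},L_0^{\inv},L_1^{\inv})$. Floer strips with a free $\tau$--orbit, meanwhile, assemble into a subquotient of the equivariant complex on which $q$ acts nilpotently, hence which vanishes after inverting $q$. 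Together these give a quasi-isomorphism $\CF_{\mathbb{Z}/2}(L_0,L_1)\otimes_{\mathbb{Z}_2\llbracket q\rrbracket}\mathbb{Z}_2\llparen q\rrparen \simeq \CF(L_0^{\inv},L_1^{\inv})\otimes\mathbb{Z}_2\llparen q\rrparen$, and combining with the spectral sequence above makes the $E^\infty$ page isomorphic to $\mathit{HF}(L_0^{\inv},L_1^{\inv})\otimes\mathbb{Z}_2\llparen q\rrparen$ as $\mathbb{Z}_2\llparen q\rrparen$--modules.

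I expect the main obstacle to be precisely this localization step: establishing the gluing and compactness statement for $\tau$--invariant strips near $M^{\inv}$, correctly identifying the normal linearized operator and its index bundle over the relevant moduli spaces, and verifying that a stable normal trivialization of $\Upsilon(M^{\inv})$ is exactly the $K$--theoretic input that trivializes it and lets one stabilize the invariant-strip problem consistently over the $[0,1]$--family of boundary conditions. The surrounding pieces --- independence of choices, absence of bubbling via exactness and convexity at infinity, finiteness over $\mathbb{Z}_2\llparen q\rrparen$ --- are the standard Floer package carried out equivariantly, and the precise indexing (``$E^1$'' versus ``$E^2$'') is dictated by the chosen $S^\infty$ model.
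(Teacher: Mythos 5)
The paper does not prove this theorem; it is a verbatim citation of Seidel and Smith, and the paper's Section~2 only records how it follows from \fullref{Localization} (also cited from Seidel--Smith) together with the spectral sequence of the double complex built from $\partial$ and $1+\tau^{\#}$. In particular the paper sets up the equivariant complex \emph{algebraically} --- finding a family $\mathbf{J}$ making $\tau^{\#}$ a chain map, viewing $\CF(L_0,L_1)$ as an $\mathbb F_2[\mathbb Z_2]$--module, and computing $\Ext_{\mathbb F_2[\mathbb Z_2]}(\CF(L_1,L_0),\mathbb F_2)$ --- and then hands the localization isomorphism over to Seidel--Smith as a black box.

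Your sketch instead reproduces Seidel and Smith's \emph{geometric} construction of the equivariant complex (parametrized Floer data over an exhaustion $S^0\subset S^1\subset\cdots$ of $E\mathbb Z/2$, rigid pairs modulo the diagonal action, the $q$--adic/skeletal filtration) and then outlines the guts of the localization step that the paper cites rather than proves: adiabatic degeneration of $\tau$--invariant strips near $M^{\inv}$, the normal linearized Cauchy--Riemann operator with Lagrangian boundary conditions in $N(L_0^{\inv})$ and $N(L_1^{\inv})$, the role of a stable normal trivialization in stabilizing the resulting index bundle over the $[0,1]$--family of boundary conditions, and the nilpotence of $q$ on the free part. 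This is a faithful high-level account of the Seidel--Smith mechanism, and the two pictures (algebraic double complex and geometric Borel construction) are equivalent; the paper's version just makes the resulting spectral sequence and its $E^1$ page, $\mathit{HF}(L_0,L_1)\otimes\mathbb Z_2\llparen q\rrparen$, completely transparent, whereas yours makes the $K$--theoretic role of $\Upsilon(M^{\inv})$ more visible. One small imprecision in your sketch: the stable normal trivialization is not asking for equivariant triviality of an index bundle per se, but rather (per \fullref{stablenormaltriv}) for a stable unitary trivialization of $\Upsilon(M^{\inv})$ over $M^{\inv}\times[0,1]$ interpolating between the two Lagrangian subbundles $N(L_0^{\inv})\times\{0\}$ and $N(L_1^{\inv})\times\{1\}$; it is this interpolating data that feeds into the index comparison, not triviality alone. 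Since the paper's intended ``proof'' is simply the citation of \cite[Theorem~20]{MR2739000}, there is no gap in your account relative to the paper --- you have sketched what the citation conceals.
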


In particular, there is the following useful corollary.

\begin{corollary}[Seidel and Smith~{\cite[Theorem 1]{MR2739000}}]
\label{Smith Inequality}
If $\Upsilon(M^{\inv})$ carries a stable normal trivialization, the Floer theoretic version of the Smith inequality holds:
\[
\rk(\mathit{HF}(L_0, L_1)) \geq \rk(\mathit{HF}(\smash{L_0^{\inv}}, \smash{L_1^{\inv}})).
\]
\end{corollary}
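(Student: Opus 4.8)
The plan is to deduce Corollary \ref{Smith Inequality} directly from Theorem \ref{SeidelSmith}, so the only work is a purely algebraic observation about spectral sequences over the ring $\mathbb{Z}_2\llparen q\rrparen$. Assume the hypothesis that $\Upsilon(M^{\inv})$ carries a stable normal trivialization, so that Theorem \ref{SeidelSmith} applies and provides a spectral sequence with $E^1$ page $\mathit{HF}(L_0,L_1)\otimes \mathbb{Z}_2\llparen q\rrparen$ converging to a page $E^\infty$ that is isomorphic to $\mathit{HF}(\smash{L_0^{\inv}},\smash{L_1^{\inv}})\otimes \mathbb{Z}_2\llparen q\rrparen$ as a $\mathbb{Z}_2\llparen q\rrparen$--module.

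First I would record the relevant algebra. Since $\mathbb{Z}_2\llparen q\rrparen$ is a field (the field of Laurent series in $q$ over $\mathbb{Z}_2$), every $\mathbb{Z}_2\llparen q\rrparen$--module appearing in the spectral sequence is free, and its rank over $\mathbb{Z}_2\llparen q\rrparen$ is well defined. The differentials $d^r$ are $\mathbb{Z}_2\llparen q\rrparen$--linear, so each page $E^{r+1}$ is a subquotient of $E^r$; passing to a subquotient of a finite-dimensional vector space over a field can only decrease (or preserve) the rank. Hence $\rk_{\mathbb{Z}_2\llparen q\rrparen}(E^{r+1})\le \rk_{\mathbb{Z}_2\llparen q\rrparen}(E^r)$ for every $r$, and in particular $\rk_{\mathbb{Z}_2\llparen q\rrparen}(E^\infty)\le \rk_{\mathbb{Z}_2\llparen q\rrparen}(E^1)$.

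Next I would translate the two ends of this chain back into the Floer cohomologies. On the $E^1$ side, $\rk_{\mathbb{Z}_2\llparen q\rrparen}\big(\mathit{HF}(L_0,L_1)\otimes \mathbb{Z}_2\llparen q\rrparen\big)=\rk\big(\mathit{HF}(L_0,L_1)\big)$, since tensoring a $\mathbb{Z}_2$--vector space up to $\mathbb{Z}_2\llparen q\rrparen$ preserves dimension; similarly, using the $E^\infty$ identification from Theorem \ref{SeidelSmith}, $\rk_{\mathbb{Z}_2\llparen q\rrparen}(E^\infty)=\rk\big(\mathit{HF}(\smash{L_0^{\inv}},\smash{L_1^{\inv}})\big)$. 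Combining with the inequality of the previous paragraph yields $\rk\big(\mathit{HF}(L_0,L_1)\big)\ge \rk\big(\mathit{HF}(\smash{L_0^{\inv}},\smash{L_1^{\inv}})\big)$, which is the claimed Floer-theoretic Smith inequality.

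The only point requiring care — and the closest thing to an obstacle — is making sure the Floer cohomologies in question are finite-dimensional over $\mathbb{Z}_2$, so that all ranks are finite and the subquotient argument is literally valid; this is guaranteed by the standing hypotheses on $M$ (exact, convex at infinity, with the relevant compactness and transversality), under which $\mathit{HF}(L_0,L_1)$ is finitely generated. Everything else is the standard fact that a spectral sequence of finite-dimensional vector spaces over a field has ranks that are nonincreasing in $r$; no genuinely new input beyond Theorem \ref{SeidelSmith} is needed.
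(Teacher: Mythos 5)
Your proposal is correct and matches the paper's (very terse) deduction: the paper simply notes that the localization maps of \fullref{Localization} are compatible with the $\mathbb Z_2\llparen q\rrparen$--module structure and concludes \fullref{Smith Inequality}. Your writeup is the explicit version of that standard argument — $\mathbb Z_2\llparen q\rrparen$ is a field, so each page of the spectral sequence is a subquotient of the previous and ranks are nonincreasing, while tensoring the $\mathbb F_2$--vector spaces up to $\mathbb Z_2\llparen q\rrparen$ preserves rank — and it is a sound justification of what the paper treats as immediate.
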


This paper is organized as follows: In Section 2 we review the set up for Floer cohomology as used by Seidel and Smith and state the results that imply \fullref{SeidelSmith}. In Section 3 we review the basics of Heegaard Floer knot homology, discuss previous work concerning the knot Floer homology of branched double covers, and state \fullref{stable}, which states that a manifold used to compute knot Floer homology carries a stable normal trivialization, and, together with some symplectic structural data, implies \fullref{existsspecseq}. We also discuss how Corollaries~\ref{sharper} and~\ref{Alexander corollary} follow from \fullref{existsspecseq}. In Section 4 we show that the spaces involved in the computation of knot Floer homology satisfy the basic symplectic structural requirements of Seidel and Smith's theory. In Section 5 we further examine the homotopy type and cohomology of these spaces, producing results we will need to prove \fullref{stable}. In Section 6 we review some important concepts from $K$--theory necessary to the proof of \fullref{stable}, and in Section 7 we finally give a proof of this theorem, completing the proof of \fullref{existsspecseq}. We afterward summarize the deduction of Corollaries~\ref{main}, \ref{sharper}, and~\ref{Alexander corollary} from \fullref{existsspecseq}, already touched on earlier, for the reader's convenience. We finish with some optimistic remarks concerning possible future work. Section 8 is an appendix containing a proof on charts that an important inclusion map of symmetric products, \eqref{embedding}, is holomorphic.

\subsection{Acknowledgements}

I am grateful to Peter Ozsv{\'a}th for suggesting this problem, and to Robert Lipshitz for providing guidance and reading a draft of this paper. Thanks also to Dylan Thurston, Rumen Zarev, and especially Adam Levine for helpful conversations, and to Dan Ramras, who pointed out the argument used in \fullref{AtiyahHirzebruch}. Furthermore, thank you to the reviewer for clear and helpful comments, and in particular for pointing out a gap in the argument of \fullref{SymplecticGeometrySection}.

I was very privileged to be able to attend MSRI's Spring 2010 program Homology Theories of Knots and Links; I would like to express appreciation both to the Institute for providing such a stimulating program in an idyllic setting, and also to the mathematicians in attendance, who were uniformly extremely generous with their time and knowledge. In addition to those noted above, particular mention is due of Jen Hom, Eli Grigsby, Allison Gilmore, Jon Bloom, and Paul Melvin.

I was partially supported by an NSF grant number DMS-0739392.

\section{Spectral sequences for Floer cohomology}

Floer cohomology is an invariant for Lagrangian submanifolds in a symplectic manifold introduced by Floer~\cite{MR965228,MR933228,MR948771}. Many versions of the theory exist; in this section we briefly introduce Seidel and Smith's setting for Floer cohomology before stating the hypotheses and results of their main theorem for equivariant Floer cohomology. Let $M$ be a manifold equipped with an exact symplectic form $\omega = d\theta$ and a compatible almost complex structure $J$. Let $L_0$ and $L_1$ be two exact Lagrangian submanifolds of $M$. For our purposes we can restrict to the case that $L_0$ and $L_1$ intersect transversely. 

\begin{definition}
The Floer chain complex $\CF(L_0,L_1)$ is an abelian group with generators the finite set of points $L_0 \cap L_1$.
\end{definition}

The differential $d$ on $\CF(L_0,L_1)$ counts holomorphic disks
whose boundary lies in $L_0 \cup L_1$ which run from $x_-$ to $x_+$. More
precisely, we choose $\mathbf{J} = J_t$ a time-dependent perturbation of
$J$ and let $\mathcal M(x_{-}, x_{+})$ be the moduli space of Floer
trajectories $u\co \mathbb R \times [0,1] \rightarrow M$ where
\begin{align*}
u(s,0) &\in L_0, &
\partial_s u + J_t(u)\partial_t(u) &= 0, \\
u(s,1) &\in L_1, &
\lim_{s\rightarrow \pm \infty} &= x_{\pm}.
\end{align*}
This moduli space carries a natural action by $\mathbb R$ corresponding to
translation on the coordinate $s$; we let the quotient by this action be
$\widehat{\mathcal M}(x_{-}, x_{+}) = \smash{\frac{\mathcal M(x_{-},
x_+)}{\mathbb R}}$ the set of unparametrized holomorphic curves from $x_-$ to $x_+$.

Before formally defining the differential on $\CF(L_0,L_1)$, we need to impose one further technical condition on $M$ to ensure both that there are only finitely many equivalence classes of holomorphic curves between any two intersection points $x_+,x_{-} \in L_0 \cap L_1$ and that the image of any holomorphic curve $u\co\mathbb R \times [0,1] \rightarrow M$ is contained in some compact set in $M$. We say $\phi\co M \rightarrow \mathbb R$ is \textit{exhausting} if it is smooth, proper, and bounded below. We consider the one-form $d^{\mathbb C}(\phi) = d\phi \circ J$ and the two-form $\omega_{\phi} = -dd^{\mathbb C}(\phi)$. We say that $\phi$ is \textit{$J$--convex} or \textit{plurisubharmonic} if $\omega_{\phi}$ is compatible with the complex structure on $M$, that is, if $\omega_{\phi}(Jv,Jw)=\omega_{\phi}(v,w)$ and $\omega_{\phi}(v,Jv) > 0$ for all $v \in TM$. This ensures that $\omega_{\phi}$ is a symplectic form on $M$.(The term plurisubharmonic indicates that the restriction of $\phi$ to any holomorphic curve in $M$ is subharmonic, hence satisfies the maximum modulus principle.) A noncompact symplectic manifold $M$ with this structure is called \textit{convex at infinity}. If $\omega_{\phi}$ is in fact the symplectic form $\omega$ on $M$, then $M$ is said to be \textit{strictly convex at infinity}.

We use an index condition to determine which strips $u$ count for the differential. Given any $u 
\in \mathcal M (x_-,x_+)$, we can associate to $u$ a Fredholm operator
$D_{\mathbf{J}}u \co \mathcal W^1_u \to \mathcal W^0_u$ from $\mathcal
W^1_u = \{X \in W^{1,p}(u^*TM) \co X(\cdot, 0) \in u^*TL_0, X(\cdot, 1)
\in u^*TL_1\}$ to $W^0_u$. (Here $p>2$ is a fixed real number.) This
operator describes the linearization of Floer's equation, $\partial_s u +
J_t(u)\partial_t(u) =0$, near $u$. We say that $\mathbf{J}$ is
\textit{regular} if $D_{\mathbf{J}} u$ is surjective for all finite energy holomorphic strips $u$.

\begin{lemma}
If $M$ is an exact symplectic manifold with a compatible almost convex
structure $J$ which is convex at infinity and $L_0,L_1$ are exact
Lagrangian submanifolds also convex at infinity, then a generic choice of
$\mathbf{J}$ perturbing $J$ is regular.
\end{lemma}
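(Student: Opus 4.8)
The plan is to run the standard Sard--Smale transversality argument of Floer theory, adapted to the exact, convex-at-infinity setting. First I would fix a pair of intersection points $x_-, x_+ \in L_0 \cap L_1$ and introduce a Banach manifold $\mathcal J$ of time-dependent perturbations of $J$. To keep $\mathcal J$ a genuine Banach space (rather than only Fr\'echet) while still retaining enough perturbations to achieve transversality, I would use Floer's $C^\epsilon$-norm trick: $\mathcal J$ consists of smooth families $\mathbf J = J_t$ of compatible almost complex structures agreeing with $J$ outside a fixed compact set, with finite $\epsilon$-weighted $C^\infty$ norm. Here convexity at infinity is what lets us restrict attention to perturbations supported in a compact region: every finite-energy strip is confined to a fixed compact set by the maximum principle applied to the plurisubharmonic exhausting function, so nothing relevant happens near infinity. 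Exactness of $M$, $L_0$ and $L_1$ is used to rule out holomorphic sphere and disk bubbling, so that the moduli spaces we linearize are honest finite-dimensional ones.

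Next I would form the universal moduli space $\mathcal M^{\mathrm{univ}}(x_-,x_+) = \{(u,\mathbf J) \co \mathbf J \in \mathcal J,\ u \in \mathcal M_{\mathbf J}(x_-,x_+)\}$ as the zero set of a Fredholm section of a Banach bundle over $\mathcal B \times \mathcal J$, where $\mathcal B$ is the usual Banach manifold of $W^{1,p}$ strips with the prescribed asymptotics. The crux of the proof — and the step I expect to be the main obstacle — is to show that the universal linearized operator $D^{\mathrm{univ}}_{(u,\mathbf J)}$, which is $D_{\mathbf J} u$ together with the derivative in the $\mathbf J$ direction, is surjective at every point of $\mathcal M^{\mathrm{univ}}(x_-,x_+)$. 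For a nonconstant trajectory one first establishes that $u$ possesses an injective point: a point $(s_0,t_0)$ with $\partial_s u(s_0,t_0) \neq 0$ and $u(s,t) \neq u(s_0,t_0)$ for every other $(s,t)$ in the time slice $t = t_0$. This follows from unique continuation and elliptic regularity for solutions of Floer's equation (the exactness hypothesis again ensuring no bubbling interferes). Then, given a nonzero element $\eta$ in the cokernel of $D_{\mathbf J} u$, one builds a compactly supported variation of $J_{t_0}$ near $u(s_0,t_0)$ that pairs nontrivially with $\eta$, contradicting that $\eta$ annihilates the image of $D^{\mathrm{univ}}$. The only trajectories with no injective point are the constant ones, $x_- = x_+$, and for these $D_{\mathbf J} u$ is already an isomorphism precisely because $L_0$ and $L_1$ meet transversely.

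With surjectivity of $D^{\mathrm{univ}}$ in hand, the implicit function theorem shows $\mathcal M^{\mathrm{univ}}(x_-,x_+)$ is a Banach manifold and the projection $\pi \co \mathcal M^{\mathrm{univ}}(x_-,x_+) \to \mathcal J$ is Fredholm of index equal to that of $D_{\mathbf J} u$. By the Sard--Smale theorem the regular values of $\pi$ form a residual subset of $\mathcal J$, and $\mathbf J$ is a regular value exactly when $D_{\mathbf J} u$ is surjective for every $u \in \mathcal M_{\mathbf J}(x_-,x_+)$. Intersecting over the countably many pairs $(x_-,x_+)$ (stratifying by energy if desired, still a countable intersection) yields a residual set of $\mathbf J$ for which all finite-energy strips are regular. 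Finally I would pass from the $C^\epsilon$ class back to smooth perturbations by Taubes's standard argument, that the smooth regular $\mathbf J$ remain residual in the $C^\infty$ topology; convexity at infinity is invoked once more here to ensure the relevant strips all lie in a single compact set, so that the countable intersection is legitimate and genericity is preserved. Everything outside the somewhere-injectivity and cokernel-killing step of the middle paragraph is a routine invocation of by-now-standard Floer-theoretic machinery.
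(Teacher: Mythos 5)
Your proposal is correct and is precisely the standard Sard--Smale transversality argument that the paper itself invokes by citation rather than reproving: the paper defers to Floer's original proof, Oh's revision, and the observation of McDuff--Salamon and Sikorav that the argument carries over to noncompact manifolds convex at infinity, exactly because (as you note) the plurisubharmonic exhausting function confines finite-energy strips to a compact set. Your sketch accurately fills in the content of those citations — the $C^\epsilon$ Banach space of perturbations, the somewhere-injectivity and cokernel-pairing step, Sard--Smale, and Taubes's passage to smooth data — so there is no divergence in method.
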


Floer's original proof of this result~\cite[Proposition~2.1]{MR965228}
and Oh's revision~\cite[Proposition~3.2]{MR1223659} were for compact
manifolds, but, as observed by McDuff and Salamon~\cite[Section~9.2]{MR2045629} and indeed by Sikorav~\cite{Sikorav} in his review of
Floer's paper~\cite{MR965228}, the proof carries through identically for
noncompact manifolds which are convex at infinity. Choose such a generic
regular $\mathbf{J}$. We let $\mathcal M_1(x_{-}, x_{+})$ be the set of
trajectories $u$ in $\mathcal M(x_{-},x_{+})$ such that the Fredholm
index of $D_{\mathbf{J}}u = 1$.

\begin{lemma}[Floer~{\cite[Lemma 3.2]{MR965228}}]
If $\mathbf{J}$ is regular, $\mathcal M_1(x_{-},x_{+})$ is a smooth, compact one-manifold such that $\#\widehat{ \mathcal M}_1(x_{-}, x_{+}) = n^{x_{-} x_+}$ is finite. Moreover, for any $x_-, x_+ \in \CF(L_0,L_1)$, the sum
\[
\sum_{x \in \CF(L_0,L_1)}n^{x_- x} n^{xx_+}
\]
is zero modulo two. 

\end{lemma}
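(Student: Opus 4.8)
The plan is to run Floer's original argument, adapted to the noncompact, convex-at-infinity setting as in McDuff--Salamon. First I would show $\mathcal M_1(x_-,x_+)$ is a smooth one-manifold: since $\mathbf J$ is regular, $D_{\mathbf J}u$ is surjective at every $u$ in this space, so the implicit function theorem applied to the section of the appropriate Banach bundle cut out by Floer's equation exhibits $\mathcal M_1(x_-,x_+)$ as a manifold of dimension equal to the Fredholm index, namely $1$. Every index-$1$ trajectory is nonconstant (a constant would have index $0$), so the translation $\mathbb R$-action is free and $\widehat{\mathcal M}_1(x_-,x_+)$ is a smooth $0$-manifold.

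Next I would prove compactness, hence finiteness. Gromov compactness says that any sequence of Floer trajectories has a subsequence converging, a priori, to a broken trajectory together with bubbled holomorphic spheres and disks. Exactness kills the bubbles: since $\omega = d\theta$, a holomorphic sphere has zero energy and is constant, and since $L_0,L_1$ are exact, Stokes' theorem forbids any nonconstant holomorphic disk with boundary on either. The convexity of $M$ at infinity confines every trajectory to a fixed compact set, via the maximum principle for the plurisubharmonic exhausting function $\phi$ (a holomorphic curve with boundary on $L_0\cup L_1$ and limits $x_\pm$ cannot leave a sublevel set of $\phi$ containing $x_\pm$), so no energy escapes to infinity. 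Thus the only possible limits are broken trajectories; but a once-broken index-$1$ trajectory would split into at least two pieces, each nonconstant and hence of index at least $1$, forcing total index at least $2$, a contradiction. Therefore $\widehat{\mathcal M}_1(x_-,x_+)$ is already compact, hence finite, which defines $n^{x_-x_+}$.

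For the mod $2$ relation I would study $\widehat{\mathcal M}_2(x_-,x_+)$, which by the same transversality argument is a smooth $1$-manifold. Its Gromov compactification $\overline{\widehat{\mathcal M}}_2(x_-,x_+)$ adds broken configurations; exactness again rules out sphere and disk bubbling and convexity again rules out escape to infinity, so, by additivity of the Fredholm index, the added points are exactly the once-broken pairs $(u_1,u_2)$ with $u_1\in\widehat{\mathcal M}_1(x_-,x)$ and $u_2\in\widehat{\mathcal M}_1(x,x_+)$ over the various $x\in L_0\cap L_1$. The gluing theorem shows conversely that each such pair is the limit of a unique, collared end of $\widehat{\mathcal M}_2(x_-,x_+)$, so $\overline{\widehat{\mathcal M}}_2(x_-,x_+)$ is a compact $1$-manifold with boundary $\bigsqcup_x \widehat{\mathcal M}_1(x_-,x)\times\widehat{\mathcal M}_1(x,x_+)$. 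A compact $1$-manifold has an even number of boundary points, so $\sum_x n^{x_-x}\,n^{xx_+}\equiv 0 \pmod 2$.

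The main obstacle is the gluing step: one must produce, for each broken pair $(u_1,u_2)$ and each large gluing parameter $\rho$, a genuine index-$2$ trajectory $u_1\#_\rho u_2$, show this pre-gluing can be corrected to an honest solution by a Newton-iteration/fixed-point argument in weighted Sobolev spaces (here surjectivity of $D_{\mathbf J}$ from regularity is essential), and check that the resulting map is injective and exhausts a half-open collar of the broken stratum — so that, combined with Gromov compactness supplying no further ends, one genuinely obtains a compact $1$-manifold with boundary. Everything else is a fairly formal package once exactness and convexity at infinity are used to tame bubbling and noncompactness.
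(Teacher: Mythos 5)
The paper does not give its own proof of this lemma: it is stated as a citation to Floer's original paper, with the surrounding discussion (the references to McDuff--Salamon and to Sikorav's review of Floer) serving as the paper's justification that the argument carries over to the noncompact, convex-at-infinity setting. Your proof is the standard one that underlies that citation, and it is correct: regularity plus the implicit function theorem for the smooth structure, Gromov compactness with exactness of $(M,L_0,L_1)$ killing sphere and disk bubbling and $J$--convexity with the maximum principle preventing escape to infinity, index additivity plus the fact that nonconstant trajectories have index at least one forcing the compactification of the index-two space to add only once-broken pairs, and gluing to show every such pair is a genuine boundary end, so that the boundary count of a compact one-manifold gives the mod-two relation. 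You also handle the one slightly loose point in the statement correctly: as written the lemma calls $\mathcal M_1(x_-,x_+)$ a ``compact'' one-manifold, but a nonempty one-manifold with a free $\mathbb R$--action is never compact; the compactness/finiteness claim is really about $\widehat{\mathcal M}_1(x_-,x_+)$, which is how you read it. Since the paper offers no proof, there is no divergence to report; your argument is the intended one, and your flagging of the gluing step as the genuine analytic content is appropriate.
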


Therefore we make the following definition.

\begin{definition}[Floer~{\cite[Definition~3.2]{MR965228}}]
\label{FloerCohomologyDefn}
The Floer cohomology $\mathit{HF}(L_0,L_1)$ is the homology of $\CF(L_0,L_1)$ with respect to the differential 
\begin{align} 
\delta(x_{-}) = \sum_{x_+ \in \CF(L_0,L_1)} \# \wwhat{\mathcal M}_1(x_{-},x_{+})x_+ \label{differential}
\end{align}
with respect to a regular family of almost complex structures
$\mathbf{J}$ perturbing $J$. 

\end{definition}

Now suppose that $M$ carries a symplectic involution $\tau$ preserving
$(M, L_0, L_1)$ and the forms $\omega$ and $\theta$. Let the submanifold
of $M$ fixed by $\tau$ be $M^{\inv}$, and similarly for
$L_i^{\inv}$ for $i=0,1$. We can define the Borel (or equivariant)
cohomology of $(M, L_0, L_1)$ with respect to this involution. Seidel and
Smith give a geometric description of the cochain complexes used to
produce equivariant Floer cohomology; we'll content ourselves with an
algebraic description, referring the reader to their paper~\cite[Section~3]{MR2739000} for further geometric detail. Notice that the usual Floer
chain complex $\CF(L_0, L_1)$ carries an induced involution
$\tau^{\#}$ which takes an intersection point $x \in L_0 \cap L_1$ to the
intersection point $\tau(x) \in L_0 \cap L_1$. This map $\tau^{\#}$ is not
a chain map with respect to a generic family of complex structures on $M$.
However, suppose that we are in the nice case that we can find a suitable
family of complex structures $\mathbf{J}$ on $M$ such that $\tau^{\#}$
commutes with the differential on $\CF(L_0, L_1)$. (Part of Seidel
and Smith's use of their technical conditions on the bundle
$\Upsilon(M^{\inv})$ is to establish that such a $\mathbf{J}$ exists~\cite[Lemma~19]{MR2739000}.) Then $\CF(L_0, L_1)$ is a chain complex over $\mathbb F_2[\mathbb Z_2] = \mathbb F_2[\tau^{\#}]/\langle (\tau^{\#})^2 = 1\rangle$. Indeed, $(1 + \tau^{\#})^2 = 0$, so there is a chain complex
\begin{align*}
\xymatrix{
0 \rightarrow \CF(L_0, L_1) \ar[r]^-{1 + \tau^{\#}} & \CF(L_0, L_1) \ar[r]^-{1 + \tau^{\#}} & \CF(L_0, L_1)\cdots
}
\end{align*}

\begin{definition}

If $\CF(L_0, L_1)$ is the Floer chain complex and $\tau^{\#}$ is a chain map with respect to the complex structure on $M$, $\mathit{HF}_{\mathrm{borel}}(L_0, L_1)$ is the homology of the complex $\CF(L_0,L_1) \otimes \mathbb Z_2\llbracket q\rrbracket$ with respect to the differential $\delta + (1 + \tau^{\#})q$. 
\end{definition}

Therefore the double complex
\label{doublecomplex}
$$\bfig
  \morphism[0`\CF(L_0,L_1);]
  \morphism(500,0)<800,0>[\phantom{\CF(L_0,L_1)}`\CF(L_0,L_1);1+\tau^{\#}]
  \morphism(350,0)/@{<-}@(ul,ur)/<300,0>[\phantom{\CF(L_0,L_1)}`\phantom{\CF(L_0,L_1)};\delta]
  \morphism(1300,0)<800,0>[\phantom{\CF(L_0,L_1)}`\CF(L_0,L_1);1+\tau^{\#}]
  \morphism(1150,0)/@{<-}@(ul,ur)/<300,0>[\phantom{\CF(L_0,L_1)}`\phantom{\CF(L_0,L_1)};\delta]
  \morphism(2100,0)<600,0>[\phantom{\CF(L_0,L_1)}`\cdots;1+\tau^{\#}]
  \morphism(1950,0)/@{<-}@(ul,ur)/<300,0>[\phantom{\CF(L_0,L_1)}`\phantom{\CF(L_0,L_1)};\delta]
  \efig$$
induces a spectral sequence whose first page is $\mathit{HF}(L_0,L_1) \otimes \mathbb Z_2\llbracket q\rrbracket$ and which converges to $\mathit{HF}_{\mathrm{borel}}(L_0,L_1)$.

There is another more algebraic method of generating this complex. We may
begin by considering the Floer \textit{homology} complex on $L_0$ and
$L_1$, which is constructed identically to the cohomology complex except
in counting holomorphic strips $u$ of Maslov index 1 with $u(s,0) \in L_1$
and $u(s,1) \in L_0$; that is, it is equal to the Floer cohomology
$\mathit{HF}(L_1,L_0)$. Let $n_{x_- x_+}$ be the number of such Floer
trajectories after quotienting by the translation action on $\mathbb R$.
Let $d$ be the Floer homology differential on $\CF(L_1, L_0)$. If
$\mathbf{J}$ is a time-dependent perturbation of $J$ which is regular for Floer cohomology, it is also regular for Floer homology. Moreover, notice that $\CF(L_1,L_0)$ carries an involution $\tau_{\#} = \tau^{\#}$, also not generically a chain map.

The following relationship between the two theories is well-known.

\begin{lemma}
The Floer cohomology complex $\CF(L_0,L_1)$ is canonically isomorphic to the complex $\mathrm{Hom}_{\mathbb Z_2}(\CF(L_1,L_0), \mathbb Z_2)$ with the dual differential $d^{\dagger}$ as chain complexes.
\end{lemma}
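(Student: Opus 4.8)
The plan is to write down the tautological candidate isomorphism and check that it intertwines the two differentials, the only real content being a reparametrization bijection between the relevant moduli spaces of strips.

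First I would observe that the two complexes have literally the same underlying $\mathbb{Z}_2$--vector space: $\CF(L_0,L_1)$ is freely generated by the intersection points $L_0\cap L_1$, while $\CF(L_1,L_0)$ is generated by $L_1\cap L_0 = L_0\cap L_1$. Writing $\{x^{\vee}\}_{x\in L_0\cap L_1}$ for the dual basis of $\mathrm{Hom}_{\mathbb{Z}_2}(\CF(L_1,L_0),\mathbb{Z}_2)$, I would define $\Phi(x) = x^{\vee}$; this is manifestly an isomorphism of $\mathbb{Z}_2$--vector spaces, and it is canonical in that it uses only the identification of generating sets. Everything then reduces to checking $\Phi\circ\delta = d^{\dagger}\circ\Phi$, where $\delta$ is the cohomology differential \eqref{differential} and $d^{\dagger}$ is the dual of the homology differential $d$ on $\CF(L_1,L_0)$. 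Since the coefficient of $y^{\vee}$ in $d^{\dagger}(x^{\vee})$ is by definition the coefficient of $x$ in $d(y)$, unwinding the formula for $\delta$ shows the desired identity is precisely that
\[
\#\widehat{\mathcal M}_1^{L_0,L_1}(x_-,x_+) \;=\; n^{L_1,L_0}_{x_+ x_-}
\]
for all $x_-,x_+\in L_0\cap L_1$; that is, the count of index--$1$ strips for $(L_0,L_1)$ from $x_-$ to $x_+$ equals the count of index--$1$ strips for $(L_1,L_0)$ from $x_+$ to $x_-$.

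To produce this bijection I would use the reparametrization $(s,t)\mapsto(-s,1-t)$ of $\mathbb{R}\times[0,1]$. Given a Floer strip $u$ for $(L_0,L_1)$ from $x_-$ to $x_+$, set $v(s,t) := u(-s,1-t)$; then $v(s,0)\in L_1$ and $v(s,1)\in L_0$, the asymptotics of $v$ are $x_+$ at $s=-\infty$ and $x_-$ at $s=+\infty$ (the transversality of $L_0\cap L_1$ makes the constant asymptotic paths automatically invariant under $t\mapsto 1-t$), and a short computation shows $v$ solves Floer's equation for $(L_1,L_0)$ with the time--reflected family $\mathbf{J}' = \{J_{1-t}\}_{t\in[0,1]}$. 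The map $(s,t)\mapsto(-s,1-t)$ is orientation preserving, so it identifies the linearization $D_{\mathbf{J}}u$ with $D_{\mathbf{J}'}v$ and preserves the Fredholm index, and it intertwines the two $\mathbb{R}$--translation actions; hence $u\mapsto v$ descends to a bijection $\widehat{\mathcal M}_1^{L_0,L_1,\mathbf{J}}(x_-,x_+)\to\widehat{\mathcal M}_1^{L_1,L_0,\mathbf{J}'}(x_+,x_-)$. Since (as already noted) regularity of $\mathbf{J}$ for Floer cohomology gives regularity of $\mathbf{J}'$ for Floer homology, both sides are finite and the counts agree, so computing $\CF(L_1,L_0)$ with the perturbation $\mathbf{J}'$ matched to the $\mathbf{J}$ used for $\CF(L_0,L_1)$ yields the coefficient identity, and $\Phi$ is an isomorphism of chain complexes.

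I do not expect a genuine obstacle here; the geometry is the standard ``reflection'' identification underlying $\mathit{HF}(L_0,L_1)\cong \mathit{HF}(L_1,L_0)^{\vee}$. The one place that needs care is the bookkeeping in the last step: keeping straight the swap of boundary conditions and asymptotics, the orientation behavior, the precise almost complex structure $\mathbf{J}'$ that the reparametrization produces, and the matching of regular perturbations for the two complexes so that one gets an isomorphism of complexes on the nose rather than merely a quasi-isomorphism.
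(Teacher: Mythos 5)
Your proposal follows the paper's proof essentially verbatim: identify the two complexes via the common generating set of intersection points, reduce to the coefficient identity via the definition of the dual differential, and establish that identity with the reparametrization $v(s,t)=u(-s,1-t)$. The only difference is that you are slightly more careful than the paper in noting that this reparametrization produces the time-reflected perturbation $\mathbf{J}'=\{J_{1-t}\}$ (where the paper simply asserts that a $\mathbf{J}$ regular for cohomology is regular for homology), which is a welcome clarification but not a change of approach.
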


\begin{proof}
Since the group $\CF(L_0, L_1) = \CF(L_1,L_0)$ has a canonical set of generators in the intersection points of $L_0$ and $L_1$, the group $\mathrm{Hom}(\CF(L_1,L_0), \mathbb Z_2)$ is canonically isomorphic to $\CF(L_0,L_1)$ as abelian groups. (Indeed, in some moral sense the space of maps ought to be the chain complex for Floer cohomology.) It remains to be shown that $d^{\dagger} = \delta$. First observe that $n_{x_- x_+} = n^{x_+ x_ -}$: if $u \co \mathbb R \times [0,1] \rightarrow M$ is a Floer trajectory of index 1 from $x_-$ to $x_+$ which counts for the differential $\delta$, then $v\co \mathbb R \times [0,1] \rightarrow M$ defined by $v(s,t) = u(-s, 1-t)$ is a Floer trajectory from $x_+$ to $x_-$ which counts for the differential $\delta$. Let $x$ be an intersection point of $L_0$ and $L_1$, and $x^*$ its dual in $\mathrm{Hom}(\CF(L_1,L_0), \mathbb Z_2)$. Then if $y$ is another intersection point, we have
$$\langle d^{\dagger} x^*, y \rangle  = \langle x^*, d y^* \rangle
= \bigg\langle x^*, \sum_{z \in L_1 \cap L_0} n_{yz}z \bigg\rangle
=n_{yx}
=n^{xy}$$
So $y^*$ appears in $d^{\dagger}x^*$ with coefficient $n^{xy}$. Since $y$ appears in $\delta x$ with coefficient $n^{xy}$, the two chain complexes are isomorphic, as promised.
\end{proof}

A similar argument applies to $\CF(L_1,L_0)$ and
$\mathrm{Hom}_{\mathbb Z_2}(\CF(L_0,L_1), \mathbb Z_2)$. In
particular, if $\mathbf{J}$ is a perturbation of $J$ with respect to which
$\tau^{\#}$ is a chain map, there is a chain map $(\tau^{\#})^{\dagger}$
on $\mathrm{Hom}_{\mathbb Z_2}(\CF(L_0,L_1), \mathbb Z_2)$ which
is identified with $\tau_{\#}$ with respect to the isomorphism. Ergo
$\tau_{\#}$ is also a chain map with respect to $\mathbf{J}$.

This leads to a more algebraic definition of equivariant Floer cohomology.

\begin{lemma} \label{homspace}

The equivariant Floer cohomology $\mathit{HF}_{\mathrm{borel}}(L_0,L_1)$ is isomorphic to
\[
\Ext_{\mathbb F_2[\mathbb Z_2]}(\CF(L_1,L_0), \mathbb F_2).
\]
\end{lemma}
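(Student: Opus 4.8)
The plan is to identify the equivariant Floer cohomology $\mathit{HF}_{\mathrm{borel}}(L_0,L_1)$, which is computed by the complex $\CF(L_0,L_1) \otimes \mathbb Z_2\llbracket q\rrbracket$ with differential $\delta + (1+\tau^{\#})q$, with a standard $\Ext$ group over the group ring $\mathbb F_2[\mathbb Z_2]$. First I would recall the free resolution of $\mathbb F_2$ as an $\mathbb F_2[\mathbb Z_2]$--module: writing $\mathbb F_2[\mathbb Z_2] = \mathbb F_2[t]/(t^2-1) = \mathbb F_2[T]/(T^2)$ where $T = 1+t$, the trivial module $\mathbb F_2$ admits the $2$--periodic free resolution
\[
\cdots \xrightarrow{\ T\ } \mathbb F_2[\mathbb Z_2] \xrightarrow{\ T\ } \mathbb F_2[\mathbb Z_2] \xrightarrow{\ T\ } \mathbb F_2[\mathbb Z_2] \to \mathbb F_2 \to 0.
\]

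Next I would apply $\Hom_{\mathbb F_2[\mathbb Z_2]}(-, \CF(L_1,L_0))$, or rather set things up so that the relevant object is $\Ext_{\mathbb F_2[\mathbb Z_2]}(\CF(L_1,L_0), \mathbb F_2)$ computed via a \emph{projective resolution of the first argument}. Using the preceding lemma's identification $\CF(L_0,L_1) \cong \Hom_{\mathbb Z_2}(\CF(L_1,L_0), \mathbb Z_2)$ together with the fact that $\tau^{\#}$ on $\CF(L_0,L_1)$ is dual to $\tau_{\#}$ on $\CF(L_1,L_0)$, the hypercohomology of the double complex displayed above can be rewritten: the horizontal direction, with differential $1+\tau^{\#}$, is exactly the dual of the bar-type resolution computing group cohomology of $\mathbb Z_2$ with coefficients in the $\mathbb F_2[\mathbb Z_2]$--module $\CF(L_1,L_0)$, and the vertical direction records the internal Floer differential $d$. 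I would then invoke the standard homological algebra fact that, since $\mathbb F_2[\mathbb Z_2]$ has global dimension issues only in the sense of being non-regular (it is self-injective), the hyperderived functor assembled from this double complex computes precisely $\Ext^{*}_{\mathbb F_2[\mathbb Z_2]}(\CF(L_1,L_0), \mathbb F_2)$, where $\CF(L_1,L_0)$ is regarded as a complex of $\mathbb F_2[\mathbb Z_2]$--modules via $\tau_{\#}$ (which is a chain map for the chosen $\mathbf J$) and $\mathbb F_2$ carries the trivial action.

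Concretely, the key steps in order are: (1) fix $\mathbf J$ for which $\tau^{\#}$ (equivalently $\tau_{\#}$) is a chain map, so that $\CF(L_1,L_0)$ is genuinely a chain complex of $\mathbb F_2[\mathbb Z_2]$--modules; (2) write down the periodic free resolution $P_{\bullet} \to \mathbb F_2$ above and form the double complex $\Hom_{\mathbb F_2[\mathbb Z_2]}(P_{\bullet}, \CF(L_1,L_0))$, identifying its rows with $\CF(L_1,L_0) \xrightarrow{1+\tau_{\#}} \CF(L_1,L_0) \xrightarrow{1+\tau_{\#}} \cdots$ and hence, after dualizing term by term via the previous lemma, with the double complex defining $\mathit{HF}_{\mathrm{borel}}(L_0,L_1)$; (3) observe that the totalization with the $q$--adic completion in the resolution direction is exactly $\CF(L_0,L_1)\otimes\mathbb Z_2\llbracket q\rrbracket$ with differential $\delta + (1+\tau^{\#})q$; (4) conclude by the definition of $\Ext$ as the cohomology of $\Hom(P_{\bullet}, -)$ applied to an injective/flat model, being slightly careful that the completion does not change the cohomology because in each fixed total degree only finitely many columns contribute once the internal degree is fixed (the complex is bounded below in the resolution grading). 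Assembling these identifications yields the claimed isomorphism.

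The main obstacle I expect is step (4), the bookkeeping around the $q$--completion: one must check that passing from the naive (uncompleted) totalization $\bigoplus$ to the product-completion $\prod$ implicit in $\otimes \mathbb Z_2\llbracket q\rrbracket$ agrees with the usual $\Ext$, i.e.\ that completing in the resolution direction is harmless. This is true here because the resolution $P_{\bullet}$ is concentrated in nonnegative degrees, so for each total degree $n$ the relevant sum is finite up to the internal Floer grading; but stating it cleanly — and matching Seidel--Smith's grading conventions, including the shift by which $q$ raises degree — is where the care is needed. Everything else is a direct transcription of the standard fact that group cohomology of $\mathbb Z_2$ with coefficients in an $\mathbb F_2[\mathbb Z_2]$--module $N$ is $\Ext^{*}_{\mathbb F_2[\mathbb Z_2]}(\mathbb F_2, N)$, together with the self-duality $\Ext_{\mathbb F_2[\mathbb Z_2]}(\mathbb F_2, N) \cong \Ext_{\mathbb F_2[\mathbb Z_2]}(N^{\vee}, \mathbb F_2)$ available because $\mathbb F_2[\mathbb Z_2]$ is a symmetric (hence self-injective) Frobenius algebra, applied with $N = \CF(L_1,L_0)$ and $N^{\vee} = \CF(L_0,L_1)$.
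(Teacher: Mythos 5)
Your route is genuinely different from the paper's, and as written it has two real slips that keep it from landing on the stated isomorphism.

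The paper computes $\Ext_{\mathbb F_2[\mathbb Z_2]}(\CF(L_1,L_0),\mathbb F_2)$ the direct way: it tensors the periodic resolution $P_\bullet\to\mathbb F_2$ with $\CF(L_1,L_0)$ over $\mathbb F_2$ (with the diagonal $\mathbb Z_2$--action this is a \emph{free} resolution of the complex $\CF(L_1,L_0)$, by the standard untwisting isomorphism), applies $\Hom_{\mathbb F_2[\mathbb Z_2]}(-,\mathbb F_2)$, and uses the adjunction $\Hom_{\mathbb F_2[\mathbb Z_2]}(\CF(L_1,L_0)\otimes_{\mathbb F_2}\mathbb F_2[\mathbb Z_2],\mathbb F_2)\cong\Hom_{\mathbb F_2}(\CF(L_1,L_0),\mathbb F_2)$. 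After this dualization the vertical differential becomes $d^\dagger=\delta$ and the horizontal one becomes $(1+\tau_{\#})^\dagger=1+\tau^{\#}$, so the result \emph{is} the $\mathit{HF}_{\mathrm{borel}}$ double complex, with no need for Frobenius duality.

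You instead form $\Hom_{\mathbb F_2[\mathbb Z_2]}(P_\bullet,\CF(L_1,L_0))$, which computes $\Ext_{\mathbb F_2[\mathbb Z_2]}(\mathbb F_2,\CF(L_1,L_0))$, i.e.\ group cohomology with coefficients in $\CF(L_1,L_0)$. Two problems follow. First, the ``identify term by term with the $\mathit{HF}_{\mathrm{borel}}$ double complex'' step is not a literal identification: your double complex has vertical differential $d$ (the homology differential on $\CF(L_1,L_0)$), whereas the $\mathit{HF}_{\mathrm{borel}}$ double complex has vertical differential $\delta=d^\dagger$. The two double complexes are $\mathbb F_2$--linearly dual, not isomorphic, and dualizing reverses \emph{both} arrows, so this needs to be treated as a duality of double complexes rather than an equality. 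Second, even granting the Frobenius self-duality $\Ext(\mathbb F_2,N)\cong\Ext(N^\vee,\mathbb F_2)$, applying it with $N=\CF(L_1,L_0)$, $N^\vee=\CF(L_0,L_1)$ produces $\Ext_{\mathbb F_2[\mathbb Z_2]}(\CF(L_0,L_1),\mathbb F_2)$ --- not $\Ext_{\mathbb F_2[\mathbb Z_2]}(\CF(L_1,L_0),\mathbb F_2)$ as the lemma asserts. Over $\mathbb F_2$ the two have the same dimensions (a complex and its dual have isomorphic cohomology over a field, and every finitely generated $\mathbb F_2[\mathbb Z_2]$--module is self-dual), so the rank conclusion is not at risk, but it is a real bookkeeping gap in the argument as stated. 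Finally, your reasoning for step (4), that ``only finitely many columns contribute in each total degree,'' is not the right justification, since the resolution direction is unbounded; the correct point is that the vertical complexes $\CF(L_1,L_0)$ are uniformly bounded and finite-dimensional, so the product totalization (which is what $\otimes\,\mathbb Z_2\llbracket q\rrbracket$ produces) and the direct-sum totalization compute the same cohomology. The paper sidesteps all of these issues by resolving $\CF(L_1,L_0)$ rather than $\mathbb F_2$.
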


Here we regard $\mathbb F_2$ as the trivial module over $\mathbb F_2[\mathbb Z_2]$.

\begin{proof}
We will show that the double complex that computes $\Ext_{\mathbb
F_2[\mathbb Z_2]}(\CF(`L_1`,`L_0`),`\mathbb F_2`)$ is isomorphic to the double complex from which our spectral sequence arises. Consider the following free resolution of $\mathbb F_2$ over $\mathbb F_2[\mathbb Z_2]$.
\begin{align*}
\xymatrix{
\cdots \ar[r]^{1 + \tau_{\#}} & \mathbb F_2[\mathbb Z_2] \ar[r]^{1 + \tau_{\#}} & \mathbb F_2[\mathbb Z_2] \ar[r] & 0
}
\end{align*}
We may obtain a free resolution of $\CF(L_1,L_0)$ by tensoring it with the chain complex above over $\mathbb F_2$. This produces a double complex 
%
$$\bfig
  \morphism<1000,0>[\cdots`\CF(L_1,L_0){\otimes_{\mathbb{F}_2}}
    \mathbb{F}_2\lbracket\mathbb{Z}_2\rbracket;
    1{\otimes}(1{+}\tau_{\#})]
  \morphism(1000,0)<1400,0>[
    \phantom{\CF(L_1,L_0){\otimes_{\mathbb{F}_2}}
    \mathbb{F}_2\lbracket\mathbb{Z}_2\rbracket}`
    \CF(L_1,L_0){\otimes_{\mathbb{F}_2}}
    \mathbb{F}_2\lbracket\mathbb{Z}_2\rbracket;
    1{\otimes}(1{+}\tau_{\#})]
  \morphism(2400,0)<800,0>[
    \phantom{\CF(L_1,L_0){\otimes_{\mathbb{F}_2}}
    \mathbb{F}_2\lbracket\mathbb{Z}_2\rbracket}`0;]
  \morphism(900,0)/@{<-}@(ul,ur)/<300,0>[
    \phantom{\CF(L_1,L_0){\otimes_{\mathbb{F}_2}}
    \mathbb{F}_2\lbracket\mathbb{Z}_2\rbracket}`
    \phantom{\CF(L_1,L_0){\otimes_{\mathbb{F}_2}}
    \mathbb{F}_2\lbracket\mathbb{Z}_2\rbracket}; d]
  \morphism(2300,0)/@{<-}@(ul,ur)/<300,0>[
    \phantom{\CF(L_1,L_0){\otimes_{\mathbb{F}_2}}
    \mathbb{F}_2\lbracket\mathbb{Z}_2\rbracket}`
    \phantom{\CF(L_1,L_0){\otimes_{\mathbb{F}_2}}
    \mathbb{F}_2\lbracket\mathbb{Z}_2\rbracket}; d]
  \efig$$
To compute $\Ext_{\mathbb F_2[\mathbb Z_2]}(\CF(L_0,L_1), \mathbb F_2)$, we must take the homology of the double complex $\mathrm{Hom}_{\mathbb F_2[\mathbb Z_2]}(\CF(L_1,L_0) \otimes_{\mathbb Z_2} \mathbb F_2[\mathbb Z_2], \mathbb F_2)$ with respect to the duals of the maps $d$ and $1 + \tau_{\#}$.

However, suppose $\phi \in \mathrm{Hom}_{\mathbb F_2[\mathbb Z_2]}(\CF(L_1,L_0) \otimes_{\mathbb F_2} \mathbb F_2[\mathbb Z_2], \mathbb F_2)$. Then since $\phi$ is equivariant with respect to the action of $\tau_{\#}$ on $\CF(L_1,L_0) \otimes_{\mathbb F_2} \mathbb F_2[\mathbb Z_2]$, we see $\phi(x \otimes \tau_{\#}) = \phi(\tau_{\#}x \otimes 1)$, that is, $\phi$ is determined by its behavior as a $\mathbb F_2$--linear map on $\CF(L_1,L_0) \otimes \{1\}$. Hence there is a canonical isomorphism 
\[
\mathrm{Hom}_{\mathbb F_2[\mathbb Z_2]}(\CF(L_1,L_0) \otimes_{\mathbb F_2} \mathbb F_2[\mathbb Z_2], \mathbb F_2) \cong \mathrm{Hom}_{\mathbb F_2}(\CF(L_1,L_0), \mathbb F_2)
\]
Since this isomorphism is natural, we can compute $\Ext_{\mathbb F_2[\mathbb Z_2]}(\CF(L_1,L_0), \mathbb F_2)$ from the double complex
%
$$\bfig
  \morphism<900,0>[\cdots`\Hom(\CF(L_1,L_0),\mathbb{F}_2)_{d^{\dagger}};
    (1{+}\tau_{\#})^{\dagger}]
  \morphism(900,0)<1400,0>[
    \phantom{\Hom(\CF(L_1,L_0),\mathbb{F}_2)_{d^{\dagger}}}`
    \Hom(\CF(L_1,L_0),\mathbb{F}_2)_{d^{\dagger}};
    (1{+}\tau_{\#})^{\dagger}]
  \morphism(2300,0)<900,0>[
    \phantom{\Hom(\CF(L_1,L_0),\mathbb{F}_2)_{d^{\dagger}}}`0.;
    (1{+}\tau_{\#})^{\dagger}]
  \morphism(800,0)/@{<-}@(ul,ur)/<300,0>[
    \phantom{\Hom(\CF(L_1,L_0),\mathbb{F}_2)_{d^{\dagger}}}`
    \phantom{\Hom(\CF(L_1,L_0),\mathbb{F}_2)_{d^{\dagger}}};]
  \morphism(2200,0)/@{<-}@(ul,ur)/<300,0>[
    \phantom{\Hom(\CF(L_1,L_0),\mathbb{F}_2)_{d^{\dagger}}}`
    \phantom{\Hom(\CF(L_1,L_0),\mathbb{F}_2)_{d^{\dagger}}};]
  \efig$$
We saw in the proof of \fullref{homspace} that $d^{\dagger} = \delta$; moreover, since $\tau_{\#}$ and $\tau^{\#}$ are in point of fact the same map on the generators of $\CF(L_0,L_1)$, $(\tau_{\#})^{\dagger} = \tau^{\#}$. Therefore this is precisely the double complex we used to define equivariant Floer cohomology.\end{proof}

Seidel and Smith's result concerns the existence of a localization map
$$\mathit{HF}_{\mathrm{borel}} (L_0, L_1)\longrightarrow
\mathit{HF}(\smash{L_0^{\inv}}, \smash{L_1^{\inv}}),$$
where the second
space is the Floer cohomology of the two Lagrangians $\smash{L_0^{\inv}}$
and $\smash{L_1^{\inv}}$ in $M^{\inv}$. The main goal is to
produce a family of $\tau$--invariant complex structures on $M$ such that,
for $u\co \mathbb R \times [0,1] \rightarrow M^{\inv}$, the index
of the operator $D_{\mathbf{J}}u$ of $u$ with respect to $\mathbf{J}$ in
$M$ differs from the index of the operator $D_{\mathbf{J}^{\inv}}$
of $u$ with respect to $\mathbf{J}^{\inv}$ in $M^{\inv}$ by a constant.

Consider the normal bundle $N(M^{\inv})$ to $M^{\inv}$ in $M$ and its Lagrangian subbundles $N(L_i^{\inv})$ the normal bundles to each $L_i^{\inv}$ in $L_i$. The construction requires one additional degree of freedom, achieved by pulling back the bundle $N(M^{\inv})$ along the projection map $M^{\inv} \times [0,1] \rightarrow M^{\inv}$. Call this pullback $\Upsilon(M^{\inv})$. This bundle is constant with respect to the interval $[0,1]$. Its restriction to each $M^{\inv} \times \{t\}$ is a copy of $N(M^{\inv})$ which will occasionally, by a slight abuse of notation, be called $N(M^{\inv}) \times \{t\}$; similarly, for $i=0,1$ the copy of $N(L_i^{\inv})$ above $L_i^{\inv} \times \{t\}$ will be referred to as $N(L_i^{\inv}) \times \{t\}$.

We make a note here of the correspondence between our notation and Seidel and Smith's original usage. Our bundle $\Upsilon(M^{\inv})$ is their $TM^{\anti}$; while our $N(\smash{L_0^{\inv}}) \times \{0\}$ is their $T\smash{L_0^{\inv}}$ and our $N(\smash{L_1^{\inv}}) \times \{1\}$ is their $TL_1^{\anti}$. (The name $TL_1^{\anti}$ is also used for the bundle that we denote $N(\smash{L_1^{\inv}}) \times \{0\}$, using the obvious isomorphism between the bundles.)

We are now ready to introduce the notion of a stable normal trivialization of $\Upsilon(M^{\inv})$. We denote the trivial bundle $X \times \mathbb C^n \rightarrow X$ by $\mathbb R^n$, whenever the base space $X$ is clear from context, and similarly for $\mathbb R^n$.

\begin{definition}[Seidel and Smith~{\cite[Definition~18]{MR2739000}}]
\label{stablenormaltriv}
A stable normal trivialization of the vector bundle $\Upsilon(M^{\inv})$ over $M^{\inv} \times [0,1]$ consists of the following data:

\begin{itemize}
\item a stable trivialization of unitary vector bundles $\phi \co \Upsilon(M^{\inv}) \oplus \mathbb C^{K} \rightarrow \mathbb C^{k_{\anti} + K}$ for some $K$;

\item a Lagrangian subbundle $\Lambda_0 \subset
(\Upsilon(M^{\inv}))|_{[0,1] \times L^{\inv}_0}$ such that
$\Lambda_0|_{\{0\} \times L^{\inv}_0} = (N(\smash{L_0^{\inv}})\times
\{0\})\oplus \mathbb R^K$ and $\phi(\Lambda_0|_{\{1\} \times
\smash{L_0^{\inv}}}) = \mathbb R^{k_{\anti} + K}$; and

\item a Lagrangian subbundle $\Lambda_1 \subset (\Upsilon(M^{\inv}))|_{[0,1] \times L^{\inv}_1}$ such that $\Lambda_1|_{\{0\} \times L^{\inv}_1} = (N(\smash{L_1^{\inv}})\times\{0\})\oplus \mathbb R^K$ and $\phi(\Lambda_1|_{\{1\} \times \smash{L_1^{\inv}}}) = i\mathbb R^{k_{\anti} + K}$.
\end{itemize}
\end{definition}

The crucial theorem of~\cite{MR2739000}, proved through extensive geometric analysis and comparison with the Morse theoretic case, is as follows.

\begin{theorem}[Seidel and Smith~{\cite[Theorem~20]{MR2739000}}]
\label{Localization}
If $\Upsilon(M^{\inv})$ carries a stable normal trivialization, then $\mathit{HF}_{\mathrm{borel}}(L_0,L_1)$ is well-defined and there are localization maps
\[
\Delta^{(m)}\co \mathit{HF}_{\mathrm{borel}} \rightarrow \mathit{HF}(\smash{L_0^{\inv}}, \smash{L_1^{\inv}})\llbracket q\rrbracket
\]
defined for $m\gg0$ and satisfying $\Delta^{(m+1)} = q\Delta^{(m)}$. Moreover, after tensoring over $\mathbb Z_2\llbracket q\rrbracket$ with $\mathbb Z_2\llparen q\rrparen$ these maps are isomorphisms. 
\end{theorem}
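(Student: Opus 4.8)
The plan is to sketch Seidel and Smith's proof, since this is their theorem; the full argument occupies most of~\cite{MR2739000}, so I will only outline the three things to be established: that $\mathit{HF}_{\mathrm{borel}}(L_0,L_1)$ is independent of the auxiliary choices, that the maps $\Delta^{(m)}$ exist and satisfy $\Delta^{(m+1)} = q\,\Delta^{(m)}$, and that they become isomorphisms after inverting $q$. The guiding principle throughout is classical Smith theory, that is, the Borel-equivariant localization theorem for $\mathbb Z_2$--actions with $\mathbb Z_2$ coefficients, realized here at the chain level in Floer cohomology.

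For well-definedness I would take the perturbing family $\mathbf{J}$ to be $\tau$--invariant. A Floer strip $u$ whose image is not contained in $M^{\inv}$ can be displaced by invariant variations of $\mathbf{J}$, so a Sard--Smale argument shows a generic invariant $\mathbf{J}$ is regular for such strips, and the portion of the differential they contribute is manifestly $\tau$--equivariant. For a strip $u$ with image in $M^{\inv}$, the linearized operator splits, under the $\pm1$--eigenbundle decomposition $u^*TM = u^*TM^{\inv}\oplus u^*N(M^{\inv})$, into an invariant part $D^{\inv}_{\mathbf{J}}u$, made surjective by a generic invariant perturbation inside $M^{\inv}$, and an anti-invariant Cauchy--Riemann operator $D^{\anti}_{\mathbf{J}}u$ on $u^*N(M^{\inv})$ with totally real boundary conditions coming from the $N(L_i^{\inv})$. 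This anti-invariant operator is where the stable normal trivialization enters: the data $(\phi,\Lambda_0,\Lambda_1)$ of \fullref{stablenormaltriv} exhibit $D^{\anti}_{\mathbf{J}}u$, after stabilizing by a trivial summand $\mathbb C^K$, as homotopic to the standard Cauchy--Riemann operator on a trivial bundle with the standard constant boundary conditions $\mathbb R^{k_{\anti}+K}$ and $i\mathbb R^{k_{\anti}+K}$, which is an isomorphism; since its index ($=0$) is therefore independent of $u$, one can arrange the invariant $\mathbf{J}$ so that $D^{\anti}_{\mathbf{J}}u$ is surjective for every $\tau$--invariant Floer strip $u$ of index $1$ in $M$. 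Such strips are then precisely the index $1$ Floer strips of $M^{\inv}$, cut out transversally, and since their count is $\tau$--invariant, $\tau^{\#}$ commutes with $\delta$ (this is~\cite[Lemma~19]{MR2739000}). Independence of $\mathit{HF}_{\mathrm{borel}}$ from the choices is then a standard equivariant continuation-map argument.

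Next I would construct $\Delta^{(m)}$ by imitating the localization map of equivariant Morse theory. Replacing $E\mathbb Z_2 = S^{\infty}$ by its $m$--skeleton $S^m$, one counts Floer strips in $M$ carrying an extra parameter in this finite approximation and retains only those lying in $M^{\inv}$; this yields a chain map from the equivariant complex $\CF(L_0,L_1)\otimes\mathbb Z_2\llbracket q\rrbracket$ to $\CF(L_0^{\inv},L_1^{\inv})\otimes\mathbb Z_2\llbracket q\rrbracket$, and the stabilization $S^m\hookrightarrow S^{m+1}$ together with the cell structure of $S^{\infty}$ gives $\Delta^{(m+1)} = q\,\Delta^{(m)}$.

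The main obstacle is the final assertion, that the mapping cone of $\Delta^{(m)}$, for $m\gg0$, is acyclic after $\otimes_{\mathbb Z_2\llbracket q\rrbracket}\mathbb Z_2\llparen q\rrparen$. This is the geometric incarnation of the Atiyah--Bott localization principle. A neck-stretching and energy-filtration argument shows that any Floer trajectory straying outside a fixed tubular neighborhood of $M^{\inv}$ costs a definite amount of energy and sweeps out a free $\mathbb Z_2$--orbit, whose contribution is annihilated once $q$ is inverted; hence after localization the equivariant complex of $(M,L_0,L_1)$ is quasi-isomorphic to that of a neighborhood of $M^{\inv}$, which is modeled on the total space of $N(M^{\inv})$ with the fiberwise $-1$ action. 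For this local model the equivariant Floer complex localizes --- by the same computation that gives the equivariant cohomology of a vector bundle with action free away from the zero section --- to $\CF(L_0^{\inv},L_1^{\inv})\otimes\mathbb Z_2\llparen q\rrparen$, and the grading bookkeeping in this identification carries no anomaly precisely because $(\phi,\Lambda_0,\Lambda_1)$ trivializes the family of normal Cauchy--Riemann operators. Making all of this rigorous --- equivariant transversality near $M^{\inv}$, the gluing that matches broken configurations to the local model, and the compactness that controls the neck-stretching --- is the technical heart of~\cite{MR2739000}, and is where the stringent hypotheses packaged in \fullref{stablenormaltriv} are genuinely used.
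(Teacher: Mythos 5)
This theorem is not proved in the paper you are comparing against: it is quoted verbatim from Seidel and Smith~\cite[Theorem~20]{MR2739000}, with the paper explicitly noting that it is ``proved through extensive geometric analysis and comparison with the Morse theoretic case'' in that reference. There is therefore no in-paper argument against which to check your sketch line by line. Treating your outline as a summary of the Seidel--Smith proof itself, the three-part skeleton is in the right spirit: the use of $\tau$--invariant almost complex structures, the decomposition of the linearized operator along invariant and anti-invariant subbundles, the role of the stable normal trivialization in pinning down the anti-invariant Cauchy--Riemann problem and making $\tau^{\#}$ a chain map, the construction of $\Delta^{(m)}$ via finite skeleta of $E\mathbb Z_2$ together with the relation $\Delta^{(m+1)}=q\Delta^{(m)}$, and the Smith-type localization after inverting $q$ are all genuinely present in their argument.

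One caution about the final step: ``neck-stretching'' is not really the mechanism Seidel and Smith employ, and invoking it suggests a degeneration argument that their proof does not perform. What they actually do is closer to an index and energy comparison: the stable normal trivialization is used to show that the equivariant Maslov index of an invariant strip equals the invariant Maslov index (after the degree shift built into the construction), so that for $m$ large the moduli spaces that contribute to $\Delta^{(m)}$ in low $q$--degree are exactly the invariant moduli spaces of $M^{\inv}$ cut out transversally, while non-invariant contributions are forced into higher $q$--degree and hence become invertible after passing to $\mathbb Z_2\llparen q\rrparen$. This is a filtration-and-index argument rather than an SFT-style compactness argument; the conclusion is the same, but if you wanted to turn your sketch into a proof you would be chasing the wrong technical thread. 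Also, note that your claim that the anti-invariant index is zero is correct precisely because the stable normal trivialization provides a homotopy to the model operator with boundary conditions $(\mathbb R^{k_{\anti}+K}, i\mathbb R^{k_{\anti}+K})$, which is an isomorphism; it would be worth saying explicitly that this is a consequence of the hypothesis and not an independent fact.
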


This implies \fullref{SeidelSmith}. Because the localization maps are well-behaved with respect to the $\mathbb Z_2\llparen q\rrparen$ module structures, we also deduce \fullref{Smith Inequality}.

\section{Heegaard Floer homology preliminaries} \label{HeegaardFloerSection}

We pause to review the construction of knot Floer homology, first defined
by Oszv\'ath and Szab\'o~\cite{MR2065507} and Rasmussen~\cite{MR2704683},
along with some of its interactions with the double branched cover
construction. We work in coefficients modulo two.

\begin{definition}\label{HeegaardDefinition}
A \textit{multipointed Heegaard diagram} $\mathcal D = (S, \boldsymbol
\alpha, \boldsymbol \beta, \mathbf{w}, \mathbf{z})$ consists of the following data.

\begin{itemize}
\item An oriented surface $S$ of genus $g$ 
\item Two sets of basepoints $\mathbf{w} = (w_1,\ldots,w_n)$ and $\mathbf{z} =
(z_1,\ldots,z_n)$ \item Two sets of closed embedded curves
$$\boldsymbol \alpha = \{\alpha_1,\ldots , \alpha_{g+n-1}\}
\quad\text{and}\quad
\boldsymbol \beta = \{\beta_1,\ldots ,\beta_{g+n-1}\}$$
such that each of $\boldsymbol \alpha$
and $\boldsymbol \beta$ spans a $g$--dimensional subspace of $H_1(S)$,
$\alpha_i \cap \alpha_j = \emptyset = \beta_i \cap \beta_j$ for $i\neq j$,
each $\alpha_i$ and $\beta_j$ intersect transversely, and each component
of $S - \cup \alpha_i$ and of $S - \cup \beta_i$ contain exactly one point
of $\mathbf{w}$ and one point of $\mathbf{z}$. 

\end{itemize}

\end{definition}

We use $\mathcal D$ to obtain an oriented three-manifold $Y$ by attaching
two-handles to $S \times I$ along the curves $\alpha_i \times \{0\}$ and
$\beta_i \times \{1\}$ and filling in $2n$ three-balls to close the
resulting manifold. This yields a handlebody decomposition $Y =
H_{\boldsymbol \alpha} \cup_{S} H_{\boldsymbol \beta}$ of $Y$. The Heegaard
diagram $\mathcal D$ furthermore determines a knot or link in $Y$: connect
the $z$ basepoints to the $w$ basepoints in the complement of the curves
$\alpha_i$ and push these arcs into the handlebody $H_{\boldsymbol \alpha}$,
and connect the $w$ basepoints to the $z$ basepoints in the complement of
the curves $\beta_i$ and push these arcs into the $H_{\boldsymbol \beta}$ handlebody. In this paper we will be concerned only with the case that this produces a knot $K$.

We impose one further technical requirement on $\mathcal D$. A
\textit{periodic domain} is a 2--chain on
$S\backslash\{\mathbf{w},\mathbf{z}\}$ whose boundary may be expressed as a sum of the $\alpha$ and $\beta$ curves. The set of periodic domains on $S$ is in bijection with $H_2(Y)$. We say that $D$ is \textit{weakly admissible} if every periodic domain on $S$ has both positive and negative local multiplicities, and require that any Heegaard diagram we use to compute knot Floer homology have this property.

Given a pair $(Y,K)$, we may produce a Heegaard diagram $\mathcal D$ for
$(Y,K)$ via the following strategy.  Let $f\co(Y,K) \rightarrow [0,3]$ be
a self-indexing Morse function with $n$ critical points each of index zero
and index three, and $g+n-1$ critical points each of index one and index
two. Furthermore, insist that $K$ is a union of flowlines between critical
points of index zero and index three, and passes once through each such
critical point.  Then we have $S = f^{-1}(\frac{3}{2})$ is a surface of
genus $g$.  Draw $\alpha$ curves at the intersection of $S$ with the
ascending manifolds of the critical points of index one, and $\beta$
curves at the intersection of $S$ with the descending manifolds of the
critical points of index two.  Finally, let the $w$ basepoints be the
intersection of flowlines in $K$ from index zero critical points to index
three critical points, and the $z$ basepoints be the intersection of
flowlines in $K$ from index three critical points to index zero critical
points.  This produces a $\mathcal D = (S, {\boldsymbol \alpha},
{\boldsymbol \beta}, \mathbf{w}, \mathbf{z})$ satisfying the conditions of \fullref{HeegaardDefinition}; moreover we may choose $f$ such that weak admissibility is also satisfied.

The construction of the knot Floer homology $\widehat{\HFK}(Y,K)$ uses the
symmetric product $\mathrm{Sym}^{g+n-1}(S)$ consisting of all unordered
$(g+n-1)$--tuples of points in $S$. This space is the quotient of
$(S)^{g+n-1}$ by the action of the symmetric group $S_{g+n-1}$ permuting
the factors of $(S)^{g+n-1}$, and its holomorphic structure is defined by
insisting that the quotient map $(S)^{g+n-1} \rightarrow
\mathrm{Sym}^{g+n-1}(S)$ be holomorphic. In particular, if $j$ is a
complex structure on $S$, there is a natural complex structure
$\mathrm{Sym}^{g+n-1}(j)$ on the symmetric product. There are two
transversely intersecting submanifolds of $\mathrm{Sym}^{g+n-1}(S)$ of
especial interest, namely the two totally real embedded tori $\mathbb
T_{\boldsymbol \alpha} = \alpha_1 \times\cdots \times \alpha_{g+n-1}$ and
$\mathbb T_{\boldsymbol \beta} = \beta_1 \times \cdots \times
\beta_{g+n-1}$. The chain complex $\widehat{\mathit{CFK}}(\mathcal D)$ for
knot Floer homology is generated by the finite set of intersection points
of $\mathbb{T}_{\boldsymbol{\alpha}}$ and
$\smash{\mathbb{T}_{\boldsymbol{\beta}}}$. More
concretely, a generator of $\widehat{\mathit{CFK}}(\mathcal D)$ is a point
$\mathbf{x} = (x_1 \cdots x_{g+n-1}) \in \mathrm{Sym}^{g+n-1}(S)$ such that each $\alpha$ or $\beta$ curve contains a single $x_i$.

In its original form, knot Floer homology is computed as follows: let
$\mathbf{x}, \mathbf{y}$ be two intersection points in
$\widehat{\mathit{CFK}}(\mathcal D)$. Denote by $\pi_2(\mathbf{x},\mathbf{y})$ the set
of Whitney disks $\phi \co B_1(0) \rightarrow \mathrm{Sym}^{g+n-1}(S)$
from the unit disk in the complex plane to our symmetric product such that
$\phi(-i) = \mathbf{x}$, $\phi(i) = \mathbf{y}$ and $\phi$ maps the portion of
the boundary of the unit disk with positive real part into $\mathbb
T_{\boldsymbol \alpha}$ and the portion with negative real part into
$\mathbb T_{\boldsymbol \beta}$. The most common method of studying such maps $\phi$ is to use the following familiar construction of Ozsv{\'a}th and Szab{\'o} to associate to any homotopy class of Whitney disks in $\pi_2(x,y)$ a domain in $S$. There is a $(g+n-1)$--fold branched cover 
\[
S \times \mathrm{Sym}^{g+n-2}(S) \rightarrow \mathrm{Sym}^{g+n-1}(S).
\]
The pullback of this branched cover along $\phi$ is a $(g+n-1)$--fold branched cover of $B_1(0)$ which we shall denote $\Sigma(B_1(0))$. Consider the induced map on $\Sigma(B_1(0))$ formed by projecting the total space of this fibration to $S$.
\begin{align*}
\xymatrix{
\Sigma(B_1(0)) \ar[r] \ar[d] & S \times \mathrm{Sym}^{g+n-2}(S) \ar[r] \ar[d] & S \\
B_1(0) \ar[r]^-{\phi} & \mathrm{Sym}^{g+n-1}(S)
}
\end{align*}
We associate to $\phi$ the image of this projection counted with
multiplicities; to wit, we let $D = \Sigma a_i D_i$ where $D_i$ are the
closures of the components of $S - \cup \alpha_i - \cup \beta_i$ and $a_i$
is the algebraic multiplicity of the intersection of the holomorphic
submanifold $V_{x_i} = \{x_i\} \times \mathrm{Sym}^{g+n-2}(S)$ with
$\phi(B_1(0))$ for any interior point $x_i$ of $D_i$. The boundary of $D$
consists of $\alpha$ arcs from points of $\mathbf{x}$ to points of
$\mathbf{y}$ and $\beta$ arcs from points of $\mathbf{y}$ to points of
$\mathbf{x}$. If $D_i$ contains a basepoint $z_j$, then we introduce some additional notation by letting $a_i = n_{z_j}(\phi)$ be the algebraic intersection number of $z_j \times \mathrm{Sym}^{g+n-2}(S)$ with the image of $\phi$.

The Maslov index $\mu(\phi)$ can be computed using the associated domain
$\Sigma a_i D_i$ in a formula of Lipshitz's~\cite[Proposition
4.2]{MR2240908}. For each domain $D_i$, let $e(D_i)$ be the Euler measure
of $D_i$; in particular, if $D$ is a convex $2k$--gon, $e(D_i) = 1-
\frac{k}{2}$. Let $p_{\mathbf{x}}(D)$ be the sum of the average of the
multiplicities of $D$ at the four corners of each point in $\mathbf{x}$
and likewise for $p_{\mathbf{y}}(D)$. Then the Maslov index is
\[
\mu(\phi) = \sum a_i e(D_i) + p_{\mathbf{x}}(D) + p_{\mathbf{y}}(D).
\]
The differential $\partial$ on $\widehat{\mathit{CFK}}(\mathcal D)$ counts
the dimension of the moduli spaces of pseudo-holomorphic curves of Maslov
index one in $\pi_2(\mathbf{x}, \mathbf{y})$.
\begin{align*}
\partial(\mathbf{x}) = \sum_{\mathbf{y} \in \mathbb T_{\boldsymbol \alpha} \cap
\mathbb T_{\boldsymbol \beta}} \sum_{\substack{\phi \in \pi_2(\mathbf{x},
\mathbf{y}) \co \\
	 \mu(\phi) =1 \\
	  n_{w_i}(\phi) = 0 \\
	  n_{z_j}(\phi) = 0 
	  }}
	  \#\left(\frac {M(\phi)}{\mathbb R}\right) \mathbf{y}
\end{align*}
Ozsv{\'a}th and Szab{\'o} have shown that this is a well-defined differential in~\cite{MR2065507}. Indeed, once we show that the homology of $\widehat{\mathit{CFK}}(\mathcal D)$ with respect to $\partial$ can be seen as the Floer cohomology of a suitable manifold, this will be a special case of the well-definedness of the differential of \fullref{FloerCohomologyDefn}.

The chain complex $(\widehat{\mathit{CFK}}(D), \partial)$ splits along the $\mathrm{spin}^{\mathrm{c}}$ structures on $Y$. A Mayer-Vietoris argument (plus Poincar\'{e} duality) shows that
\begin{align*}
H^2(Y) \cong H_1(Y) \cong \frac{H_1(\mathrm{Sym}^{g+n-1}(S))} {H_1(\mathbb
T_{\boldsymbol \alpha}) \oplus H_1(\mathbb T_{\boldsymbol \beta})} \cong
\frac{H_1(S)}{\{[\boldsymbol \alpha_1`], \ldots , [\boldsymbol \alpha_{g+n-1}`],
[\boldsymbol \beta_1`], \ldots , [\boldsymbol \beta_{g+n-1}`]\}}
\end{align*}
(We will discuss the cohomology rings of certain symmetric products at far greater length in \fullref{HomotopyCohomologySection}.)

Any two intersection points $\mathbf{x}$ and $\mathbf{y}$ in
$\widehat{\mathit{CFK}}(\mathcal D)$ can be connected by a one-cycle
$\gamma_{\mathbf{x}, \mathbf{y}}$ of $\alpha$ arcs from points in
$\mathbf{x}$ to
$\mathbf{y}$ and $\beta$ arcs from points in $\mathbf{y}$ to $\mathbf{x}$; there is
then a Whitney disk $\phi$ between $\mathbf{x}$ and $\mathbf{y}$ exactly when
the image $\epsilon(\mathbf{x}, \mathbf{y})$ of $\gamma_{\mathbf{x},
\mathbf{y}}$ is
trivial in
$$\frac{H_1(S)}{\{[\alpha_1], \ldots , [\alpha_{g+n-1}], [\beta_1], \ldots , [
\beta_{g+n-1}]\}} \cong H_1(Y),$$
or when the Poincar\'{e} dual of
$\epsilon(\mathbf{x}, \mathbf{y})$ is trivial in $H^2(Y)$. Therefore the
chain complex splits along an affine copy of $H^2(Y)$, or along the
$\mathrm{spin}^{\mathrm{c}}$ structures of $Y$. To pin down the
$\mathrm{spin}^{\mathrm{c}}$ structure corresponding to a generator
$\mathbf{x}$, let $f$ be a Morse function compatible with $\mathcal D$ and
let $N_{\mathbf{x}}$ be the closures of regular neighborhoods of the
flowlines of $f$ through the points $\mathbf{x}$, which connect index 1
critical points to index 2 critical points and the flowlines through the
$w_i$, which connect index 3 critical points to index 0 critical points.
Then the gradient vector field $\nabla f$ does not vanish on $Y \backslash
N_{\mathbf{x}}$ and defines a $\mathrm{spin}^{\mathrm{c}}$ structure $\mathfrak s$ on $Y$.

The complex $\widehat{\mathit{CFK}}(\mathcal D, \mathfrak s)$ also carries
a (relative, for our purposes) homological grading called the Maslov
grading and, when $K$ is nullhomologous in $Y$, an additional grading
known as the Alexander grading. Suppose $\mathbf{x}$ and $\mathbf{y}$ are connected by a Whitney disk $\phi$. Then the relative gradings are determined by
\begin{align*}
M(\mathbf{x}) - M(\mathbf{y}) &= \mu(\phi) - 2\sum_i n_{w_i}(\phi) \\
A(\mathbf{x}) - A(\mathbf{y}) &= \sum_i n_{z_i}(\phi) - \sum_i n_{w_i}(\phi).
\end{align*}
The relative Alexander grading may also be computed as the linking number
of $\gamma_{\mathbf{x}, \mathbf{y}}$ with $K$. If $Y$ is a rational
homology sphere, we may pin down this grading precisely by letting
$Y_0(K)$ be the manifold obtained by zero-surgery along $K$ and
$\underline{\mathfrak s}$ be the $\mathrm{spin}^{\mathrm{c}}$ structure
obtained by extending the $\mathrm{spin}^{\mathrm{c}}$ structure
associated to $\mathbf{x}$ over $Y_0(K)$. Choose $F$ a Seifert surface of
$K$ in $Y$ and let $\widehat{F}$ be the closed surface resulting from
capping off $F$ in $Y_0(K)$. In this case, $A(\mathbf{x}) = \langle c_1(\underline{\mathfrak s}), [\widehat{F}] \rangle$. There is also a formula for absolute Maslov gradings which is somewhat too complicated to discuss here.

The differential $\partial$ lowers the Maslov grading by one and preserves the Alexander grading. Therefore $\widehat{\mathit{CFK}}(\mathcal D)$ also splits along Alexander grading in each $\mathrm{spin}^{\mathrm{c}}$ structure. A further useful technical tool is the $\delta$ grading, defined as the difference between the Maslov and Alexander gradings.

The homology of $\widehat{\mathit{CFK}}(\mathcal D)$ with respect to the differential $\partial$ is very nearly the knot Floer homology of $(Y,K)$. There is, however, a slight subtlety having to do with the number of pairs of basepoints $z_i$ and $w_i$ on $\mathcal D$. Let $V$ be a vector space over $\mathbb F_2$ with generators in gradings $(M,A) = (0,0)$ and $(M,A) = (-1,-1)$, and suppose $\mathcal D$ carries $n$ pairs of basepoints.

\begin{definition}
The homology of the complex $\widehat{\mathit{CFK}}(\mathcal D)$ with respect to the differential $\partial$ is
\[
\widetilde{\HFK}(\mathcal D) = \widehat{\HFK}((Y,K)) \otimes V^{\otimes (n-1)}.
\]

\end{definition}

To explicate the mysterious appearance of the vector space $V$, consider
that we can eliminate any pair of basepoints $z_i$ and $w_i$ (or
$w_{i-1}$) by adding to the Heegaard surface a tube connecting a small
neighborhood of one basepoint to a small neighborhood of the other. The
resulting surface $D'$ with the same $\alpha$ and $\beta$ curves as
previously is a Heegaard diagram for $(Y \# (S^1 \times S^2)), K')$, where
$K'$ is a knot running over the new tube instead of between the two former
basepoints and otherwise identical to $K$. If $t_0$ is the single torsion
$\mathrm{spin}^{\mathrm{c}}$ structure on $\#^{n-1} (S^1 \times S^2)$,
then $\widetilde{\HFK}(\mathcal D, s) = \widehat{\HFK}(Y
\# (\#^{n-1} S^1 \times S^2), K, s \# t_0)$ (see Ozsv\'ath and
Szab\'o~\cite[Theorem 4.5]{MR2443092}).

Perutz has shown~\cite[Theorem 1.2]{MR2509747} that there is a symplectic
form $\omega$ on $\mathrm{Sym}^{g+n-1}(S)$ which is compatible with the
induced complex structure, and with respect to which the submanifolds
$\mathbb T_{\alpha}$ and $\mathbb T_{\beta}$ are in fact Lagrangian and
the various Heegaard Floer homology theories are their Lagrangian Floer
cohomologies. In particular, the knot Floer homology is the Floer
cohomology of these two tori in the ambient space $\mathrm{Sym}^{g+n-1}(S
\backslash \{\mathbf{w},\mathbf{z}\})$, where the removal of the basepoints accounts for the restriction that holomorphic curves not be permitted to intersect the submanifolds $V_{w_i}$ and $V_{z_j}$ of the symmetric product.

\begin{proposition}
There is a symplectic structure on $\mathrm{Sym}^{g-n-1}(S^3 \backslash
\{\mathbf{w},\mathbf{z}\})$ with respect to which the submanifolds $\mathbb
T_{\boldsymbol \alpha}$ and $\mathbb T_{\boldsymbol \beta}$ are Lagrangian and
\[
\widetilde{\HFK}(D) \cong \widehat{\HFK}(S^3,K)\otimes V^{\otimes (n-1)}
\cong \mathit{HF}(\mathbb T_{\boldsymbol \beta}, \mathbb T_{\boldsymbol \alpha}).
\]

\end{proposition}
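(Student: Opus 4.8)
The plan is to recognize this proposition as the specialization to $Y = S^3$ of Perutz's theorem~\cite[Theorem~1.2]{MR2509747} that the Heegaard Floer chain complexes are Lagrangian Floer complexes, combined with the standard identification of the knot Floer differential with a count of holomorphic disks in the complement of the basepoint divisors. First I would fix a weakly admissible multipointed Heegaard diagram $\mathcal D = (S, \boldsymbol\alpha, \boldsymbol\beta, \mathbf w, \mathbf z)$ for $(S^3, K)$ with $S$ of genus $g$ and $n$ pairs of basepoints, built from a self-indexing Morse function as above. Perutz's theorem then supplies a symplectic form $\omega$ on $\mathrm{Sym}^{g+n-1}(S)$ compatible with the complex structure $\mathrm{Sym}^{g+n-1}(j)$, with respect to which $\mathbb T_{\boldsymbol\alpha}$ and $\mathbb T_{\boldsymbol\beta}$ are Lagrangian, and such that the Floer complex of this ordered pair of tori is the Ozsv\'ath--Szab\'o complex of $\mathcal D$.

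Next I would restrict $\omega$ and $\mathrm{Sym}^{g+n-1}(j)$ to the open submanifold $U = \mathrm{Sym}^{g+n-1}(S \setminus \{\mathbf w, \mathbf z\})$, that is, to the complement of $\bigcup_i V_{w_i} \cup \bigcup_j V_{z_j}$ inside the compact symmetric product $\mathrm{Sym}^{g+n-1}(S)$. Since the curves $\alpha_i$ and $\beta_j$ are disjoint from the basepoints, the tori $\mathbb T_{\boldsymbol\alpha}$ and $\mathbb T_{\boldsymbol\beta}$ lie inside $U$ and remain Lagrangian for the restricted form. The key observation, which is already the content of the basepoint conditions in the definition of $\partial$, is that a holomorphic Whitney disk $\phi \in \pi_2(\mathbf x, \mathbf y)$ has image contained in $U$ precisely when $n_{w_i}(\phi) = 0$ and $n_{z_j}(\phi) = 0$ for all $i$ and $j$: the number $n_{w_i}(\phi)$ is the algebraic intersection number of $\phi$ with the divisor $V_{w_i}$, which for a holomorphic curve is nonnegative and vanishes exactly when the disk avoids $V_{w_i}$, and similarly for $V_{z_j}$. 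Hence the moduli spaces of index-one holomorphic disks computed in $U$ are exactly those counted by the differential $\partial$ on $\widehat{\mathit{CFK}}(\mathcal D)$, so $\CF(\mathbb T_{\boldsymbol\beta}, \mathbb T_{\boldsymbol\alpha})$ computed in $U$ is the complex $(\widehat{\mathit{CFK}}(\mathcal D), \partial)$. Passing to homology and combining with the Definition above and Ozsv\'ath--Szab\'o's stabilization formula~\cite[Theorem~4.5]{MR2443092} gives
\[
\mathit{HF}(\mathbb T_{\boldsymbol\beta}, \mathbb T_{\boldsymbol\alpha}) \;=\; H_*\bigl(\widehat{\mathit{CFK}}(\mathcal D), \partial\bigr) \;=\; \widetilde{\HFK}(\mathcal D) \;\cong\; \widehat{\HFK}(S^3, K) \otimes V^{\otimes (n-1)}.
\]

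The hard part will be checking that $U$, being noncompact, is a legitimate setting for the Floer theory of \fullref{FloerCohomologyDefn}: one needs an exhausting $J$--convex function exhibiting $U$ as convex at infinity near the removed basepoint divisors, together with exactness of the restricted $\omega$ and of the two Lagrangian tori. Establishing these structural properties, which is in any case what is required for the Seidel--Smith machinery used later, is precisely the business of \fullref{SymplecticGeometrySection}, so at this stage I would simply cite that section; granting it, the moduli-space identification above is verbatim Ozsv\'ath and Szab\'o's original definition of the knot Floer differential, and the proposition follows. A minor bookkeeping point is the ordering of the Lagrangians in $\mathit{HF}(\mathbb T_{\boldsymbol\beta}, \mathbb T_{\boldsymbol\alpha})$ relative to the corner conventions in $\pi_2(\mathbf x, \mathbf y)$; over $\mathbb F_2$ this affects the identification only up to the duality of \fullref{homspace}, which is immaterial for the rank statements to come.
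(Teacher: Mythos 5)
Your proposal is correct and tracks the paper's own (extremely terse) argument closely: the paper likewise cites Perutz~\cite[Theorem~1.2]{MR2509747}, notes the adjustment for multiple basepoints via the identification of the $n_{w_i}(\phi)=n_{z_j}(\phi)=0$ constraints with avoidance of the divisors $V_{w_i},V_{z_j}$, and defers the verification of exactness and convexity at infinity to \fullref{SymplecticGeometrySection}. The one small caveat is that in \fullref{SymplecticGeometrySection} the paper does not literally restrict Perutz's form on the compact symmetric product as you suggest, but instead builds an exhausting plurisubharmonic function directly on the punctured symmetric product; since you explicitly defer the structural checks to that section, this amounts to the same thing.
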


This is essentially~\cite[Theorem~1.2]{MR2509747} adjusted for Heegaard diagrams with multiple basepoints. From now on we will work with this method of computing knot Floer homology; in \fullref{SymplecticGeometrySection} we will show that the symplectic form produced by Perutz's construction meets the requirements of Seidel and Smith's theorem.

\subsection{Heegaard diagrams of double branched covers}

Consider the double branched cover $\Sigma(K)$ of the three-manifold $Y$ over the knot $K$; that is, the unique complex manifold with an involution $\tau \co \Sigma(K) \rightarrow \Sigma(K)$ such that the quotient of $\Sigma(K)$ by the action of $\tau$ is $S^3$, and such that if $\pi \co \Sigma(K) \rightarrow Y$ is the quotient map, $\pi^{-1}(K)$ is exactly the set of fixed points of $\tau$.
One way of constructing this manifold is to choose a Seifert surface $F$ of $K$ and remove a bicollar $F \times [-1,1]$ from $Y$. We then take two copies of $Y \backslash (F \times [-1,1])$ and identify the positive side of the bicollar in the one copy with the negative side in the other.

Suppose $f \co (Y,K)\rightarrow [0,3]$ is a self-indexing Morse function
with respect to which $K$ is a collection of flowlines between critical
points of index zero and index three.  Let $\mathcal D$ be the Heegaard
surface for $(Y,K)$ constructed from $f$.  That is, $\mathcal D$ consists
of the surface $S = f^{-1}(\frac{3}{2})$, curves $\boldsymbol{\alpha} =
\{\alpha_1,\ldots,\alpha_{g+n-1}\}$ at the intersection of the ascending
manifolds of critical points of index one with $S$, curves $\boldsymbol{\beta} =
\{\beta_1,\ldots,\beta_{g+n-1}\}$ at the intersection of the descending
manifolds of critical points of index two with $S$, and basepoints
$\mathbf{w} = (w_1,\ldots,w_n)$ (resp. $\mathbf{z}=(z_1,\ldots,z_n)$) at the
negatively (resp. positively) oriented points of $S \cup K$.  Now consider
the map $\wtilde{f} = f \circ \pi$, which is also self-indexing Morse
since $\pi$ is proper.  From $\wtilde{f}$ we obtain a Heegaard diagram
$\wtilde{\mathcal D}$ for $(\Sigma(K), K)$ which has surface $\wtilde{S} =
\pi|_{S}^{-1}(S)$, the branched double cover of $S$ over the basepoints
$\{\mathbf{w},\mathbf{z}\}$.  Moreover, each $\alpha_i$ lifts to two closed curves
$\wtilde{\alpha}_i$ and $\tau(\wtilde{\alpha}_i)$, each of which is the
attaching circle of a one-handle in $\Sigma(K)$, and similarly for the
$\beta$ curves and two-handles. Let $\wtilde{\boldsymbol{\alpha}} =
\{\wtilde{\alpha}_1,\tau(\wtilde{\alpha}_1),\ldots
,\wtilde{\alpha}_{g+n-1}, \tau(\wtilde{\alpha}_{g+n-1}))$ and
likewise for $\wtilde{\boldsymbol{\beta}}$. This leads us to the following lemma.

\begin{lemma}
If $\mathcal D=(S, \boldsymbol \alpha, \boldsymbol \beta, \mathbf{w},\mathbf{z})$ is a weakly admissible Heegaard surface for $(Y,K)$, then
$\wtilde{\mathcal D} = (\Sigma(S), \wtilde{\boldsymbol{\alpha}},
\wtilde{\boldsymbol{\beta}}, \mathbf{w},\mathbf{z})$ is a weakly admissible Heegaard surface for $(\Sigma(K), K)$.
\end{lemma}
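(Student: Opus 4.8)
The construction given just before the statement already exhibits $\wtilde{\mathcal D}$ as a Heegaard diagram for $(\Sigma(K),K)$: it comes from the self--indexing Morse function $\wtilde f = f\circ\pi$, a Riemann--Hurwitz count gives $\wtilde S$ genus $2g+n-1$ with $2(g+n-1) = (2g+n-1)+n-1$ curves in each of $\wtilde{\boldsymbol\alpha}$ and $\wtilde{\boldsymbol\beta}$, and each region of $\wtilde S \setminus \cup\wtilde{\alpha}_i$ is the branched double cover of a region of $S\setminus\cup\alpha_i$ over the two basepoints it contains, hence connected and carrying exactly one $w$ and one $z$ (similarly for the $\beta$ curves). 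So the real content of the lemma is that weak admissibility passes to the cover, and this is where I would focus. The plan is to push periodic domains downstairs along $\pi$ and play them off against the weak admissibility of $\mathcal D$ together with the symmetry $\tau$.

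\textbf{Key steps.} Let $\wtilde P\neq 0$ be a periodic domain on $\wtilde S$, with multiplicity function $m$, a locally constant $\mathbb Z$--valued function on $\wtilde S\setminus(\wtilde{\boldsymbol\alpha}\cup\wtilde{\boldsymbol\beta})$. Since $\wtilde P$ avoids $\{\mathbf{w},\mathbf{z}\}$, hence all the branch points, $\pi$ restricts to an honest unbranched double covering near the support of $\wtilde P$, so $\pi_*\wtilde P$ is a well--defined 2--chain on $S\setminus\{\mathbf{w},\mathbf{z}\}$; and because $\pi$ carries each of $\wtilde{\alpha}_i$, $\tau(\wtilde{\alpha}_i)$ homeomorphically onto $\alpha_i$ (and similarly for the $\beta$ curves), its boundary $\pi_*(\partial\wtilde P) = \partial(\pi_*\wtilde P)$ is an integral combination of the $\alpha$ and $\beta$ curves. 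Thus $\pi_*\wtilde P$ is a periodic domain on $S$, and at any non--basepoint $p\in S$ with $\pi^{-1}(p) = \{\wtilde{p}_1,\wtilde{p}_2\}$ its multiplicity there equals $m(\wtilde{p}_1)+m(\wtilde{p}_2)$. Now split into cases. If $\pi_*\wtilde P\neq 0$, weak admissibility of $\mathcal D$ yields a non--basepoint where $\pi_*\wtilde P$ has strictly positive multiplicity and one where it has strictly negative multiplicity; over the first, one of the two preimages has $m>0$, and over the second, one has $m<0$, so $\wtilde P$ already has local multiplicities of both signs. If $\pi_*\wtilde P = 0$, then $m(\wtilde{p}_1) = -m(\wtilde{p}_2)$ for every pair of preimages, i.e.\ $m(\tau R) = -m(R)$ for every region $R$ of $\wtilde S\setminus(\wtilde{\boldsymbol\alpha}\cup\wtilde{\boldsymbol\beta})$ (here $\tau$ permutes the regions, as it preserves $\wtilde{\boldsymbol\alpha}\cup\wtilde{\boldsymbol\beta}$); picking $R$ with $m(R)\neq 0$, which exists since $\wtilde P\neq 0$, the region $\tau R$ has multiplicity $-m(R)$, of the opposite sign. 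Either way $\wtilde P$ has both positive and negative local multiplicities, which is what is needed.

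\textbf{Main obstacle.} I do not expect a serious difficulty. The one case that looks as though it might be troublesome, and the only place any thought is required, is $\pi_*\wtilde P = 0$: a nonzero periodic domain upstairs can push forward to zero. It is tempting to try to rule this out, but there is no need to — anti--invariance of the multiplicity function under $\tau$ is precisely the structure that exhibits regions of opposite sign. The remaining verifications are routine: that $\pi_*$ commutes with $\partial$ on chains supported away from the branch points, and that multiplicities add under $\pi_*$, both of which follow from $\pi$ being an unbranched double cover off the basepoints.
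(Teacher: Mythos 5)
Your proof is correct and uses the same key idea as the paper — pushing periodic domains forward along $\pi$ and using additivity of multiplicities over fibers — but the paper argues by contraposition (a nonzero periodic domain upstairs with one-signed multiplicities pushes forward to a nonzero periodic domain downstairs with one-signed multiplicities, since nonnegative numbers summing to zero everywhere would force $\wtilde P = 0$), which makes your second case $\pi_*\wtilde P = 0$ unnecessary. Your $\tau$--anti-invariance observation handling that case is a pleasant alternative, but it only arises because you chose a direct proof rather than the contrapositive.
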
 

\begin{proof}
The only thing left to check is admissibility. Yet if there is a two-chain
$F$ in $\Sigma(S)$ with boundary some collection of the curves in
$\wtilde{\boldsymbol{\alpha}}$ and $\wtilde{\boldsymbol{\beta}}$ with only
positive (or only negative) local multiplicities, then $\pi(F)$ is a
two-chain in $S$ with boundary some of the curves in $\boldsymbol \alpha$
and $\boldsymbol \beta$ with only positive (or negative) local
multiplicities. Hence $\wtilde{\mathcal D}$ is weakly admissible if $\mathcal D$ is.\end{proof}

The generators of $\widehat{\mathit{CFK}}(D)$ have been studied by Grigsby~\cite{MR2253451} and Levine~\cite{MR2443111}; we give a quick sketch of their proofs of the following lemmas before proceeding to discuss the Heegaard diagrams we will use in this paper.

\begin{lemma}[Levine~{\cite[Lemma 3.1]{MR2443111}}]
Any generator $\mathbf{x}$ of $\widehat{\mathit{CFK}}( \wtilde{\mathcal D})$ admits a non-unique 
decomposition as
$\wtilde{\mathbf{x}}_{1}\wtilde{\mathbf{x}}_{2}$ where
$\wtilde{\mathbf{x}}_{i}$ is a lift of a generator $\mathbf{x}_i \in \widehat{\mathit{CFK}}(\mathcal D)$ for $i=1,2$. 
\end{lemma}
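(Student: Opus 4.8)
The plan is to use the combinatorial structure of the Heegaard diagram $\wtilde{\mathcal D}$, specifically the fact that its $\alpha$ and $\beta$ curves come in pairs $\{\wtilde{\alpha}_i, \tau(\wtilde{\alpha}_i)\}$ and $\{\wtilde{\beta}_i, \tau(\wtilde{\beta}_i)\}$ lifting the curves $\alpha_i, \beta_i$ on $S$. A generator $\mathbf{x}$ of $\widehat{\mathit{CFK}}(\wtilde{\mathcal D})$ is a $(g+n-1)$-tuple of points on $\wtilde{S}$ hitting each curve in $\wtilde{\boldsymbol\alpha}$ exactly once and each curve in $\wtilde{\boldsymbol\beta}$ exactly once — but since there are $2(g+n-1)$ such $\alpha$ curves and only $g+n-1$ points, the intersection points are forced to sit over the branch points. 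First I would recall that the branched covering $\pi|_{\wtilde S}\co \wtilde S \to S$ restricts to a genuine double cover away from the $2n$ basepoints, and that each intersection point $\wtilde\alpha_i \cap \wtilde\beta_j$ (or $\wtilde\alpha_i \cap \tau(\wtilde\beta_j)$, etc.) projects to an intersection point of $\alpha_i$ with $\beta_j$ in $S$; conversely each intersection point of $\alpha_i$ and $\beta_j$ in $S$ has exactly two preimages, swapped by $\tau$, one lying on $\wtilde\alpha_i$ (resp.\ $\wtilde\beta_j$) and the other on $\tau(\wtilde\alpha_i)$ (resp.\ $\tau(\wtilde\beta_j)$), with the four combinations distributed among $\wtilde\alpha_i\cap\wtilde\beta_j$, $\wtilde\alpha_i\cap\tau(\wtilde\beta_j)$, $\tau(\wtilde\alpha_i)\cap\wtilde\beta_j$, $\tau(\wtilde\alpha_i)\cap\tau(\wtilde\beta_j)$.

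Next I would argue that a generator $\mathbf{x}$, viewed as a matching, uses $\wtilde\alpha_i$ paired (via the $\beta$-index permutation hidden in the generator) with some $\wtilde\beta_{\sigma(i)}$ or $\tau(\wtilde\beta_{\sigma(i)})$, and $\tau(\wtilde\alpha_i)$ paired with the complementary lift. Projecting down, the points $\pi(\mathbf{x})$ assemble into a $(g+n-1)$-tuple in $S$ hitting each $\alpha_i$ and each $\beta_j$ exactly twice; the claim is that this decomposes as two generators $\mathbf{x}_1, \mathbf{x}_2$ of $\widehat{\mathit{CFK}}(\mathcal D)$ — one reading off the pairing of the $\wtilde\alpha_i$'s, the other the pairing of the $\tau(\wtilde\alpha_i)$'s — and that $\wtilde{\mathbf{x}}_i$ is a lift of $\mathbf{x}_i$. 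The decomposition is non-unique precisely because there is a choice of how to group the lifted points: for each $i$, once we decide which of the two preimages of $\pi(\mathbf{x})\cap\alpha_i$ lies on $\wtilde\alpha_i$, that determines whether a given projected point belongs to $\mathbf{x}_1$ or $\mathbf{x}_2$, and these choices are independent across $i$, giving the stated ambiguity.

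The main obstacle is checking that the two projected tuples $\mathbf{x}_1$ and $\mathbf{x}_2$ are each \emph{legitimate} generators of $\widehat{\mathit{CFK}}(\mathcal D)$ — i.e.\ each hits every $\alpha$ curve and every $\beta$ curve of $S$ exactly once. That each $\alpha_i$ is hit once by each $\mathbf{x}_k$ is immediate from the curve-pairing structure ($\wtilde\alpha_i$ contributes to $\mathbf{x}_1$, $\tau(\wtilde\alpha_i)$ to $\mathbf{x}_2$, or vice versa, depending on the grouping choice). The subtler point is the $\beta$ side: one must verify that the $\beta$-curve labels get distributed so that $\mathbf{x}_1$ hits each $\beta_j$ exactly once and likewise $\mathbf{x}_2$ — in other words, that the hidden permutation relating $\wtilde\alpha$-indices to $\wtilde\beta$-indices in the generator $\mathbf{x}$ is compatible with splitting into two permutations of $\{1,\ldots,g+n-1\}$. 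This follows because $\mathbf{x}$, as a single generator, hits each of $\wtilde\beta_j$ and $\tau(\wtilde\beta_j)$ exactly once, so exactly one of these two lifts is used in conjunction with the $\wtilde\alpha$-lifts constituting $\mathbf{x}_1$ and the other with those of $\mathbf{x}_2$; making this bookkeeping precise, and noting it is exactly Grigsby's and Levine's original argument, completes the proof. I would also remark that the point of the lemma is to set up a comparison with $\widehat{\mathit{CFK}}(\mathcal D)^{\otimes 2}$, which motivates why the non-uniqueness is recorded explicitly.
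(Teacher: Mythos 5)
Your proposed decomposition is not the one the paper uses, and, more seriously, it does not actually work: a genuine gap appears precisely at the step you flag as ``the subtler point'' and then dismiss.

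You propose to split $\pi(\mathbf{x})$ by putting the projections of the points on the curves $\wtilde\alpha_i$ into $\mathbf{x}_1$ and the projections of the points on the curves $\tau(\wtilde\alpha_i)$ into $\mathbf{x}_2$. This guarantees that each $\mathbf{x}_k$ hits each $\alpha_i$ once, but there is no reason for each $\mathbf{x}_k$ to hit each $\beta_j$ once. Your justification --- that ``exactly one of $\wtilde\beta_j$, $\tau(\wtilde\beta_j)$ is used in conjunction with the $\wtilde\alpha$-lifts constituting $\mathbf{x}_1$'' --- is a non-sequitur. It is perfectly possible for the point of $\mathbf{x}$ on $\wtilde\beta_j$ to lie on $\wtilde\alpha_{i_1}$ and the point on $\tau(\wtilde\beta_j)$ to lie on $\wtilde\alpha_{i_2}$ with $i_1\neq i_2$: both then land in your $\mathbf{x}_1$, which therefore hits $\beta_j$ twice and some other $\beta$ curve not at all. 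Concretely, with $g+n-1=2$, take $p_1\in\wtilde\alpha_1\cap\wtilde\beta_1$, $p_2\in\tau(\wtilde\alpha_1)\cap\wtilde\beta_2$, $p_3\in\wtilde\alpha_2\cap\tau(\wtilde\beta_1)$, $p_4\in\tau(\wtilde\alpha_2)\cap\tau(\wtilde\beta_2)$. This is a legitimate generator of $\widehat{\mathit{CFK}}(\wtilde{\mathcal D})$, but your $\mathbf{x}_1=\{\pi(p_1),\pi(p_3)\}$ lies on $\alpha_1\cap\beta_1$ and $\alpha_2\cap\beta_1$ --- not a generator of $\widehat{\mathit{CFK}}(\mathcal D)$. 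The split that \emph{does} work here is $\{\pi(p_1),\pi(p_4)\}$ and $\{\pi(p_2),\pi(p_3)\}$, which mixes the $\wtilde\alpha$ and $\tau(\wtilde\alpha)$ labels; there is no way to read that partition off the lift decorations directly.

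The paper's proof takes a genuinely different route that sidesteps this. It ignores the $\wtilde\alpha$ versus $\tau(\wtilde\alpha)$ distinction entirely and instead works with the projected multiset $\pi(\mathbf{x})$ and the $1$--complex it spans: starting at any point of $\pi(\mathbf{x})$, it alternately walks along the $\beta$ arc to the other point of $\pi(\mathbf{x})$ on that $\beta$ curve, then along the $\alpha$ arc to the other point on that $\alpha$ curve, and so on; the resulting closed walk has even length, so one can two-color its vertices, and each $\alpha$ or $\beta$ curve touched receives exactly one vertex of each color. Repeating on each connected cycle yields the partition. This construction builds the compatibility on the $\beta$ side in by hand, rather than hoping it is inherited from the double-cover labeling. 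It also makes clear that the correct count of decompositions is $2^{(\text{number of cycles})}$, not $2^{g+n-1}$ as your claim ``these choices are independent across $i$'' would give; the choices are only independent cycle-by-cycle. Finally, your opening sentences contain numerical slips (a generator of $\widehat{\mathit{CFK}}(\wtilde{\mathcal D})$ is a $2(g+n-1)$--tuple, not a $(g+n-1)$--tuple, and nothing sits over the branch points, which are the deleted basepoints); those are likely typos but should be fixed so as not to obscure the real issue, which is the partition rule.
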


\begin{proof}
Consider the image of $\mathbf{x}$ under the natural map
$\mathrm{Sym}^{2(g+n-1)}(\Sigma(S)) \to \mathrm{Sym}^{2(g+n-1)}(S)$; this
is a collection of $2(g+n-1)$ points in $S$ such that each $\alpha$ circle
and each $\beta$ circle contains precisely two points. We can partition
the image into two subsets each of which is a generator in
$\widehat{\mathit{CFK}}(\mathcal D)$. To see this, begin with some $x_0$
on $\alpha_{i_1}$ and $\beta_{j_1}$ in $S$. Construct an oriented
one-cycle on $S$ as follows: moving along $\beta_{j_1}$ to the second
point in the image of $\mathbf{x}$ on that curve, which must also lie on some $\alpha_{i_2}$ (where $i_1$ and $i_2$ are not necessarily distinct). Move along $\alpha_{i_2}$ to the second point on that curve, and so on. Eventually this process terminates at $x_0$, which must be reached along $\alpha_{i_1}$. Hence there are an even number of edges in the resulting one-cycle on $S$, and we have collected an even number of vertices along the way. Put those vertices sitting at the start of a portion of this one-cycle lying on a $\beta$ curve in one set labelled $B$ and those vertices sitting at the start of a portion of this one-cycle lying on an $\alpha$ curve in another labelled $A$. Then each $\alpha$ and each $\beta$ curve which contributes an arc to the one-cycle contains exactly one point in $A$ and one point in $B$. Choose a vertex not yet assigned to $A$ or $B$ and repeat. This choice of partition is not at all unique.
\end{proof}

Of particular interest are the generators of the form
$\wtilde{\mathbf{x}}\tau(\wtilde{\mathbf{x}})$ in $\widehat{\mathit{CFK}}(\wtilde{\mathcal
D})$; that is, the generators which consist of all lifts of the points of
a generator $\mathbf{x}$ in $\widehat{\mathit{CFK}}(\mathcal D)$. These
points are exactly the invariant set of the induced involution $\tau^{\#}$
on $\widehat{\mathit{CFK}}(\wtilde{\mathcal D})$.

\begin{lemma}[Grigsby~{\cite[Proposition 3.2]{MR2253451}}]
All generators of $\widehat{\mathit{CFK}}(\wtilde{\mathcal D})$ of the
form $\wtilde{\mathbf{x}}\tau(\wtilde{\mathbf{x}})$ are in the same $\mathrm{spin}^{\mathrm{c}}$ structure, hereafter denoted $\mathfrak s_0$ and called the \textit{canonical $\mathrm{spin}^{\mathrm{c}}$ structure} on the double branched cover.  
\end{lemma}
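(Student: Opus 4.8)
The plan is to use the criterion recalled just above: two generators of $\widehat{\mathit{CFK}}(\wtilde{\mathcal D})$ lie in the same $\mathrm{spin}^{\mathrm{c}}$ structure on $\Sigma(K)$ exactly when the difference class $\epsilon$ of the associated one-cycle of $\wtilde{\boldsymbol\alpha}$-- and $\wtilde{\boldsymbol\beta}$--arcs is trivial in
\[
\frac{H_1(\Sigma(S))}{\{[\wtilde\alpha_i],[\tau(\wtilde\alpha_i)],[\wtilde\beta_j],[\tau(\wtilde\beta_j)]\}_{i,j}}\;\cong\;H_1(\Sigma(K);\mathbb Z).
\]
So, given two generators $\mathbf x,\mathbf y$ of $\widehat{\mathit{CFK}}(\mathcal D)$ with chosen lifts, it suffices to show that $\epsilon\big(\wtilde{\mathbf x}\tau(\wtilde{\mathbf x}),\,\wtilde{\mathbf y}\tau(\wtilde{\mathbf y})\big)=0$ in $H_1(\Sigma(K);\mathbb Z)$.

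First I would build a convenient representative downstairs and lift it. Join the points of $\mathbf x$ to the points of $\mathbf y$ by a one-chain $\gamma^{\boldsymbol\alpha}$ supported on the curves $\alpha_i$ and by a one-chain $\gamma^{\boldsymbol\beta}$ supported on the curves $\beta_j$, so that $\gamma:=\gamma^{\boldsymbol\alpha}-\gamma^{\boldsymbol\beta}$ is a one-cycle on $S$ representing $\epsilon(\mathbf x,\mathbf y)$. The essential point is that $\gamma$ lies on the $\alpha$ and $\beta$ curves and hence \emph{misses the basepoints} $\{\mathbf w,\mathbf z\}$, i.e.\ the branch locus of $\pi$; so the honest (unbranched) preimage $\pi^{-1}(\gamma)=\pi^{-1}(\gamma^{\boldsymbol\alpha})-\pi^{-1}(\gamma^{\boldsymbol\beta})$ is a genuine one-chain on $\Sigma(S)$, with $\partial\,\pi^{-1}(\gamma^{\boldsymbol\alpha})=\pi^{-1}(\mathbf y)-\pi^{-1}(\mathbf x)=\wtilde{\mathbf y}\tau(\wtilde{\mathbf y})-\wtilde{\mathbf x}\tau(\wtilde{\mathbf x})$, consisting of arcs along the curves $\wtilde\alpha_i,\tau(\wtilde\alpha_i)$ (since each $\alpha_i$ lifts to exactly those two curves), and similarly for the $\beta$ part. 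Thus $\wtilde\gamma:=\pi^{-1}(\gamma)$ is precisely a one-cycle of $\wtilde{\boldsymbol\alpha}$-- and $\wtilde{\boldsymbol\beta}$--arcs joining $\wtilde{\mathbf x}\tau(\wtilde{\mathbf x})$ to $\wtilde{\mathbf y}\tau(\wtilde{\mathbf y})$, so its class in $H_1(\Sigma(K);\mathbb Z)$ is $\epsilon\big(\wtilde{\mathbf x}\tau(\wtilde{\mathbf x}),\,\wtilde{\mathbf y}\tau(\wtilde{\mathbf y})\big)$.

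Now the key observation is that $\wtilde\gamma=\pi^{-1}(\gamma)$ is manifestly $\tau$--invariant as a chain, so its class is fixed by $\tau_*$ on $H_1(\Sigma(S))$, and this survives passage to $H_1(\Sigma(K);\mathbb Z)$ since $\tau$ only permutes the curves $\{\wtilde\alpha_i,\tau(\wtilde\alpha_i)\}$ and $\{\wtilde\beta_j,\tau(\wtilde\beta_j)\}$ among themselves. On the other hand, $\Sigma(K)$ is a rational homology sphere whose $H_1(\Sigma(K);\mathbb Z)$ has \emph{odd} order (equal to the determinant of $K$), and the covering involution acts on it as $-\mathrm{id}$ (a classical fact: after inverting $2$ one has $H_1(\Sigma(K);\mathbb Z[1/2])^{\tau}\cong H_1(S^3;\mathbb Z[1/2])=0$, so $H_1(\Sigma(K);\mathbb Z[1/2])$ is the $(-1)$--eigenspace of $\tau_*$, and this is integral since $H_1(\Sigma(K);\mathbb Z)$ is $2$--torsion free). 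A class $\xi\in H_1(\Sigma(K);\mathbb Z)$ fixed by $\tau_*$ therefore satisfies $\xi=-\xi$, whence $2\xi=0$, whence $\xi=0$ as the group has odd order. Applying this to $\xi=\epsilon\big(\wtilde{\mathbf x}\tau(\wtilde{\mathbf x}),\,\wtilde{\mathbf y}\tau(\wtilde{\mathbf y})\big)$ gives the lemma, with $\mathfrak s_0$ the common $\mathrm{spin}^{\mathrm{c}}$ structure.

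I expect the only point needing care to be the branched-cover bookkeeping of the middle step: checking that because $\gamma$ avoids the branch locus its set-theoretic preimage really is a chain with $\partial\circ\pi^{-1}=\pi^{-1}\circ\partial$, and that $\pi^{-1}(\gamma)$ has exactly the combinatorial shape used earlier to read off $\epsilon$ upstairs; everything else is the standard affine-space description of $\mathrm{spin}^{\mathrm{c}}$ structures plus the two classical facts about $\Sigma(K)$ quoted above. Equivalently, and perhaps more conceptually, one can package the argument as follows: each $\wtilde{\mathbf x}\tau(\wtilde{\mathbf x})$ is a fixed point of $\tau^{\#}$, so its $\mathrm{spin}^{\mathrm{c}}$ structure is $\tau_*$--fixed; and since $\tau_*$ acts on the affine space of $\mathrm{spin}^{\mathrm{c}}$ structures with linear part $-\mathrm{id}$ over a finite group of odd order, there is a unique $\tau_*$--fixed $\mathrm{spin}^{\mathrm{c}}$ structure, which we call $\mathfrak s_0$.
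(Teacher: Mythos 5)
Your argument is correct, but it takes a mildly different route from the paper's, and the difference is worth noticing. The paper constructs the connecting cycle as $\wtilde{\gamma}_{\mathbf{x},\mathbf{y}} + \tau^{\#}(\wtilde{\gamma}_{\mathbf{x},\mathbf{y}})$ for a chosen lift $\wtilde{\gamma}_{\mathbf{x},\mathbf{y}}$, so that $\epsilon = (1 + \tau_*)[\wtilde{\gamma}_{\mathbf{x},\mathbf{y}}]$ manifestly lies in the \emph{image} of $1 + \tau_*$; since $\tau_* = -\mathrm{id}$ on $H_1(\Sigma(K))$, that image is zero and $\epsilon$ vanishes outright. You instead observe that $\pi^{-1}(\gamma)$ is a $\tau$--invariant chain, which shows only that $\epsilon$ lies in the \emph{kernel} of $1 - \tau_*$; with $\tau_* = -\mathrm{id}$ this gives $2\epsilon = 0$, and you then need the extra classical fact that $|H_1(\Sigma(K))| = |\det K|$ is odd to kill the $2$--torsion. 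So your reasoning is valid, and the final ``conceptual repackaging'' (a $\tau^{\#}$--fixed generator has a $\tau_*$--fixed $\mathrm{spin}^{\mathrm{c}}$ structure, and there is a unique such structure when $\tau_*$ has linear part $-\mathrm{id}$ over an odd-order group) is a genuinely illuminating way to think about it, but the paper's direct computation is more economical because it sidesteps the odd-determinant input entirely. Your care about the lifting step — $\gamma$ missing the branch locus so that $\pi^{-1}$ commutes with $\partial$ — matches what the paper implicitly assumes when it says ``let $\wtilde{\gamma}_{\mathbf{x},\mathbf{y}}$ be any lift.''
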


\begin{proof}
Given two such generators $\wtilde{\mathbf{x}}\tau(\wtilde{\mathbf{x}})$ and
$\wtilde{\mathbf{y}}\tau(\wtilde{\mathbf{y}})$, let
$\gamma_{\mathbf{x},\mathbf{y}}$ be a
one-cycle in $S$ connecting $\mathbf{x}$ and $\mathbf{y}$ chosen as previously
and $\wtilde{\gamma}_{\mathbf{x},\mathbf{y}}$ be any lift to $\Sigma(S)$. Then
$\wtilde{\gamma}_{\mathbf{x},\mathbf{y}} +
\tau^{\#}(\wtilde{\gamma}_{\mathbf{x},\mathbf{y}})$ is
a suitable one-cycle running from
$\wtilde{\mathbf{x}}\tau(\wtilde{\mathbf{x}})$
to $\wtilde{\mathbf{y}}\tau(\wtilde{\mathbf{y}})$. Moreover, since $\tau^{\#}$
acts by multiplication by $-1$ on
$$H_1(\Sigma(Y)) \cong
\frac{H_1(\Sigma(S))}{\langle [\wtilde \alpha_1],
[\tau(\wtilde{\alpha}_1)],\ldots ,[\wtilde{\beta}_{g+n-1}],
[\tau(\wtilde{\beta}_{g+n-1})] \rangle},$$
the image
$\epsilon(\wtilde{\mathbf{x}}\tau(\wtilde{\mathbf{x}}),
\wtilde{\mathbf{y}}\tau(\wtilde{\mathbf{y}}))$ of
$\wtilde{\gamma}_{\mathbf{x},\mathbf{y}} +
\tau^{\#}(\wtilde{\gamma}_{\mathbf{x},\mathbf{y}})$ in $H^1(\Sigma(Y))$ is trivial.\end{proof}

At this juncture we pause to discuss the action of the induced involution
$\tau^{\#}$ on $\mathrm{spin}^{\mathrm{c}}$ structures on
$\widehat{\HFK}(\Sigma(K),K)$. The $\mathrm{spin}^{\mathrm{c}}$
structures on $\Sigma(K)$ are an affine copy of $H^2(\Sigma(K)) \cong
H_1(\Sigma(K))$; setting $\mathfrak s_0 = 0$ removes the ambiguity of the
identification between the set of $\mathrm{spin}^{\mathrm{c}}$ structures
on $\Sigma(K)$ and $H^2(\Sigma(K))$. Moreover, we shall see that for
suitable choice of Heegaard diagram $\mathcal D$, including both the
spherical bridge diagrams used in this paper and the toroidal grid
diagrams of Levine~\cite{MR2443111}, and additionally any $\mathcal D$
which is nice in the sense of Sarkar and Wang~\cite{MR2630063},
$\tau^{\#}$ is a chain map. The induced involution $\tau^*$ which on the
first homology of $\Sigma(K)$ acts by multiplication by $-1$, as does
conjugation of $\mathrm{spin}^{\mathrm{c}}$ structures on
$H_1(\Sigma(K))$. Ergo $\tau^*(\mathfrak s) = \wbar{\mathfrak s}$.
Thus the action  of $\tau$ to $\wtilde{\mathcal D}$ induces an isomorphism
\[
\widehat{\HFK}(\Sigma(K), K, \mathfrak s) \cong \widehat{\HFK}(\Sigma(K),
K, \wbar{\mathfrak s}).
\]
In particular, the action of $\tau$ on
$\widehat{\mathit{CFK}}(\wtilde{\mathcal D})$ preserves the canonical $\mathrm{spin}^{\mathrm{c}}$ structure.

\begin{lemma}
If $\mathbf{x} = \wtilde{\mathbf{x}}_{1}\wtilde{\mathbf{x}}_{2}$, the
Alexander grading of $\mathbf{x}$ is the average of the Alexander gradings
of $\mathbf{x}_1$ and $\mathbf{x}_2$. 
\end{lemma}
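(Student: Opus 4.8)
The plan is to read off relative Alexander gradings upstairs from Whitney disks, push their domains down the branched projection $\pi\co\Sigma(S)\to S$, and compare with the relative Alexander gradings of the two halves on $\mathcal D$; the only new ingredient is the factor of two that branching contributes at the basepoints. So suppose first that $\mathbf{x}=\wtilde{\mathbf{x}}_1\wtilde{\mathbf{x}}_2$ and $\mathbf{y}=\wtilde{\mathbf{y}}_1\wtilde{\mathbf{y}}_2$ lie in the same $\mathrm{spin}^{\mathrm{c}}$ structure on $\Sigma(K)$, so that there is some $\phi\in\pi_2(\mathbf{x},\mathbf{y})$ with domain $D_\phi$ on $\Sigma(S)$. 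Its pushforward $\pi_*D_\phi$ is a domain on $S$ connecting the point configurations $\mathbf{x}_1+\mathbf{x}_2$ and $\mathbf{y}_1+\mathbf{y}_2$. Away from the $2n$ basepoints $\pi$ is an honest double cover, so the local multiplicities of $\pi_*D_\phi$ are sums of those of $D_\phi$; but each $w_i$ and each $z_i$ is a branch point with a single preimage, near which $\pi$ is modelled on $\zeta\mapsto\zeta^2$, so the multiplicity of $\pi_*D_\phi$ at $w_i$ is \emph{twice} that of $D_\phi$ at the region over $w_i$. Hence $n_{w_i}(\pi_*D_\phi)=2\,n_{w_i}(\phi)$ and $n_{z_i}(\pi_*D_\phi)=2\,n_{z_i}(\phi)$ for every $i$.

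Now choose $\phi_j\in\pi_2(\mathbf{x}_j,\mathbf{y}_j)$ on $S$ for $j=1,2$, which exist since $S^3$ carries a unique $\mathrm{spin}^{\mathrm{c}}$ structure. Then $\pi_*D_\phi-D_{\phi_1}-D_{\phi_2}$ is a $2$--chain on $S$ whose boundary is a $\mathbb{Z}$--linear combination of full $\alpha$ and $\beta$ curves, and since $H_2(S^3)=0$ any such chain is an integer multiple of the fundamental class $[S]$, which contributes $1$ to every $n_{w_i}$ and to every $n_{z_i}$; hence $\sum_i n_{z_i}-\sum_i n_{w_i}$ takes the same value on $\pi_*D_\phi$ as on $D_{\phi_1}+D_{\phi_2}$. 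Feeding this and the previous paragraph into the relative grading formula $A(\mathbf{a})-A(\mathbf{b})=\sum_i n_{z_i}(\psi)-\sum_i n_{w_i}(\psi)$ yields
\[
A(\mathbf{x})-A(\mathbf{y})=\frac12\Bigl[\bigl(A(\mathbf{x}_1)-A(\mathbf{y}_1)\bigr)+\bigl(A(\mathbf{x}_2)-A(\mathbf{y}_2)\bigr)\Bigr],
\]
which is the asserted identity up to an overall normalization (and shows, incidentally, that $A(\mathbf{x}_1)+A(\mathbf{x}_2)$ is independent of the decomposition). One could equally run this with the linking-number description of the relative Alexander grading, using a Seifert surface $F$ of $K$ in $S^3$ and its lift to a Seifert surface $\wtilde{F}$ of $K$ in $\Sigma(K)$ on which $\pi$ has degree one --- such a lift exists because every loop on $F$ has zero linking number with $K$, so $\pi^{-1}(F)$ consists of two copies of $F$ joined along $K$ --- together with the same factor of two at the branch locus.

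It remains to fix the additive constant in each $\mathrm{spin}^{\mathrm{c}}$ structure, and this is the step I expect to demand the most care. When the two halves coincide, i.e.\ for a generator $\wtilde{\mathbf{x}}\tau(\wtilde{\mathbf{x}})$, which by Grigsby's lemma lies in $\mathfrak{s}_0$, the claim reduces to $A(\wtilde{\mathbf{x}}\tau(\wtilde{\mathbf{x}}))=A(\mathbf{x})$; this I would extract from the $\mathrm{spin}^{\mathrm{c}}$-theoretic description of the absolute Alexander grading, noting that the gradient of $\wtilde{f}=f\circ\pi$ is the pullback of the nonvanishing vector field defining $\mathfrak{s}(\mathbf{x})$ downstairs and that $\pi$ restricts to a map of degree one on the lifted Seifert surface, so the relevant evaluation of $c_1$ is unchanged. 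Levine's decomposition lemma guarantees that every generator of $\widehat{\mathit{CFK}}(\wtilde{\mathcal D})$ has the form $\wtilde{\mathbf{x}}_1\wtilde{\mathbf{x}}_2$, and the relative identity above then propagates this normalization within each $\mathrm{spin}^{\mathrm{c}}$ structure, completing the argument. The genuinely fiddly part is precisely the branch-locus bookkeeping behind the factor of two: confirming that a cycle or $2$--chain on $\Sigma(S)$ and its image on $S$ differ only by this factor and only at the basepoints, and that the additive constant it forces is zero.
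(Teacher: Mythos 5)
The paper does not actually supply a proof of this lemma: it cites Levine (Lemma~3.4 of~\cite{MR2443111}) and moves on, so there is no in-paper argument to compare against. Your push-forward-of-domains strategy is a reasonable way to attack the relative statement, and the factor of two at the branch locus is the right bookkeeping: away from the basepoints $\pi_\ast$ sums multiplicities over the two sheets, and at each $w_i,z_i$ the unique preimage region double-covers its image, so $n_{w_i}(\pi_\ast D_\phi)=2n_{w_i}(\phi)$ and likewise for $z_i$. However, the justification you give for discarding $\pi_\ast D_\phi - D_{\phi_1}-D_{\phi_2}$ is incorrect as written: a $2$--chain on $S^2$ whose boundary is a $\mathbb Z$--linear combination of full $\alpha$ and $\beta$ curves need not be a multiple of $[S^2]$ (a disk bounded by a single $\alpha_i$ already gives a counterexample). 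What you actually need is the standard fact that such a periodic domain caps off to a $2$--cycle in $S^3$, which is nullhomologous because $H_2(S^3)=0$, hence has zero algebraic intersection with $K$, and this intersection number is precisely $\sum_i n_{z_i}-\sum_i n_{w_i}$. The conclusion you want is true, but your stated reason is not.

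The more serious problem is the absolute normalization. Your relative identity holds only within a single $\mathrm{spin}^{\mathrm{c}}$ structure on $\Sigma(K)$: if $\mathbf{x}$ and $\mathbf{y}$ lie in different $\mathrm{spin}^{\mathrm{c}}$ structures, then $\gamma_{\mathbf{x},\mathbf{y}}$ is not nullhomologous modulo the $\wtilde\alpha$ and $\wtilde\beta$ curves, so there is not even a topological domain $D_\phi$ on $\Sigma(S)$ to push forward, and the comparison breaks down. You anchor the additive constant only at the $\tau$--invariant generators $\wtilde{\mathbf{x}}\tau(\wtilde{\mathbf{x}})$, which by Grigsby's lemma all lie in $\mathfrak s_0$. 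Consequently your final sentence --- that the relative identity ``propagates this normalization within each $\mathrm{spin}^{\mathrm{c}}$ structure'' --- does not close the argument: the constant $c(\mathfrak s)=A(\mathbf x)-\tfrac12(A(\mathbf x_1)+A(\mathbf x_2))$ is shown to be zero only for $\mathfrak s=\mathfrak s_0$, while every non-canonical $\mathrm{spin}^{\mathrm{c}}$ structure remains unanchored. To repair this you would need a direct argument via $A(\mathbf x)=\tfrac12\langle c_1(\underline{\mathfrak s}(\mathbf x)),[\widehat{F}]\rangle$ relating $\underline{\mathfrak s}(\mathbf x)$ on $\Sigma(K)_0(K)$ to $\underline{\mathfrak s}(\mathbf x_1)$ and $\underline{\mathfrak s}(\mathbf x_2)$ on $S^3_0(K)$ for an arbitrary decomposable generator, not just the $\tau$--invariant ones; as written, the $\mathrm{spin}^{\mathrm{c}}$--theoretic sketch you give handles only the invariant case.
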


We refer the reader to Levine~\cite[Lemma 3.4]{MR2443111} for a lovely proof of this lemma.

We are now ready to consider the equivariant knot Floer homology of a
double branched cover of $S^3$ over a knot. Let $\pi \co (\Sigma(K),K)
\rightarrow (Y,K)$ be the branched double cover map and $\tau \co
\Sigma(K) \rightarrow \Sigma(K)$ be the involution interchanging the two
not necessarily distinct preimages of a point $x \in Y$. Assume that for
our particular Heegaard diagram $\mathcal D$, the map $\tau^{\#}$ is a
chain map on $\widehat{\mathit{CFK}}(\wtilde{\mathcal D})$.

\begin{definition}
The equivariant knot Floer homology $\widehat{\HFK}_{\mathrm{borel}}(\Sigma(K),K)$ is given by
\[
\widetilde{\HFK}_{\mathrm{borel}}(\wtilde{\mathcal D}) = \widehat{\HFK}_{\mathrm{borel}}(\Sigma(K),K) \otimes V^{\otimes (n-1)}
\]
where $\widetilde{\HFK}_{\mathrm{borel}}(\mathcal D)$ is the
homology of $\widehat{\mathit{CFK}}(\wtilde{\mathcal D}) \otimes \mathbb Z_2\llbracket q\rrbracket$ with respect to the differential $\partial + (1 + \tau^{\#})q$.

\end{definition}
The spectral sequence derived from the double complex
\begin{align*}
\xymatrix{
0 \ar[r] \ar[d] & \widehat{\mathit{CFK}}_{i+1}(\wtilde{\mathcal D})
\ar[d]^-{\partial} \ar[r]^-{1 + \tau^{\#}}  &
\widehat{\mathit{CFK}}_{i+1}(\wtilde{\mathcal D}) \ar[d]^-{\partial}
\ar[r]^-{1 + \tau^{\#}} & \widehat{\mathit{CFK}}_{i+1}(\wtilde{\mathcal D}) \ar[d]^-{\partial} \cdots \\
0 \ar[r] \ar[d] & \widehat{\mathit{CFK}}_i(\wtilde{\mathcal D})
\ar[d]^-{\partial} \ar[r]^-{1 + \tau^{\#}}  &
\widehat{\mathit{CFK}}_i(\wtilde{\mathcal D}) \ar[d]^-{\partial}
\ar[r]^-{1 + \tau^{\#}} & \widehat{\mathit{CFK}}_i(\wtilde{\mathcal D}) \ar[d]^-{\partial} \cdots \\
0 \ar[r] & \widehat{\mathit{CFK}}_{i-1}(\wtilde{\mathcal D}) \ar[r]^-{1 +
\tau^{\#}}  & \widehat{\mathit{CFK}}_{i-1}(\wtilde{\mathcal D}) \ar[r]^-{1
+ \tau^{\#}} & \widehat{\mathit{CFK}}_{i-1}(\wtilde{\mathcal D}) \cdots \\
}
\end{align*}
which converges to
$\widetilde{\HFK}_{\mathrm{borel}}(\wtilde{\mathcal D})$ has been a source
of interest for some time; a popular conjecture has been that its
$E^{\infty}$ page is isomorphic modulo torsion to
$\widetilde{\HFK}(\mathcal D) \otimes \mathbb Z_2\llbracket q\rrbracket$. We will show a
similar statement, namely \fullref{existsspecseq}. The $E^1$ page of this
spectral sequence (after computing the homology of the vertical
differentials) is $(\widetilde{\HFK}(\wtilde{\mathcal D}) \otimes V^{\otimes (n-1)}) \otimes \mathbb Z_2\llbracket q\rrbracket$, and an application of \fullref{Smith Inequality} will show that after tensoring with $\mathbb Z\llparen q\rrparen$, the $E^{\infty}$ page of the spectral sequence is isomorphic to $(\widetilde{\HFK}(\mathcal D) \otimes V^{\otimes (n-1)}) \otimes \mathbb Z\llparen q\rrparen$.

Notice that since both $\partial$ and $\tau^{\#}$ preserve the Alexander
grading, the spectral sequence splits along the Alexander grading; it
moreover splits along pairs of conjugate $\mathrm{spin}^{\mathrm{c}}$
structures. Ergo it is interesting to consider not only the full spectral
sequence but also its restriction to $\widehat{\mathit{CFK}}(\Sigma(K), K,
\mathfrak s_0, i)$ for any Alexander grading $i$. Since the localization
maps of Seidel and Smith~\cite[Section 2c]{MR2739000} referenced in
\fullref{SeidelSmith} are defined by counting holomorphic disks between
equivariant and nonequivariant generators of $\CF(L_0,L_1)
= \CF(\mathbb T_{\beta}, \mathbb T_{\alpha})$ and by
multiplication and division by $q$, the Alexander grading on
$\widetilde{\HFK}(\wtilde{\mathcal D})$ is preserved by the
isomorphism of \fullref{existsspecseq}.

In order to apply \fullref{Smith Inequality} to the case of $(S^3,K)$ and
its double branched cover $(\Sigma(K), K)$ we will require a Heegaard
diagram $D$ for $(S^3, K)$ lying on the sphere $S^2$. Choose a bridge
presentation of $K$ in $S^2$; that is, a diagram of $K$ in $S^2 = \mathbb
R^2 \cup \{\infty\}$ such that there are a finite number of line segments
$b_1,\ldots ,b_n$ in the image of $K$ in the plane such that at every
crossing in $K$ the overcrossing arc is a portion of the $b_i$ and neither
of the undercrossing arcs are. Distribute basepoints $\mathbf{w} =
(w_1,\ldots ,w_n)$ and $\mathbf{z} = (z_1,\ldots ,z_n)$ along the image of $K$ in the bridge presentation at the endpoints of the line segments $b_i$ such that as one moves along $K$ in the direction of the orientation starting with the line segment $b_1$, these basepoints are encountered in the order $w_1, z_1, w_2, z_2,\ldots ,w_n, z_n$. For $1 \leq i \leq n-1$, let $\beta_i$ be a closed curve in the plane encircling whichever bridge $b_j$ has endpoints $w_i$ and $z_i$, and let $\alpha_i$ be a closed curve in the plane encircling the arc of $K$ which contains none of the bridges $b_j$ and has endpoints $z_i$ and $w_{i+1}$. Both sets of curves will be oriented counterclockwise with respect to their interiors in the plane $S^2 \backslash \{z_{n}\}$.

\begin{figure}[ht!]
\begin{center}
\includegraphics{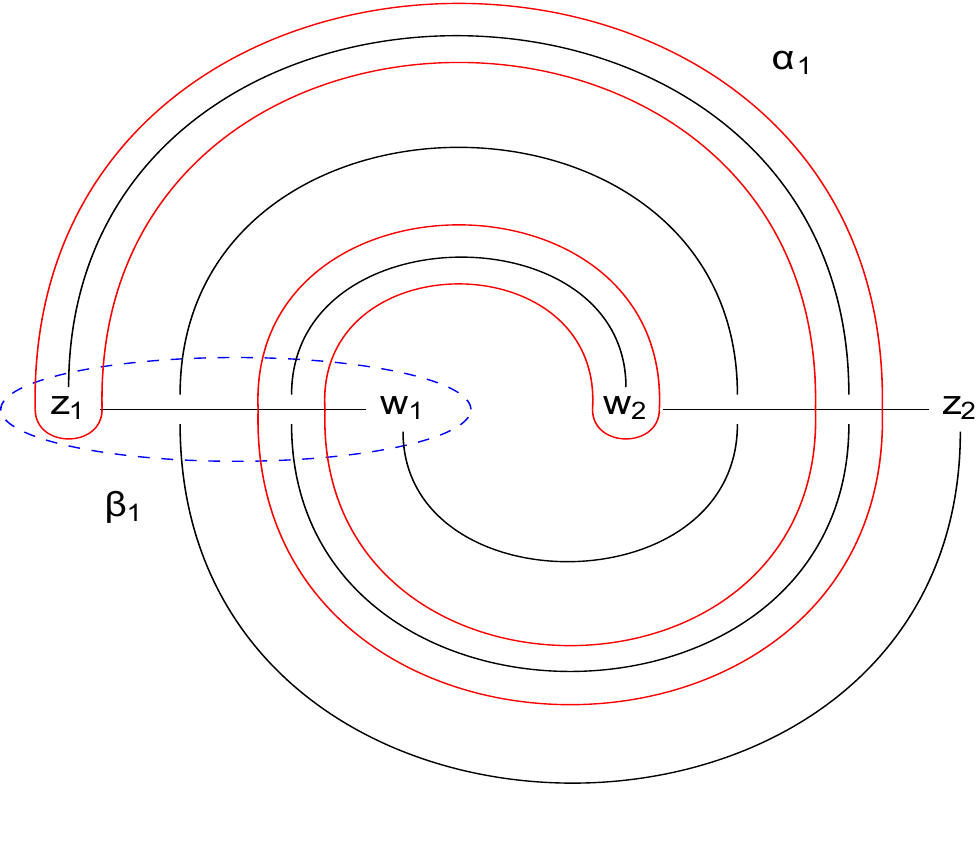}
\caption{A Heegaard diagram on the sphere derived from a two-bridge presentation of the trefoil.}
\end{center}
\end{figure}

Choose complex coordinates on $S^2 \backslash \{\mathbf{w},\mathbf{z}\}$ and use these
to induce complex charts on $\Sigma(S^2) \backslash \{\mathbf{w},\mathbf{z}\}$ and
subsequently on the symmetric products $\mathrm{Sym}^{n-1}(S^2 \backslash
\{\mathbf{w},\mathbf{z}\})$ and $\mathrm{Sym}^{2n-2}(\Sigma(S^2) \backslash
\{\mathbf{w},\mathbf{z}\})$. (Later on we will want to be a little more precise about this
original choice, but for now we allow ourselves considerable latitude.) If
$j$ is the almost complex structure on $S^2 \backslash \{\mathbf{w},\mathbf{z}\}$,
then $\pi^*j = \wtilde{j}$ is the almost complex structure on $\Sigma(S^2)
\backslash \{\mathbf{w},\mathbf{z}\}$ and $\mathrm{Sym}^{n-1}(j)$ and
$\mathrm{Sym}^{2n-2}(\wtilde{j})$ are the almost complex structures on the two symmetric products.

For $x_i$ a point on $S^2 \backslash \{\mathbf{w},\mathbf{z}\}$, let
$\wtilde{x}_i$ denote one of the two preimages of $x_i$ on
$\Sigma(S^2)\backslash\{\mathbf{w},\mathbf{z}\}$. Consider the map
\begin{align}
i \co \mathrm{Sym}^{n-1}(S^2 \backslash \{\mathbf{w},\mathbf{z}\}) & \hookrightarrow  \mathrm{Sym}^{2n-2}(\Sigma(S^2)\backslash \{\mathbf{w},\mathbf{z}\}) \label{embedding} \\
(x_1x_2 \ldots x_{n-1}) &\mapsto (\wtilde{x}_1\tau(\wtilde{x}_1) \ldots
\wtilde{x}_{n-1}\tau(\wtilde{x}_{n-1})) \nonumber
\end{align}
This map is a holomorphic embedding; for a proof of this fact on charts, see Appendix 1. The involution $\tau$ interchanges the two (not necessarily distinct) preimages of a point under the restriction of the double branched cover map $\pi \co \Sigma(K) \rightarrow S^3$.

We make the following suggestive choices of notation: let
$L_0 = \mathbb{T}_{\wtilde{{\boldsymbol \beta}}}$, let
$L_1 =\mathbb{T}_{\wtilde{{\boldsymbol \alpha}}}$, and let
$M = \mathrm{Sym}^{2n-2}(\Sigma(S^2)\backslash
\{\mathbf{z},\mathbf{w}\})$. In \fullref{SymplecticGeometrySection}
we will show that $M$ is convex at
infinity and can be equipped with an exact $\tau$--invariant symplectic
form with respect to which $\mathbb T_{\boldsymbol \alpha}$ and $\mathbb
T_{\boldsymbol \beta}$ are exact Lagrangian submanifolds.

The following is immediate from the definitions.

\begin{lemma}

With respect to the induced involution on $M$ (also called $\tau$), we have the following invariant sets.
\begin{align*}
M^{\inv} &= \mathrm{Sym}^{2n-2}(\Sigma(S^2) \backslash
\{\mathbf{z},\mathbf{w}\})^{\inv} = i(\mathrm{Sym}^{n-1}(S^2
\backslash \{\mathbf{z},\mathbf{w}\}))\\
\smash{L_0^{\inv}} &= \mathbb{T}_{\wtilde{{\boldsymbol
\beta}}}^{\inv} = i(\mathbb{T}_{{\boldsymbol \beta}}) \\
\smash{L_1^{\inv}} &= \mathbb{T}_{\wtilde{{\boldsymbol
\alpha}}}^{\inv} = i(\mathbb{T}_{{\boldsymbol \alpha}}).
\end{align*}
\end{lemma}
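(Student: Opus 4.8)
The plan is to unwind the definition of the involution $\tau$ induced on $M = \mathrm{Sym}^{2n-2}(\Sigma(S^2)\backslash\{\mathbf{w},\mathbf{z}\})$ and to exploit the fact that, once the basepoints are deleted, $\tau$ acts \emph{freely} on the Heegaard surface of the cover. First I would record the key input: the deck transformation of $\pi\co\Sigma(S^2)\to S^2$ has fixed-point set exactly the ramification locus $\pi^{-1}(\{\mathbf{w},\mathbf{z}\})$, and since those $2n$ points are precisely the ones that have been removed, the restriction of $\tau$ to $\Sigma(S^2)\backslash\{\mathbf{w},\mathbf{z}\}$ is a free involution with quotient $S^2\backslash\{\mathbf{w},\mathbf{z}\}$. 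The map induced on $M$ then sends an unordered configuration, regarded as a multiset $P = \{p_1,\dots,p_{2n-2}\}$, to $\{\tau(p_1),\dots,\tau(p_{2n-2})\}$, so $P\in M^{\inv}$ precisely when $P$ is invariant as a multiset.

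Next I would observe that such a $P$ is a union, with multiplicity, of $\tau$-orbits on $\Sigma(S^2)\backslash\{\mathbf{w},\mathbf{z}\}$; since $\tau$ is free, every orbit has size $2$, and if a point $\wtilde{x}$ occurs in $P$ with multiplicity $m$ then so does $\tau(\wtilde{x})$. Hence $P = \{\wtilde{x}_1,\tau(\wtilde{x}_1),\dots,\wtilde{x}_{n-1},\tau(\wtilde{x}_{n-1})\}$, i.e. $P = i(x_1\cdots x_{n-1})$ with $x_j = \pi(\wtilde{x}_j)$, where $i$ is the embedding of \eqref{embedding}; conversely every configuration of this shape is manifestly $\tau$-fixed. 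This gives the first identity $M^{\inv} = i(\mathrm{Sym}^{n-1}(S^2\backslash\{\mathbf{w},\mathbf{z}\}))$.

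For the two tori I would use that, from the construction of $\wtilde{\mathcal D}$, the set $\wtilde{\boldsymbol\beta} = \{\wtilde{\beta}_1,\tau(\wtilde{\beta}_1),\dots,\wtilde{\beta}_{n-1},\tau(\wtilde{\beta}_{n-1})\}$ consists of $2n-2$ pairwise disjoint curves forming (the $\beta$-part of) a valid Heegaard diagram, on which $\tau$ interchanges $\wtilde{\beta}_j$ with $\tau(\wtilde{\beta}_j)$, so $\mathbb{T}_{\wtilde{\boldsymbol\beta}} = \prod_{j=1}^{n-1}\bigl(\wtilde{\beta}_j\times\tau(\wtilde{\beta}_j)\bigr)\subset M$. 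A configuration $\mathbf{x}\in\mathbb{T}_{\wtilde{\boldsymbol\beta}}$ has exactly one marked point $q_j$ on each $\wtilde{\beta}_j$ and one on each $\tau(\wtilde{\beta}_j)$; since $\wtilde{\beta}_j$ and $\tau(\wtilde{\beta}_j)$ are disjoint, $q_j$ is not $\tau$-fixed, and $\mathbf{x}\in\mathbb{T}_{\wtilde{\boldsymbol\beta}}^{\inv}$ iff the marked point on $\tau(\wtilde{\beta}_j)$ equals $\tau(q_j)$ for all $j$, which is exactly the condition $\mathbf{x} = i(\mathbf{y})$ for some $\mathbf{y}\in\mathbb{T}_{\boldsymbol\beta}$ (observing that $i$ sends $\mathbb{T}_{\boldsymbol\beta}$ into $\mathbb{T}_{\wtilde{\boldsymbol\beta}}$, since a lift of a point of $\beta_j$ lands on $\wtilde{\beta}_j$ or $\tau(\wtilde{\beta}_j)$). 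This yields $L_0^{\inv} = i(\mathbb{T}_{\boldsymbol\beta})$, and replacing $\beta$ by $\alpha$ verbatim gives $L_1^{\inv} = i(\mathbb{T}_{\boldsymbol\alpha})$. The one step that is not purely formal --- and the reason the lemma is not entirely vacuous --- is the freeness of $\tau$ on the punctured surface, i.e. that the deleted basepoints account for the entire fixed locus of the deck transformation; this is what rules out an invariant configuration containing a solitary ramification point and forces $\wtilde{\beta}_j\neq\tau(\wtilde{\beta}_j)$. Granting it, the rest is routine bookkeeping with orbits and multisets.
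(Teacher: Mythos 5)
Your argument is correct and is precisely the unwinding of definitions that the paper (which labels this lemma as "immediate from the definitions" and gives no proof) has in mind: freeness of $\tau$ on the punctured surface forces each $\tau$-invariant multiset to decompose into $\pi$-fibres, and the torus case follows by applying the same observation coordinatewise using the disjointness of $\wtilde{\beta}_j$ and $\tau(\wtilde{\beta}_j)$. You are right to flag the freeness of $\tau$ after deleting basepoints as the non-vacuous input; this is indeed the reason the basepoints must be removed before taking symmetric products for the localization framework to apply.
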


\begin{corollary}
With respect to our choice of symplectic manifold $M$ and Lagrangians $L_0$ and $L_1$, we have the following Floer homology groups. 
\begin{align*}
\mathit{HF}(L_0, L_1) &= \mathit{HF}\big(\mathbb{T}_{\wtilde{{\boldsymbol
\beta}}}, \mathbb{T}_{\wtilde{{\boldsymbol \alpha}}}\big)\\&=
\widetilde{\HFK}(\wtilde{D})\\ &= \widehat{\HFK}((\Sigma(K),K)) \otimes V^{\otimes (n-1)} \\
\mathit{HF}(\smash{L_0^{\inv}}, \smash{L_1^{\inv}}) &=
\mathit{HF}\big(\mathbb{T}_{\wtilde{{\boldsymbol \beta}}}^{\inv},
\mathbb{T}_{\wtilde{{\boldsymbol \alpha}}}^{\inv}\big) \\ &=
\mathit{HF}(\mathbb{T}_{{\boldsymbol \beta}}, \mathbb{T}_{{\boldsymbol \alpha}}) \\ &= \widetilde{\HFK}(D) \\ &= \widehat{\HFK}(S^3,K) \otimes V^{\otimes (n-1)}.
\end{align*}
\end{corollary}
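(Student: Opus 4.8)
The proof is a bookkeeping exercise that strings together the preceding lemma, Perutz's theorem, and the fact that \eqref{embedding} is a holomorphic embedding. For the first chain of equalities, note that by construction $L_0 = \mathbb{T}_{\wtilde{\boldsymbol\beta}}$ and $L_1 = \mathbb{T}_{\wtilde{\boldsymbol\alpha}}$ lie in $M = \mathrm{Sym}^{2n-2}(\Sigma(S^2)\setminus\{\mathbf z,\mathbf w\})$, and \fullref{SymplecticGeometrySection} will equip $M$ with an exact symplectic form, compatible with the complex structure $\mathrm{Sym}^{2n-2}(\wtilde j)$ and convex at infinity, with respect to which $L_0$ and $L_1$ are exact Lagrangians. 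Since $\Sigma(S^2)$ has genus $n-1$ and $\wtilde{\mathcal D}$ has $n$ pairs of basepoints, $2n-2$ is the correct symmetric power, so Perutz's theorem~\cite[Theorem~1.2]{MR2509747} in the multi-basepoint form recorded in the proposition above gives $\mathit{HF}(\mathbb{T}_{\wtilde{\boldsymbol\beta}},\mathbb{T}_{\wtilde{\boldsymbol\alpha}}) \cong \widetilde{\HFK}(\wtilde{\mathcal D})$; unwinding the definition of $\widetilde{\HFK}$ for a diagram with $n$ basepoint pairs identifies this with $\widehat{\HFK}(\Sigma(K),K)\otimes V^{\otimes(n-1)}$.

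For the invariant part I would start from the preceding lemma, which identifies $M^{\inv}$ with $i(\mathrm{Sym}^{n-1}(S^2\setminus\{\mathbf z,\mathbf w\}))$ and $L_0^{\inv}$, $L_1^{\inv}$ with $i(\mathbb{T}_{\boldsymbol\beta})$, $i(\mathbb{T}_{\boldsymbol\alpha})$. The involution $\tau$ acts freely on $\Sigma(S^2)\setminus\{\mathbf z,\mathbf w\}$ --- its fixed locus on $\Sigma(S^2)$ is exactly the preimages of the $2n$ branch points, all of which have been removed --- so every $\tau$--invariant unordered $(2n-2)$--tuple in $\Sigma(S^2)\setminus\{\mathbf z,\mathbf w\}$ splits uniquely into pairs $\{\wtilde x,\tau(\wtilde x)\}$, and $i$ is therefore a bijection onto $M^{\inv}$. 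Combined with the Appendix's verification that \eqref{embedding} is holomorphic, $i$ restricts to a biholomorphism $\mathrm{Sym}^{n-1}(S^2\setminus\{\mathbf z,\mathbf w\}) \to M^{\inv}$ carrying $\mathbb{T}_{\boldsymbol\beta}$, $\mathbb{T}_{\boldsymbol\alpha}$ to $L_0^{\inv}$, $L_1^{\inv}$. By naturality of Lagrangian Floer cohomology under a biholomorphism of the ambient manifold intertwining the Lagrangian pairs --- the holomorphic strips counted on either side are in bijection --- $\mathit{HF}(L_0^{\inv},L_1^{\inv}) \cong \mathit{HF}(\mathbb{T}_{\boldsymbol\beta},\mathbb{T}_{\boldsymbol\alpha})$ computed in $\mathrm{Sym}^{n-1}(S^2\setminus\{\mathbf z,\mathbf w\})$, and a second application of the proposition above (Perutz for $(S^3,K)$) identifies this with $\widetilde{\HFK}(\mathcal D) = \widehat{\HFK}(S^3,K)\otimes V^{\otimes(n-1)}$.

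The one point that is not purely formal, and which I regard as the main thing to be careful about, is the compatibility needed for the naturality step: one must know that when the exact symplectic form and almost complex structure that \fullref{SymplecticGeometrySection} places on $M$ are restricted along $i$ to $\mathrm{Sym}^{n-1}(S^2\setminus\{\mathbf z,\mathbf w\})$, the resulting data are of the type used to compute $\widehat{\HFK}(S^3,K)$, so that the relevant moduli spaces of holomorphic strips downstairs and in $M^{\inv}$ literally coincide. The holomorphicity of \eqref{embedding} handles the complex structure, and \fullref{SymplecticGeometrySection} will supply the statement about the symplectic form; granting these, the corollary is immediate. I do not anticipate a genuine obstacle, since the statement is essentially a change of bookkeeping, but if anything requires attention it is confirming that restricting Perutz's form to the fixed locus --- rather than building a Perutz form there from scratch --- still calibrates the correct Floer complex for $(S^3,K)$.
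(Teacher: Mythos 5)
Your proof is correct and fills in what the paper leaves implicit: the paper states this corollary immediately after the lemma on fixed-point sets, offering no proof at all, so the intended argument is exactly the bookkeeping you describe (apply Perutz's theorem in $M$, then use the biholomorphism $i$ onto $M^{\inv}$ and Perutz again downstairs). Your closing caveat about the restricted symplectic data on $M^{\inv}$ being of Perutz type is the right thing to flag, and it is precisely what the $\tau$--invariance construction of \fullref{SymplecticGeometrySection} is arranged to provide.
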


As before, let $\Upsilon(M^{\inv}) \rightarrow M^{\inv} \times [0,1]$ be the pullback of the normal bundle to $M^{\inv}$ in $M$ along the projection map $M^{\inv} \times [0,1] \rightarrow M^{\inv}$. Let $N(L_i^{\inv}) \times \{t\}$ for $i=0,1$ be the copy of the Lagrangian normal bundle to $L_i^{\inv}$ in $L_i$ sitting above the subspace $L_i^{\inv} \times \{t\} \subset M^{\inv} \times [0,1]$.

Our goal will be to prove the following.

\begin{theorem} \label{stable}

The bundle $\Upsilon(M^{\inv})$ carries a stable normal trivialization with respect to the involution $\tau$.

\end{theorem}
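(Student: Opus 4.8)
The plan is to follow the route by which Seidel and Smith verify stable normal triviality in their own applications: first identify the bundles $N(M^{\inv})$ and $N(L_i^{\inv})$ explicitly, then show $N(M^{\inv})$ is stably trivial, and finally choose the trivialization so that it meshes with the two Lagrangian normal bundles in the way \fullref{stablenormaltriv} demands. The explicit identifications, and the cohomological computations they rest on, are carried out in \fullref{HomotopyCohomologySection}; the $K$--theoretic input is in \fullref{AtiyahHirzebruch}, and the assembly follows.

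First I would pin down $N(M^{\inv})$. Since $\tau$ acts holomorphically, $TM|_{M^{\inv}}$ splits into complex subbundles, the $(+1)$--eigenbundle being $TM^{\inv}$ and the $(-1)$--eigenbundle being $N(M^{\inv})$. A chart computation shows that at a divisor $\{x_1,\dots,x_{n-1}\}$ of distinct points missing $\{\mathbf w,\mathbf z\}$ one has $T_{i(\{x_j\})}M\cong\bigoplus_j\bigl(T_{\widetilde{x}_j}\widetilde{S}\oplus T_{\tau\widetilde{x}_j}\widetilde{S}\bigr)$ with $\tau$ swapping the two summands of each pair, so the diagonal is $TM^{\inv}$, the anti-diagonal is $N(M^{\inv})$, and $d\pi$ carries each isomorphically onto $\bigoplus_j T_{x_j}(S^2\setminus\{\mathbf w,\mathbf z\})$. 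Hence $N(M^{\inv})$ is the bundle over $M^{\inv}$ with fiber $\bigoplus_{x\in D}T_x(S^2\setminus\{\mathbf w,\mathbf z\})$ over a divisor $D$, twisted in each summand by the order-two line bundle that classifies the unbranched cover $\Sigma(S^2)\setminus\{\mathbf w,\mathbf z\}\to S^2\setminus\{\mathbf w,\mathbf z\}$; since $H^2(S^2\setminus\{\mathbf w,\mathbf z\})=0$ that twist is trivial, so $N(M^{\inv})\cong TM^{\inv}$. Finally, $S^2\setminus\{\mathbf w,\mathbf z\}$ is biholomorphic to $\mathbb C$ with $2n-1$ points deleted, so $M^{\inv}=\Sym^{n-1}(S^2\setminus\{\mathbf w,\mathbf z\})$ is biholomorphic, via elementary symmetric functions, to the complement of $2n-1$ affine hyperplanes in $\mathbb C^{n-1}$; as an open subset of $\mathbb C^{n-1}$ it has trivial tangent bundle and torsion-free integral cohomology. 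Therefore $N(M^{\inv})$, and with it $\Upsilon(M^{\inv})$, is a trivial unitary bundle, which supplies the first datum of \fullref{stablenormaltriv}.

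It remains to build the Lagrangian subbundles $\Lambda_0$ and $\Lambda_1$. The same chart analysis identifies $N(L_i^{\inv})$, the normal bundle of $L_i^{\inv}$ in $L_i$, with the totally real subbundle of $N(M^{\inv})|_{L_i^{\inv}}$ whose fiber over $\{x_1,\dots,x_{n-1}\}$ is $\bigoplus_j T_{x_j}\beta_j$ (respectively $\bigoplus_j T_{x_j}\alpha_j$) inside $\bigoplus_j T_{x_j}(S^2\setminus\{\mathbf w,\mathbf z\})$; abstractly this is the trivial tangent bundle of the torus $\mathbb T_{\boldsymbol\beta}$ (respectively $\mathbb T_{\boldsymbol\alpha}$). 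Since each $\alpha_j,\beta_j$ is an embedded curve in an oriented surface its tangent line bundle is orientable, so the Maslov class of $N(L_i^{\inv})$ measured against any unitary framing has even components in $H^1(L_i^{\inv};\mathbb Z)$. I would then choose, for $K$ large, a unitary framing $\phi$ of $N(M^{\inv})\oplus\mathbb C^K$ over $M^{\inv}$ whose restriction to $L_0^{\inv}$ sends $N(L_0^{\inv})\oplus\mathbb R^K$ to the real form $\mathbb R^{k_{\anti}+K}$ up to homotopy and whose restriction to $L_1^{\inv}$ sends $N(L_1^{\inv})\oplus\mathbb R^K$ to the imaginary form $i\mathbb R^{k_{\anti}+K}$ up to homotopy; taking $\phi$ constant along $[0,1]$, one then lets $\Lambda_0$ be any homotopy over $[0,1]\times L_0^{\inv}$ from $N(L_0^{\inv})\oplus\mathbb R^K$ at $t=0$ to $\phi^{-1}(\mathbb R^{k_{\anti}+K})$ at $t=1$, which exists precisely because of how $\phi$ was chosen, and similarly for $\Lambda_1$. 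That such a $\phi$ exists is where \fullref{HomotopyCohomologySection} is used: the primary (Maslov) obstruction over each torus is even and hence killed by adjusting the framing, the product structure of $N(L_i^{\inv})$ over $\mathbb T_{\boldsymbol\beta}$ and $\mathbb T_{\boldsymbol\alpha}$ reduces the higher obstructions to the primary one, the transversality of $\mathbb T_{\boldsymbol\alpha}$ and $\mathbb T_{\boldsymbol\beta}$ makes the real and imaginary prescriptions compatible at the finitely many points of $L_0^{\inv}\cap L_1^{\inv}$, and --- crucially --- the classes $[\alpha_j],[\beta_j]$ form a partial basis of $H_1(S^2\setminus\{\mathbf w,\mathbf z\};\mathbb Z)\cong H_1(M^{\inv};\mathbb Z)$, so that $H^1(M^{\inv};\mathbb Z)\to H^1(L_0^{\inv};\mathbb Z)\oplus H^1(L_1^{\inv};\mathbb Z)$ is onto and a single global framing absorbs both corrections at once.

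I expect the main obstacle to be exactly this last point: producing one unitary framing of a stabilization of $N(M^{\inv})$ over all of $M^{\inv}$ that is simultaneously compatible with $N(L_0^{\inv})$, with $N(L_1^{\inv})$, and with the asymmetric real/imaginary boundary conditions of \fullref{stablenormaltriv}. This is what forces the detailed study in \fullref{HomotopyCohomologySection} of the homotopy type and cohomology of $M$, $M^{\inv}$, $\mathbb T_{\boldsymbol\alpha}$ and $\mathbb T_{\boldsymbol\beta}$, and the $K$--theoretic bookkeeping of \fullref{AtiyahHirzebruch}; by contrast, identifying $N(M^{\inv})$ and checking that it is stably trivial is comparatively quick once one recognizes $M^{\inv}$ as an open subset of affine space.
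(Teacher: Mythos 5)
Your first two paragraphs are sound and in fact a bit slicker than the paper in places: the identification of $N(M^{\inv})$ at divisors of distinct unbranched points as the anti-diagonal of $\bigoplus_j\bigl(T_{\wtilde{x}_j}\wtilde{S}\oplus T_{\tau\wtilde{x}_j}\wtilde{S}\bigr)$, hence abstractly a copy of $TM^{\inv}$, is correct, and the observation that the elementary symmetric functions realize $M^{\inv}=\mathrm{Sym}^{n-1}(S^2\setminus\{\mathbf w,\mathbf z\})$ as the complement of $2n-1$ affine hyperplanes in $\mathbb C^{n-1}$ gives triviality of $TM^{\inv}$ (and freeness of $H^*(M^{\inv})$) in one stroke. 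The paper gets the same two outputs by a slightly different route (the nullhomotopy of $i_1$ into $\mathrm{Sym}^{n-1}(S^2)$ to show $N(M^{\inv})\oplus TM^{\inv}$ is pulled back from a point, plus \fullref{subtori} for the cohomology), and it proves the triviality of $N(L_i^{\inv})$ by exactly the antidiagonal argument you sketch (\fullref{trivialsubbundles}). You should, however, be careful with your chart picture of $N(M^{\inv})$: it is only literally a direct sum decomposition over the locus of divisors supported away from the branch points and the big diagonal, and you need an argument (the paper's is bundle-theoretic) that extends the identification or at least the stable triviality across those strata.

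The genuine gap is in your third paragraph, where the heart of the proof actually lives, and what you propose there is not a proof. You try to build the global unitary framing $\phi$ directly by obstruction theory, citing the evenness of the Maslov class of $N(L_i^{\inv})$, a claim that the ``product structure'' over the tori reduces higher obstructions to the primary one, and a transversality argument at the finitely many intersection points. None of these is substantiated, and I don't think they are the right mechanisms: the Maslov obstruction you describe is not the obstruction of \fullref{stablenormaltriv} (the homotopies $\Lambda_i$ over $[0,1]\times L_i^{\inv}$ already absorb Maslov contributions), there is no reason the obstruction theory over a torus collapses to the primary class, and the compatibility of the real and imaginary boundary conditions is enforced over the two disjoint pieces $L_0^{\inv}\times\{0\}$ and $L_1^{\inv}\times\{1\}$ of $X$, so ``transversality at the intersections'' is not where a conflict could appear. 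The paper instead fixes once and for all real trivializations of the two Lagrangian normal bundles, observes that the obstruction to extending the resulting complex trivialization of $\Upsilon(M^{\inv})|_X$ over all of $M^{\inv}\times[0,1]$ is a relative class $[\Upsilon(M^{\inv})_{\mathrm{rel}}]\in\wtilde K^0((M^{\inv}\times[0,1])/X)$, and kills it in two steps: (i) it is torsion, because all its Chern classes vanish, which uses \fullref{cohomologysurjection} in \emph{all} degrees $*\geq 1$ (your proposal only invokes surjectivity of $H^1(M^{\inv})\to H^1(L_0^{\inv})\oplus H^1(L_1^{\inv})$, which controls at most the primary obstruction) to make $q^*$ injective on reduced cohomology; and (ii) the group $\wtilde K^0((M^{\inv}\times[0,1])/X)$ is torsion-free, by \fullref{AtiyahHirzebruch} applied after checking $\wtilde H^*((M^{\inv}\times[0,1])/X)$ is free abelian from the resulting short exact sequences. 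This $K$--theoretic reduction is exactly what lets one dispose of all higher obstructions simultaneously without analyzing them individually; your sketch does not achieve this, so as written the proposal leaves the main assertion unproved.
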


The proof is given in \fullref{StableTrivSection}.

\fullref{stable}, combined with convexity at infinity of $M$ and the
existence of an exact $\tau$--invariant form on $M$ with respect to which
$\mathbb T_{\boldsymbol \alpha}$ and $\mathbb T_{\boldsymbol \beta}$ are exact Lagrangians, implies \fullref{existsspecseq}. We deduce a rank inequality between $\widetilde{\HFK}(\Sigma(K), K, \mathfrak s_0)$ and $\widetilde{\HFK}(S^3, K)$. But since each of these Heegaard diagrams contains $n$ pairs of basepoints, we obtain the rank inequality in \fullref{main}. Moreover, our previous remarks concerning the splitting of the spectral sequence along $\mathrm{spin}^{\mathrm{c}}$ structures and Alexander gradings will then imply Corollaries~\ref{sharper} and~\ref{Alexander corollary}.

\section{Symplectic geometry of $M$ and $M^{\inv}$} \label{SymplecticGeometrySection}

Thus far we have not shown that the complex manifold $M =
\mathrm{Sym}^{2n-2}(\Sigma(S^2) \backslash \{\mathbf{z},\mathbf{w}\})$ and its totally
real submanifolds $L_0 = \mathbb T_{\wtilde{\boldsymbol \beta}}$ and $L_1
= \mathbb T_{\wtilde{\boldsymbol \alpha}}$ satisfy the basic symplectic structural requirements imposed by Seidel and Smith's theory. Before we move to the more complex task of demonstrating that $\Upsilon(M^{\inv})$ carries a stable normal trivialization, we pause to show that $(M, L_0, L_1)$ can be equipped with a symplectic form such that $M$ is exact and convex at infinity and $L_0$, $L_1$ are exact Lagrangian submanifolds of $M$.

Regard the punctured sphere $S^2 \backslash \{\mathbf{w},\mathbf{z}\}$ as a
$(2n-1)$--punctured plane
$$\mathbb C \backslash \{\mathbf{w},z_1,\ldots ,z_{n-1}\}.$$
We can insist that our original choice of complex coordinates
on $S^2$, which we used to induce complex coordinates on $\Sigma(S^2)$ and
subsequently all of the relevant symmetric products, was compatible with
this embedding. Let $x=u+iv$ be a point on $\mathbb C$, and let $\phi_0$
be a smooth function on $S^2 \backslash \{\mathbf{w},\mathbf{z}\}$ defined as follows.
\begin{align*}
\phi_0 \co \mathbb C \backslash \{\mathbf{w},z_1,\ldots ,z_{n-1}\}  &\rightarrow \mathbb R \\
x &\mapsto |x|^2 + \sum_{i=1}^{n}\frac{1}{|x-w_i|^2}+  \sum_{i=1}^{n-1} \frac{1}{|x-z_i|^2}
\end{align*}
We claim this map is $j$--convex, where $j$ is as before the complex structure on the punctured sphere. The corresponding symplectic form is
\begin{align*}
\omega_{\phi_0} = -dd^{\mathbb C}(\phi) = \bigg(4 +
\sum_{i=1}^{n}\frac{4}{|x-w_i|^4} +\sum_{i=1}^{n-1}
\frac{4}{|x-z_{i}|^4}\bigg) du\wedge dv.
\end{align*}
This is compatible with the almost complex structure on the punctured
plane, so $\phi_0$ is a $j$--convex function on $S^2 \backslash
\{\mathbf{w},\mathbf{z}\}$.

The smooth map $\phi_0$ will not quite be our final choice of $j$--convex
function on the punctured plane, since we would like each $\alpha$ and
$\beta$ circle on $S^2 \backslash \{\mathbf{w},\mathbf{z}\}$ to be an
exact Lagrangian with respect to the our choice of symplectic form on $S^2
\backslash \{\mathbf{w},\mathbf{z}\}$. We will replace $\phi_0$ with a closely related $j$--convex function $\phi_1$ and slightly isotope the $\alpha$ and $\beta$ circles with the result that $\int_{\alpha_i} -d^{\mathbb C} \phi_1 = \int_{\beta_j} -d^{\mathbb C} \phi_1 = 0$. Then the restriction of the form $-d^{\mathbb C}\phi_1$ to either an $\alpha$ or $\beta$ curve will be exact as desired.

We begin by observing that since the curves $\alpha_i$ and $\beta_i$ were all chosen to be oriented counterclockwise in the plane formed by deleting $z_{n}$, if $\psi(z) = |z|^2$ then the integral of $-d^{\mathbb C}(\psi) = 2x dy - 2ydx$ around one of the attaching circles is four times the area enclosed by that circle in the plane, and in particular is strictly positive. Therefore there is some large constant $C$ such that if we replace the function $\phi_0$ with $\phi_1$ as below, each of $\int_{\alpha_i} -d^{\mathbb C}(\phi_1)$ and $\int_{\beta_j} -d^{\mathbb C}(\phi_1)$ is nonnegative. Let
\begin{align*}
\phi_1 \co \mathbb C \backslash \{\mathbf{w},z_1, \ldots ,z_{n-1}\}  &\rightarrow \mathbb R \\
x &\mapsto C|x|^2 + \sum_{i=1}^{n}\frac{1}{|x-w_i|^2} +\sum_{i=1}^{n-1} \frac{1}{|x-z_i|^2}.
\end{align*}
This is still a $j$--convex function by the same argument as for $\phi_0$. Now for each attaching circle, a look at the construction shows there is a straight line segment from the circle to one of the punctures it encircles which intersects no other attaching circles. We can isotope any of the attaching circles inward along these arcs without changing the intersection points of the $\alpha$ and $\beta$ curves or altering any of our computations concerning the cohomology of this space and its symmetric product. Since $\omega_{\phi_1}$ is an area form on the punctured plane, by Stokes' theorem changing one of the attaching circles in this fashion subtracts the area of the small counterclockwise-oriented region removed from the interior of the attaching circle from the integral of $-d^{\mathbb C}\phi_1$ around the curve. Since $\phi_1$ goes to infinity along the arc connecting the attaching circle to the puncture, we can make this area arbitrarily large, and in fact choose it precisely so that the integral of $-d^{\mathbb C}(\phi_1)$ along our isotoped attaching circle is zero. From now on we will assume we have performed such an isotopy and that each $\alpha$ and $\beta$ curve is an exact Lagrangian with respect to $\omega_{\phi_1}$. We take $\omega_{\phi_1}$ to be our choice of exact symplectic form on the punctured sphere.

Now let $\pi\co\Sigma(S^2) \backslash \{\mathbf{w},\mathbf{z}\} \rightarrow S^2
\backslash \{\mathbf{w},\mathbf{z}\}$ be the restriction of the double branched
covering map, and consider the map $\phi_1 \circ \pi$. We claim that this
map is $\wtilde{j}$--convex: it is clearly smooth and bounded below, and
since $\pi$ is a branched covering map, hence proper, $\phi_1 \circ \pi$
is also proper. Moreover, as $\pi$ is holomorphic by definition,
$d^{\mathbb C}(\phi_1 \circ \pi) = \pi^*(d^{\mathbb C}(\phi_1))$.
Therefore $\omega_{\phi_1 \circ \pi} = \pi^*(\omega_{\phi_1})$ is
compatible with the complex structure on $\Sigma(S^2) \backslash
\{\mathbf{w},\mathbf{z}\}$ and is $\wtilde{j}$--convex. We let $\wtilde{\phi}_1 = \phi_1
\circ \pi$ and take $\omega_{\wtilde{\phi}_1}$ to be our choice of
symplectic form on $\Sigma(S)\backslash \{\mathbf{w},\mathbf{z}\}$. Notice that the
lifts of the $\alpha$ and $\beta$ curves in the punctured sphere to the
$\alpha$ and $\beta$ curves in $\Sigma(S^2) \backslash \{\mathbf{w},\mathbf{z}\}$ are
necessarily exact Lagrangian with respect to this $\omega_{\wtilde{\phi}_1}$.

On the product space $(\Sigma(S^2) \backslash \{\mathbf{w},\mathbf{z}\})^{2n-2}$ there
is a corresponding $\mathrm{Sym}^{2n-2}(\wtilde{j})$--convex function: if
$p_k$ is the projection of the product space to its $k$th factor, then let
$\wtilde{\phi}\co (\Sigma(S^2) \backslash \{\mathbf{w},\mathbf{z}\})^{2n-2}$ be given
by $\wtilde{\phi} = \wtilde{\phi}_1 \circ p_1 + \cdots + \wtilde{\phi}_1
\circ p_{2n-2}$. This is proper and bounded below; moreover,
$$\omega_{\wtilde{\phi}} = -dd^{\mathbb C}(\wtilde{\phi}) =
\omega_{\wtilde{\phi}_1} \otimes 1 \otimes \cdots \otimes 1 + \cdots + 1
\otimes \cdots \otimes 1\otimes \omega_{\wtilde{\phi}_1},$$
which is
necessarily compatible with the induced complex structure on the product
space. We take $\omega_{\wtilde{\phi}}$ to be the symplectic form on the
product $(\Sigma(S^2) \backslash \{\mathbf{w},\mathbf{z}\})^{2n-2}$, noting that with
respect to this form, any of the $(2n-2)!$ lifts of $\mathbb
T_{\wtilde{\boldsymbol \alpha}}$ or $\mathbb T_{\wtilde{\boldsymbol \beta}}$ to the product space is an exact Lagrangian submanifold of the product.

Now consider the symmetric product $\mathrm{Sym}^{2n-2}(\Sigma(S^2)
\backslash \{\mathbf{w},\mathbf{z}\})$. The map $\wtilde{\phi}$ induces a (possibly singular) continuous function 
\begin{align*}
\psi \co \mathrm{Sym}^{2n-2}(\Sigma(S)\backslash \{\mathbf{w},\mathbf{z}\}) &\rightarrow \mathbb R \\
(x_1 \cdots x_{2n-2}) &\mapsto \sum_{\sigma \in S_{2n-2}} \wtilde{\phi}(x_{\sigma(1)},\ldots ,x_{\sigma(2n-2)})
\end{align*}
which is proper, bounded below, and smooth outside a
neighborhood of the fat diagonal $\{(x_1 \cdots x_{2n-2}) \in
\mathrm{Sym}^{2n-2}(S^2 \backslash \{\mathbf{w},\mathbf{z}\}): x_i = x_j
\text{ for some } i\neq j\}$. Perutz observes in~\cite{MR2509747} that this function
is strictly plurisubharmonic in the sense of non-smooth functions, that
is, that the two-current $-dd^{\mathbb C}\psi$ is strictly positive. This
gives us a continuous exhausting function on
$\mathrm{Sym}^{2n-2}(\Sigma(S^2) \backslash \{\mathbf{w},\mathbf{z}\})$.  We may apply
the following lemma of Richberg~\cite{MR0222334}, quoted by Cielebak and
Eliashberg~\cite[Lemma 3.10]{SteinBook}.

\begin{lemma}
Let $\psi$ be a continuous $J$--convex function on an integrable complex manifold $(V,J)$. Then for every positive function $h \co V \rightarrow \mathbb R_+$, there exists a smooth $J$--convex function $\psi'$ such that $|\psi'(x) - \psi(x)| < h(x)$. If $\phi$ is already smooth on a neighborhood of a compact subset $A$, then we can achieve $\phi = \phi'$ on $A$.
\end{lemma}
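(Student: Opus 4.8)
This smoothing result is classical (it is due to Richberg and is proved in the cited book of Cieliebak--Eliashberg~\cite{SteinBook}); what follows is the strategy I would take. The plan is to reduce the global statement to two ingredients: a purely local smoothing in holomorphic charts, and a patching procedure built from the \emph{regularized maximum}. Recall that $J$--convexity is a local condition which, in a holomorphic coordinate chart, is exactly strict plurisubharmonicity, i.e.\ positivity of the current $-dd^{\mathbb C}\psi$; and that $J$--convexity is preserved under two algebraic operations, namely taking convex combinations, and composing a finite collection of $J$--convex functions with a smooth convex non-decreasing function of several variables.

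\emph{Local smoothing.} For each $p\in V$ I would choose a holomorphic chart about $p$ and let $\psi_{p,\varepsilon}$ be the convolution of $\psi$ with a standard mollifier of radius $\varepsilon$ in those coordinates. Since $-dd^{\mathbb C}(\psi_{p,\varepsilon})=(-dd^{\mathbb C}\psi)\ast\rho_\varepsilon$, the function $\psi_{p,\varepsilon}$ is smooth and still strictly $J$--convex on a slightly smaller chart; since $\psi$ is continuous and $\psi_{p,\varepsilon}$ decreases to $\psi$ as $\varepsilon\to 0$, on some open neighbourhood $U_p$ of $p$ one has $\psi\le\psi_{p,\varepsilon}<\psi+h$ once $\varepsilon$ is small. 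On a neighbourhood of the compact set $A$, where $\psi$ is already smooth and $J$--convex, I take the local model to be $\psi$ itself.

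\emph{Patching.} Pass to a locally finite refinement $\{U_i\}_{i\ge 1}$ of $\{U_p\}$ with successive shrinkings $U_i'\subset U_i''\subset U_i$ (so that $\overline{U_i'}\subset U_i''$, $\overline{U_i''}\subset U_i$, and $\{U_i'\}$ still covers $V$), and write $\phi_i$ for the local strictly $J$--convex model on $U_i$. Build $\psi'$ inductively: given a smooth strictly $J$--convex $\Psi_{N-1}$ on a neighbourhood of $\overline{U_1'\cup\dots\cup U_{N-1}'}$ with $\psi\le\Psi_{N-1}<\psi+h$ there and $\Psi_{N-1}=\psi$ on $A$, set $\Psi_N$ equal to $\Psi_{N-1}$ away from $U_N$, equal to $\phi_N$ on the part of $U_N'$ not yet reached, and equal to the regularized maximum $\mathrm{max}_{\eta_N}(\Psi_{N-1},\phi_N)$ on the overlap, where $\mathrm{max}_\eta$ is the standard smooth convex non-decreasing approximation to $\max$ with $\max\le\mathrm{max}_\eta\le\max+\eta$ that agrees with $\max$ whenever the arguments differ by more than $\eta$. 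For this to glue to one smooth function one must arrange, near the part of $\partial U_N$ lying in the domain of $\Psi_{N-1}$, that $\Psi_{N-1}>\phi_N+\eta_N$, and on the new region $U_N'$ that $\phi_N>\Psi_{N-1}+\eta_N$; this is achieved by first correcting the two models with $J$--convex bump functions built from the Euclidean model $|z|^2$ in the local coordinates (which inside a round ball stays below, and outside stays above, its boundary value) and then taking $\eta_N$ small relative to the resulting gap. These corrections can be chosen $J$--convex and arbitrarily small on the cores $U_i'$, so the bound $|{\cdot}-\psi|<h$ is not disturbed, and zero near $A$, so $\Psi_N=\psi$ persists there. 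By local finiteness each point has a neighbourhood on which $\Psi_N$ stabilizes in $N$, so $\psi'=\lim_N\Psi_N$ is a well-defined smooth $J$--convex function with $|\psi'-\psi|<h$ and $\psi'=\psi$ on $A$.

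\emph{Main obstacle.} The local smoothing is routine; the work is the patching. The essential tool is the regularized maximum together with the fact that the ordinary maximum of $J$--convex functions is again $J$--convex, so it can be smoothed without losing convexity. The delicate part is the bookkeeping over the locally finite cover: shaping each local model near the boundary of its chart (via the $|z|^2$--type corrections) so that the regularized maxima are mutually consistent and assemble into a single globally defined smooth function, while never leaving the $h$--neighbourhood of $\psi$ and never altering $\psi$ on $A$.
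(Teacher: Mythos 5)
The paper does not prove this lemma; it is stated as a quoted result of Richberg, cited from Cieliebak--Eliashberg~\cite{SteinBook}. Your sketch correctly reproduces the standard Richberg argument found in that reference: local mollification in holomorphic charts to get smooth strictly $J$--convex local models, followed by inductive patching over a locally finite cover using the regularized maximum (with small $|z|^2$--type $J$--convex corrections to create the gaps needed for the regularized maxima to glue), all while staying within the prescribed $h$--tube and fixing $\psi$ near $A$. So you have, in effect, supplied the proof that the paper delegates to the literature, and it is the right one.
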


In particular, we may take $h\co\text{Sym}^{2n-2}(\Sigma(S^2) \backslash
\{\mathbf{w},\mathbf{z}\})\rightarrow \mathbb R_+$ to be a constant function $h(x) =
\epsilon$, and apply the lemma to our map $\psi$ and $h$.  Then we may
produce $\psi'\co \mathrm{Sym}^{2n-2}(\Sigma(S^2) \backslash
\{\mathbf{w},\mathbf{z}\}) \rightarrow \mathbb R$ such that $|\psi'(x) - \psi(x)|<\epsilon$,
and $\psi'$ is smooth and $\mathrm{Sym}^{2n-2}(\wtilde{j})$--convex.  Moreover, since $\psi$ is bounded below and proper, $\psi'$ is also, since the two real valued functions differ by at most $\epsilon$.  Therefore $\psi'$ is an exhausting function on $M$, and $M$ is convex at infinity.

We have yet to produce an appropriate symplectic form on $M$, which we will do using work of Perutz~\cite{MR2509747}. We begin with a definition.  


\begin{definition}[Perutz~{\cite[Definition~7.3]{MR2509747}}]
Let $X$ be a complex manifold with complex structure $J$. A \emph{K\"ahler
cocycle} on X is a collection $(U_i, \phi_i)_{i \in I}$, where $(U_i)_{i
\in I}$ is an open cover of $X$ and $\phi_i \co U_i \rightarrow \mathbb R$
is an upper semicontinuous function such that
\begin{itemize}
\item $\phi_i$ is strictly plurisubharmonic, and
\item $\phi_i - \phi_j$ is pluriharmonic.
\end{itemize}
\end{definition} 

If a K\"ahler cocycle $(U_i, \phi_i)_{i \in I}$ is smooth then we can
associate to it the symplectic form $\omega$ which is $-dd^{\mathbb
C}\phi_i$ on each $U_i$. Notice, for example, that a K\"ahler cocycle can
consist of a single plurisubharmonic function on all of $X$, as in the
case of the smooth K\"ahler cocycle $((\Sigma(S) \backslash
\{\mathbf{w},\mathbf{z}\})^{2n-2}, \wtilde{\phi})$ on $(\Sigma(S)
\backslash \{\mathbf{w},\mathbf{z}\})^{2n-2}$ and the singular K\"ahler cocycle $(M, \psi)$ on $M$.

Perutz proves the following technical result.

\begin{lemma}[Perutz~{\cite[Lemma~7.4]{MR2509747}}] Let $(U_i, \psi_i)$ be a continuous K\"ahler cocycle on a complex manifold $X$. Suppose that $X = X_1 \cup X_2$ such that $X_1$ and $X_2$ are open and the functions $\psi_i|_{U_i \cap X_{\ell}}$ are smooth. Then there exists a continuous function
\[
\chi \co X \rightarrow \mathbb R,\quad \Supp(\chi) \subset X_2
\]
and a locally finite refinement
\[
V_j \subset U_{i(j)} 
\]
such that the family $(V_i, \psi_{i(j)}|_{V_j} + \chi|_{V_j})$ is a smooth K\"ahler cocycle.
\end{lemma}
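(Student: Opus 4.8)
The plan is to treat this as a globalized, cocycle-theoretic version of the Richberg smoothing lemma quoted just above. The observation that makes the statement plausible is that each transition function $\psi_i - \psi_j$ is pluriharmonic, so $dd^{\mathbb C}(\psi_i - \psi_j) = 0$ and the local forms $-dd^{\mathbb C}\psi_i$ agree on overlaps; hence replacing every $\psi_i$ by $\psi_i + \chi$ for one \emph{global} continuous function $\chi$ on $X$ leaves all transition functions unchanged, so $(U_i, \psi_i + \chi)$ is again a K\"ahler cocycle provided each $\psi_i + \chi$ stays strictly plurisubharmonic, and a \emph{smooth} K\"ahler cocycle provided each $\psi_i + \chi$ is smooth. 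So the problem collapses to producing one function $\chi$, supported in $X_2$, that makes every $\psi_i + \chi$ smooth and strictly plurisubharmonic.

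Since $X = X_1 \cup X_2$, the locus where the cocycle can fail to be smooth lies in the closed set $Z = X \setminus X_1 \subseteq X_2$. First I would use paracompactness of $X$ to pass to a locally finite refinement by coordinate balls $V_j \subset U_{i(j)}$ with $\overline{V_j}$ compact (this already supplies the locally finite refinement demanded in the conclusion), and relabel so that $(U_i, \psi_i)$ is itself locally finite with compact closures. Next pick an open set $N$ with $Z \subseteq N \subseteq \overline{N} \subseteq X_2$. On each $U_i$ I would apply Richberg's lemma to the continuous strictly $J$--convex function $\psi_i$ with a small error function $h_i$, using the part of the lemma that preserves smoothness near compact sets (after mildly shrinking the cover if needed) to arrange that the smooth strictly $J$--convex approximation $\psi_i'$ equals $\psi_i$ off $N$. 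Put $c_i := \psi_i' - \psi_i$: a continuous function vanishing off $N$, with $\psi_i + c_i = \psi_i'$ smooth and strictly $J$--convex on $U_i$.

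The real difficulty is that the $c_i$ need not agree on overlaps, so they do not directly glue to a global $\chi$. The way around this is that on $U_i \cap U_j$ one has $c_i - c_j = (\psi_i' - \psi_j') - (\psi_i - \psi_j)$, a difference of two smooth functions (the $\psi'$ are smooth by Richberg, and $\psi_i - \psi_j$ is pluriharmonic), hence smooth; so the $c_i$ form a nonsmooth $0$--cochain with smooth coboundary, and a smooth partition of unity subordinate to $(U_i)$ produces a single continuous $\chi$, with $\Supp(\chi) \subseteq \overline{N} \subseteq X_2$, whose discrepancy $\chi - c_i$ from $c_i$ is smooth on each $U_i$; consequently $\psi_i + \chi = \psi_i' + (\chi - c_i)$ is smooth on $U_i$, and off $N$ we simply have $\chi = 0$ with $\psi_i$ smooth since we are inside $X_1$. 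Keeping $\psi_i + \chi$ strictly plurisubharmonic is the crux of the argument: since $-dd^{\mathbb C}\psi_i'$ is bounded below by a strictly positive form on the compact $\overline{U_i}$, it suffices to make the smooth discrepancies $\chi - c_i$ small not merely in $C^0$ but in $C^2$, which forces one to run the Richberg approximations $\psi_i'$ compatibly and with uniform control across the cover (taking the error functions $h_i$ small, and decaying along the ends of $X$ when $X$ is noncompact) so that forming the partition-of-unity sum does not blow up second derivatives. Once such a $\chi$ is in hand, $(V_j, \psi_{i(j)}|_{V_j} + \chi|_{V_j})$ is a locally finite smooth K\"ahler cocycle with $\Supp(\chi) \subseteq X_2$, as required.

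I expect this last quantitative step --- assembling the local Richberg corrections into one continuous function while keeping the resulting cocycle strictly plurisubharmonic --- to be the main obstacle, since each individual smoothing is already furnished by the quoted lemma and the reduction to a single global correction is dictated by the pluriharmonicity of the transition functions.
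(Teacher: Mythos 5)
Note first that the paper does not actually prove this lemma: it is quoted from Perutz~\cite[Lemma~7.4]{MR2509747} and used as a black box, so there is no in-paper argument to compare against. What follows is an assessment of your proposal on its own terms.

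Your structural observations are correct and do the real organizing work. Since $\psi_i - \psi_j$ is pluriharmonic, adding one global $\chi$ to every $\psi_i$ leaves the transition functions unchanged, so the only issues are smoothness and strict plurisubharmonicity of each $\psi_i + \chi$. And because the Richberg corrections $c_i = \psi_i' - \psi_i$ satisfy $c_i - c_j = (\psi_i' - \psi_j') - (\psi_i - \psi_j)$, which is smooth, the identity $\psi_i + \chi = \psi_i' + \sum_k \rho_k (c_{i(k)} - c_i)$ (using $\sum_k \rho_k = 1$) shows that $\psi_i + \chi$ is smooth on $U_i$ once $(\rho_k)$ is a smooth, locally finite partition of unity with $\Supp \rho_k$ compactly contained in some $V_{j(k)} \subset U_{i(k)}$. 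That $\Supp(\chi) \subset X_2$ also follows as you say, since all the $c_i$ vanish outside the neighborhood $N$ of $Z = X \setminus X_1$.

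The genuine gap is the strict plurisubharmonicity, and your proposed fix does not close it. Writing $f_k = \psi_i + c_{i(k)} = \psi_{i(k)}' + (\psi_i - \psi_{i(k)})$, each smooth and strictly plurisubharmonic, one has $\psi_i + \chi = \sum_k \rho_k f_k$, and a short computation using $\sum_k d\rho_k = 0$ and $\sum_k dd^{\mathbb C}\rho_k = 0$ gives
\[
-dd^{\mathbb C}(\psi_i + \chi) = \sum_k \rho_k(-dd^{\mathbb C}f_k) + E,
\]
where $E$ collects terms of the form $d\rho_k \wedge d^{\mathbb C}(f_k - f_{k_0})$, $d(f_k - f_{k_0}) \wedge d^{\mathbb C}\rho_k$, and $(f_k - f_{k_0})\,dd^{\mathbb C}\rho_k$ for a locally chosen index $k_0$ with $\rho_{k_0} > 0$. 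The first sum is a convex combination of positive forms, hence positive, but $E$ is governed by the $C^1$ norm of the smooth discrepancies $f_k - f_{k_0} = c_{i(k)} - c_{i(k_0)}$, whereas the quoted Richberg lemma provides only the $C^0$ estimate $|\psi' - \psi| < h$. Shrinking $h_i$ shrinks $|c_i|$ but controls neither $dc_i$ nor $d(c_i - c_j)$: smoothing a merely continuous plurisubharmonic function can produce first derivatives that do not shrink (and second derivatives that blow up) even as the uniform error tends to zero, and there is no obvious mechanism forcing the gradients of $\psi_i'$ and $\psi_j'$ to approach one another just because $\psi_i' - \psi_i$ and $\psi_j' - \psi_j$ are individually small. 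So "taking the error functions $h_i$ small" does not make $E$ small, and the sum above need not be positive. Closing this gap requires either derivative-level control of the local smoothings, which the black-box statement of Richberg does not furnish, or a further device, for instance an exhaustion-and-regularized-max gluing in place of the partition of unity, or an auxiliary compactly supported strictly plurisubharmonic bump in $X_2$ whose Levi form dominates the negative part of $E$ with care taken at the boundary of its support. Either way, substantive analysis remains that the proposal acknowledges but does not carry out.
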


Notice that if $(U_i, \psi_i)$ happened to be the K\"ahler cocycle associated to a single $J$--convex function $\psi$ on $X$, then $(V_i, \psi_{i(j)}|_{V_j} + \chi|_{V_j})$ is the K\"ahler cocycle associated to the smooth plurisubharmonic function $\psi + \chi$.

In our particular case we take $X$ to be $\mathrm{Sym}^{2n-2}(\Sigma(S)
\backslash \{\mathbf{w},\mathbf{z}\})$, $X_1$ to be the complement of the main
diagonal in this symmetric product, and $X_2$ to be a small neighborhood
of the main diagonal with no intersection with $\mathbb
T_{\wtilde{\boldsymbol \alpha}}$ and $\mathbb T_{\wtilde{\boldsymbol
\beta}}$. Then the function $\psi \co \mathrm{Sym}^{2n-2}(\Sigma(S^2)
\backslash \{\mathbf{w},\mathbf{z}\}) \rightarrow \mathbb R$ admits a smoothing to a
$\mathrm{Sym}^{2n-2}(\tilde{j})$--convex function $\psi + \chi \co
\mathrm{Sym}^{2n-2}(\Sigma(S^2) \backslash \{\mathbf{w},\mathbf{z}\}) \rightarrow
\mathbb R$ which is equal to $\psi$ away from a neighborhood of the large
diagonal. The symplectic form $\omega_{\psi+ \chi}$ is exact and
compatible with the complex structure on $M$. Finally, on a neighborhood
of $\mathbb T_{\wtilde{\boldsymbol \alpha}}$ the map $\chi$ is identically
$0$ and $\psi = (2n-2)!\wtilde{\phi}|_{\wtilde{\alpha}_1 \times
\tau(\wtilde{\alpha}_1) \times \cdots \times \wtilde{\alpha}_{n-1} \times
\tau(\wtilde{\alpha}_{n-1})}$. Therefore $\omega_{\psi+ \chi}|_{\mathbb
T_{\wtilde{\boldsymbol \alpha}}} = 0$ and $d^{\mathbb C}(\psi +
\chi)|_{\mathbb T_{\wtilde{\boldsymbol \alpha}}} = (2n-2)!d^{\mathbb
C}(\wtilde{\phi})|_{\wtilde{\alpha}_1 \times \cdots \times
\wtilde{\alpha}_{n-1} \times \tau(\wtilde{\alpha}_{n-1})}$ is exact. Ergo
$\mathbb T_{\wtilde{\boldsymbol \alpha}}$ is an exact Lagrangian in the
exact symplectic manifold $M$, and similarly $\mathbb T_{\wtilde{\boldsymbol \beta}}$ is as well.

The reader may at this point be alarmed that we have failed thus far to
check that $\tau$ is a symplectic involution. We'll make one final
alteration to the symplectic form on $M$ such that this is the case.
Because our original $\tilde{j}$--convex function $\wtilde{\phi}_1$ on
$\Sigma(S) \backslash \{\mathbf{w},\mathbf{z}\}$ was the pullback of a $j$--convex
function $\phi_1$ on $S^2 \backslash \{\mathbf{w},\mathbf{z}\}$, we see that
$\omega_{\wtilde{\phi}_1}$ is certainly $\tau$--invariant. Following our
construction of the continuous singular plurisubharmonic function $\psi$
on $M$, we see $\psi$ is invariant with respect to the induced involution
$\tau$ on $M$. Since $\omega_{\psi + \chi} = \omega_{\psi}$ away from
$X_2$ a neighborhood of the large diagonal in $M$, $\omega_{\psi + \chi}$
is $\tau$--invariant away from $X_2$. We replace $\omega_{\psi + \chi}$
with $\omega = \frac{1}{2}(\omega_{\psi + \chi} + \tau^*\omega_{\psi +
\chi})$; this exact form is $\mathrm{Sym}^{2n-2}(\wtilde{j})$--compatible
and nondegenerate since $\tau$ is holomorphic. Moreover, $\omega =
\omega_{\psi+\chi}$ away from $X_2$, implying that the two tori $\mathbb
T_{\wtilde{\boldsymbol \alpha}}$ and $\mathbb T_{\wtilde{\boldsymbol \beta}}$ remain exact Lagrangian submanifolds with respect to $\omega$. The form $\omega$ is our final choice of symplectic form on $M$.

\section{Homotopy type and cohomology of $M$ and $M^{\inv}$} \label{HomotopyCohomologySection}

In order to show that our setting can be made to satisfy the hypotheses of \fullref{Smith Inequality}, we will need to begin with a good grip on the homotopy type of $M^{\inv}$. This in turn will allow us to say something concrete concerning the cohomology ring of $M^{\inv} \times [0,1]$ and its relationship to the cohomology ring of $\smash{L_0^{\inv}} \times  \{0\} \cup \smash{L_1^{\inv}} \times \{1\}$.

We begin with homotopy type. Notice that both $S^2\backslash\{\mathbf{w},
\mathbf{z}\}$ and $\Sigma(S^2)\backslash\{\mathbf{w},\mathbf{z}\}$ are
$(2n-2)$--punctured surfaces of genus $0$ and $n-1$, respectively, and
therefore homotopy equivalent to the wedge product of $2n-1$ and $4n-3$
circles, respectively. As the operation of taking the symmetric product
preserves homotopy equivalence (see, for example, Hatcher~\cite[Section 4.K]{MR1867354}), it suffices to determine the homotopy type of a finite symmetric product of a wedge of circles $(S^1)^{\vee m}$. It will turn out that this space deformation retracts onto a subspace of a torus $(S^1)^m$. Let us start by establishing some notation. Give $S^1$ the coordinates of the unit circle in the complex plane, and let 1 be its basepoint and unique zero-cell. If $I$ is a subset of $\{1, \ldots ,m\}$, let $(S^1)^I$ be the subspace of $(S^1)^{m}$ defined as follows.
\begin{align*}
(S^1)^I = \{(x_1,\ldots ,x_m) \in (S^1)^{r}: x_i = 1 \text{ if } i
\not\in I\}
\end{align*}

\begin{lemma} \label{subtori}
For $r \geq m$, the symmetric product $\mathrm{Sym}^r((S^1)^{\vee m})$ has the homotopy type of an $m$--torus $(S^1)^m$. For $r < m$, the symmetric product $\mathrm{Sym}^r((S^1)^{\vee m})$ has the homotopy type of the union of the $m \choose r$ canonical subtori of $(S^1)^m$,
\begin{align*}
\bigcup_{|I| = r} (S^1)^I \subset (S^1)^m.
\end{align*}
If the torus $(S^1)^m$ is given the usual product CW structure this space is the $r$--skeleton of the CW complex.
\end{lemma}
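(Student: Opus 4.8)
The plan is to exhibit a deformation retraction of $\mathrm{Sym}^r((S^1)^{\vee m})$ onto the appropriate subcomplex of the torus $(S^1)^m$, handling the two cases $r \geq m$ and $r < m$ simultaneously since the construction is the same; the case distinction only affects which subtori appear. First I would set up coordinates: write the wedge $(S^1)^{\vee m}$ with its $m$ circles $C_1,\ldots,C_m$ joined at the basepoint $*=1$, and give each $C_j$ the coordinate of the unit circle in $\mathbb{C}$. A point of $\mathrm{Sym}^r((S^1)^{\vee m})$ is an unordered $r$--tuple of points, each lying on some circle $C_j$ (with the basepoint lying on all of them simultaneously). The key observation is that the ``edge'' $C_j$ of the wedge carries a natural $S^1$--action, but more usefully, for a fixed assignment of how many of the $r$ points lie on each circle, $\mathrm{Sym}^{k}(S^1) \simeq S^1$ via the map that remembers the product of the coordinates (this is the classical fact that $\mathrm{Sym}^k(S^1)$ deformation retracts onto $S^1$; it can be cited from Hatcher or proved directly by sliding all but one point to the basepoint).

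The main construction: define a map $\mathrm{Sym}^r((S^1)^{\vee m}) \to (S^1)^m$ by sending a configuration to $(p_1,\ldots,p_m)$ where $p_j \in S^1 = C_j$ is the product (in $\mathbb{C}^\times$, landing on the unit circle) of the coordinates of those points of the configuration lying on $C_j$, with $p_j = 1$ if no point lies on $C_j$. The image is exactly $\bigcup_{|I|\le r}(S^1)^I$, which equals $\bigcup_{|I|=r}(S^1)^I$ when $r < m$ (any configuration using fewer than $r$ circles lies in a face of a top-dimensional one among those allowed) and equals all of $(S^1)^m$ when $r \geq m$. I would then construct a homotopy on $\mathrm{Sym}^r((S^1)^{\vee m})$ itself: a configuration of points on the wedge can be deformed, circle by circle and without ever changing which circle a point sits on or crossing the basepoint in a problematic way, so as to coalesce all the points on each $C_j$ into a single point (their product) while dumping no point onto a wrong circle — concretely, slide points along their own circle toward the basepoint in a way that preserves the product on each circle, stopping just before collision. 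The fixed set of this deformation retraction is precisely the set of configurations with at most one non-basepoint point per circle, which is canonically identified with $\bigcup_{|I|\le r}(S^1)^I$. One must check continuity across the strata where a point hits the basepoint (i.e., where the combinatorial type of ``which circle'' jumps): this is where the product description helps, since the product coordinate varies continuously even as a point passes through $*$.

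The main obstacle I anticipate is exactly this continuity/well-definedness issue at the wedge point. The naive ``slide each point toward the basepoint'' retraction is discontinuous because a point on $C_j$ approaching $*$ and then a nearby configuration with that point on $C_{j'}$ need to be reconciled; the remedy is to phrase everything in terms of the product-of-coordinates invariant, which is a genuinely continuous $S^1$--valued function on each $\mathrm{Sym}^k$ of the relevant circle and patches continuously over the basepoint because the basepoint contributes the identity $1$ to every product. A secondary point to verify carefully is that the resulting image is the stated $r$--skeleton: when $r \geq m$ every $(S^1)^I$ with $|I| = m$ appears so we get all of $(S^1)^m$, while for $r<m$ the subtori of rank exactly $r$ already contain all those of lower rank as sub-CW-complexes, so the union over $|I|=r$ is the full $r$--skeleton. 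I would close by remarking that since the symmetric product functor preserves homotopy equivalence and $S^2\setminus\{\mathbf{w},\mathbf{z}\}$ and $\Sigma(S^2)\setminus\{\mathbf{w},\mathbf{z}\}$ are homotopy equivalent to wedges of circles, this lemma will be exactly what is needed to pin down the homotopy types of $M$ and $M^{\inv}$.
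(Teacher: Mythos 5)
Your high-level strategy is the same as the paper's: exploit the fact that multiplying coordinates gives a retraction $\mathrm{Sym}^k(S^1) \to S^1$, and then run this circle by circle on the wedge. But two points are glossed over, and the second is a genuine gap. First, you never actually construct the homotopy from the identity to the retraction; ``slide points toward the basepoint preserving the product, stopping just before collision'' is not a definition (and the ``stopping before collision'' clause is a red flag, since in a symmetric product coalescence is exactly what you want). Continuity of the product \emph{map} does not automatically give a continuous \emph{homotopy}: the difficulty is choosing coherent intermediate configurations on the unordered quotient. The paper resolves this by passing through the biholomorphism $\mathrm{Sym}^r(\mathbb{C}) \cong \mathbb{C}^r$ via elementary symmetric functions, which provides honest coordinates on the symmetric product, and then obtaining the deformation retraction $F^r$ of $\mathrm{Sym}^r(S^1)$ onto $S^1\times\{1\}^{r-1}$ as a composite of staged retractions $F^{r,k}$ whose existence comes from a vanishing of relative homotopy groups plus a CW-pair argument, not from an explicit formula.

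The more serious gap is the one you do not mention. When you decompose a configuration ``circle by circle,'' you land in $\prod_j \mathrm{Sym}^r(S^1_j)$, but the wedge symmetric product $\mathrm{Sym}^r((S^1)^{\vee m})$ is only the proper subspace of this product (after padding with $r(m-1)$ basepoints inside $\mathrm{Sym}^{rm}$) where the \emph{total} number of nonbasepoint points is at most $r$. A priori, applying a deformation retraction factor by factor could push a configuration out of this subspace. The paper's staged construction of $F^r$ is designed precisely so that it never increases the number of nonbasepoint entries; that monotonicity is what guarantees the circle-by-circle homotopy preserves $\mathrm{Sym}^r(\bigvee_i S^1_i)\times\{1\}^{r(m-1)}$. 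Without this check, your argument does not establish that the intermediate stages of the deformation remain inside $\mathrm{Sym}^r((S^1)^{\vee m})$ at all.
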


\begin{proof}
We essentially follow the argument given by Ong~\cite{MR1993792}. Let $\sigma_1, \ldots ,\sigma_r$ be the first $r$ elementary symmetric functions of $r$ variables $x_1, \ldots ,x_r$, such that 
\begin{align*}
\sigma_j(x_1,\ldots ,x_r) = \sum_{1\leq i_1 < \cdots <i_j \leq r} x_{i_1} \ldots x_{i_j}.
\end{align*}
There is a well-known biholomorphism between $\mathrm{Sym}^{r}(\mathbb C)$ and $\mathbb C^r$ by evaluating each of the functions $\sigma_j$ for $1\leq j\leq r$ on an unordered set of complex numbers $x_1,\ldots ,x_r$. This has the effect of mapping to the ordered coefficients of the monic polynomial with roots $x_1,\ldots ,x_r$, multiplied by an alternating sign.
\begin{equation}
\begin{aligned} 
\phi \co \mathrm{Sym}^r(\mathbb C) &\rightarrow \mathbb C^r \\
(x_1 \ldots x_r) &\mapsto (\sigma_1(x_1, \ldots ,x_r), \ldots , \sigma_r(x_1, \ldots ,x_r)).
\end{aligned}
\label{biholomorphism}
\end{equation}
Under the map $\phi$ the submanifold $\mathrm{Sym}^r(\mathbb C^*)$ is carried to $\mathbb C^{r-1} \times \mathbb C^*$. We can use this map to construct the following homotopy equivalence between $S^1$ and $\mathrm{Sym}^r(S^1)$ for any $r$.
$$\smash{\xymatrix{
\mathrm{Sym}^r(S^1) \ar@{^(->}[r] & \mathrm{Sym}^r(\mathbb C^*) \ar[rr]^-{\phi|_{\mathrm{Sym}^r(\mathbb C^*)}}  && \mathbb C^{r-1} \times \mathbb C^*  \ar[r] & \mathbb C^* \ar[r] & S^1 
}}$$
Here the first inclusion map is a homotopy equivalence and the final two
maps are deformation retractions. The total map carries $\smash{(e^{i
\theta_1} \ldots e^{i\theta_r})} \in \mathrm{Sym}^r(S^1)$ to
$\smash{\sigma_r((e^{i \theta_1}, \ldots
,e^{i\theta_r}))}$, which is exactly the product of the entries,
$\smash{e^{i(\theta_1+ \cdots +\theta_r)}}$. Ergo the total map from $\mathrm{Sym}^r(S^1)$ to $S^1$ which multiplies the entries of an unordered $r$--tuple of points on the circle is a homotopy equivalence. Indeed, since this homotopy equivalence can be regarded as a retract from $\mathrm{Sym}^{r}(S^1)$ to its subspace $A_r = \{(e^{i\theta_1}e^{i\theta_2} \ldots e^{i\theta_r}) \co \theta_2 = \cdots = \theta_r = 0\} = S^1 \times \{1\}^{r-1}$, there must be a deformation retraction $F^r$ from $\mathrm{Sym}^r(S^1)$ to this subspace.

It will be useful to be slightly more careful concerning our choice of
deformation retraction. Suppose $\mathrm{Sym}^{r-1}(S^1)$ is regarded as a
subspace $\mathrm{Sym}^{r-1} \times \{1\}$ of $\mathrm{Sym}^r(S^1)$ via
the embedding $(e^{i\theta_1} \ldots e^{i\theta_{r-1}}) \mapsto
(e^{i\theta_1} \ldots e^{i\theta_{r-1}} e^{i0})$. Then both
$\mathrm{Sym}^{r}(S^1)$ and $\mathrm{Sym}^{r-1}(S^1) \times \{1\}$
deformation retract onto the subspace $A_r$ in $\mathrm{Sym}^r(S^1)$,
implying that the relative homotopy groups
$\pi_i(\mathrm{Sym}^{r}(S^1),\mathrm{Sym}^{r-1}(S^1) \times \{1\})$ are
trivial. Hence since all the spaces involved carry CW structures induced
by the CW structure on $S^1$ and the inclusion $\mathrm{Sym}^{r-1}(S^1)
\hookrightarrow \mathrm{Sym}^r(S^1)$ is cellular, there is a deformation
retraction $F^{r,0}$ from $\mathrm{Sym}^r(S^1)$ to
$\mathrm{Sym}^{r-1}(S^1) \times \{1\}$, which can be taken to run on a
time interval $\smash{\big[0, \frac{1}{r-1}\big]}$. By similar logic for $r\geq k
\geq 2$, there is a deformation retraction $F^{r,r-k}$ from
$\mathrm{Sym}^{k}(S^1) \times \{1\}^{r-k}$ to $\mathrm{Sym}^{k-1}(S^1)
\times \{1\}^{r-k+1}$ whose time input can be taken to be
$\smash{\big[\frac{r-k}{r-1}, \frac{r-k+1}{r-1}\big]}$. If we take $F^r$ to be the
map $\mathrm{Sym}^r(S^1) \times [0,1] \rightarrow \mathrm{Sym}^r(S^1)$
which is  $F^{r,r-k}$ on $\smash{\big[\frac{r-k}{r-1},
\frac{r-k+1}{r-1}\big]}$, then $F^r$ is a deformation retraction from $\mathrm{Sym}^r(S^1)$ to $A_r = \mathrm{Sym}^1(S^1) \times \{1\}^{r-1}$ which preserves each of the subspaces $\mathrm{Sym}^{k} \times \{1\}^{r-k}$.

We can now deal with the symmetric product of a wedge of circles. To keep track of which points originate in which circle, label the circles $S^1_1, \ldots ,S^1_m$ and refer to points $e^{i\theta}_j$ on $S^1_j$. Take the wedge point to be the basepoint $1=e^{i 0}$ on each circle.

Regard the space $\mathrm{Sym}^r\big(\bigvee_{i=1}^{m}S^1_i\big)$ as the
subspace $\mathrm{Sym}^{r}\big(\bigvee_{i=1}^{m}S^1_i\big) \times
\{1\}^{r(m-1)}$ of $\mathrm{Sym}^{rm}\big(\bigvee_{i=1}^{m}S^1_i\big)$.
Any point in $\mathrm{Sym}^r\big(\bigvee_{i=1}^m S^1_i\big) \times
\{1\}^{r(m-1)}$ may be split uniquely into a point in
$\mathrm{Sym}^r(S^1_1) \times \mathrm{Sym}^r(S^1_2)\times \cdots \times
\mathrm{Sym}^r(S^1_m)$, as there are at most $r$ terms from any circle
$S^1_i$ in an $mr$--tuple in $\mathrm{Sym}^r\big(\bigvee_{i=1}^m S^1_i\big) \times \{1\}^{r(m-1)}$. Consider applying $F^r \co \mathrm{Sym}^r(S^1_1) \times [0,1]$ to this submanifold; that is, consider the following map.
\begin{align*}
F^r_1 \co \mathrm{Sym}^r(S^1_1){\times}\cdots{\times}\mathrm{Sym}^r(S^1_m)
{\times}[0,``1]  &\rightarrow
\mathrm{Sym}^r(S^1_1){\times}\cdots{\times}\mathrm{Sym}^r(S^1_m) \\
\big(\big(e_1^{i\theta_1}{\ldots}e_1^{i\theta_r}\big),\ldots ,\big(e_m^{i
\theta_1}{\ldots}
e_m^{i \theta_r}\big)\big){\times}[0,``1]  &\mapsto
\big(F^r\big(e_1^{i\theta_1}\ldots e_1^{i\theta_r}, t\big), \ldots , \big(e_m^{i
\theta_1}{\ldots}e_m^{i \theta_r}\big)\big).
\end{align*}
Because $F^r$ never increases the number of nonbasepoint terms in
$\big(e_1^{i\theta_1}\ldots e_1^{i\theta_r}\big)$, the map $F^r_1$
preserves the subspace $\mathrm{Sym}^{r}\big(\bigvee_{i=1}^{m}S^1_i\big) \times \{1\}^{r(m-1)}$ in
$$\mathrm{Sym}^r(S^1_1) \times \mathrm{Sym}^r(S^1_2)\times \cdots \times
\mathrm{Sym}^r(S^1_m).$$
Ergo $F^r_1$ is a deformation retraction from
$\mathrm{Sym}^{r}\big(\bigvee_{i=1}^{m}S^1_i\big)$ to the subspace of
$r$--tuples in $\mathrm{Sym}^{r}\big(\bigvee_{i=1}^{m}S^1_i\big)$ of which
at most one entry is a nonbasepoint point on $S^1_1$. Running this
procedure on each factor of $\mathrm{Sym}^r(S^1_1) \times
\mathrm{Sym}^r(S^1_2)\times \cdots \times \mathrm{Sym}^r(S^1_m)$ produces
a deformation retraction from
$\mathrm{Sym}^{r}\big(\bigvee_{i=1}^{m}S^1_i\big)$ to the subspace of $r$--tuples in this space containing only one nontrivial point of each $S^1_j$ for $1\leq j \leq r$. So we have the homotopy equivalence
$$\mathrm{Sym}^{r}\bigg(\bigvee_{i=1}^{m}S^1_i\bigg) \simeq
\big\{\big(e_1^{i\theta_1},\ldots ,e_m^{i\theta_m}\big)
: \text{ at most } r~\theta_i \text{ are nonzero}\big\}
\subset S^1_1\times \cdots \times S^1_m.$$
If $r>m$, this space is all of $S_1^1 \times \cdots \times S^1_m =
(S^1)^m$. If $r<m$, this space is exactly $\bigcup_{I=|r|}(S^1)^I \subset
(S^1)^m$.
\end{proof}

We are now ready to discuss cohomology, and in particular to prove an important relationship between the cohomology of $M^{\inv} \times [0,1]$ and its subspace $(\smash{L_0^{\inv}} \times \{0\}) \cup (\smash{L_1^{\inv}} \times \{1\})$. Consider the inclusion map
$$i_1 \co (\mathbb T_{\boldsymbol \beta} \times \{0\}) \cup (\mathbb
T_{\boldsymbol \alpha} \times \{1\}) \rightarrow \mathrm{Sym}^{n-1}(S^2
\backslash \{\mathbf{w},\mathbf{z}\}) \times [0,1].$$

\begin{proposition} \label{cohomologysurjection}
The cohomology pullback
\begin{align*} \label{cohomologysurjectionmap}
i_1^* \co H^*(\mathrm{Sym}^{n-1}(S^2 \backslash \{\mathbf{w},\mathbf{z}\}) \times
[0,1]) \rightarrow H^*((\mathbb T_{\boldsymbol \beta} \times \{0\}) \cup
(\mathbb T_{\boldsymbol \alpha} \times \{1\}))
\end{align*} 
induced by inclusion map $i_1$ is a surjection for $*\geq 1$.
\end{proposition}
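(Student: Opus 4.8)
The plan is to reduce surjectivity of $i_1^*$ to a statement in degree $1$, and then to an elementary computation in $H_1$ over $\mathbb F_2$ (our coefficient ring throughout). First note that $\mathbb T_{\boldsymbol\beta}\times\{0\}$ and $\mathbb T_{\boldsymbol\alpha}\times\{1\}$ are disjoint, since they lie over different points of $[0,1]$, so
\[
H^*\big((\mathbb T_{\boldsymbol\beta}\times\{0\})\cup(\mathbb T_{\boldsymbol\alpha}\times\{1\})\big)\;\cong\;H^*(\mathbb T_{\boldsymbol\beta})\oplus H^*(\mathbb T_{\boldsymbol\alpha}).
\]
Each of $\mathbb T_{\boldsymbol\alpha},\mathbb T_{\boldsymbol\beta}$ is an $(n-1)$--torus, so each summand is an exterior algebra over $\mathbb F_2$ generated by its degree--one part; consequently every element of the target in degree $\ge 1$ is a sum of products of degree--one elements. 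Since $i_1^*$ is a ring homomorphism, it therefore suffices to prove that $i_1^*$ is onto in degree $1$.

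For degree one I would dualise: over the field $\mathbb F_2$ we have $H^1(-)=\Hom_{\mathbb F_2}(H_1(-),\mathbb F_2)$, so it is enough to show $(i_1)_*$ is injective on $H_1$. The $[0,1]$ factor plays no role. By \fullref{subtori} the space $\mathrm{Sym}^{n-1}(S^2\backslash\{\mathbf w,\mathbf z\})$ is homotopy equivalent to the $(n-1)$--skeleton of the torus $(S^1)^{2n-1}$; in particular $H_1$ is free of rank $2n-1$, and the map $H_1(S^2\backslash\{\mathbf w,\mathbf z\})\to H_1(\mathrm{Sym}^{n-1}(S^2\backslash\{\mathbf w,\mathbf z\}))$ induced by $x\mapsto\{x,q_2,\dots,q_{n-1}\}$ (for fixed basepoints $q_j$) is an isomorphism, carrying the class of a loop $c$ to the class of the $1$--cycle in which one coordinate traverses $c$ and the others stay constant. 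Under this identification $(i_1)_*$ sends the generator of $H_1$ of the $i$--th circle factor of $\mathbb T_{\boldsymbol\beta}$ to $[\beta_i]\in H_1(S^2\backslash\{\mathbf w,\mathbf z\})$, and likewise the $i$--th factor of $\mathbb T_{\boldsymbol\alpha}$ to $[\alpha_i]$.

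It then remains to read the classes $[\alpha_i],[\beta_i]$ off the bridge diagram and check linear independence. Realising $S^2\backslash\{\mathbf w,\mathbf z\}$ as $\mathbb C\backslash\{\mathbf w,z_1,\dots,z_{n-1}\}$, the small--loop classes $\mu_{w_1},\mu_{z_1},\dots,\mu_{w_{n-1}},\mu_{z_{n-1}},\mu_{w_n}$ form an $\mathbb F_2$--basis of $H_1$, the only relation among all $2n$ puncture loops being $\sum_{i=1}^n(\mu_{w_i}+\mu_{z_i})=0$, which expresses $\mu_{z_n}$ in terms of the others. Since by construction $\beta_i$ encircles the bridge with endpoints $w_i,z_i$ and $\alpha_i$ encircles the arc with endpoints $z_i,w_{i+1}$, we get $[\beta_i]=\mu_{w_i}+\mu_{z_i}$ and $[\alpha_i]=\mu_{z_i}+\mu_{w_{i+1}}$ for $1\le i\le n-1$. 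These $2n-2$ vectors are precisely the unsigned edge vectors $\mu_u+\mu_v$ of the path $w_1-z_1-w_2-\cdots-z_{n-1}-w_n$ on the $2n-1$ basis vertices. Because that graph is a tree, no nonempty subset of its edges meets every vertex an even number of times, so the edge vectors are linearly independent over $\mathbb F_2$; note also that $\mu_{z_n}$ occurs in none of them, so the relation above does not interfere. Hence $(i_1)_*$ is injective on $H_1$, so $i_1^*$ is onto in degree $1$, and therefore onto in every degree $\ge 1$.

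The main obstacle will be the middle paragraph: pinning down the isomorphism $H_1(\mathrm{Sym}^{n-1}(S^2\backslash\{\mathbf w,\mathbf z\}))\cong H_1(S^2\backslash\{\mathbf w,\mathbf z\})$ and tracking how the coordinate circles of $\mathbb T_{\boldsymbol\alpha}$ and $\mathbb T_{\boldsymbol\beta}$ push forward, together with correctly reading $[\alpha_i],[\beta_i]$ off the bridge presentation. Once those classes are identified the linear algebra is immediate, since ``the unsigned edge vectors of a tree are $\mathbb F_2$--independent'' is elementary. (For $n=1$ the proposition is vacuous, there being nothing in positive degree.)
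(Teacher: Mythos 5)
Your proposal is correct in its logic and reaches the right conclusion, but by a somewhat more efficient route than the paper's.  The paper computes $H_k$ of both spaces in every degree (using \fullref{subtori}) and checks directly that $(i_1)_*$ sends a basis of $H_k(X)$ to a subset of a basis of $H_k(M^{\inv})$; you instead exploit the multiplicative structure to reduce everything to degree $1$, using that $H^*(\mathbb T_{\boldsymbol\beta})\oplus H^*(\mathbb T_{\boldsymbol\alpha})$ is generated in degrees $\le 1$ (more precisely, that every element in degree $k\ge 1$ is a sum of $k$--fold products of degree--one classes) and that $i_1^*$ is a ring homomorphism.  This genuinely shortens the higher--degree bookkeeping.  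In degree $1$ your identification $[\beta_i]=\mu_{w_i}+\mu_{z_i}$, $[\alpha_i]=\mu_{z_i}+\mu_{w_{i+1}}$ agrees with the paper's computation $[\beta_j]=[\nu_{2j-1}]+[\nu_{2j}]$, $[\alpha_j]=[\nu_{2j}]+[\nu_{2j+1}]$, and phrasing their independence as ``edge vectors of the spanning path $w_1-z_1-\cdots-z_{n-1}-w_n$'' is a clean formulation of the same linear algebra.

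One point needs adjusting.  You declare ``$\mathbb F_2$ (our coefficient ring throughout)'' and dualise via $H^1=\Hom_{\mathbb F_2}(H_1,\mathbb F_2)$, arguing that injectivity of $(i_1)_*$ over a field gives surjectivity of $i_1^*$.  But the proposition is applied later (in the proof of \fullref{stable}) with \emph{integer} coefficients: the short exact sequences it yields are used to show $\widetilde H^m((M^{\inv}\times[0,1])/X;\mathbb Z)$ embeds in a free abelian group and is therefore torsion--free, which feeds into the Atiyah--Hirzebruch argument.  Over $\mathbb Z$, injectivity of $(i_1)_*$ alone does not give surjectivity of the dual; one needs the inclusion to be split, equivalently that $(i_1)_*$ sends a $\mathbb Z$--basis to part of a $\mathbb Z$--basis.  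Your tree argument in fact yields exactly this: the edge vectors $e_i+e_{i+1}$ of the path, together with the leaf $\mu_{w_n}$, form an integral basis of $\mathbb Z^{2n-1}$ (solve for $\mu_{z_{n-1}},\mu_{w_{n-1}},\dots$ by back--substitution), so $(i_1)_*$ is a split injection over $\mathbb Z$ and the dual is onto.  The degree--$1$--reduction via the ring structure also works verbatim over $\mathbb Z$ since the exterior algebras are free.  So the argument is sound once rephrased over $\mathbb Z$, but as written it only establishes the $\mathbb F_2$ statement, which is not what the paper subsequently uses.
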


\begin{proof}
We will see in this proof that the homology (and hence the
cohomology) groups of $M^{\inv} \times [0,1] =
\mathrm{Sym}^{n-1}(S^2 \backslash \{\mathbf{w},\mathbf{z}\}) \times [0,1]$ and of
$(\mathbb T_{\boldsymbol \beta} \times \{0\}) \cup (\mathbb T_{\boldsymbol
\alpha} \times \{1\})$ are all free abelian. Under these circumstances it
suffices to work with a set of generators for $H_k(M^{\inv}) =
H_k(M^{\inv} \times [0,1])$ as an abelian group and a set of
generators for $H_k((\mathbb T_{\boldsymbol \beta} \times \{0\}) \cup
(\mathbb T_{\boldsymbol \alpha} \times \{1\}))$ as an abelian group. 

The following commutative square is natural, and both of the horizontal arrows are isomorphisms, since there is no torsion in the cohomology of any of the spaces involved.
\begin{align*}
\xymatrix{
H^k(M^{\inv} \times [0,1]) \ar[d]^-{i_1^*} \ar[r] & \mathrm{Hom}(H_k(M^{\inv} \times [0,1]), \mathbb Z) \ar[d]^-{(i_1)_*^T} \\
H^k((\mathbb T_{\boldsymbol \beta} \times \{0\}) \cup (\mathbb T_{\boldsymbol \alpha} \times \{1\})) \ar[r] &
\mathrm{Hom}(H_k((\mathbb T_{\boldsymbol \beta} \times \{0\}) \cup (\mathbb
T_{\boldsymbol \alpha} \times \{1\})), \mathbb Z)
}
\end{align*}
If we can show that the homology push-forward $(i_1)_*$ is an injection
mapping a generator $\zeta$ of the $k$th homology of the subspace to a
generator $\kappa$ of the $k$th homology of $M^{\inv}$, then the
cohomology pullback $\smash{i_1^*}$ maps the dual $\what{\kappa}$ of $\kappa$
in $H^k(M^{\inv} \times [0,1])$ to the dual $\what{\zeta}$ of
$\zeta$ in $H^k((\mathbb T_{\boldsymbol \beta} \times \{0\}) \cup (\mathbb
T_{\boldsymbol \alpha} \times \{1\}))$.

Let the curves $\{\alpha_1,\ldots ,\alpha_{n-1}\}$, $\{\beta_1,\ldots
,\beta_{n-1}\}$ on the punctured sphere $S^2 \backslash
\{\mathbf{w},\mathbf{z}\}$ be as in \fullref{HeegaardFloerSection}. Choose
a parametrization of each of the $\alpha$ and $\beta$ curves such that
$\alpha_j \co [0,1] \rightarrow S^2 \backslash \{\mathbf{w},\mathbf{z}\}$
is a one-cycle in the punctured sphere, and similarly for $\beta_j \co
[0,1] \rightarrow S^2 \backslash \{\mathbf{w},\mathbf{z}\}$. Moreover,
choose a set of one-cycles $\nu_i \co [0,1] \rightarrow S^2 \backslash
\{\mathbf{w},\mathbf{z}\}$ for $1 \leq i \leq 2n$ such that the image of
$\nu_{2j-1}$ is a small circle oriented counterclockwise around $w_j$ and
the image of $\nu_{2j}$ is a small circle around $z_j$. Then $[\beta_j] =
[\nu_{2j-1}] + [\nu_{2j}]$ and $[\alpha_j] = [\nu_{2j}] + [\nu_{2j+1}]$ in
$H_1(S^2 \backslash \{\mathbf{w},\mathbf{z}\})$.

\begin{figure}[ht!]
\begin{center}
\includegraphics{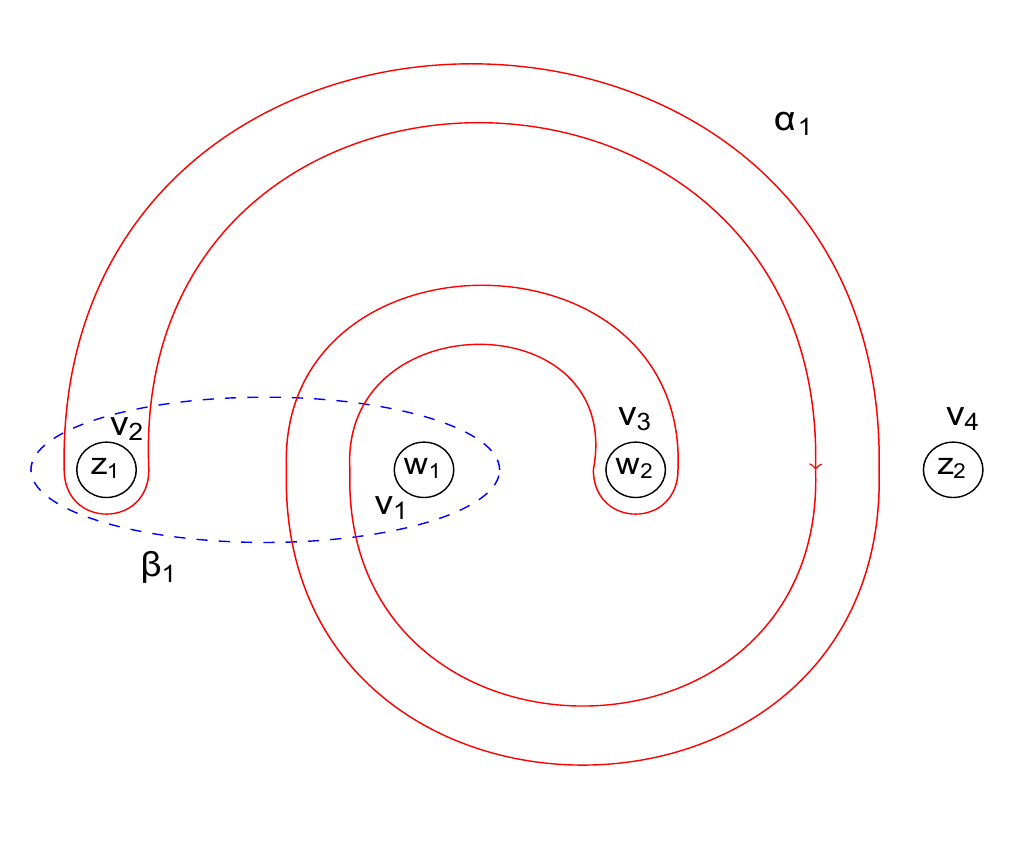}
\caption{The Heegaard diagram of Figure 1 with the knot deleted.  The
curves $\nu_1$, $\nu_2$, and $\nu_3$ generate the first homology of $S^2
\backslash \{\mathbf{w},\mathbf{z}\}$.}
\end{center}
\end{figure}

Choose a set of one-cycles $\{\nu_1',\ldots ,\nu_{2n-1}'\}$ in $S^2
\backslash \{\mathbf{w},\mathbf{z}\}$ such that the image of each $\nu_i'$
is an oriented circle homologous to $\nu_i$, and moreover the union of the
images of the $\nu_i$ is a wedge of circles. We abuse notation slightly by
referring to this wedge as $\bigvee_{i=1}^{2n-1} \nu_i'$. Then $S^2
\backslash \{\mathbf{w},\mathbf{z}\}$ admits a deformation retraction onto
the wedge of circles $\bigvee_{i=1}^{2n-1} \nu_i'$. This deformation
retraction of the punctured sphere onto a wedge of circles induces a
deformation retraction of the space $M^{\inv} = \mathrm{Sym}^{n-1}(S^2
\backslash \{ \mathbf{w},\mathbf{z}\})$ onto
$\mathrm{Sym}^{n-1}(\bigvee_{i=1}^{2n-1} \nu_i')$. However, since
$\bigvee_{i=1}^{2n-1} \nu_i'$ is a wedge of circles, from
\fullref{subtori} we observe that
$\mathrm{Sym}^{n-1}\big(\bigvee_{i=1}^{2n-1} \nu_i'\big)$ admits a
deformation retraction onto the $(n{-}1)$st skeleton of the torus
$\smash{\prod_{i=1}^{2n-1}} \nu_i'$. In particular, the first homology of this space is generated by the homology classes of the following one-cycles.%
$$\xymatrix@R=.5cm@C-0.1cm{
\wbar{\nu}_i' \co [0,1] \ar[r] & \prod_{i=1}^{2n-1} \nu_i'
\ar@{^(->}[r] & \mathrm{Sym}^{n-1}\big(\bigvee_{i=1}^{2n-1} \nu_i'\big) \ar@{^(->}[r] & M^{\inv} \\
t \ar@{|->}[r]& (\nu_i'(t),x_0, \ldots ,x_0) \ar@{|->}[r]& (\nu_i'(t)x_0 \ldots x_0) \ar@{|->}[r]& (\nu_i'(t)x_0 \ldots x_0)
}$$
Hence $H_1(M^{\inv}) = \mathbb Z \langle [\wbar{\nu}_1'],\ldots
,[\wbar{\nu}_{2n-1}'] \rangle$ and by the Kunneth formula $H_k(M^{\inv})$
has a basis consisting of tensor products of $k$ distinct one-cycles
$[\wbar{\nu}_i']$ for $1 \leq k \leq n-1$. Through a slight abuse of notation, we say $H_k(M^{\inv}) = \textstyle{\bigwedge^{k} H_1(M^{\inv})}$, although it does not have the product structure of the exterior algebra. This will not, however, be the most convenient presentation of this group. Let us introduce the following one-cycles in $M^{\inv}$, for $1 \leq i \leq n-1$.
\begin{align*}
\wbar{\alpha}_i \co [0,1] &\rightarrow M^{\inv} &
\wbar{\beta}_i \co [0,1] & \rightarrow M^{\inv} \\
t &\mapsto (\alpha_i(t)x_0\ldots x_0) &
t & \mapsto (\beta_i(t)x_0 \ldots x_0)
\end{align*}
Recall that in $S^2 \backslash \{\mathbf{w},\mathbf{z}\}$, $\beta_i$ is homologous to $\nu_{2i-1} + \nu_{2i}$, and hence to $\nu_{2i-1}' + \nu_{2i}'$. Similarly, $\alpha_i$ is homologous to $\nu_{2i} + \nu_{2i+1}$, and hence to $\nu_{2i}' + \nu_{2i+1}'$. Therefore in $H_1(M^{\inv})$ we have the following equalities.
\begin{align*}
[\wbar{\beta}_i] &= [\wbar{\nu}_{2i-1}'] + [\wbar{\nu}_{2i}'] \\
[\wbar{\alpha}_i] &= [\wbar{\nu}_{2i}'] + [\wbar{\nu}_{2i+1}']
\end{align*}
Therefore we can rewrite the homology of $M^{\inv}$ as follows.
\begin{align*}
H_1(M^{\inv}) &= \mathbb Z \langle [\wbar{\nu}_1'],\ldots ,[\wbar{\nu}_{2n-1}'] \rangle\\
&=\mathbb Z \langle [\wbar{\alpha}_1],\ldots ,[\wbar{\alpha}_{n-1}],
[\wbar{\beta}_1],\ldots ,[\wbar{\beta}_{n-1}], [\wbar{\nu}_{2n-1}'] \rangle \\ 
H_k(M^{\inv}) &= \textstyle{\bigwedge^k  \mathbb Z} \langle
[\wbar{\nu}_1'],\ldots ,[\wbar{\nu}_{2n-1}'] \rangle\\
&=\textstyle{\bigwedge^k} \mathbb Z \langle [\wbar{\alpha}_1],\ldots
,[\wbar{\alpha}_{n-1}], [\wbar{\beta}_1],\ldots ,[\wbar{\beta}_{n-1}],
[\wbar{\nu}_{2n-1}'] \rangle
\end{align*}
Here as before $1 \leq k \leq n-1$. Using the isomorphism $H_k(M^{\inv}) \cong H_k(M^{\inv} \times [0,1])$ we will also regard this as a description of $H_k(M^{\inv} \times [0,1])$.

Next we consider $\mathbb T_{\boldsymbol \alpha} = \alpha_1 \times \cdots
\times \alpha_{n-1} \subset M^{\inv}$. Let
$\smash{\wtilde{\wbar{\alpha}}_i}$ denote the one-cycle in $\mathbb
T_{\boldsymbol \alpha}$ corresponding to the curve $\alpha_i$. To wit,
choose a basepoint $x_i$ on each $\alpha_i$ and let
$\smash{\wtilde{\wbar{\alpha}}_i}$ be defined as follows.
\begin{align*}
\wtilde{\wbar{\alpha}}_i \co [0,1] &\rightarrow \mathbb T_{\boldsymbol \alpha} \\
t &\mapsto (x_1,\ldots ,x_{i-1},\alpha_i(t),x_{i+1},\ldots ,x_{n-1})
\end{align*}
Then the first homology of $\mathbb T_{\boldsymbol \alpha}$ has generators
$\big[\wtilde{\wbar{\alpha}}_i\big]$ for $1 \leq i \leq n-1$ and we see that, for $1 \leq k \leq n-1$,
\begin{align*}
H_1(\mathbb T_{\boldsymbol \alpha}) &= \mathbb Z \big\langle
\big[\wtilde{\wbar{\alpha}}_1\big],\ldots
,\big[\wtilde{\wbar{\alpha}}_{n-1}\big] \big\rangle \\
H_k(\mathbb T_{\boldsymbol \alpha})&=\textstyle{\bigwedge^k} \mathbb Z
\big\langle \big[\wtilde{\wbar{\alpha}}_1\big],\ldots
,\big[\wtilde{\wbar{\alpha}}_{n-1}\big] \big\rangle 
\end{align*}
The story for $\mathbb T_{\boldsymbol \beta}$ is completely analogous; if
$y_i$ is a basepoint on $\beta_i$, then let
$\smash{\wtilde{\wbar{\beta}}_i}$ be the one-cycles
\begin{align*}
\wtilde{\wbar{\beta}}_i \co [0,1] &\rightarrow \mathbb T_{\boldsymbol \beta} \\
t &\mapsto (x_1,\ldots ,x_{i-1},\beta_i(t),x_{i+1},\ldots ,x_{n-1}).
\end{align*}
Then
\begin{align*}
H_1(\mathbb T_{\boldsymbol \beta}) &= \mathbb Z \big\langle
\big[\wtilde{\wbar{\beta}}_1\big],\ldots
,\big[\wtilde{\wbar{\beta}}_{n-1}\big]
\big\rangle \\
H_k(\mathbb T_{\boldsymbol \beta}) &= \textstyle{\bigwedge^k} \mathbb Z
\big\langle \big[\wtilde{\wbar{\beta}}_1\big],\ldots
,\big[\wtilde{\wbar{\beta}}_{n-1}\big] \big\rangle.
\end{align*}
Notice that we have now seen that $H_*(M^{\inv} \times
[0,1])$, $H_*(\mathbb T_{\boldsymbol \beta} \times \{0\}) = H_*(\mathbb
T_{\boldsymbol \beta})$, and $H_*(\mathbb T_{\boldsymbol \alpha} \times \{1\})
= H_*(\mathbb T_{\boldsymbol \alpha})$ are all free abelian as promised at the start of this proof.

We are now ready to consider the relationship of $\mathbb T_{\boldsymbol
\alpha}$ and $\mathbb T_{\boldsymbol \beta}$ to $M^{\inv}$ and prove that the homology push forward
\begin{align*}
(i_1)_* \co H_*((\mathbb T_{\boldsymbol \alpha} \times \{0\}) \cup (\mathbb
T_{\boldsymbol \beta} \times \{1\})) \rightarrow H_*(M^{\inv})
\end{align*}
is an injection and maps a generator on homology to a generator on homology for $* \geq 1$.

From our description of the one-cycles whose homology classes generate the
first homology of $M^{\inv} \times [0,1]$ and $(\mathbb
T_{\boldsymbol \alpha} \times \{0\}) \cup (\mathbb T_{\boldsymbol \beta}
\times \{1\})$, we see that the inclusion of
$\smash{\wtilde{\wbar{\alpha}}_i}$ into $M^{\inv} \times [0,1]$ is
$\wbar{\alpha}_i$ and likewise the inclusion of
$\smash{\wtilde{\wbar{\beta}}_i}$
into $M^{\inv}$ is $\wbar{\beta}_i$. Ergo for $1 \leq k \leq n-1$, $i_1$ induces an injective map on homology of the following form.
\begin{align*}
(i_1)_*\co H_{k}((\mathbb T_{\boldsymbol \beta} \times \{0\}) \cup (\mathbb
T_{\boldsymbol \alpha} \times \{1\})) &\rightarrow H_k(M^{\inv}) \\
\big(\textstyle{\bigwedge_{t=1}^{k_1}} [\wtilde{\wbar{\beta}}_{j_t}]\big)
\oplus \big(\textstyle{\bigwedge_{t=1}^{k_2}}
[\wtilde{\wbar{\alpha}}_{j_t'}]\big)
&\mapsto \textstyle{\bigwedge_{t=1}^{k_1}} [\wbar{\beta}_{j_t}] +
\textstyle{\bigwedge_{t=1}^{k_2}}[\wbar{\alpha}_{j_t'}] 
\end{align*}
Here $(j_1,\ldots ,j_{k_1})$ and $(j_1',\ldots ,j_{k_2}')$ are any two
collections of distinct integers between $1$ and $n{-}1$ inclusive, with
$k_1{+}k_2{=}k$. Since the generators of $H_k((\mathbb T_{\boldsymbol \alpha}
{\times}\{0\}) \cup (\mathbb T_{\boldsymbol \beta}{\times}\{1\}))$ as a
free abelian group are $\smash{\big\{\textstyle{\bigwedge_{t=1}^{k}}
\big[\smash{\wtilde{\wbar{\alpha}}_{j_t}}\big]\big\}} \cup \smash{\big\{\textstyle{\bigwedge_{t=1}^k}
\big[\wtilde{\wbar{\beta}}_{j'_t}\big]\big\}}$, whereas the generators of
$H^k(M^{\inv})$ include $\big\{\textstyle{\bigwedge_{t=1}^k}
[\wbar{\alpha}_{j_t}]\big\}$ and
$\big\{\textstyle{\bigwedge_{t=1}^k}[\wbar{\beta}_{j'_t}]\big\}$, the homology map $(i_1)_*$ on $H_k$ maps a generator to a generator injectively. Therefore the corresponding map on cohomology $i_1^*$ is a surjection.
\end{proof}

\begin{remark}
The map in \fullref{cohomologysurjection} is \textit{not} a surjection if only half the basepoints of the Heegaard diagram are removed; that is, for the inclusion map 
\begin{align*}
i_2 \co \mathbb (T_{\boldsymbol \beta} \times \{0\}) \cup (\mathbb
T_{\boldsymbol \alpha} \times \{1\}) \rightarrow \mathrm{Sym}^{n-1}(S^2
\backslash \{\mathbf{w}\}) \times [0,1]
\end{align*}
the cohomology pullback
\begin{align*}
i_2^*\co H^*(\mathrm{Sym}^{n-1}(S^2 \backslash \{\mathbf{w}\})) &\rightarrow
H^*((\mathbb T_{\boldsymbol \beta} \times \{0\}) \cup (\mathbb T_{\boldsymbol \alpha} \times \{1\})) \\ &= H^*((S^1)^{n-1})\oplus H^*((S^1)^{n-1})
\end{align*}
has image the diagonal of $H^*((S^1)^{n-1})\oplus H^*((S^1)^{n-1})$. This
is the primary reason that the argument of this paper does not extend to a
statement regarding $\widehat{\mathit{HF}}(\Sigma(K))$ and
$\widehat{\mathit{HF}}(S^3)$, although in a future paper we will be able to overcome this difficulty.
\end{remark}

\section{Important constructions from $K$--theory}

Central to our argument that the space $(M^{\inv},
\smash{L_0^{\inv}}, \smash{L_1^{\inv}})$ admits a stable normal
trivialization will be several useful results from complex $K$--theory,
most trivial but one rather less so. A detailed treatment of the subject,
along with proofs of all results up to
\fullref{Cherncharacterisomorphism}, may be found in Hatcher~\cite{HatcherKTheory}.

Let $V$ be a complex vector bundle over a base space $X$, which for purposes of this paper we take to be a compact Hausdorff topological space. We let $\mathrm{Vect}^{\mathbb C}_n(X)$ denote the set of isomorphism classes of $n$--dimensional vector bundles over $X$. This is a monoid under the direct sum of vector bundles. For any $n$, as before we refer to the trivial complex vector bundle of degree $n$ over $X$ simply as $\mathbb C^n$. Then there is an equivalence relation $\approx_S$ on $\mathrm{Vect}^{\mathbb C}_n(X)$ defined as follows: given $V,W$ two $n$--dimensional vector bundles over $X$, we say that $V \approx_S W$ if and only if there is some $m$ such that $V \oplus \mathbb C^m \cong W \oplus \mathbb C^m$. The two vector bundles $V$ and $W$ are said to be \textit{stably isomorphic}. This relation respects the direct sums and tensor products of vector bundles, such that the set of equivalence classes of bundles under $\approx_{S}$ inherits two abelian laws of composition $[V]_S + [W]_S = [V \oplus W]_S$ and $[V]_S \times [W]_S = [V \otimes W]_S$. This set of equivalence classes may be given the structure of a ring by formally adjoining the inverse of each element under the direct sum. More precisely, we set
\begin{align*}
K^0(X) = \big\{[V]_S - [W]_S: [V]_S, [W]_S \text{ are equivalence classes
with respect to } \approx_S\big\}.
\end{align*}
Commonly $[V]_S - [\mathbb C^0]_S$ will be written simply as $[V]_S$ and its additive inverse $[\mathbb C^0]_S - [V]_S$ simply as $-[V]_S$. 

\begin{lemma}
$K^0(X)$ is a ring with respect to the operations $[V]_S + [W]_S = [V \oplus W]_S$ and $[V]_S \times [W]_S = [V \otimes W]_S$.
\end{lemma}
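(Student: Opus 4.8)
The plan is to proceed in two stages. First I would show that $\oplus$ and $\otimes$ descend to a commutative semiring structure on the set $\mathcal{S}(X) = \big(\bigsqcup_n \mathrm{Vect}^{\mathbb C}_n(X)\big)/\!\approx_S$ of all stable isomorphism classes; then I would verify that passing to formal differences (the Grothendieck completion of the additive monoid $\mathcal{S}(X)$) upgrades this to the ring $K^0(X)$.

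For the first stage the essential point is well-definedness of the two operations on $\approx_S$-classes. For $\oplus$ this is immediate: if $V \oplus \mathbb C^m \cong V' \oplus \mathbb C^m$ and $W \oplus \mathbb C^k \cong W' \oplus \mathbb C^k$, then $(V \oplus W) \oplus \mathbb C^{m+k} \cong (V' \oplus W') \oplus \mathbb C^{m+k}$, so $[V \oplus W]_S$ depends only on $[V]_S$ and $[W]_S$. For $\otimes$ one must use that $X$ is compact Hausdorff, so that every bundle $W$ over $X$ admits a complement $W^{\perp}$ with $W \oplus W^{\perp} \cong \mathbb C^N$ for some $N$: tensoring $V \oplus \mathbb C^m \cong V' \oplus \mathbb C^m$ with $W$ and then adding $(W^{\perp})^{\oplus m}$ to both sides yields $(V \otimes W) \oplus \mathbb C^{Nm} \cong (V' \otimes W) \oplus \mathbb C^{Nm}$, i.e.\ $V \otimes W \approx_S V' \otimes W$; symmetry in the two factors gives full well-definedness. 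With this established, associativity and commutativity of both operations, the units $[\mathbb C^0]_S$ and $[\mathbb C^1]_S$, and the distributive law all follow by pushing the corresponding natural isomorphisms of vector bundles (such as $V \otimes (W_1 \oplus W_2) \cong (V \otimes W_1) \oplus (V \otimes W_2)$) through to stable classes.

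For the second stage, $K^0(X)$ consists of formal differences $[V]_S - [W]_S$ taken modulo the Grothendieck relation, under which $[V]_S - [W]_S = [V']_S - [W']_S$ exactly when $[V \oplus W']_S = [V' \oplus W]_S$ in $\mathcal{S}(X)$; with componentwise addition $([V]_S - [W]_S) + ([V']_S - [W']_S) = [V \oplus V']_S - [W \oplus W']_S$ this is an abelian group with zero $[\mathbb C^0]_S$ and $-([V]_S - [W]_S) = [W]_S - [V]_S$. I would extend the product by
\[
([V]_S - [W]_S)\cdot([V']_S - [W']_S) = \big([V \otimes V']_S + [W \otimes W']_S\big) - \big([V \otimes W']_S + [W \otimes V']_S\big),
\]
check that it does not depend on the representatives chosen for either factor (a direct computation using the semiring axioms from the first stage together with the Grothendieck relation), and then observe that associativity and commutativity of $\cdot$, the multiplicative unit $[\mathbb C^1]_S$, and distributivity of $\cdot$ over $+$ are each inherited from the semiring axioms on $\mathcal{S}(X)$ by a routine bookkeeping argument tracking the four summands.

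The step I expect to be the main obstacle is the well-definedness of $\otimes$: first on $\approx_S$-classes, where it genuinely relies on compactness of $X$ through the existence of complementary bundles, and then again when extending $\cdot$ to formal differences, where the representatives must be tracked with some care. Everything else amounts to a routine transcription of vector-bundle isomorphisms into the language of stable classes.
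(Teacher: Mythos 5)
The paper gives no proof of this lemma; it is stated as a standard fact and the reader is referred to Hatcher's $K$--theory notes for a full treatment. Your argument is correct and is exactly the standard one: well-definedness of $\oplus$ and $\otimes$ on $\approx_S$--classes makes the set of stable classes a commutative semiring, and the Grothendieck completion of its additive monoid then yields the ring $K^0(X)$. You are right to flag the well-definedness of $\otimes$ as the only genuinely nontrivial point, and your use of complementary subbundles (which requires $X$ compact Hausdorff) to upgrade $(V\otimes W)\oplus W^{\oplus m}\cong(V'\otimes W)\oplus W^{\oplus m}$ to a stable isomorphism is the correct mechanism. One small remark worth making explicit: the same existence of complements shows that the monoid of stable classes is already cancellative, which is what justifies writing the Grothendieck relation in the simplified form $[V\oplus W']_S=[V'\oplus W]_S$ rather than the general form involving an auxiliary summand. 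With that noted, your two-stage plan is complete and matches the standard treatment the paper is implicitly relying on.
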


There is also a reduced form of this ring $\wtilde{K}(X)$ constructed as follows. Let $\sim$ be a second equivalence relation on $\bigcup_{n \in \mathbb N} \mathrm{Vect}^{\mathbb C}_n$ such that $V \sim W$ if there is some $m_1,m_2$ such that $V \oplus \mathbb C^{m_1} \cong W \oplus \mathbb C^{m_2}$. In this case one can show that the set of equivalence classes with respect to $\sim$ contains additive inverses without adjoining any additional elements. Let the equivalence class of a vector bundle $V$ with respect to $\sim$ be $[V]$.

\begin{lemma}
$\wtilde{K}^0(X)$ is a ring with respect to the operations $[V] + [W] = [V \oplus W]$ and $[V] \times [W] = [V \otimes W]$.
\end{lemma}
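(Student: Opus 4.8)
The plan is to verify the ring axioms directly, reducing almost everything to standard natural isomorphisms of vector bundles, with the one genuinely nonformal input being the existence of additive inverses. First I would check that the two operations descend to $\sim$--classes: if $V \sim V'$ and $W \sim W'$, then direct summing the defining isomorphisms (after stabilizing so that the trivial summands match) shows $V \oplus W \sim V' \oplus W'$, and applying the distributive isomorphism $(V \oplus \mathbb{C}^a) \otimes (W \oplus \mathbb{C}^b) \cong (V \otimes W) \oplus V^{\oplus b} \oplus W^{\oplus a} \oplus \mathbb{C}^{ab}$ shows $V \otimes W \sim V' \otimes W'$. Associativity and commutativity of $+$ and of $\times$, and the facts that $[\mathbb{C}^0]$ is an additive unit and $[\mathbb{C}^1]$ a multiplicative unit, are then immediate from the natural bundle isomorphisms $U \oplus (V \oplus W) \cong (U \oplus V) \oplus W$, $V \oplus W \cong W \oplus V$, $V \oplus \mathbb{C}^0 \cong V$ (and their $\otimes$, $\mathbb{C}^1$ analogues); distributivity follows from $U \otimes (V \oplus W) \cong (U \otimes V) \oplus (U \otimes W)$.

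The main obstacle --- the only place where anything about $X$ is used --- is the existence of additive inverses. Here I would invoke the standard fact that over a compact Hausdorff space every complex vector bundle $V$ is a direct summand of a trivial bundle. Concretely: choose a finite open cover $\{U_i\}_{i=1}^k$ on which $V$ is trivial, with trivializations $h_i \colon V|_{U_i} \to U_i \times \mathbb{C}^n$, together with a subordinate partition of unity $\{\rho_i\}$; then the assignment sending $v \in V_x$ to the tuple $(\rho_i(x)\,\mathrm{pr}_2(h_i(v)))_{i=1}^k$ defines a bundle map $V \to X \times \mathbb{C}^{kn}$ which is injective on each fibre, realizing $V$ as a subbundle of $\mathbb{C}^{kn}$. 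Equipping $\mathbb{C}^{kn}$ with its standard Hermitian metric and taking $W$ to be the orthogonal complement subbundle gives $V \oplus W \cong \mathbb{C}^{kn}$, so that $[V] + [W] = [\mathbb{C}^{kn}] = [\mathbb{C}^0]$ in $\wtilde{K}^0(X)$, i.e. $[W] = -[V]$. This makes $(\wtilde{K}^0(X), +)$ an abelian group, and combined with the multiplicative axioms above it is a commutative unital ring.

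Finally, I would record the alternative (and perhaps cleaner) bookkeeping: using such a complement $W'$ with $W \oplus W' \cong \mathbb{C}^N$, one sees that every element of $\wtilde{K}^0(X)$ is of the form $[V]$ for a single bundle $V$, so that the assignment $[V]_S - [W]_S \mapsto [V] - [W] = [V \oplus W']$ is a well-defined surjective ring homomorphism from the ring $K^0(X)$ of the preceding lemma onto $\wtilde{K}^0(X)$; the ring structure then descends along this surjection. Either way, the single load-bearing ingredient is the summand-of-trivial-bundle property for bundles over a compact base, and everything else is formal manipulation of natural isomorphisms.
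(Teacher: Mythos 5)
The paper does not supply its own proof of this lemma; it cites Hatcher's $K$--theory notes for all the preliminaries in this section, so the proposal has to stand on its own. The additive part of your argument is correct: $\oplus$ plainly descends to $\sim$--classes, and the partition-of-unity construction realizing any bundle over a compact Hausdorff base as a summand of a trivial bundle (so that the orthogonal complement provides $-[V]$) is exactly the standard argument. That is the real content of the lemma, and the one place compactness is used.

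There is, however, a genuine gap on the multiplicative side, inherited from an imprecision in the lemma's statement. The formula $[V]\times[W]=[V\otimes W]$ does \emph{not} descend to $\sim$--classes, and the distributive isomorphism $(V\oplus\mathbb C^a)\otimes(W\oplus\mathbb C^b)\cong(V\otimes W)\oplus V^{\oplus b}\oplus W^{\oplus a}\oplus\mathbb C^{ab}$ you invoke does not show that it does: the cross terms $V^{\oplus b}\oplus W^{\oplus a}$ are not trivial bundles, so they do not disappear under $\sim$. Concretely, $[\mathbb C^0]=[\mathbb C^1]$ in $\wtilde K^0(X)$, yet $\mathbb C^0\otimes W=\mathbb C^0$ while $\mathbb C^1\otimes W=W$, and these have different $\sim$--classes whenever $W$ is not stably trivial. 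Your closing ``cleaner bookkeeping'' has the same defect: the map $K^0(X)\to\wtilde K^0(X)$ sending $[V]_S-[W]_S$ to $[V\oplus W']$ (with $W\oplus W'\cong\mathbb C^N$) is indeed the projection onto the rank-zero part and is an additive homomorphism, but it cannot be a ring homomorphism, because its kernel --- the copy of $\mathbb Z$ spanned by $[\mathbb C^1]_S$ --- is a unital subring of $K^0(X)$, not an ideal. The correct statement, which is what Hatcher actually proves, is that $\wtilde K^0(X)$ is the kernel of the rank map $K^0(X)\to\mathbb Z$, hence an ideal in $K^0(X)$, and therefore inherits a (non-unital) ring structure by \emph{restricting} the product on $K^0(X)$; in $\sim$--class terms the product of $[V]$ and $[W]$ is the class representing the rank-zero element $[V\otimes W]_S-(\dim W)[V]_S-(\dim V)[W]_S+(\dim V)(\dim W)$, not $[V\otimes W]$.
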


Then $K^0(X) \cong \wtilde{K}^0(X) \oplus \mathbb Z$. In both cases a
vector bundle in the same equivalence class as $\mathbb C^m$ is said to be
\textit{stably trivial}. Notice as a most basic case that for $\{x_0\}$ a
one point space $K^0(x_0) = \mathbb Z$ and $\wtilde{K}^0(x_0) = 0$.

Given a continuous map $f \co X \rightarrow Y$, the corresponding map $f^*
\co \mathrm{Vect}^{\mathbb C}_n(Y) \rightarrow  \mathrm{Vect}^{\mathbb
C}_n(X)$ which maps a vector bundle $V$ over $Y$ to its pullback $f^*(V)$
descends to maps $f^* \co K^0(Y) \rightarrow K^0(Y)$ and $f^* \co
\wtilde{K}^0(Y) \rightarrow \wtilde{K}^0(X)$. Recalling that if maps $f,g
\co X \rightarrow Y$ are homotopic then $f^*(V)$ and $g^*(V)$ the two
pullbacks of a bundle $V$ over $Y$ are isomorphic, we see that homotopic
maps $f$ and $g$ induce the same maps $f^* = g^*$ on $K^0(Y)$ and $\wtilde{K}^0(Y)$. In particular, we have the following lemma.

\begin{lemma}
If $f \co X_1 \rightarrow X_2$ is a homotopy equivalence with homotopy inverse $g \co X_2 \rightarrow X_1$, the induced map $f^* \co K^0(X_2) \rightarrow K^0(X_1)$ is an isomorphism with inverse $g^*$. The same is true of the reduced theory.
\end{lemma}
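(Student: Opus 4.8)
The plan is to deduce this purely formally from two facts already recorded above: the contravariant functoriality of the pullback construction on vector bundles, and its homotopy invariance (the observation immediately preceding the lemma that homotopic maps induce the same map on $K^0$ and $\widetilde{K}^0$). No geometry or analysis is needed.

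First I would note that for composable continuous maps $f \co X_1 \to X_2$ and $h \co X_2 \to X_3$ there is a natural isomorphism $(h \circ f)^*(V) \cong f^*(h^*(V))$ of vector bundles over $X_1$ for every bundle $V$ over $X_3$. Since the induced maps on $K^0$ and $\widetilde{K}^0$ are defined on stable (resp.\ formal-difference) isomorphism classes and pullback commutes with $\oplus$, this descends to the equality of ring homomorphisms $(h \circ f)^* = f^* \circ h^*$ on both $K^0(X_3) \to K^0(X_1)$ and $\widetilde{K}^0(X_3) \to \widetilde{K}^0(X_1)$; likewise $(\mathrm{Id}_X)^*$ is the identity on $K^0(X)$ and on $\widetilde{K}^0(X)$.

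Now apply this to the two composites $g \circ f \co X_1 \to X_1$ and $f \circ g \co X_2 \to X_2$: functoriality gives $f^* \circ g^* = (g \circ f)^*$ and $g^* \circ f^* = (f \circ g)^*$. By hypothesis $g \circ f \simeq \mathrm{Id}_{X_1}$ and $f \circ g \simeq \mathrm{Id}_{X_2}$, so homotopy invariance yields $(g \circ f)^* = (\mathrm{Id}_{X_1})^* = \mathrm{Id}_{K^0(X_1)}$ and $(f \circ g)^* = (\mathrm{Id}_{X_2})^* = \mathrm{Id}_{K^0(X_2)}$. Hence $f^*$ and $g^*$ are mutually inverse, so $f^* \co K^0(X_2) \to K^0(X_1)$ is an isomorphism with inverse $g^*$. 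Every ingredient --- functoriality, $(\mathrm{Id})^* = \mathrm{Id}$, and homotopy invariance --- holds verbatim for $\widetilde{K}^0$, so the reduced statement follows by the identical argument.

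The honest answer is that there is no real obstacle here; the only point deserving a line of care is checking that the contravariant functoriality $(h \circ f)^* = f^* \circ h^*$, which is clear for pullbacks of honest vector bundles, genuinely passes to the Grothendieck groups $K^0$ and $\widetilde{K}^0$ --- but this is immediate because those groups are built from (stable or formal-difference) classes of bundles and pullback respects direct sums, so it induces well-defined ring maps that compose as claimed.
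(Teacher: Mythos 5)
Your argument is correct and is exactly the implicit argument in the paper: the lemma is stated as an immediate consequence of the preceding paragraph's observation that pullback descends functorially to $K^0$ and $\widetilde{K}^0$ and that homotopic maps induce equal pullbacks, which combined with $(h\circ f)^* = f^*\circ h^*$ and $(\mathrm{Id})^* = \mathrm{Id}$ gives the result. You have simply written out the routine verification the paper leaves to the reader.
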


We will use this lemma to deal with the minor problem that $M^{\inv}$ is
not actually compact. The three spaces whose $K$--theory will be of
interest to us are $M^{\inv} \times [0,1]$, its subspace $X =
\smash{L_0^{\inv}} \times \{0\} \cup \smash{L_1^{\inv}} \times \{1\}$, and
the quotient space $M^{\inv}/X$. The space $X$ is a disjoint union of two
tori, hence compact. We have seen previously that $M^{\inv}$ deformation
retracts onto a compact subspace homeomorphic to the $n-1$ skeleton of
$(S^1)^{2n-1}$. For purposes of dealing with $K^0(M^{\inv})$ let us choose
a slightly different deformation retraction. Let $Y = \big(\bigcup
\alpha_i\big) \cup \big(\bigcup \beta_i\big)$. Then let $F \co (S^2 \backslash
\{\mathbf{w},\mathbf{z}\}) \times [0,1] \rightarrow S^2 \backslash
\{\mathbf{w},\mathbf{z}\}$ be a deformation retraction from the punctured
sphere to the union of $Y$ and all components of $S^2 \backslash Y$ not
containing any point of $\{\mathbf{w},\mathbf{z}\}$. This is a compact
subspace of the punctured sphere; let it be $Z$. Then $F$ induces a
deformation retraction $\mathrm{Sym}^{n-1}(F)$ of $M^{\inv} =
\mathrm{Sym}^{n-1}(S^2 \backslash \{\mathbf{w},\mathbf{z}\})$ onto the
compact set $\mathrm{Sym}^{n-1}(Z)$, so we can legitimately refer to the
$K$--theory of $M^{\inv}$ by identifying it with
$K^0(\mathrm{Sym}^{n-1}(Z))$. Notice further that $F$ is the identity on
$Y \times I$, so the induced deformation retraction from $M^{\inv}$
preserves $X \subset \mathrm{Sym}^{n-1}(Y)$. Therefore we may produce a
deformation retract $\smash{\overline{\mathrm{Sym}^{n-1}(F)}}$ from $M^{\inv}/X$ to $\mathrm{Sym}^{n-1}(Z)/X$, and $\mathrm{Sym}^{n-1}(Z)/X$ is compact Hausdorff. So we identify the $K$--groups of $M^{\inv}/X$ with those of $\mathrm{Sym}^{n-1}(Z)/X$, and from now on make no further reference to this technical subtlety.

The relationship between homotopy classes of maps and pullbacks of vector bundles in fact gives us the following deeper proposition, which follows from the theorem that any $n$--dimensional complex vector bundle over $X$ is a pullback of the canonical $n$--dimensional bundle over the Grassmanian $G_n(\mathbb C^{\infty})$ along some homotopy class of maps $X \rightarrow G_n(\mathbb C^{\infty})$.

\begin{proposition}
$\wtilde{K}^0(X) \cong [X, BU]$.
\end{proposition}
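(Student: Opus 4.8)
The plan is to deduce the proposition from the classification theorem cited above, together with a compactness argument that passes to the limit over the stabilization maps. Write $BU(n) = G_n(\mathbb C^\infty)$, so that $BU = \bigcup_n BU(n)$ is the increasing union of the closed subspaces $BU(n)$, where the inclusion $BU(n) \hookrightarrow BU(n+1)$ is (up to homotopy) the map classifying the stabilization operation $V \mapsto V \oplus \mathbb C$ on $n$--dimensional bundles. The classification theorem supplies, for each $n$, a bijection $\mathrm{Vect}^{\mathbb C}_n(X) \cong [X, BU(n)]$ which is natural in $X$ and under which the stabilization map $\mathrm{Vect}^{\mathbb C}_n(X) \to \mathrm{Vect}^{\mathbb C}_{n+1}(X)$ corresponds to postcomposition with $BU(n) \hookrightarrow BU(n+1)$.

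First I would show that $[X, BU] = \mathrm{colim}_n\, [X, BU(n)]$. Since $X$ is compact Hausdorff, $f(X)$ is compact for any continuous $f \co X \to BU$, hence contained in some $BU(N)$; the same applies to any homotopy $X \times [0,1] \to BU$, whose domain is again compact. Thus the canonical map $\mathrm{colim}_n\, [X, BU(n)] \to [X, BU]$ is a bijection. Next I would identify $\mathrm{colim}_n\, [X, BU(n)]$ with $\wtilde{K}^0(X)$ as a set: under the classification theorem this colimit is $\bigl(\bigcup_n \mathrm{Vect}^{\mathbb C}_n(X)\bigr)/{\sim}$, where $V \sim W$ exactly when $V \oplus \mathbb C^{m_1} \cong W \oplus \mathbb C^{m_2}$ for some $m_1, m_2$ (given such an isomorphism, enlarging $m_1$ and $m_2$ so the ranks agree identifies $V$ with $W$ after finitely many stabilizations, and the converse is immediate). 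This is precisely the relation defining $\wtilde{K}^0(X)$.

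Finally I would check that this bijection is a group homomorphism. The group law on $[X, BU]$ is induced by the $H$--space structure on $BU$ coming from Whitney sum $BU(m) \times BU(n) \to BU(m+n)$, which on classified bundles is $([V],[W]) \mapsto [V \oplus W]$, matching the addition on $\wtilde{K}^0(X)$. (Only the additive structure is asserted here; the tensor-product ring structure is not visible from $[X, BU]$ alone, which is why the statement is phrased as an isomorphism of the underlying abelian groups.)

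The main obstacle is the interchange of $[X,-]$ with the colimit in the second step: mapping spaces do not in general commute with colimits, so it is essential to exploit the compactness of both $X$ and $X \times [0,1]$ and the fact that $BU$ is the increasing union of the closed subspaces $BU(n)$. Once that is established, the remainder is a routine unwinding of the definitions of $\approx_S$, of $\sim$, and of the $H$--space structure on $BU$.
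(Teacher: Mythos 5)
Your proof is correct and gives the standard argument that the paper leaves implicit (the paper simply cites the classification theorem $\mathrm{Vect}^{\mathbb C}_n(X) \cong [X, G_n(\mathbb C^\infty)]$ and refers the reader to Hatcher for details). Filling in the compactness argument for commuting $[X,-]$ with the colimit and then unwinding the stabilization relation to recover $\sim$ is exactly how the deferred argument goes.
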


Here $BU$ is the classifying space of the infinite unitary group.

We extend our definition of $\wtilde{K}^0(X)$ as follows. Let
$\wtilde{K}^{-i}(X) = \wtilde{K}^{0}(\Sigma^i(X))$, where $\Sigma(X)$ is (for this section only) the reduced suspension of $X$. Then we have the following.


\begin{proposition}[The Bott Periodicity Theorem]
$\wtilde{K}^0(X) \cong \wtilde{K}^0(\Sigma^2(X))$.
\end{proposition}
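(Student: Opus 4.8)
The plan is to deduce Bott periodicity from the external product theorem via the Atiyah--Bott clutching-function argument; this is the route carried out in detail in Hatcher~\cite{HatcherKTheory}. The organizing remark is that $\Sigma^2 X = S^2 \wedge X$, and that for based compact Hausdorff spaces the cofiber sequence $X \vee S^2 \to X \times S^2 \to X \wedge S^2$ splits in reduced $K$--theory, so that
\[
\wtilde{K}^0(X \times S^2) \cong \wtilde{K}^0(\Sigma^2 X) \oplus \wtilde{K}^0(X) \oplus \wtilde{K}^0(S^2).
\]
Since $\wtilde{K}^0(S^2) \cong \mathbb Z$ is generated by $\beta = [H] - 1$, where $H$ is the tautological line bundle on $S^2 = \mathbb{CP}^1$, and $\beta^2 = 0$, everything will follow once one knows that the external product $\mu \co K^0(X) \otimes_{\mathbb Z} K^0(S^2) \to K^0(X \times S^2)$ is a ring isomorphism. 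Indeed, granting this, $K^0(X \times S^2)$ is the free $K^0(X)$--module on $1$ and $\beta$, and restricting $\mu$ to the summand $\wtilde{K}^0(X) \otimes \mathbb Z\beta$ identifies it with the smash summand $\wtilde{K}^0(X \wedge S^2) = \wtilde{K}^0(\Sigma^2 X)$; the map $a \mapsto \mu(a \otimes \beta)$ is then the desired isomorphism $\wtilde{K}^0(X) \cong \wtilde{K}^0(\Sigma^2 X)$.

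To prove the external product theorem I would run the usual clutching-function analysis. Writing $S^2 = D_+ \cup_{S^1} D_-$, every complex vector bundle on $X \times S^2$ that is trivial on $X \times \{\mathrm{pt}\}$ arises as $E_f$ for a clutching function $f \co X \times S^1 \to GL_n(\mathbb C)$, with nearby clutching functions producing isomorphic bundles and the assignment $[f] \mapsto [E_f]$ compatible with direct sums and tensor products. First one approximates $f$ uniformly by a Laurent polynomial clutching function in the circle coordinate $z$; then, after tensoring $E_f$ with a suitable power of $H$ (whose clutching function is $z$), one arranges that $f(z) = \sum_{k=0}^{n} a_k z^k$ is an honest polynomial clutching function with coefficients continuous endomorphisms over $X$.

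The technical heart, and the step I expect to be the main obstacle, is the \emph{linearization lemma}: after direct summing with a trivial (identity) clutching function of appropriate rank and conjugating by a $z$--independent automorphism over $X$, such an $f$ can be brought to the linear form $z \mapsto zA + B$ with $zA + B$ invertible for all $|z| = 1$ --- a companion-matrix type manipulation carried out carefully enough that continuity in $X$ and invertibility on the unit circle are preserved at every stage. Once $f$ is linear, the family of Riesz projections $p = \frac{1}{2\pi i}\oint_{|z| = 1}(zA + B)^{-1} A\, dz$ splits the ambient bundle over $X$ into pieces carrying the spectral data ``inside'' and ``outside'' the unit circle, and a direct computation exhibits $E_f$, up to stable isomorphism and tensoring by powers of $H$, as an element of the image of $\mu$; pushing the powers of $H$ through the relation $([H] - 1)^2 = 0$ yields surjectivity of $\mu$. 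Injectivity comes from applying the same normal-form analysis to a clutching function realizing an isomorphism $E_f \cong E_g$, or equivalently from checking that $\mu$ is injective on the explicit free-module description of $K^0(X \times S^2)$. I would also note as an alternative Bott's original proof, which obtains $\pi_i(U) \cong \pi_{i+2}(U)$ by Morse theory on the loop space $\Omega U$, identifying the relevant space of minimal geodesics with a complex Grassmannian; both approaches, in full, appear in Hatcher~\cite{HatcherKTheory}.
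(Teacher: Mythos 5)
The paper does not prove Bott periodicity at all: it states the proposition without proof in its review of $K$--theory and defers, with the other preliminaries up through the Chern character isomorphism, to Hatcher's $K$--theory notes. Your sketch --- deducing periodicity from the external product theorem via the cofiber-sequence splitting of $\wtilde{K}^0(X\times S^2)$, and proving the product theorem by the Atiyah--Bott clutching-function reduction (Laurent approximation, normalization to a polynomial clutching function, the linearization lemma, and the spectral splitting of a linear clutching function) --- is precisely the argument in Hatcher, so your proposal is correct and matches the route the paper implicitly invokes by citation.
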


We use this two-periodicity to define $\wtilde{K}^i(X)$ for positive $i$.
Now notice that $\wtilde{K}^0(X)=K^0(X^+)$, where $X^+$ denotes the
one-point compactification of $X$ (when $X$ is compact, this will mean $X$
together with a point at infinity). In particular, $K^1(X) = \wtilde{K}^0(\Sigma(X))$.  Then the groups $K^i(X)$ are also doubly-periodic.

With $K^i(X)$ now defined for all $i \in \mathbb Z$, we are ready for the
following proposition, which deals with $K^*(X) = \bigcup_i K^i(X)$ merely as a collection of abelian groups and (momentarily) ignores their ring structure.

\begin{proposition} 
The groups $K^i(X)$ form a generalized cohomology theory on compact
topological spaces. Likewise, the groups $\wtilde{K}^i(X)$ form a reduced cohomology theory.
\end{proposition}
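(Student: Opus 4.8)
The plan is to verify that the functors $\wtilde{K}^i$ satisfy the Eilenberg--Steenrod axioms for a reduced cohomology theory with the dimension axiom omitted --- homotopy invariance, the exact sequence of a cofibre pair, excision, and the wedge (additivity) axiom --- and then to deduce the unreduced statement for pairs formally. Since every space to which this machinery will later be applied is a finite CW complex, I will assume throughout that the subspace inclusions $A \hookrightarrow X$ in question are closed cofibrations, which removes all point-set subtleties. Homotopy invariance is already essentially in hand: we noted above that homotopic maps $f \simeq g$ induce $f^* = g^*$ on $\wtilde{K}^0$, and since the reduced suspension $\Sigma$ is a homotopy functor this persists on each $\wtilde{K}^{-i}(X) = \wtilde{K}^0(\Sigma^i(X))$ and hence, via the Bott Periodicity Theorem, on every $\wtilde{K}^i$.

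The substantive step is exactness, which rests on the following lemma: for a closed cofibration $A \hookrightarrow X$ of pointed compact spaces, with inclusion $\iota \co A \hookrightarrow X$ and quotient $p \co X \rightarrow X/A$, the sequence
\[
\wtilde{K}^0(X/A) \longrightarrow \wtilde{K}^0(X) \longrightarrow \wtilde{K}^0(A)
\]
induced by $p$ and $\iota$ is exact at the middle term. The inclusion $\mathrm{im}\, p^* \subseteq \ker \iota^*$ is immediate because $p \circ \iota$ is constant and $\wtilde{K}^0$ of a point vanishes. For the reverse inclusion one invokes the \emph{clutching} construction: if a complex vector bundle $V$ on $X$ has stably trivial restriction to $A$, then after adding a trivial summand $V$ becomes genuinely trivial over $A$; choosing such a trivialization, spreading it over a neighbourhood of $A$ using the cofibration property, and collapsing $A$, one produces a bundle $V'$ on $X/A$ with $p^* V' \cong V \oplus \mathbb C^m$, so $[V]$ lies in the image of $p^*$. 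Granting this lemma, apply $\wtilde{K}^0$ to the Puppe cofibre sequence
\[
A \longrightarrow X \longrightarrow X/A \longrightarrow \Sigma A \longrightarrow \Sigma X \longrightarrow \Sigma(X/A) \longrightarrow \cdots,
\]
whose every three consecutive terms are, up to homotopy equivalence, of the form $B \rightarrow C \rightarrow C/B$; the lemma then gives exactness at each spot, producing
\[
\cdots \longrightarrow \wtilde{K}^{-1}(A) \longrightarrow \wtilde{K}^0(X/A) \longrightarrow \wtilde{K}^0(X) \longrightarrow \wtilde{K}^0(A),
\]
and Bott periodicity splices this into a two-sided infinite exact sequence in the groups $\wtilde{K}^i$. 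Excision comes for free, since the theory is built directly from the quotient $X/A$.

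For the wedge axiom, collapsing all but one summand of a finite wedge $\bigvee_j X_j$ of pointed compact spaces and iterating the exact sequence yields $\wtilde{K}^0\big(\bigvee_j X_j\big) \cong \bigoplus_j \wtilde{K}^0(X_j)$, which is all the additivity we require; the unreduced cohomology theory on compact pairs is then recovered by setting $K^i(X,A) = \wtilde{K}^i(X/A)$ and $K^i(X) = \wtilde{K}^i(X^+)$ and transcribing the reduced axioms. The main obstacle is the clutching lemma itself, together with the careful bookkeeping required to move between ``trivial'' and ``stably trivial'' bundles so as to stay compatible with the stabilized equivalence defining $\wtilde{K}^0$; once that is settled, every remaining axiom is a formal consequence of the Puppe sequence and Bott periodicity, both of which we already have.
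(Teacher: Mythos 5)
The paper states this proposition without proof, deferring to Hatcher's $K$--theory notes for the details, and your sketch correctly reconstructs that standard argument: the collapsing-bundle lemma (which you call ``clutching'') giving exactness at the middle of $\wtilde{K}^0(X/A) \to \wtilde{K}^0(X) \to \wtilde{K}^0(A)$, propagation of exactness along the Puppe cofibre sequence, Bott periodicity to extend the sequence in both directions, and a formal passage to the unreduced theory. Nothing is missing, and your outline matches the cited reference's approach.
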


A relevant note is that if $\{x_0\}$ is a one point space, we have
\begin{align*}
K^i(\{x_0\}) = 
\begin{cases}
\mathbb Z \text{ if } i \text{ even } \\
0 \text{ if } i \text{ odd }.
\end{cases}
\end{align*}
One consequence of this structure on the $K$--theory of a space is that
there is a notion of the relative $K$--theory of a compact space $X$ and a
closed subspace $Y$. We let $K^0(X,Y)$ be the ring of isomorphism classes
of vector bundles over $X$ which restrict to a trivial bundle over $Y$.
Equivalently, $K^0(X,Y) \cong \wtilde{K}^0(X/Y)$; notice that if $X$ is
compact Hausdorff and $Y$ is a closed subspace, $X/Y$ is compact
Hausdorff, so this statement makes sense. With respect to this
construction, the long exact sequence of a pair $(X, x_0)$ consisting of a
space $X$ together with a basepoint $x_0$ becomes a long exact sequence
relating the groups $K^i(X)$ and $\wtilde{K}^i(X)$. 
\[
\cdots \rightarrow \wtilde{K}^i(X) \rightarrow K^i(X) \rightarrow K^i(x_0) \rightarrow
\wtilde{K}^{i-1}(X) \rightarrow \cdots
\]
For our purposes, the crucial results from $K$--theory will be those
relating $K^*(X)$ to the rational cohomology $H(X; \mathbb Q)$ of $X$. The
exact sequence above, and the corresponding exact sequence on the reduced
and unreduced rational cohomology of $X$, will allow us to move between
maps on $K^*(X)$ and $\wtilde{K}^*(X)$ with relative ease.

Recall that the Chern classes of a vector bundle $V$ over a space $X$ are a set of natural characteristic classes $c_i(V) \in H^{2i}(X)$. The total Chern class of an $n$--dimensional vector bundle $V$ is $c(V) = c_0(V) + c_1(V) + \cdots + c_n(X)$. If $X$ has the structure of a complex manifold, as do most spaces of interest to us, we will often use $c(X)$ to refer to the total Chern class of the tangent bundle $TX$ of $X$. The Chern classes may be used to produce a ring homomorphism $K^0(X) \cup K^1(X) \rightarrow H^*(X; \mathbb Q)$.

Let $\sigma_j(x_1,\ldots ,x_k)$ be the elementary symmetric functions on $k$ elements as in \fullref{HomotopyCohomologySection}. Then we assert the existence of a new set of polynomials $s(x_1,\ldots ,x_k)$ with the following properties.

\begin{lemma}
There exists a unique set of polynomials $s_j(x_1,\ldots ,x_k)$ with the property that $s_j(\sigma_1(x_1,\ldots ,x_k),\ldots ,\sigma_k(x_1,\ldots ,x_k)) = x_1^j + \cdots + x_k^j$.
\end{lemma}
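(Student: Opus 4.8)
The plan is to deduce this from the fundamental theorem of symmetric polynomials together with the algebraic independence of the elementary symmetric functions. First I would note that the power sum $p_j(x_1,\ldots,x_k) = x_1^j + \cdots + x_k^j$ is invariant under every permutation of the variables, hence is a symmetric polynomial; by the fundamental theorem of symmetric polynomials it therefore lies in the subring $\mathbb{Z}[\sigma_1,\ldots,\sigma_k]$ generated by the elementary symmetric functions. Applying this to $p_j$ produces a polynomial $s_j$ with $s_j(\sigma_1(x_1,\ldots,x_k),\ldots,\sigma_k(x_1,\ldots,x_k)) = p_j(x_1,\ldots,x_k)$, which is the required existence statement. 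For uniqueness I would invoke the fact that $\sigma_1,\ldots,\sigma_k$ are algebraically independent, so that the substitution homomorphism $\mathbb{Z}[y_1,\ldots,y_k] \to \mathbb{Z}[x_1,\ldots,x_k]$, $y_i \mapsto \sigma_i$, is injective; any two polynomials realizing $p_j$ then differ by an element of its kernel, hence coincide.

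Alternatively --- and this is the route I would actually write out, since it is self-contained and exhibits the $s_j$ explicitly --- I would construct the $s_j$ recursively using Newton's identities. For $1 \le j \le k$ one has
\[
p_j - \sigma_1 p_{j-1} + \sigma_2 p_{j-2} - \cdots + (-1)^{j-1}\sigma_{j-1} p_1 + (-1)^{j} j\,\sigma_j = 0,
\]
and for $j > k$ the same relation holds with the convention $\sigma_i = 0$ for $i > k$. Solving for $p_j$ expresses it as an integer combination of products of $\sigma_1,\ldots,\sigma_j$ with $p_1,\ldots,p_{j-1}$; since $p_1 = \sigma_1$, induction on $j$ yields polynomials $s_j \in \mathbb{Z}[y_1,\ldots,y_k]$ with $s_j(\sigma_1,\ldots,\sigma_k) = p_j$, and the recursion pins down each $s_j$ from $s_1,\ldots,s_{j-1}$. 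Uniqueness of the whole collection then reduces, exactly as above, to algebraic independence of the $\sigma_i$.

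There is no serious obstacle here: this is a classical fact about symmetric functions, and in the present context it is needed only so that the resulting classes can be fed into the Chern character in the next section. The one point that genuinely uses a hypothesis is the uniqueness clause, where one must know that the $\sigma_i$ satisfy no polynomial relation (equivalently, that $\mathbb{Z}[x_1,\ldots,x_k]$ is free of rank $k!$ over $\mathbb{Z}[\sigma_1,\ldots,\sigma_k]$); I would cite this alongside the fundamental theorem of symmetric polynomials rather than reprove it.
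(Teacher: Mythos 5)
Your proposal is correct and your second, recursive route is essentially the one the paper takes: the paper states the lemma and then immediately defines the $s_j$ by the Newton's-identity recursion, treating existence and uniqueness as following from that recursion together with $s_1 = \sigma_1$. Your first route (fundamental theorem of symmetric polynomials for existence, algebraic independence of the $\sigma_i$ for uniqueness) is a cleaner and more conceptual argument that the paper doesn't spell out, but it proves the same thing.

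One point worth flagging, since it bears on the precise recursion: the Newton's identity you write,
\[
p_j - \sigma_1 p_{j-1} + \sigma_2 p_{j-2} - \cdots + (-1)^{j-1}\sigma_{j-1}p_1 + (-1)^{j}\,j\,\sigma_j = 0,
\]
is the correct one. The recursion displayed in the paper,
\[
s_j = \sigma_1 s_{j-1} - \sigma_2 s_{j-2} + \cdots + (-1)^{j-2}\sigma_{j-2}s_2 + (-1)^{j-1}\sigma_{j-1}s_1,
\]
omits the final term $(-1)^{j-1}\,j\,\sigma_j$, which is only negligible when $j>k$ (so $\sigma_j = 0$). For $j \le k$ the paper's stated recursion is therefore incomplete; your version supplies the missing term. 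This does not affect the truth of the lemma or the ultimate use of the $s_j$ in the Chern character, but it is a correction worth making.
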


The polynomials $s_j$ are defined recursively in terms of the elementary symmetric functions and the $s_i$ of lower degree by the following relation.
\begin{align*}
s_j = \sigma_1 s_{j-1} - \sigma_2 s_{j-2} + \cdots + (-1)^{j-2}\sigma_{j-2}s_{2} + (-1)^{j-1}\sigma_{j-1}s_1
\end{align*}
We can now define the Chern character of a vector bundle.
\begin{align*}
\ch\co K^0(X) &\rightarrow H^{\mathrm{even}}(X; \mathbb Q) \\
[V]_S &\mapsto n + \sum_{j>0} s_k(c_1(V),\ldots ,c_k(V))/k!
\end{align*}
While this is the quickest way to produce this definition, it is not clear
that it is necessarily the most intuitive. To clarify: this definition is
explicitly chosen so that if $L$ is a line bundle over $X$ and the total
Chern class of $L$ is $1 + c_1(L)$, then $\ch(L) = 1 +c_1(L) +
\frac{c_1(L)^2}{2!} + \frac{c_1(L)^3}{3!}+\cdots$, the ``exponential'' of
the total Chern class of $L$. As the map $\ch$ was intended to be a ring
isomorphism on $K^0(X)$, we next require that for a product of line
bundles $L_1 \otimes L_2 \otimes \cdots \otimes L_k$ over $X$ we have
$\ch(L_1 \otimes \cdots \otimes L_k) = \ch(L_1)\cdots\ch(L_k)$. A computation leads to the formula above.

The Chern character descends to a reduced map $\wtilde{\ch}\co
\wtilde{K}^0(X) \rightarrow \wtilde{H}^{\mathrm{even}}(X; \mathbb Q)$. We may also consider the Chern character of vector bundles on $\Sigma(X)$, which yields the following map into the odd cohomology of $X$.
\begin{align*}
\ch\co K^1(X)=\wtilde{K}^0(\Sigma(X)) \rightarrow \wtilde{H}^{\mathrm{even}}(\Sigma(X);\mathbb Q)\cong H^{\mathrm{odd}}(X;\mathbb Q)
\end{align*}
Similarly, we define $\widetilde{\ch}\co\wtilde{K}^1(X)\rightarrow
\wtilde{H}^{\mathrm{odd}}(X;\mathbb Q)$. We then have the following extremely useful result.

\begin{proposition} \label{Cherncharacterisomorphism}
The Chern character induces a rational isomorphism
\begin{align*}
\ch \co K^0(X)\cup K^1(X) \otimes \mathbb Q \rightarrow H^*(X; \mathbb Q).
\end{align*}
In particular, the rank of $K^0(X)\cup K^1(X)$ is equal to the rank of
$H^*(X)$. The analogous statement holds for $\widetilde{\ch}$ and the two reduced theories.
\end{proposition}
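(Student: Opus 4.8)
The plan is to recognize $\ch$ as a natural transformation of multiplicative cohomology theories and run the standard comparison argument. First I would reduce to the case that $X$ is a finite CW complex: every space to which we shall apply this proposition is homotopy equivalent to a finite CW complex (indeed to a subcomplex of a torus, by \fullref{subtori}), and both $K^*(-)$ and $H^*(-;\mathbb Q)$ are homotopy invariant, so nothing is lost. Since $\mathbb Q$ is flat over $\mathbb Z$, the functor $K^*(-)\otimes\mathbb Q$ remains a generalized cohomology theory on finite CW pairs: exactness of the long exact sequence of a pair, excision, and the additivity over disjoint unions recorded earlier are all preserved by tensoring with $\mathbb Q$. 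And $H^*(-;\mathbb Q)$ is of course a cohomology theory.

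Next I would verify that $\ch$, extended over $K^1$ via the suspension isomorphism as in the excerpt, is a natural transformation of these two theories. Naturality in $X$ is immediate from the naturality of the Chern classes. The essential point is that $\ch$ commutes (up to the usual signs) with the connecting homomorphisms of the two long exact sequences; this follows from the facts that $\ch$ is a ring homomorphism and that the connecting map is, after the identification $\wtilde K^{i}(X)\cong\wtilde K^0(\Sigma^{?}X)$, induced by smashing with a fixed suspension class, so that a commuting ladder between the long exact sequences of any finite CW pair $(X,A)$ results. I expect this compatibility with the boundary maps to be the main obstacle, since the definition of $\ch$ given above is spelled out through Chern classes rather than as a map of spectra; the cleanest remedy is to observe that $\ch$ is realized by a map of spectra $\mathrm{KU}\to\prod_{n}\mathrm{H}\mathbb Q[2n]$, which makes the ladder automatic.

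With the natural transformation in hand, I would check the base case and induct. On a point, $K^0(\{x_0\})=\mathbb Z$ and $K^1(\{x_0\})=0$, and $\ch$ carries a generator of $K^0(\{x_0\})$ to $1\in H^0(\{x_0\};\mathbb Q)$, so $\ch\otimes\mathbb Q$ is an isomorphism on a point; by the suspension isomorphism and additivity it is therefore an isomorphism on every wedge of spheres, in particular on $S^k$ for all $k$. Now induct on the number of cells: if $X=A\cup_{\varphi}e^{k}$ is obtained from a finite complex $A$ for which the result is known, apply $\ch$ to the long exact sequences of $(X,A)$; since $X/A\simeq S^{k}$ the map is an isomorphism on the relative groups $\wtilde K^{*}(X/A)\otimes\mathbb Q$ and $\wtilde H^{*}(X/A;\mathbb Q)$, and the five lemma yields the isomorphism for $X$. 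This establishes that $\ch\otimes\mathbb Q$ is an isomorphism $K^0(X)\cup K^1(X)\otimes\mathbb Q\to H^*(X;\mathbb Q)$ for every finite CW complex.

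Finally, the rank assertion is a formal consequence: $K^0(X)\cup K^1(X)$ is a finitely generated abelian group whose rank equals the $\mathbb Q$-dimension of $K^0(X)\cup K^1(X)\otimes\mathbb Q$, which by the isomorphism equals $\dim_{\mathbb Q}H^*(X;\mathbb Q)=\operatorname{\rk}H^*(X)$. The reduced statement follows by feeding the long exact sequences of $(X,x_0)$ for both theories through the five lemma, using that $\ch$ respects the splittings $K^0(X)\cong\wtilde K^0(X)\oplus\mathbb Z$ and $H^*(X)\cong\wtilde H^*(X)\oplus\mathbb Z$ coming from the basepoint. (As an alternative to the inductive argument, one could instead invoke the rational degeneration of the Atiyah--Hirzebruch spectral sequence for $K$-theory, which gives the rank equality at once; but identifying $\ch$ itself as the comparison isomorphism still requires the naturality input of the second step.)
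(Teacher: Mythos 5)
Your proposal is correct and follows essentially the same route as the reference the paper cites for this result, namely Hatcher's \emph{Vector Bundles and $K$-Theory}, Theorem~4.5: reduce to finite CW complexes, verify the isomorphism on the base case using the known $K$-theory and cohomology of a point (equivalently, of spheres, via Bott periodicity), and induct cell-by-cell with the five lemma applied to the ladder of long exact sequences. The one place where you diverge from Hatcher is the handling of naturality of $\ch$ with respect to connecting homomorphisms: Hatcher verifies this directly at the level of Chern classes and the cup product structure, while you propose realizing $\ch$ as a map of spectra $\mathrm{KU}\to\prod_n H\mathbb Q[2n]$, which makes the commuting ladder automatic. Both are valid; the spectrum-level argument is cleaner but less elementary, and it sweeps under the rug the identification of the spectrum map with the Chern-class formula the paper actually writes down, which is precisely the content of the hands-on verification. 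You might also make explicit that the step from a point to spheres requires knowing that $\ch$ carries a generator of $\widetilde K^0(S^2)$ to a generator of $\widetilde H^2(S^2;\mathbb Z)$ (this is where Bott periodicity enters concretely, via $\ch([H]-1)=c_1(H)$) and then invoking multiplicativity for $S^{2n}$; the phrase ``by the suspension isomorphism'' glosses over the fact that the $K$-theoretic suspension shifts degree by two, not one, so the degrees have to be tracked through the external product rather than through a naive suspension of $\ch$.
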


The proof of this proposition takes Bott periodicity as a base case and is
laid out in Hatcher~\cite[Theorem 4.5]{HatcherKTheory}.

We will last need a $K$--theoretic statement about relationship between the torsion of $H^*(X)$ and $K^*(X)$. As it is a rather high-powered result, we'll say a few words about the proof, which relies on the Atiyah--Hirzebruch spectral sequence, a device for computing an arbitrary cohomology theory first described by its eponymous introducers in~\cite{MR0139181}.

\begin{proposition} \label{AtiyahHirzebruchSpecSeq}
Let $h^n(X)$ be any cohomology theory. Then there is a spectral sequence which converges to $h^n(X)$ whose $E^2$ page is given by
\begin{align*}
E^2_{i,j} = H^i(X; h^j(\{\mathrm{pt}\}).
\end{align*}
\end{proposition}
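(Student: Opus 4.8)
The plan is to build the spectral sequence out of the skeletal filtration of $X$ by the standard exact-couple construction, following Atiyah and Hirzebruch's original treatment~\cite{MR0139181}. Every space to which we shall apply this result is a finite CW complex, so I would assume throughout that $X$ is a finite CW complex with skeleta $\emptyset = X^{(-1)} \subset X^{(0)} \subset \cdots \subset X^{(N)} = X$; this sidesteps the $\varprojlim^1$ issues that would arise for infinite-dimensional or merely compact spaces.

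First I would assemble, for each $p$, the long exact sequence of the pair $(X^{(p)}, X^{(p-1)})$ in the cohomology theory $h^*$ into an exact couple, with $E_1$-term $E_1^{p,q} = h^{p+q}(X^{(p)}, X^{(p-1)})$ and $D_1$-term $D_1^{p,q} = h^{p+q}(X^{(p-1)})$, the connecting and restriction maps of these sequences supplying the structure maps of the couple. Its derived spectral sequence is the one we want. Next I would identify the $E_1$ page with cellular cochains: since $X^{(p)}/X^{(p-1)}$ is homotopy equivalent to a wedge of $p$-spheres indexed by the $p$-cells of $X$, the wedge and excision axioms together with the suspension isomorphism $\widetilde{h}^{p+q}(S^p) \cong h^q(\{\mathrm{pt}\})$ give
\[
E_1^{p,q} \cong \prod_{p\text{-cells}} \widetilde{h}^{p+q}(S^p) \cong \prod_{p\text{-cells}} h^q(\{\mathrm{pt}\}) = C^p_{\mathrm{cell}}(X; h^q(\{\mathrm{pt}\})).
\]

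The key step is to verify that under these identifications $d_1 \colon E_1^{p,q} \to E_1^{p+1,q}$ becomes the cellular coboundary map. By construction $d_1$ is the composite of the restriction $h^{p+q}(X^{(p)}, X^{(p-1)}) \to h^{p+q}(X^{(p)})$ with the connecting homomorphism of the pair $(X^{(p+1)}, X^{(p)})$, and a diagram chase through the attaching maps of the $(p+1)$-cells — formally identical to the chase that identifies cellular with singular cohomology in the ordinary case — shows it is computed cell by cell by the same incidence numbers, now with coefficients in $h^q(\{\mathrm{pt}\})$. Granting this, $E_2^{p,q} = \ker d_1 / \operatorname{Id} d_1$... more precisely $E_2^{p,q} = H^p_{\mathrm{cell}}(X; h^q(\{\mathrm{pt}\})) \cong H^p(X; h^q(\{\mathrm{pt}\}))$, which is the asserted $E^2$ page after renaming $(p,q) = (i,j)$. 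Finally, since $X$ is finite-dimensional the filtration of $h^n(X)$ by $F^p = \ker\bigl(h^n(X) \to h^n(X^{(p-1)})\bigr)$ is finite, so the spectral sequence collapses at a finite stage and $\bigoplus_{p+q=n} E_\infty^{p,q}$ is the associated graded of this filtration; this gives convergence to $h^n(X)$.

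The main obstacle is precisely the identification $d_1 = \delta$: this is the one place where one must genuinely trace the connecting homomorphisms of the skeletal pairs against the cell structure of $X$, rather than merely invoke the formal (Eilenberg–Steenrod-type) axioms for $h^*$; everything else is bookkeeping with the exact couple, the wedge axiom, and the finite filtration.
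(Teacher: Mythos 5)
The paper does not actually prove this proposition: it is stated as background and credited to Atiyah and Hirzebruch~\cite{MR0139181}, so there is no in-paper proof to compare against. Your sketch is a correct account of the standard skeletal-filtration/exact-couple construction that underlies that citation, with the $E_1$ page identified via the wedge axiom and suspension, and you correctly isolate the identification of $d_1$ with the cellular coboundary as the one step requiring a genuine cell-by-cell diagram chase rather than a formal appeal to the axioms. Restricting to finite CW complexes to avoid $\varprojlim^1$ issues is appropriate for the paper's use, since there the result is applied only to $M^{\mathrm{inv}}\times[0,1]/X$, which deformation retracts onto a finite complex. One small slip: you wrote $\ker d_1/\mathrm{Id}\, d_1$ where the image of $d_1$ is intended; the correction you give immediately afterward is right.
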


In the particular case of the cohomology theory $K^*(X)$ this spectral sequence collapses rationally, leading to a useful statement concerning the torsion groups of $K^*(X)$.

\begin{proposition}[Atiyah and Hirzebruch~{\cite[Section~2.5]{MR0139181}}]
\label{AtiyahHirzebruch}
Let $X$ be a compact Hausdorff space. If $H^i(X; \mathbb Z)$ is torsion-free for all $i$ and finitely generated, then $K^0(X)\cup K^1(X)$ is torsion-free of the same rank.
\end{proposition}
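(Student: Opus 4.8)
The plan is to feed the cohomology theory $K^*$ into the Atiyah--Hirzebruch spectral sequence of \fullref{AtiyahHirzebruchSpecSeq} and to show that, under the stated hypotheses, it degenerates at the $E_2$ page with every entry a finitely generated free abelian group; then $K^*(X)$, being assembled from those entries by iterated extensions, is itself finitely generated and free of the expected rank.

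First I would identify the $E_2$ page. Since $K^j(\{\mathrm{pt}\}) = \mathbb Z$ for $j$ even and $0$ for $j$ odd, we get $E^2_{i,j} \cong H^i(X;\mathbb Z)$ when $j$ is even and $E^2_{i,j} = 0$ when $j$ is odd; by hypothesis each of these is finitely generated and torsion-free, hence free abelian of finite rank, and (as the spaces of interest to us have the homotopy type of finite complexes) nonzero for only finitely many pairs $(i,j)$, so each diagonal $i+j=n$ meets only finitely many nonzero entries and the induced filtration on $K^n(X)$ is finite.

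Next, the crucial point: the spectral sequence collapses rationally, so all its higher differentials are torsion-valued. Indeed, passing from $E^r$ to $E^{r+1}$ decreases the total rank $\sum_{i,j}\rk(E^r_{i,j})$ by exactly twice the sum over all $(i,j)$ of $\rk(\mathrm{im}\, d_r|_{E^r_{i,j}})$; but by \fullref{Cherncharacterisomorphism} the Chern character gives $\rk(K^0(X)) + \rk(K^1(X)) = \rk(H^*(X;\mathbb Z)) = \sum_{i,j}\rk(E^2_{i,j})$, whereas $\rk(K^0(X)) + \rk(K^1(X)) = \sum_{i,j}\rk(E^{\infty}_{i,j})$, so no rank can ever be lost and each $d_r$ with $r\geq 2$ has torsion image. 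I would then prove by induction on $r\geq 2$ that every $E^r_{i,j}$ is free abelian of finite rank: the base case is the previous paragraph, and if $E^r_{*,*}$ is everywhere free then each $d_r\colon E^r_{i,j}\to E^r_{i+r,j-r+1}$ has image a torsion subgroup of a free abelian group, hence is zero, so $E^{r+1}_{*,*} = E^r_{*,*}$ remains everywhere free. Therefore $E^{\infty}_{i,j} = E^2_{i,j}$ is finitely generated and free for all $i,j$.

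Finally I would conclude: $K^n(X)$ carries a finite filtration whose successive quotients are the groups $E^{\infty}_{i,j}$ with $i+j=n$, and an extension of a finitely generated free abelian group by another one is again finitely generated and free, so $K^n(X)$ is finitely generated and torsion-free. Adding the ranks of the successive quotients and then summing over $n=0,1$ yields $\rk(K^0(X)\cup K^1(X)) = \sum_i \rk(H^i(X;\mathbb Z)) = \rk(H^*(X;\mathbb Z))$, as claimed. The one genuinely delicate step is the interplay in the third paragraph: a subquotient of a free abelian group need not be free, so one really does need the rational collapse coming from the Chern character to force the higher differentials to vanish integrally, not merely rationally; everything else is formal.
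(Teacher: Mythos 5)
Your proposal is correct and follows essentially the same route as the paper: identify the Atiyah--Hirzebruch $E_2$ page, use the Chern character isomorphism to show that no rank is lost passing to $E^\infty$, and then exploit the freeness of the $E_2$ entries to kill torsion. The only cosmetic difference is in the last step: you run a clean induction showing that each $d_r$ has torsion image landing in a free abelian group and therefore vanishes (so $E^\infty=E^2$ outright), whereas the paper argues by contradiction that a first appearance of torsion at some $E^r_{p,q}$ would force a rank drop. Both are valid and rest on the same observation.
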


\begin{proof}
Recall that if $\{x_0\}$ is a one-point space then $K^q(x_0)$ is $\mathbb Z$ for $q$ even and $0$ for $q$ odd. Since unreduced $K$--theory is a cohomology theory, \fullref{AtiyahHirzebruchSpecSeq} gives first and second quadrant spectral sequence converging to $K^*(X)$ whose $E^2$ page has entries 
\begin{align*}
E^2_{p,q} = H^p(X; K^q(\{x_0\})) = 
\begin{cases}
H^p(X) \text{ if } q \text{ is even } \\
0 \text{ if } q  \text{ is odd } .\\
\end{cases}
\end{align*}
The rank of $K^i(X)$ is the sum $\sum_{p+q =i} \rk(E^{\infty}_{p,q})$.
Moreover, the ranks of the corresponding entries on the $E_2$ page of the
spectral sequence sum to $\sum_{p+q = i} \rk(E^{2}_{p,q}) = \sum_{p \equiv
n \mathrm{ mod } 2} H^p(X)$. By \fullref{Cherncharacterisomorphism}, the
rank of $K^i(X)$ and the total rank of the integer cohomology groups of
degree the same parity as $i$ are equal, so
$$\sum_{p+q =i}
\rk(E^{\infty}_{p,q}) = \sum_{p+q =i} \rk(E^2_{p,q}).$$
As $\rk(E^{\infty}_{p,q}) \leq \rk(E^2_{p,q})$ for all $p,q$, we see
that in fact this is an equality for all pairs $(p,q)$.

Now notice that every entry on the $E_2$ page of this spectral sequence is free abelian. Suppose $E^{\infty}_{p,q}$ has a nontrivial torsion subgroup for some fixed $p,q$ such that $q$ is even. Then there must be some first $E^r_{p,q}$ such that $r>2$ with this property. Because $E^r_{p,q}$ is a quotient of a subgroup of the free abelian group $E^{r-1}_{p,q}$, if it contains a torsion subgroup it must have lower rank than $E^r_{p,q}$, implying that $E^{\infty}_{p,q}$ has strictly lower rank than $E^2_{p,q}$. We have seen that this never happens. Therefore the $E^{\infty}$ page of our spectral sequence is torsion-free, implying that $K^*(X)$, which is filtered by the entries on the $E^{\infty}$ page, is also free abelian.

This argument, which is similar to the original proof of Atiyah and
Hirzebruch~\cite[Section 2.5]{MR0139181}, was suggested by Dan Ramras.
\end{proof}

Our proof that $(M^{\inv}, \smash{L_0^{\inv}}, \smash{L_1^{\inv}})$
carries a stable normal trivialization will contain as a crucial step a
proof that $H^*(M^{\inv}{\times}[0,1], (\smash{L_0^{\inv}} \times \{0\})
\cup (\smash{L_1^{\inv}} \times \{1\}))$ is torsion-free, and thus that the $K$--theory of this space must likewise be free abelian.

\section{Stable trivialization of the normal bundle} \label{StableTrivSection}

We are now finally ready to discuss the proof of \fullref{stable}. We begin by restating the theorem in a form that will be slightly easier to prove. Let $J$ denote the complex structure on $M^{\inv}$. As in \fullref{FloerCohomologyDefn}, we denote the trivial bundle $X \times \mathbb C^n \rightarrow X$ by $\mathbb C^n$ whenever the base space $X$ is clear from context, and similarly for $\mathbb R^n$.

\begin{proposition}[Seidel and Smith~{\cite[Section 3d]{MR2739000}}]
\label{stable2}
The existence of a stable normal trivialization of $\Upsilon(M^{\inv})$ is implied by the existence of a nullhomotopy of the map
\[
f\co(M^{\inv} \times [0,1], (\smash{L_0^{\inv}} \times \{0\}) \cup (\smash{L_1^{\inv}} \times \{1\})) \rightarrow (BU, BO)
\]
which classifies the complex bundle $\Upsilon(M^{\inv})$ and the totally real subbundles $N(\smash{L_0^{\inv}}) \times \{0\}$ over $L_0 \times \{0\}$ and $J(N(\smash{L_1^{\inv}})) \times \{1\}$ over $L_1 \times \{1\}$. That is, it suffices to find a complex stable trivialization of $\Upsilon(M^{\inv})$ which restricts to a real stable trivialization of $N(\smash{L_0^{\inv}}) \times \{0\}$ and of $J(N(\smash{L_1^{\inv}}))\times \{1\}$.

\end{proposition}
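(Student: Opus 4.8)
The plan is to reformulate \fullref{stablenormaltriv} in the language of classifying spaces and then to read the data of a stable normal trivialization directly off the hypothesized nullhomotopy of $f$. The reformulation rests on three standard facts. First, a complex vector bundle over a compact base $X$ is classified up to stable isomorphism by a homotopy class of maps $X\to BU$, and the bundle is stably trivial exactly when this map is nullhomotopic; since $X$ is compact a nullhomotopy into $BU$ factors through some $BU(k_{\anti}+K)$ and hence yields an explicit stable trivialization. Second, if $E|_A$ is a complex bundle over a subspace $A$, then a full-rank totally real subbundle $R\subset E|_A$ (so that $R\otimes_{\mathbb R}\mathbb C\cong E|_A$) is the same datum as a lift of $A\to BU$ through the fibration $U/O\to BO\to BU$, whose fiber $U/O=\bigcup_n U(n)/O(n)$ is the stabilized Grassmannian of totally real subspaces of $\mathbb C^n$. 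Third, and \emph{crucially}, a continuous family of such totally real subbundles corresponds to a homotopy of the associated $BO$-lifts covering the (possibly moving) map to $BU$. Under this dictionary the triple consisting of $\Upsilon(M^{\inv})$ together with its totally real subbundles $N(\smash{L_0^{\inv}})\times\{0\}$ over $\smash{L_0^{\inv}}\times\{0\}$ and $J(N(\smash{L_1^{\inv}}))\times\{1\}$ over $\smash{L_1^{\inv}}\times\{1\}$ is precisely the map of pairs $f\co(M^{\inv}\times[0,1],(\smash{L_0^{\inv}}\times\{0\})\cup(\smash{L_1^{\inv}}\times\{1\}))\to(BU,BO)$.

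Now suppose $f$ is nullhomotopic as a map of pairs, and fix such a nullhomotopy $H\co(M^{\inv}\times[0,1])\times[0,1]_{\mathrm h}\to(BU,BO)$ from $f$ to the constant map at the basepoint of $BO\subset BU$, where $[0,1]_{\mathrm h}$ is the homotopy parameter. Enlarging the stabilization constant $K$, we may arrange that $H$ factors through $(BU(k_{\anti}+K),BO(k_{\anti}+K))$. Restricting $H$ to $(M^{\inv}\times\{0\})\times[0,1]_{\mathrm h}$ gives a nullhomotopy of the classifying map of $\Upsilon(M^{\inv})$, and hence a unitary stable trivialization $\phi\co\Upsilon(M^{\inv})\oplus\mathbb C^K\to\mathbb C^{k_{\anti}+K}$; since $\Upsilon(M^{\inv})$ is pulled back from $M^{\inv}$ we take $\phi$ constant in the $[0,1]$ direction. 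This is the first piece of the stable normal trivialization.

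Next I would extract the Lagrangian subbundles. Restricting $H$ to $(\smash{L_0^{\inv}}\times\{0\})\times[0,1]_{\mathrm h}$ gives a path inside $BO$ from the classifying map of the real bundle $N(\smash{L_0^{\inv}})$ to the basepoint; read geometrically, and after adjoining the stabilizing summand $\mathbb R^K\subset\mathbb C^K$ coherently with $\phi$, this is a path of full-rank totally real subbundles of $(\Upsilon(M^{\inv})\oplus\mathbb C^K)|_{\smash{L_0^{\inv}}}$ running from $N(\smash{L_0^{\inv}})\oplus\mathbb R^K$ to $\phi^{-1}(\mathbb R^{k_{\anti}+K})$. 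Reparametrizing $[0,1]_{\mathrm h}$ as the interval $[0,1]$ of $M^{\inv}\times[0,1]$, this path is the desired $\Lambda_0$, with precisely the two endpoint values demanded by \fullref{stablenormaltriv}. The subbundle $\Lambda_1$ is produced the same way from the restriction of $H$ to $(\smash{L_1^{\inv}}\times\{1\})\times[0,1]_{\mathrm h}$; the one difference is that $f$ here classifies $J(N(\smash{L_1^{\inv}}))$ rather than $N(\smash{L_1^{\inv}})$, so I would apply $J^{-1}$ pointwise to the resulting path of totally real subbundles, correcting the stabilizing summand by a short rotation if necessary. Since $\phi$ is complex linear it intertwines $-J$ with multiplication by $-i$, so the endpoint $\phi^{-1}(\mathbb R^{k_{\anti}+K})$ is transformed into $\phi^{-1}(i\mathbb R^{k_{\anti}+K})$; this is exactly why the $J$--twist is built into the statement, namely to convert the asymmetric normalization in \fullref{stablenormaltriv} (terminal value $\mathbb R^{k_{\anti}+K}$ for $\Lambda_0$ but $i\mathbb R^{k_{\anti}+K}$ for $\Lambda_1$) into the uniform condition that the nullhomotopy of $f$ terminate at the basepoint of $BO$. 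Finally I would verify the three bullet points of \fullref{stablenormaltriv} for $(\phi,\Lambda_0,\Lambda_1)$; the only point that is not a matter of unwinding definitions is that each $\Lambda_i$ is genuinely Lagrangian, i.e.\ full-rank totally real at every point rather than merely complex, and this holds precisely because $H$ was required to land in $BO$, and not merely in $BU$, over all of $((\smash{L_0^{\inv}}\times\{0\})\cup(\smash{L_1^{\inv}}\times\{1\}))\times[0,1]_{\mathrm h}$.

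The main obstacle I anticipate is bookkeeping rather than conceptual. \fullref{stablenormaltriv} demands a single stabilization constant $K$ and a single isomorphism $\phi$ serving all three pieces of data simultaneously, so one must take $H$ to factor through $BU(k_{\anti}+K)$ and $BO(k_{\anti}+K)$ for one common $K$ fixed once and for all, and one must keep the stabilizing summands $\mathbb C^K$ and $\mathbb R^K$ coherent when passing among $\phi$, $\Lambda_0$ and $\Lambda_1$. One must also be careful to identify the homotopy coordinate of the nullhomotopy of $f$ with the interpolation coordinate $[0,1]$ appearing in \fullref{stablenormaltriv}, rather than with the $[0,1]$ already present in the domain of $f$. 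The single genuinely homotopy-theoretic ingredient, namely that homotopies of $BO$-lifts correspond to paths of totally real subbundles because the fiber of $BO\to BU$ is the stabilized totally real Grassmannian $U/O$, is classical and may be taken for granted.
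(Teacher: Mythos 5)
Your framework is sound: the reformulation of a stable normal trivialization as a nullhomotopy of a map to $(BU,BO)$, via the fibration $U/O\to BO\to BU$, is correct, and the construction of $\phi$ as the constant extension of the trivialization obtained by restricting $H$ to $M^{\inv}\times\{0\}\times[0,1]_h$ agrees with what the paper does. The construction of $\Lambda_0$ by reparametrizing the homotopy coordinate as the interval $[0,1]$ of $M^{\inv}\times[0,1]$ also works, precisely because the trivialization obtained from $H|_{\smash{L_0^{\inv}}\times\{0\}\times[0,1]_h}$ is $\psi_0=\phi$, so its terminal value is $\phi^{-1}(\mathbb R^{k_{\anti}+K})$ as required.

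The gap is in $\Lambda_1$. The nullhomotopy restricted to $\smash{L_1^{\inv}}\times\{1\}\times[0,1]_h$ produces a path of totally real subbundles whose terminal value at $s=1$ is pulled back through the trivialization arising \emph{from that restriction}, namely $\psi_1=\psi|_{M^{\inv}\times\{1\}}$, not through the constant $\phi=\psi_0$. After your $J^{-1}$ twist the endpoint is $\psi_1^{-1}(i\mathbb R^{k_{\anti}+K})$, and so $\phi(\Lambda_1|_{\{1\}\times\smash{L_1^{\inv}}})=\psi_0\circ\psi_1^{-1}(i\mathbb R^{k_{\anti}+K})$, which is not $i\mathbb R^{k_{\anti}+K}$ because $\psi_0\circ\psi_1^{-1}$ is a nontrivial family of unitary automorphisms; the nullhomotopy cannot be made constant in $t$ without destroying the $(BU,BO)$ pair condition, so this discrepancy is unavoidable. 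Choosing $\phi=\psi_1$ or a $t$--varying $\phi=\psi$ instead just transfers the same problem to $\Lambda_0$. Your ``correcting the stabilizing summand by a short rotation'' addresses the $\mathbb R^K$ versus $i\mathbb R^K$ normalization at $t=0$ but not this mismatch at $t=1$.

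The paper avoids the issue by not treating the two Lagrangian subbundles symmetrically: it uses the $[0,1]$ already present in $M^{\inv}\times[0,1]$ (not the homotopy parameter) as the interpolation direction, takes $\Lambda_0$ to be the constant bundle $(N(\smash{L_0^{\inv}})\times\{t\})\oplus\mathbb R^K$, and defines $\Lambda_1|_{\smash{L_1^{\inv}}\times\{t\}}=\psi_0^{-1}\circ\psi_t\big((N(\smash{L_1^{\inv}})\times\{t\})\oplus i\mathbb R^K\big)$. The transition maps $\psi_0^{-1}\circ\psi_t$ are exactly what is needed to conjugate the terminal value into $\phi^{-1}(i\mathbb R^{k_{\anti}+K})$: at $t=1$ one computes $\phi(\Lambda_1|_{t=1})=\psi_1(N(\smash{L_1^{\inv}})\oplus i\mathbb R^K)=J\big(\psi_1(J(N(\smash{L_1^{\inv}}))\oplus\mathbb R^K)\big)=J(\mathbb R^s)=i\mathbb R^s$, using that $\psi_1|_{\smash{L_1^{\inv}}}$ is a real trivialization because $H$ lands in $BO$ over $\smash{L_1^{\inv}}\times\{1\}$. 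You would need to build these transition maps into your $\Lambda_1$ to close the gap. One further minor omission: the paper first replaces the unitary classifying map by a symplectic one via a symplectic bundle isomorphism $\chi$, using the McDuff--Salamon fact that such isomorphisms carry Lagrangian subbundles to Lagrangian subbundles, so that ``totally real of half rank'' can legitimately stand in for ``Lagrangian''; your appeal to ``full-rank totally real'' needs this preliminary reduction to be justified.
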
 

\begin{proof}
This is largely the statement that we can dispense with the symplectic
structure involved in a stable normal trivialization and argue purely in
terms of complex vector bundles. Recall that any symplectic vector bundle
can be equipped with a compatible complex structure which is unique up to
homotopy, and any complex vector bundle similarly admits a
symplectification. Moreover, two symplectic vector bundles are isomorphic
if and only if their underlying complex vector bundles are isomorphic, and
isomorphisms of symplectic vector bundles map Lagrangian subbundles to
Lagrangian subbundles (see McDuff and
Salamon~\cite[Theorem~2.62]{MR1373431}). Let $\omega_M$ be the natural
symplectic structure on $N(M^{\inv})$ coming from the symplectic
structure on $TM$.

Suppose that $g \co N(M^{\inv}) \rightarrow BU$ is a classifying
map of $N(M^{\inv})$ thought of as a unitary vector bundle, so
that the image of $g$ lies inside $BU_{k_{\anti}}$. Let
$$\zeta_{k_{\anti}}\co EU_{k_{\anti}} \rightarrow
BU_{k_{\anti}}$$
be the complex $k_{\anti}$--dimensional
universal bundle, and similarly let
$$\eta_{k_{\anti}}\co
EO_{k_{\anti}} \rightarrow BO_{k_{\anti}}$$
be the real
$k_{\anti}$--dimensional universal bundle. Equip
$EU_{k_{\anti}}$ with a symplectic structure $\omega_{\zeta}$ such
that $\eta_{k_{\anti}} \subset \zeta_{k_{\anti}}$ is a
Lagrangian subbundle. Then the bundles $(N(M^{\inv}), \omega_M)$
and $(N(M^{\inv}), g^*(\omega_{\zeta}))$ are isomorphic (indeed,
equal) as complex vector bundles, so there is a symplectic vector bundle
isomorphism
$$\chi\co(N(M^{\inv}), \omega_M) \rightarrow
(N(M^{\inv}), g^*(\omega_{\zeta})).$$
This extends to a symplectic
vector bundle isomorphism
$$\wtilde{\chi}\co (\Upsilon(M^{\inv}),
\wtilde{\omega}_M) \rightarrow (\Upsilon(M^{\inv}),
\wtilde{g}^*(\omega_{\zeta}) = f^*(\omega_M)),$$
where in both cases the
symplectic forms are the pullbacks of the original symplectic forms on
$N(M^{\inv})$ to $\Upsilon(M^{\inv})$, and therefore
constant with respect to the interval $[0,1]$, as is the map
$\wtilde{\chi}$. From now on, we assume that we have first applied an
isomorphism of this form to $\Upsilon(M^{\inv})$ so that the map
$g \co \Upsilon(M^{\inv}) \rightarrow BU$ is in fact a symplectic
classifying map. We can if necessary precompose the resulting stable
normal trivialization with $\wtilde{\chi}$.

Consider a nullhomotopy $H$ of $f$.
\begin{align*}
H \co (M^{\inv} \times [0,1], \smash{L_0^{\inv}} \times \{0\} \cup \smash{L_1^{\inv}} \times \{1\}) \times [0,1] &\rightarrow (BU,BO) \\
(x,t,s) &\mapsto h_s(x,t)
\end{align*}
Here the map $h_0$ is equal to $f$ and the map $h_1$ is constant.

Since $M^{\inv}$ is homotopy equivalent to a compact subspace of itself, we may assume there is some $K>0$ such that if $s=k_{\anti} +K$, the image of $H$ lies inside $(BU_s, BO_s)$. Let $\zeta_s \co EU_s \rightarrow BU_s$ be the complex $s$--dimensional universal bundle with subbundle $\eta_s \co EO_s \rightarrow BO_s$ the real $s$--dimensional universal bundle. Then the pullbacks of $\zeta_s$ and $\eta_s$ along $h_1$ and $h_0$ are certain bundles of great interest to our investigation.
\begin{align*}
h_0^*(\zeta_s) &= (\Upsilon(M^{\inv}) \oplus \mathbb C^K, h_0^*\omega_{\zeta}) \\
h_1^*(\zeta_s) &= (\mathbb C^s, h_1^*\omega_{\zeta}) \\
(h_0|_{\smash{L_0^{\inv}}\times \{0\}})^*(\eta_s) &= (N(\smash{L_0^{\inv}}) \times \{0\}) \oplus \mathbb R^K \\
(h_0|_{\smash{L_1^{\inv}} \times \{1\}})^*(\eta_s) &= (J(N(\smash{L_1^{\inv}})) \times \{1\}) \oplus \mathbb R^K \\
(h_1|_{L_i^{\inv} \times \{i\}})^*(\eta_s) &= \mathbb R^s \text{ for } i=0,1
\end{align*}
For clarity's sake it should be borne in mind that $\mathbb R^K$ and $\mathbb R^s$ refer to the canonical real subspaces in $\mathbb C^K$ and $\mathbb C^s$.

Since $H$ is a nullhomotopy, it induces a stable trivialization $\psi$ of $\Upsilon(M^{\inv})$. Write an arbitrary vector in $\Upsilon(M^{\inv})$ as $(x,t,v)$ where $(x,t) \in M^{\inv} \times [0,1]$ and $v$ is an element of the fiber over $(x,t)$.
\begin{align*}
\psi \co N(M^{\inv}) \oplus \mathbb C^K = h_0^*(\zeta_s)
&\stackrel{\hbox{\footnotesize$\sim$}}{\longrightarrow} h_1^*(\zeta_s) = \mathbb C^s \\
(x,t,v) & \mapsto \psi(x,t,v)
\end{align*}
The restrictions of $\psi$ to $(N(\smash{L_0^{\inv}}) \times \{0\}) \oplus \mathbb  R^K$ and to $(J(N(\smash{L_1^{\inv}}) \times\{1\})) \oplus \mathbb R^K$ are real stable trivializations of these two bundles.
\begin{align*}
\psi|_{(N(\smash{L_0^{\inv}} \times \{0\}) \oplus \mathbb R^K)} \co (N(\smash{L_0^{\inv}}) \times \{0\}) \oplus \mathbb R^K \rightarrow \mathbb R^s \\
\psi|_{(J(N(\smash{L_1^{\inv}}) \times \{1\}) \oplus \mathbb R^K)} \co (J(N(\smash{L_0^{\inv}})) \times \{1\}) \oplus \mathbb R^K \rightarrow \mathbb R^s
\end{align*}
Since $\Upsilon(M^{\inv})$ is the pullback of $N(M^{\inv})$ to $M^{\inv} \times [0,1]$, the map $h_0 = f$ is constant with respect to the interval $[0,1]$. That is, $h_0(x,t_1) = h_0(x,t_2)$ for all $x \in M^{\inv}$ and $t_1,t_2 \in [0,1]$. Then each $\psi_t = \psi|_{M^{\inv} \times \{t\}}$ is a stable trivialization of $N(M^{\inv}) \times \{t\} = N(M^{\inv})$.  More concretely, we have symplectic trivializations
\begin{align*}
\psi_t \co N(M^{\inv}) &\rightarrow \mathbb C^k \\
(x,v) &\mapsto \psi(x,t,v).
\end{align*}
We will use the family of isomorphisms $\psi_t$ to produce a stable normal trivialization of $\Upsilon(M^{\inv})$. Consider a map $\phi$ defined by applying $\psi_0$ to each $M^{\inv} \times \{t\} \subset M^{\inv} \times [0,1]$.
\begin{align*}
\phi \co \Upsilon(M^{\inv}) &\rightarrow \mathbb C^k \\
(x,t,v) &\mapsto \psi_0((x,v)) = \psi(x,0,v).
\end{align*}
This is a stable trivialization of $\Upsilon(M^{\inv})$. Because the symplectic structure on $\Upsilon(M^{\inv})$ is constant with respect to the interval $[0,1]$, it is in fact a symplectic isomorphism of vector bundles. We next need to produce two Lagrangian subbundles $\Lambda_0$ and $\Lambda_1$ satisfying the conditions outlined in \fullref{stablenormaltriv}. Consider the following candidates.
\begin{align*}
\Lambda_0|_{\smash{L_0^{\inv}} \times \{t\}} &= (N(\smash{L_0^{\inv}}) \times \{t\}) \oplus \mathbb R^K \\
\Lambda_1|_{\smash{L_1^{\inv}} \times \{t\}} &= \psi_0^{-1} \circ \psi_t(N(\smash{L_1^{\inv}}) \times \{t\} \oplus i \mathbb R^K).
\end{align*}
Since the maps $\psi_t$ form a homotopy, $\Lambda_1$ is a smooth subbundle as desired. Both subbundles are Lagrangian since their restriction to each $L_i^{\inv} \times \{t\}$ is Lagrangian.  The next thing to check is the restriction of $\Lambda_i$ to $L^{\inv}_i \times \{0\}$ for $i=0,1$.
\begin{align*}
\Lambda_0|_{\smash{L_0^{\inv}} \times \{0\}} &= (N(\smash{L_0^{\inv}}) \times \{0\}) \oplus \mathbb R^K \\
\Lambda_1|_{\smash{L_1^{\inv}} \times \{0\}} &= \psi_0^{-1} \circ \psi_0((N(\smash{L_1^{\inv}}) \times \{0\}) \oplus i\mathbb R^K)
= (N(\smash{L_1^{\inv}}) \times \{0\}) \oplus i \mathbb R^K.
\end{align*}
This is exactly as desired. The other condition to check is that $\phi(\Lambda_0|_{L^{\inv}_0 \times \{1\}})$ is $\mathbb R^s \subset \mathbb C^s$ and $\phi(\Lambda_1|_{L^{\inv}_1 \times \{1\}})$ is $i\mathbb R^s \subset \mathbb C^s$.
\begin{align*}
\phi(\Lambda_0|_{\smash{L_0^{\inv}} \times \{1\}}) &= \psi_0((N(\smash{L_0^{\inv}}) \times \{0\}) \oplus \mathbb R^K) \\
&= \psi((N(\smash{L_0^{\inv}}) \times \{0\}) \oplus \mathbb R^K) \\
&= \mathbb R^s \\
\phi(\Lambda_1|_{\smash{L_1^{\inv}} \times \{1\}}) &= \psi_0( \psi_0^{-1} \circ \psi_1((N(\smash{L_1^{\inv}}) \times \{0\}) \oplus i\mathbb R^K) \\
&= \psi_1(J(J((N(\smash{L_1^{\inv}}) \times \{0\}) \oplus i \mathbb R^K)))	\\
&=J(\psi_1(J(N(\smash{L_1^{\inv}}	) \times \{0\}) \oplus \mathbb R^k))	\\								
&= i(\mathbb R^s)							
\end{align*}
Ergo the stable trivialization $\phi$ of $\Upsilon(M^{\inv})$ together with the subbundles $\Lambda_0$ and $\Lambda_1$ constitutes a stable normal trivialization of $\Upsilon(M^{\inv})$.\end{proof}

The proof of \fullref{stable} will rely on analysis of the bundle in question and the $K$--theory of $\mathrm{Sym}^{n-1}(S^2 \backslash \{z,w\})$. We start by showing that $\Upsilon(M^{\inv})$ is trivial as a complex vector bundle. 

\begin{lemma} \label{chern}
The complex vector bundle $\Upsilon(M^{\inv})$ is trivial.
\end{lemma}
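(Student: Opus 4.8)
The plan is to prove the equivalent (indeed stronger) statement that the normal bundle $N := N(M^{\inv})$ of $M^{\inv}$ in $M$ is trivial as a \emph{holomorphic} vector bundle; since $\Upsilon(M^{\inv})$ is by definition the pullback of $N$ along the projection $M^{\inv}\times[0,1]\to M^{\inv}$, this gives the lemma. The idea is not to compute Chern classes abstractly but to identify $N$ explicitly as a ``tautological'' push-forward bundle over a symmetric product of a planar domain, where an honest global frame is available.

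First I would pin down $N$ geometrically. By the preceding lemmas, $M^{\inv}$ is the fixed locus in $M = \mathrm{Sym}^{2n-2}(\Sigma(S^2)\setminus\{\mathbf{w},\mathbf{z}\})$ of the involution $\tau$ induced by the fixed-point-free deck transformation of the double cover $\pi\colon \Sigma(S^2)\setminus\{\mathbf{w},\mathbf{z}\}\to S^2\setminus\{\mathbf{w},\mathbf{z}\}$, and it consists precisely of the pullback divisors $\pi^*E'$; the map $i$ identifies it with $\mathrm{Sym}^{n-1}(S^2\setminus\{\mathbf{w},\mathbf{z}\})$. Write $S = S^2\setminus\{\mathbf{w},\mathbf{z}\}$ and $\widetilde S = \Sigma(S^2)\setminus\{\mathbf{w},\mathbf{z}\}$. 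At a reduced divisor $\pi^*E' = \{\widetilde x_1,\tau\widetilde x_1,\dots,\widetilde x_{n-1},\tau\widetilde x_{n-1}\}$ one has $T_{\pi^*E'}M = \bigoplus_{j}\bigl(T_{\widetilde x_j}\widetilde S\oplus T_{\tau\widetilde x_j}\widetilde S\bigr)$, and $d\tau$ interchanges the two summands of each pair, so the $(-1)$-eigenbundle $N$ has fibre $\bigoplus_j T_{\widetilde x_j}\widetilde S$. Stated invariantly over $E' = \{x_1,\dots,x_{n-1}\}$, each summand is the $(-1)$-eigenspace of the $\mathbb Z_2$-action on $\bigl(\pi_*\pi^*TS\bigr)_{x_j} = \bigl(TS\otimes\pi_*\mathcal O_{\widetilde S}\bigr)_{x_j}$, and since $\pi_*\mathcal O_{\widetilde S} = \mathcal O_S\oplus\mathcal L$ with $\mathcal L$ the flat order-two line bundle of the double cover, this eigenspace is $(TS\otimes\mathcal L)_{x_j}$. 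Hence $N\cong E^{(n-1)}$, the push-forward $p_*(q^*E\otimes\mathcal O_{\mathcal Z})$ of the line bundle $E := TS\otimes\mathcal L$ along the universal divisor $\mathcal Z\subset\mathrm{Sym}^{n-1}(S)\times S$. (One checks this identification is natural and so extends over non-reduced divisors, e.g. using the standard exact-sequence description of the tangent space to a symmetric product.)

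It remains to see that $E^{(n-1)}$ is trivial, and this is where the genus-zero (planar) nature of $S$ enters. Since $S = S^2\setminus\{\mathbf{w},\mathbf{z}\}$ is a domain in $\mathbb C$, the bundle $TS$ is holomorphically trivial and $H^2(S) = 0$, so $c_1(E) = 0$ and, $S$ being Stein, $E\cong\mathcal O_S$; thus $N\cong\mathcal O_S^{(n-1)} = p_*\mathcal O_{\mathcal Z}$. The elementary symmetric functions realize $\mathrm{Sym}^{n-1}(S)$ as an open subset of $\mathrm{Sym}^{n-1}(\mathbb C)\cong\mathbb C^{n-1}$ (concretely, the complement of the affine hyperplane arrangement cut out by ``one of the roots equals a puncture''), and over $\mathbb C^{n-1}$ the fibre of $p_*\mathcal O_{\mathcal Z}$ at $\{x_1,\dots,x_{n-1}\}$ is $\mathbb C[t]/\prod_i(t-x_i)$, which carries the global frame $[1],[t],\dots,[t^{n-2}]$; so $p_*\mathcal O_{\mathcal Z}$ is trivial, hence so is $N = E^{(n-1)}$, hence so is its pullback $\Upsilon(M^{\inv})$. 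The genuinely delicate point --- the main obstacle --- is the identification $N\cong E^{(n-1)}$: one must carry out the $(-1)$-eigenbundle computation correctly and verify it globalizes across the diagonal and through the two-fold cover. If one prefers to sidestep that, an alternative is to prove only that $N$ is stably trivial and then invoke that $N$ has complex rank $n-1$, equal to the CW-dimension of $M^{\inv}$, together with torsion-freeness of $\widetilde K^0(M^{\inv})$ coming from \fullref{AtiyahHirzebruch} and the Chern character, to upgrade this to genuine triviality; but stable triviality would itself be established via the same identification of $N$.
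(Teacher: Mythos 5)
Your proposal is correct but takes a genuinely different route from the paper. The paper's proof is topological: it observes that the inclusion $i_1 \co \mathrm{Sym}^{n-1}(S^2\setminus\{\mathbf{w},\mathbf{z}\})\hookrightarrow\mathrm{Sym}^{n-1}(S^2)$ is nullhomotopic (because $S^2\setminus\{\mathbf{w},\mathbf{z}\}\hookrightarrow S^2$ is nullhomotopic and symmetric products preserve homotopies), deduces that both $N(M^{\inv})\oplus T(M^{\inv})$ and $T(M^{\inv})$ are pullbacks of bundles along $i_1$ (via the extension $h$ of the embedding $i$ and the dimension argument for $i_2$) and hence trivial, and concludes that $N(M^{\inv})$ is trivial. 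This last step is implicitly a stable-range argument --- complex rank $n-1$ over a base homotopy equivalent to an $(n{-}1)$-dimensional CW complex --- which the paper does not spell out. By contrast you identify $N(M^{\inv})$ intrinsically as the secant bundle $E^{(n-1)}=p_*(q^*E\otimes\mathcal O_{\mathcal Z})$ for the (holomorphically trivial) line bundle $E=TS\otimes\mathcal L$ over the punctured sphere $S$, then reduce to $p_*\mathcal O_{\mathcal Z}$ over an open subset of $\mathrm{Sym}^{n-1}(\mathbb C)\cong\mathbb C^{n-1}$, where the explicit monic-polynomial frame $[1],[t],\ldots,[t^{n-2}]$ trivializes the bundle. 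Your route gives the stronger conclusion of holomorphic triviality and produces an actual global frame, so it needs no stable-range upgrade; its cost is the step you flag as delicate, namely verifying that the pointwise identification of the $(-1)$-eigenbundle with $(TS\otimes\mathcal L)$-fibers globalizes across the fat diagonal of $\mathrm{Sym}^{n-1}(S)$, which is asserted but not carried out. (Incidentally, the stable-range upgrade in your suggested alternative follows already from the rank/dimension count $2(n-1)\geq n-1$ and doesn't really require the torsion-freeness of $\wtilde K^0(M^{\inv})$ from \fullref{AtiyahHirzebruch}; that result is used elsewhere in the paper's argument, for the relative class over $(M^{\inv}\times[0,1])/X$.)
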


\begin{proof}
It suffices to show $N(M^{\inv})$ is trivial.  Let $h$ be a map from
$\mathrm{Sym}^{n-1}(S^2)$ to $\mathrm{Sym}^{2n-2}(\Sigma(S^2))$ which
restricts to the embedding \eqref{embedding} on $\mathrm{Sym}^{n-1}(S^2
\backslash \{\mathbf{w},\mathbf{z}\})$ and is defined as follows.
\begin{align*}
h \co \mathrm{Sym}^{n-1}(S^2) &\rightarrow \mathrm{Sym}^{2n-2}(\Sigma(S^2)) \\
(x_1\ldots x_{n-1}) &\mapsto (\wtilde{x}_1\tau(\wtilde{x}_1)\ldots
\wtilde{x}_{n-1}\tau(\wtilde{x}_{n-1}))
\end{align*}
A proof on charts that $h$ is continuous and holomorphic appears in
\fullref{appendix1}.

Now we have the following useful commutative diagram of spaces, where $i_1$, $i_2$ are inclusion maps.
\begin{align}\label{fourspacesetup}
\xymatrix{ 
\mathrm{Sym}^{n-1}(S^2) \ar[rr]^-{h} && \mathrm{Sym}^{2n-2}(\Sigma(S^2)) \\
\mathrm{Sym}^{n-1}(S^2 \backslash \{\mathbf{z}, \mathbf{w}\}) \ar@{^(->}[u]^-{i_1} \ar@{^(->}[rr]^-{i} && \mathrm{Sym}^{2n-2}(\Sigma(S^2) \backslash \{\mathbf{z},\mathbf{w}\}) \ar@{^(->}[u]^-{i_2}
}
\end{align}
Since the map $i_2$ is an inclusion of the space
$\mathrm{Sym}^{2n-2}(\Sigma(S) \backslash \{ \mathbf{z}, \mathbf{w} \})$
of complex dimension $2n-2$ into the similarly $(2n-2)$--dimensional space
$\mathrm{Sym}^{2n-2}(\Sigma(S))$, the tangent bundle to $M =
\mathrm{Sym}^{2n-2}(\Sigma(S) \backslash \{ \mathbf{z}, \mathbf{w} \})$ is the pullback of the tangent bundle to $\mathrm{Sym}^{2n-2}(\Sigma(S))$ along $i_2$. Hence we have the following equalities of vector bundles.
\begin{align*}
N(M^{\inv}) \oplus T(M^{\inv}) & = i^*(TM) \\
										   &=
i^*(T(\mathrm{Sym}^{2n-2}(\Sigma(S) \backslash \{ \mathbf{z}, \mathbf{w} \})) \\
										   &= i^* \circ i_2^*(T(\mathrm{Sym}^{2n-2}(\Sigma(S)))) \\
										   &= i_1^* \circ h^*(T(\mathrm{Sym}^{2n-2}(\Sigma(S))))
\end{align*}
It will be most illuminating to consider the map $i_1$. Let $i_3 \co S^2
\backslash \{\mathbf{z}, \mathbf{w}\} \hookrightarrow S^2$, and $\pi_1$, $\pi_2$ be quotient maps, so that the following diagram commutes.
\begin{align*}
\xymatrix{
(S^2 \backslash \{ \mathbf{z}, \mathbf{w} \})^{n-1} \ar@{^(->}[rr]^-{i_3^{\times (n-1)}} \ar[d]^{\pi_1}&& (S^2)^{n-1} \ar[d]^{\pi_2} \\
\mathrm{Sym}^{n-1}(S^2 \backslash \{ \mathbf{z}, \mathbf{w} \}) \ar@{^(->}[rr]^-{i_1} && \mathrm{Sym}^{n-1}(S^2)
}
\end{align*}
The map $i_3$ is nullhomotopic. Since the operation of taking the symmetric product preserves homotopy equivalence, the original map $i_1$ is likewise nullhomotopic. In particular, the pullback of any bundle along $i_1$ is trivial, so we conclude that 
\begin{align*}
0 &= i_1^* \circ h^* (T(\mathrm{Sym}^{2n-2}(\Sigma(S)))) \\
  &= N(M^{\inv}) \oplus T(M^{\inv})
\end{align*}
Finally, $TM^{\inv}$ is the pullback of $T(\mathrm{Sym}^{n-1}(S^2))$ along $i_1$, and therefore trivial. Ergo $N(M^{\inv})$ is trivial as a complex vector bundle. This implies that $\Upsilon(M^{\inv})$, the pullback of $N(M^{\inv})$ to $M^{\inv} \times [0,1]$, is also trivial.\end{proof}

\begin{remark}
The triviality of $N(M^{\inv})$ is the reason we have been
compelled to choose a Heegaard surface for $(S^3,K)$ on the sphere $S^2$
and strew basepoints about like confetti. In the general case, given a
surface $S$ of genus $g$ with a choice of $2n$ basepoints $\mathbf{w} =
(w_1,\ldots ,w_n)$ and  $\mathbf{z} = (z_1,\ldots ,z_n)$, its double branched cover over these basepoints is a surface $\Sigma(S)$ of genus $n+2g-1$. By the same mechanism as before, there is an embedding
\begin{align*}
i \co \mathrm{Sym}^{n+g-1}(S \backslash \{\mathbf{w},\mathbf{z}\})
\hookrightarrow \mathrm{Sym}^{2n+2g-2}(\Sigma(S) \backslash \{\mathbf{w},\mathbf{z}\})
\end{align*}
In~\cite{MR0151460} Macdonald has calculated the Chern classes of the
symmetric product of a Riemann surface. A computation using his results
proves that the normal bundle to $i(\mathrm{Sym}^{n+g-1}(S \backslash
\{\mathbf{w},\mathbf{z}\}))$ in $\mathrm{Sym}^{2n+2g-2}(\Sigma(S)
\backslash \{\mathbf{w},\mathbf{z}\})$ is in general stably equivalent to
the tangent bundle of $\mathrm{Sym}^{n+g-1}(S \backslash \{\mathbf{w},\mathbf{z}\})$. In the case that $S$ is actually a sphere, both are trivial.
\end{remark}

We will also require  $N(L_i^{\inv})$ for $i=0,1$ to be a trivial real bundle.

\begin{proposition} \label{trivialsubbundles}
 $N(L_i^{\inv})$ is trivial for $i=0,1$.
\end{proposition}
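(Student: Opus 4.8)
The plan is to exploit a very concrete description of the pair $L_i^{\inv} \subset L_i$ as a product of graph circles inside $2$--tori. Recall from the computation of the invariant sets above that $L_0^{\inv} = i(\mathbb T_{\boldsymbol \beta})$ and $L_1^{\inv} = i(\mathbb T_{\boldsymbol \alpha})$, while $L_0 = \mathbb T_{\wtilde{\boldsymbol \beta}}$ and $L_1 = \mathbb T_{\wtilde{\boldsymbol \alpha}}$; I will treat the case $i = 0$, as $i = 1$ is identical. For $1 \leq j \leq n-1$ write $C_j = \wtilde{\beta}_j \times \tau(\wtilde{\beta}_j)$, so that the torus $L_0 = \mathbb T_{\wtilde{\boldsymbol \beta}}$ is literally the product $C_1 \times \cdots \times C_{n-1}$. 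Unwinding the definition of the embedding \eqref{embedding}, together with the facts that $\pi^{-1}(\beta_j)$ is the disjoint union $\wtilde{\beta}_j \sqcup \tau(\wtilde{\beta}_j)$ of the two lifts of $\beta_j$ and that no point of $\beta_j$ is a branch point, one sees that $i(x_1 \cdots x_{n-1})$ is the tuple $(\wtilde{x}_1, \tau(\wtilde{x}_1), \ldots, \wtilde{x}_{n-1}, \tau(\wtilde{x}_{n-1}))$, so that under the identification $L_0 = \prod_j C_j$ the submanifold $L_0^{\inv}$ is exactly the product $\Delta_1 \times \cdots \times \Delta_{n-1}$, where $\Delta_j \subset C_j$ is the graph of the diffeomorphism $\tau \co \wtilde{\beta}_j \to \tau(\wtilde{\beta}_j)$ --- an embedded circle in the $2$--torus $C_j$.

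Granting this, the normal bundle of a product of embeddings is the direct sum of the pullbacks of the normal bundles of the factors, so
\[
N(L_0^{\inv}) \;\cong\; \bigoplus_{j=1}^{n-1} \mathrm{pr}_j^* \, N(\Delta_j \subset C_j),
\]
where $\mathrm{pr}_j \co L_0^{\inv} \to \Delta_j$ is the projection. It therefore suffices to prove that each line bundle $N(\Delta_j \subset C_j)$ over $\Delta_j \cong S^1$ is trivial. This follows from the splitting $T C_j|_{\Delta_j} = T\Delta_j \oplus N(\Delta_j \subset C_j)$: the bundle $TC_j$ is trivial, being the tangent bundle of a torus, and $T\Delta_j \cong TS^1$ is trivial, so $w_1(N(\Delta_j \subset C_j)) = 0$; an orientable real line bundle over any space is trivial, hence $N(\Delta_j \subset C_j)$ is trivial. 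Taking the direct sum over $j$ shows $N(L_0^{\inv})$ is trivial, and the same argument with $\alpha$ in place of $\beta$ handles $N(L_1^{\inv})$.

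The only point requiring genuine care is the identification of $L_0^{\inv} \subset L_0$ with the product of the graphs $\Delta_j \subset C_j$: one must track how the embedding \eqref{embedding} interacts with the product decompositions $\mathbb T_{\boldsymbol \beta} = \beta_1 \times \cdots \times \beta_{n-1}$ and $\mathbb T_{\wtilde{\boldsymbol \beta}} = \wtilde{\beta}_1 \times \tau(\wtilde{\beta}_1) \times \cdots \times \wtilde{\beta}_{n-1} \times \tau(\wtilde{\beta}_{n-1})$, and confirm that, because each point of $\beta_j$ has one preimage on $\wtilde{\beta}_j$ and one on $\tau(\wtilde{\beta}_j)$, the image of $i$ restricted to $\mathbb T_{\boldsymbol \beta}$ sweeps out exactly $\Delta_1 \times \cdots \times \Delta_{n-1}$. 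Once this is in place the remaining steps are routine facts about characteristic classes and normal bundles of product submanifolds. As a consistency check, this is compatible with the weaker observation that $N(L_0^{\inv})$ is automatically \emph{stably} trivial, since $TL_0|_{L_0^{\inv}}$ is trivial (a restriction of the trivial tangent bundle of the torus $L_0$) and $TL_0^{\inv}$ is trivial (the tangent bundle of the torus $L_0^{\inv}$).
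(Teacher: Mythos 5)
Your proof is correct, and rests on the same key observation as the paper's---that the embedding $L_0^{\inv} \hookrightarrow L_0$ respects, factor by factor, the product decomposition of $\mathbb T_{\wtilde{\boldsymbol\beta}}$ into $2$--tori $C_j = \wtilde\beta_j \times \tau(\wtilde\beta_j)$, sending $\beta_j$ to the graph circle $\Delta_j$. Where the two arguments diverge is the final step. The paper exhibits $T(L_i^{\inv})$ and $N(L_i^{\inv})$ as the diagonal and anti-diagonal subbundles of $TL_i|_{L_i^{\inv}}$ and writes an explicit bundle isomorphism $(\wtilde v_1, \tau_*\wtilde v_1,\ldots) \mapsto (\wtilde v_1, -\tau_*\wtilde v_1,\ldots)$; since $T(L_i^{\inv})$ is the trivial tangent bundle of an $(n{-}1)$--torus, this gives a trivialization of $N(L_i^{\inv})$ in one stroke. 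You instead split $N(L_0^{\inv})$ into a direct sum of real line bundles $N(\Delta_j \subset C_j)$ over the circles $\Delta_j$ and argue from Stiefel--Whitney classes that each summand is orientable, hence trivial. Both are valid. The paper's version is more explicit and avoids characteristic classes; a small streamlining of yours is available if you want it, since the normal bundle to the graph of a diffeomorphism $f\colon A \to B$ in $A \times B$ is canonically isomorphic to the pullback of $TB$, so $N(\Delta_j \subset C_j) \cong T(\tau(\wtilde\beta_j))$ is the trivial tangent line bundle of a circle without any appeal to $w_1$.
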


\begin{proof}
Recall that $L_1 = \mathbb T_{\wtilde{\boldsymbol \alpha}} \subset
\mathrm{Sym}^{2n-2}(S \backslash \{\mathbf{z},\mathbf{w}\})$ is the totally real
torus
$$\wtilde{\alpha}_1 \times \tau(\wtilde{\alpha}_1) \times \cdots
\times \wtilde{\alpha}_{n-1} \times \tau(\wtilde{\alpha}_{n-1}).$$
Thus the tangent bundle
$$TL_1 = T(\wtilde{\alpha}_1) \times
T(\tau(\wtilde{\alpha}_1)) \times \cdots \times T(\wtilde{\alpha}_{n-1})
\times T(\tau(\wtilde{\alpha}_{n-1}))$$
is the trivial real tangent bundle
to $(S^1)^{2n-2}$.  The invariant set
$$\smash{L_1^{\inv}} = i(\mathbb
T_{\boldsymbol \alpha}) = i(\alpha_1 \times \cdots \times \alpha_{n-1})$$
is embedded in $L_1$ via
\begin{align*}
i|_{\mathbb T_{\boldsymbol \alpha}} \co \mathbb T_{\boldsymbol\alpha} &\hookrightarrow
\mathbb T_{\wtilde{\boldsymbol \alpha}}\\
(x_1,\ldots ,x_{n-1}) &\mapsto (\wtilde{x}_1, \tau(\wtilde{x}_1),\ldots
,\wtilde{x}_{n-1}, \tau(\wtilde{x}_{n-1})).
\end{align*}
Therefore the tangent and normal bundles to $\smash{L_1^{\inv}}$ have the following descriptions.
\begin{align*}
T(\smash{L_1^{\inv}}) &= i_*(T(\mathbb T_{\boldsymbol \alpha})) \\ &=
\{(\wtilde{v}_1, \tau_*(\wtilde{v}_1),\ldots ,\wtilde{v}_{n-1},
\tau_*(\wtilde{v}_{n-1})) : (v_1,v_2,\ldots ,v_{n-1}) \in \mathbb
T_{\boldsymbol \alpha} \} \subset TL_1 \\
N(\smash{L_1^{\inv}}) &= \{(\wtilde{v}_1, -\tau_*(\wtilde{v}_1),\ldots ,
\wtilde{v}_{n-1}, -\tau_*(\wtilde{v}_{n-1})): (v_1,\ldots ,v_{n-1}) \in
\mathbb T_{\boldsymbol \alpha} \} \subset TL_1.
\end{align*}

The point is that $T(\smash{L_1^{\inv}}) \simeq T((S^1)^{n-1})$ is trivial, and there is an isomorphism
\begin{align*}
T(\smash{L_1^{\inv}}) &\rightarrow N(\smash{L_1^{\inv}}) \\
(\wtilde{v}_1, \tau_*(\wtilde{v}_1),\ldots ,\wtilde{v}_{n-1},
\tau_*(\wtilde{v}_{n-1})) &\mapsto (\wtilde{v}_1,
-\tau_*(\wtilde{v}_1),\ldots ,\wtilde{v}_{n-1}, -\tau_*(\wtilde{v}_{n-1})). 
\end{align*}

Triviality of $N(\smash{L_0^{\inv}})$ is proven analogously. \end{proof}

With these facts in hand, we are finally ready to give a proof of \fullref{stable}.

\begin{proof}[Proof of \fullref{stable}]
We claim that $\Upsilon(M^{\inv})$ carries a stable normal trivialization. As per \fullref{stable2}, it suffices to produce a nullhomotopy of the map
\begin{align*}
f \co(M^{\inv}, (\smash{L_0^{\inv}} \times \{0\}) \cup (\smash{L_1^{\inv}} \times \{1\})) \rightarrow (BU,BO)
\end{align*}
which classifies the complex bundles $\Upsilon(M^{\inv})$ and 
Lagrangian subbundles $N(\smash{L_0^{\inv}}) \times \{0\}$ over
$\smash{L_0^{\inv}}$ and $J(N(\smash{L_1^{\inv}})) \times \{1\}$
over $\smash{L_1^{\inv}}$. In fact, we shall do slightly better. By
\fullref{trivialsubbundles}, both $N(\smash{L_0^{\inv}}) \times \{0\}$
and $J(N(\smash{L_1^{\inv}})) \times \{1\}$ are trivial bundles. Choose,
once and for all, a preferred real trivialization of each. We will show
that there is a complex trivialization of $\Upsilon(M^{\inv})$ whose
restriction to $\left(N(\smash{L_0^{\inv}}) \times \{0\}\right) \cup
J(N(\smash{L_1^{\inv}}) \times \{1\})$ is the fixed real trivialization
in question; this implies the existence of a nullhomotopy of the map $f$.

For ease of reference, let $X = (\smash{L_0^{\inv}} \times \{0\}) \cup
(\smash{L_1^{\inv}} \times \{1\}) = (\mathbb T_{\boldsymbol \beta} \times
\{0\}) \cup (\mathbb T_{\boldsymbol \alpha} \times \{1\})$.

Since $N(\smash{L_0^{\inv}} \times \{0\})$ is a totally real subbundle of
$\Upsilon`(`M^{\inv}`)|_{\smash{L_0^{\inv}}}$, the bundle
$\Upsilon`(`M^{\inv}`)|_{\smash{L_0^{\inv}}}$ is equal to $N(\smash{L_0^{\inv}}) \times \{0\} \oplus J(N(\smash{L_0^{\inv}})) \times \{0\})$. In particular, a choice of real trivialization of $N(\smash{L_0^{\inv}}) \times \{0\}$ induces a choice of complex trivialization of $\Upsilon(M^{\inv})|_{\smash{L_0^{\inv}}}$. Similarly, a choice of real trivialization of $J(N(\smash{L_1^{\inv}})\times \{1\})$ induces a choice of complex trivialization of $\Upsilon(M^{\inv})|_{\smash{L_1^{\inv}} \times \{1\}}$.

We now have a complex trivialization of $\Upsilon(M^{\inv})|_{X}$,
and we would like to show it extends to a trivialization of
$\Upsilon(M^{\inv})$. It is enough to demonstrate that the
relative vector bundle $[\Upsilon(M^{\inv})_{\mathrm{rel}}] \in
\wtilde{K}^0((M^{\inv}\times [0,1]), X) \cong \wtilde{K}^0((M^{\inv}\times[0,1])/X)$ is trivial.

To begin, we claim that the equivalence class
$[\Upsilon(M^{\inv})_{\mathrm{rel}}]$ is a torsion element of
$\wtilde{K}^0((M^{\inv}\times[0,1])/X)$. As the reduced Chern
character
$$\widetilde{\ch} \co
\wtilde{K}^0((M^{\inv}\times [0,1])/X)\otimes \mathbb Q
\rightarrow \wtilde{H}^*((M^{\inv}\times [0,1])/X; \mathbb Q)$$
is an isomorphism, it suffices to show that the Chern classes of $\Upsilon(M^{\inv})_{\mathrm{rel}}$ are trivial.

Recall from \fullref{cohomologysurjection} that the map $H^*(M^{\inv}\times[0,1]) \rightarrow H^*(X)$ induced by inclusion is a surjection for each $*\geq 1$. This remains true in the setting of reduced cohomology, which will be a slightly more useful setting for our purposes. Indeed, since $(M^{\inv} \times [0,1],X)$ can be taken to be a CW pair, there is a long exact sequence on the reduced cohomology of these three spaces.
\begin{multline*}
\cdots \longrightarrow \wtilde{H}^m((M^{\inv} \times[0,1])/X)
\stackrel{q^*}{\longrightarrow} \wtilde{H}^m(M^{\inv})
\stackrel{i^*}{\longrightarrow} \wtilde{H}^m(X) \\
\stackrel{\partial}{\longrightarrow} \wtilde{H}^{m+1}((M^{\inv}\times[0,1])/X)
\longrightarrow \cdots
\end{multline*}
Here $i$ is the inclusion map $i \co X \hookrightarrow M^{\inv}$ and $q$ is the quotient map $q \co M^{\inv} \rightarrow (M^{\inv} \times[0,1])/X$. Since $i^*$ is a surjection for $m\geq1$, for $m\geq 2$ this sequence breaks up into short exact sequences as follows.
\label{shortexact}
\begin{align*}
\xymatrix{
0 \ar[r] & \wtilde{H}^m((M^{\inv}\times[0,1]/X)
\ar@{^(->}[r]^-{q^*} & \wtilde{H}^m(M^{\inv}) \ar@{>>}[r]^-{i^*} &
\wtilde{H}^m(X) \ar[r] & 0
}
\end{align*}
In particular, for $m\geq 2$ the map $q^* \co
\wtilde{H}^m((M^{\inv}\times[0,1])/X) \hookrightarrow
\wtilde{H}^m(M^{\inv})$ is an injection. For $i>0$, let
$c_i(\Upsilon(M^{\inv})) = 0$ be the $i$th Chern class of
$\Upsilon(M^{\inv})$ in $H^{2i}(M^{\inv}) = \wtilde{H}^{2i}(M^{\inv})$. Then since $\Upsilon(M^{\inv}) = q^*(\Upsilon(M^{\inv})_{\mathrm{rel}})$, we see that $ 0 = c_i(\Upsilon(M^{\inv})) = q^*(c_i(\Upsilon(M^{\inv})_{\mathrm{rel}}))$. As the induced map $q^*$ is injective on $H^{2i}(M^{\inv}/X)$, the $i$th Chern class of $\Upsilon(M^{\inv})_{\mathrm{rel}}$ is trivial.

Next we show that the $K$--theory of $M^{\inv}/X$ is in fact
torsion free. By \fullref{AtiyahHirzebruch}, it suffices to show that the
cohomology of $M^{\inv}/X$ is torsion-free. Consider again the
short exact sequence above. When $m\geq 2$, the group
$\wtilde{H}^m(M^{\inv}/X)$ injects into the free abelian group
$\wtilde{H}^m(M^{\inv}) \cong \mathbb{Z}^{\tbinom{2n-1}{m}}$ and
therefore is torsion free. In order to analyze $\wtilde{H}^1(M^{\inv}/X)$, let us look closely at the early stages of the long exact sequence of the pair $(M^{\inv}, X)$.
\begin{align*}
\xymatrix{
\cdots \longrightarrow 0 = \wtilde{H}^0(M^{\inv}) \ar[r]^-{i^*} & \wtilde{H}^0(X)
\ar[r]^-{\partial}& \wtilde{H}^1(M^{\inv}/X) \ar[r]^{q^*} &
\wtilde{H}^1(M^{\inv}) \longrightarrow \cdots
}
\end{align*}
We observe that $\wtilde{H}^0(X) \cong \mathbb Z$ since $X =
(\smash{L_0^{\inv}} \times \{0\}) \cup (\smash{L_1^{\inv}} \times \{1\})$
has two connected components. Moreover, $\wtilde{H}^1(M^{\inv})$ is free abelian, so its subgroup $\mathrm{Im}(q^*)$ is as well. Thus we have the short exact sequence
\begin{align*}
\xymatrix{
0 \ar[r] & \mathbb Z \ar@{^(->}[r] & \wtilde{H}^1((M^{\inv}\times[0,1])/X)
\ar@{>>}[r] & \mathrm{Im}(q^*) \ar[r] & 0.
}
\end{align*}
As $\mathbb Z$ and $\mathrm{Im}(q^*)$ are free abelian,
$\wtilde{H}^1((M^{\inv}\times[0,1])/X)$ is as well. Thus
$\wtilde{H}^*((M^{\inv}\times[0,1])/X)$ is torsion free, implying
that $\wtilde{K}^0((M^{\inv}\times[0,1])/X)$ is as well.

Ergo the relative bundle $[\Upsilon(M^{\inv})_{\mathrm{rel}}] \in
\wtilde{K}^0((M^{\inv}\times[0,1])/X)$ is trivial, implying that $\Upsilon(M^{\inv})$ carries a stable normal trivialization.\end{proof}

This proves \fullref{stable}, which together with the discussion of the
symplectic geometry of $M$ in \fullref{SymplecticGeometrySection} shows
that $M = \mathrm{Sym}^{2n-2}(\Sigma(S) \backslash \{\mathbf{w},\mathbf{z}\})$ satisfies the hypotheses of 
\fullref{SeidelSmith}. This proves \fullref{existsspecseq}. For the reader's convenience, we summarize here the arguments that allow us to deduce Corollaries~\ref{main}, \ref{sharper}, and~\ref{Alexander corollary} from \fullref{existsspecseq}.

\begin{proof}[Proof of \fullref{main}] Since there is a spectral sequence whose $E^1$ page is
$$(\widehat{\HFK}(\Sigma(K), K) \otimes V^{\otimes (n-1)}) \otimes \mathbb
Z\llparen q\rrparen$$
and whose $E^{\infty}$ page is isomorphic to $(\widehat{\HFK}(S^3,K) \otimes V^{\otimes (n-1)}) \otimes Z\llparen q\rrparen$ as $\mathbb Z\llparen q\rrparen$--modules, the rank of $(\widehat{\HFK}(\Sigma(K), K) \otimes V^{\otimes (n-1)}) \otimes \mathbb Z\llparen q\rrparen$ as a $\mathbb Z\llparen q\rrparen$--module is greater than the rank of $(\widehat{\HFK}(S^3,K) \otimes V^{\otimes (n-1)}) \otimes Z\llparen q\rrparen$ as a $\mathbb Z\llparen q\rrparen$--module. Ergo
\begin{align*}
2^{n-1} \rk ( \widehat{\HFK}(\Sigma(K),K)) \geq 2^{n-1}\rk( \widehat {\HFK}(S^3,K))
\end{align*}
implying
\[
\rk ( \widehat{\HFK}(\Sigma(K),K)) \geq \rk( \widehat
{\HFK}(S^3,K)).\proved
\]
\end{proof}

\begin{proof}[Proof of \fullref{sharper}]
We have seen in \fullref{HeegaardFloerSection} that the spectral sequence
of \fullref{existsspecseq} arises from the double complex 
\begin{align*}
\xymatrix{
0 \ar[r] \ar[d] & \widehat{\mathit{CFK}}_{i+1}(\wtilde{\mathcal D})
\ar[d]^-{\partial} \ar[r]^-{1 + \tau^{\#}}  &
\widehat{\mathit{CFK}}_{i+1}(\wtilde{\mathcal D}) \ar[d]^-{\partial}
\ar[r]^-{1 + \tau^{\#}} & \widehat{\mathit{CFK}}_{i+1}(\wtilde{\mathcal
D}) \ar[d]^-{\partial} \ar[r] & \cdots \\
0 \ar[r] \ar[d] & \widehat{\mathit{CFK}}_i(\wtilde{\mathcal D})
\ar[d]^-{\partial} \ar[r]^-{1 + \tau^{\#}}  &
\widehat{\mathit{CFK}}_i(\wtilde{\mathcal D}) \ar[d]^-{\partial}
\ar[r]^-{1 + \tau^{\#}} & \widehat{\mathit{CFK}}_i(\wtilde{\mathcal D})
\ar[d]^-{\partial} \ar[r] & \cdots \\
0 \ar[r] & \widehat{\mathit{CFK}}_{i-1}(\wtilde{\mathcal D}) \ar[r]^-{1 +
\tau^{\#}}  & \widehat{\mathit{CFK}}_{i-1}(\wtilde{\mathcal D}) \ar[r]^-{1
+ \tau^{\#}} & \widehat{\mathit{CFK}}_{i-1}(\wtilde{\mathcal D})
\ar[r] & \cdots
}
\end{align*}
where $\mathcal D$ is a Heegaard diagram for $(S^3,K)$ on the
sphere $S^2$, and $\wtilde{\mathcal D}$ its lift to a Heegaard diagram for
$(\Sigma(K),K)$, and that this spectral sequence preserves the canonical
$\mathrm{spin}^{\mathrm{c}}$ structure on
$\widehat{\mathit{CFK}}(\wtilde{\mathcal D})$. (Other
$\mathrm{spin}^{\mathrm{c}}$ structures are exchanged in pairs by the
involution $\tau^*$ on $\widetilde{\HFK}(\wtilde{\mathcal D})$, and thus vanish after the second page of the spectral sequence.) Since the splitting of $\widetilde{\HFK}(\tilde{\mathcal D})$ along $\mathrm{spin}^{\mathrm{c}}$ canonically corresponds to the splitting of $\widehat{\HFK}(Y^3,K)$, we sharpen our statement to the following.
\[
\rk \big( \widehat{\HFK}(\Sigma(K), K, \mathfrak s_0)\big) \geq \rk
\big(\widehat{\HFK}(S^3, K)\big).\proved
\]
\end{proof}

\begin{proof}[Proof of \fullref{Alexander corollary}]
Consider the double complex of the preceding proof. Both differentials in the spectral sequence preserve the Alexander grading, and the isomorphism between the $E^{\infty}$ page of the spectral sequence and $\widetilde{\HFK}(\mathcal D)$ does not disrupt the splitting of the spectral sequence along the Alexander grading. Therefore we sharpen \fullref{main} to
\[
\rk \big( \widetilde{\HFK}(\wtilde{\mathcal D}, \mathfrak s_0, i)\big)
\geq \rk \big(\widetilde{\HFK}(\mathcal D, i)\big)
\]
In particular, let $g$ be the largest Alexander grading for which
$\widetilde{\HFK}(\wtilde{\mathcal D})$ is nonzero. Then since the vector
space $V$ has elements with gradings $(0,0)$ and $(-1,-1)$, we have
$\widetilde{\HFK}(\wtilde{\mathcal D}, s_0, g) = \widehat{\HFK}(\Sigma(K), K, g)$, and similarly $\widetilde{\HFK}(\mathcal D, g) = \widehat{\HFK}(S^3,K)$. Ergo the inequality in this Alexander grading takes the form
\[
\rk \big( \widehat{\HFK}(\Sigma(K), K, \mathfrak s_0, g)\big) \geq \rk
\big(\widehat{\HFK}(S^3, K, g)\big).\proved
\]
\end{proof}

\begin{remark}
In~\cite[Section 3d]{MR2739000}, Seidel and Smith observe that although their theorem deals with an obstruction on the level of the classifying map
\[
f \co([0,1] \times M^{\inv}, (\smash{L_0^{\inv}} \times \{0\}) \cup (\{1\} \times \smash{L_1^{\inv}})) \rightarrow (BU, BO)
\]
there is evidence to suggest that the fundamental obstruction is a map encoding slightly less structure. Let $\mathcal P^{\inv} = \{y\co[0,1] \rightarrow M^{\inv}\co y(0) \in \smash{L_0^{\inv}}, y(1) \in \smash{L_1^{\inv}} \}$ be the set of paths between the two Lagrangians in $M^{\inv}$. Then $f$ induces a map
\[
\mathcal P^{\inv} \rightarrow U/O
\]
This descends to a map on loop spaces:
\begin{align} \label{loopspaces}
\Omega \mathcal P^{\inv} \rightarrow \Omega(U/O) \cong \mathbb Z \times BO
\end{align}
This map records the Maslov index of a self-flow in $\Omega \mathcal
P^{\inv}$. The difference between the Maslov index and equivariant
Maslov index of a holomorphic curve $u$ in $M^{\inv}$ is
classified by the composition of \eqref{loopspaces} with a map
$M(x_-,x_+)^{\inv} \rightarrow  \Omega \mathcal P^{\inv}$;
which suggests that \eqref{loopspaces} is the fundamental obstruction to
the existence of a spectral sequence from $\mathit{HF}(L_0, L_1)$ to
$\mathit{HF}(\smash{L_0^{\inv}}, \smash{L_1^{\inv}})$. For more detail,
see Seidel and Smith~\cite[Section~3d]{MR2739000}.

In the Heegaard Floer context this has a nice interpretation. If $D = (S,
\boldsymbol \alpha, \boldsymbol \beta, \mathbf{w},\mathbf{z})$ is a Heegaard surface
for $(Y,K)$, and $M^{\inv}$ is $\mathrm{Sym}^{g+n-1}(S \backslash
\{\mathbf{w},\mathbf{z}\})$, then a loop in $\Omega \mathcal P^{\inv}$ is a
periodic domain on the punctured Heegaard surface $S \backslash
\{\mathbf{w},\mathbf{z}\}$. The map $\Omega \mathcal P^{\inv} \rightarrow \mathbb Z$
which is the composition of \eqref{loopspaces} with projection to $\mathbb
Z$ records the Maslov index of the periodic domain, and is nullhomotopic exactly when there are no periodic domains of nonzero index on $S$. We speculate that Seidel and Smith's observations might contain a method for extending \fullref{main} from pairs $(S^3,K)$ to pairs $(Y,K)$ for which a punctured Heegaard surface contains no periodic domain of nonzero index.
\end{remark}

\section*{Appendix 1: Inclusion maps of $M^{\inv}$ into $M$}
\label{appendix1}

Since the proof that the map
\begin{align*}
h \co \mathrm{Sym}^{n-1}(S^2) &\rightarrow \mathrm{Sym}^{2n-2}(\Sigma(S^2)) \\
(x_1 \ldots x_{n-1}) &\mapsto (\wtilde{x}_1\tau(\wtilde{x}_1)\ldots
\wtilde{x}_{n-1}\tau(\wtilde{x}_{n-1}))
\end{align*}
is holomorphic is a computation on charts in the symmetric product, we place it here so as not to disrupt the flow of the arguments above.

We claim $h$ is continuous; indeed, holomorphic. Let $\wbar{x} =
(x_1\ldots x_{n-1})$ be a point in $\mathrm{Sym}^{n-1}(S^2)$. Collect
repeated points, so that $(x_1\ldots x_{n-1})$ has the form
$$(y_1,\ldots,y_1,y_2,\ldots ,y_2,\ldots ,y_{\ell},\ldots ,y_{\ell})$$
with $n{-}1$ total
entries but $\ell$ unique entries. Moreover, list points which are not
branch points of the double branched cover (that is, points which are not
in $\{ \mathbf{w},\mathbf{z}\}$) first, so that $y_1,\ldots ,y_s \not\in
\{\mathbf{w},\mathbf{z}\}$ and $y_{s+1},\ldots ,y_{\ell} \in
\{\mathbf{w},\mathbf{z} \}$ for some $s$.

First consider $y_i$ such that $1 \leq i \leq s$, so that $y_i$ is not a
branch point of the double cover. Then let $\wtilde{y}_i$ be a lift of
$y_i$. There is a neighborhood $U_i$ of $y_i$ which admits a homeomorphic
lift to a neighborhood $\wtilde{U}_i$ of $\wtilde{y}_i$ such that
$(\pi|_{\wtilde{U}_i})^{-1}\co U_i \rightarrow \wtilde{U}_i$ is
holomorphic. Moreover, we may pick $U_i$ sufficiently small such that
there is a chart $f_i \co U_i \rightarrow D$, where $D$ is the unit disk
in the complex plane, and a corresponding chart $\wtilde{f}_i = f_i \circ
\pi \co \wtilde{U}_i \rightarrow D$. In total this gives a local
biholomorphism between $U_i$ and $\wtilde{U}_i$ expressed on charts as follows.
$$\bfig
\barrsquare/^{ (}->`->`->`->/<700,400>[\wtilde{U}_i`\wtilde{U}_i`D`D;
  \big(\pi|_{\wtilde{U}_i}\big)^{-1}` f_i` \wtilde{f}_i` \mathrm{Id}]
\efig$$
Similarly, there is a neighborhood $\tau(\wtilde{U}_i)$ of the second lift
$\tau(\wtilde{y}_i)$ of $y_i$  which is homeomorphic to $U_i$ via
$(\pi|_{\tau(\wtilde{U}_i)})^{-1}$ and has a chart $\tau(\wtilde{f}_i)\co
\tau(\wtilde{U}_i) \rightarrow D$.

Now suppose $s+1 \leq i \leq \ell$, so that $y_i$ is a branch point for the double branched cover map. Then there is a chart $f_i\co  U_i \rightarrow D$ around $y_i$ and a chart $g_i \co \pi^{-1}(U_i) \rightarrow D$ with respect to which $f_i \circ \pi \circ (g_i)^{-1}$ is $x \mapsto x^2$.
\begin{align*}
\xymatrix@C+20pt{
\pi^{-1}(U_i) \ar[d]^-{g_i} \ar[r]^-{\pi|_{\pi^{-1}(U_i)}} & U_i \ar[d]^-{f_i} \\
D \ar[r]^-{x \mapsto x^2} & D
}
\end{align*}
In particular, if $y \in U_i$ and $f_i(y) = x \in \mathbb D$, then if
$\wtilde{y}$ and $\tau{\wtilde{y}}$ the two lifts of $y$ in
$\pi^{-1}(U_i)$, then $g_i(\wtilde{y})$ and $g_i(\tau(\wtilde{y}))$ are $\sqrt{x}$ and $-\sqrt{x}$ in some order.

We will insist that all of our choices of neighborhoods $U_i$ about each
$y_i$ on which we choose preferred charts $\phi_i$ be made such that if
$i_1 \neq i_2$, $U_{i_1} \cap U_{i_2} = \emptyset$, by shrinking if
necessary. This in turn implies that
$\wtilde{U}_1,\tau(\wtilde{U}_1),\ldots ,\wtilde{U}_s,
\tau(\wtilde{U}_s),\pi^{-1}(U_{s+1}),\ldots,\pi^{-1}(U_{\ell})$ are all pairwise disjoint.

We are now ready to discuss the map $h$. Recall that we began with a point
$\wbar{x} = (x_1 \ldots x_{n-1}) = (y_1 \ldots y_1y_2 \ldots y_2
\ldots y_{\ell} \ldots y_{\ell})$ in $\mathrm{Sym}^{n-1}(S^2)$. Let $k_i$
be the number of times $x_i$ appears in $\wbar{y}$. Then
$\wbar{y}$ is contained in the open neighborhood
$$\mathrm{Sym}^{k_1}(U_1) \times \cdots \times
\mathrm{Sym}^{k_{\ell}}(U_{\ell}).$$
Here the product notation arises due
to our unwavering insistence that the $U_i$ be pairwise disjoint.
Moreover, we see $h(\wbar{x}) = (\wtilde{x}_1\tau(\wtilde{x}_1)\ldots
\wtilde{x}_{n-1}\tau(\wtilde{x}_{n-1}))$ is contained in the analogous open neighborhood 
\begin{align*}
\prod_{i=1}^{s}(\mathrm{Sym}^{k_i}(\wtilde{U}_i) \times
\mathrm{Sym}^{k_i}(\tau(\wtilde{U}_i)) \times \prod_{i=s+1}^{\ell} \mathrm{Sym}^{2k_{i}}(\pi^{-1}(U_{i})).
\end{align*}
We see that locally the map $h$ is a product of maps 
\begin{align*}
h_i \co \mathrm{Sym}^{k_i}(U_i) &\rightarrow
\mathrm{Sym}^{k_i}(\wtilde{U}_i) \times \mathrm{Sym}^{k_i}(\tau(\wtilde{U}_i)) \\
(y_1'\ldots y'_{k_i}) &\mapsto ((\wtilde{y}'_1 \ldots \wtilde{y}'_{n-1}),
(\tau(\wtilde{y}'_1), \ldots ,\tau(\wtilde{y}'_{k_i}))
\end{align*}
where $\wtilde{y}'_i$ is the lift of $y_i'$ in $\wtilde{U}_i$
and similarly for $\tau(\wtilde{U}_i)$, and maps 
\begin{align*}
h_i \co \mathrm{Sym}^{k_i}(U_i) &\rightarrow \mathrm{Sym}^{2k_i}(\pi^{-1}(U_i)) \\
(y_1' \ldots y_{k_i}') &\mapsto (\wtilde{y}_1'\tau(\wtilde{y}_1')\ldots
\wtilde{y}_{k_i}'\tau(\wtilde{y}_{k_i}'))
\end{align*}
Our goal is to show $h_i$ is holomorphic in each of these cases. We begin
with the first case, in which $x_i$ is not a branch point of the double
branch covering. In this case $h_i$ carries a point $(y_1'\ldots
y_{k_i}')$ in $\mathrm{Sym}^{k_i}(U_i)$ to the product of its lifts
$(\wtilde{y}_1\ldots \wtilde{y}_{k_i})$ in
$\mathrm{Sym}^{k_i}(\wtilde{U}_i)$ and $(\tau(\wtilde{y}_1)\ldots
\tau(\wtilde{y}_{k_i}))$ in $\mathrm{Sym}^{k_i}(\tau(\wtilde{U}_i))$. Ergo
$h_i = \mathrm{Sym}^{k_i}((\pi|_{\wtilde{U}_i})^{-1}) \times
\mathrm{Sym}^{k_i}( (\pi|_{\tau(\wtilde{U}_i)})^{-1})$. We already know
how to express this map in terms of the biholomorphisms
$\mathrm{Sym}^{k_i}(f_i) \co \mathrm{Sym}^{k_i}(U_i) \rightarrow
\mathrm{Sym}^{k_i}(D^{k_i})$ and the corresponding maps
$\mathrm{Sym}^{k_i}(\wtilde{f}_i)$ on $\mathrm{Sym}^{k_i}(\wtilde{U}_i)$
and $\mathrm{Sym}^{k_i}(\tau(\wtilde{f}_i))$ on $\tau(\wtilde{U}_i)$.
$$
\bfig\barrsquare|allb|<2400,500>[\Sym^{k_i}(U_i)`
\Sym^{k_i}(\wtilde{U}_i) \times \Sym^{k_i}(\tau(\wtilde{U}_i))`
\Sym^{k_i}(D)`
\Sym^{k_i}(D) \times \Sym^{k_i}(D);
h_i = \Sym((\pi|_{U_i})^{-1})\times\Sym^{k_i}((\pi|_{\tau(\wtilde{U}_i)})^{-1})`
\Sym^{k_i}(f_i)`
\Sym^{k_i}(\wtilde{f}_i)\times\Sym^{k_i}(\tau(\wtilde{f}_i))`
\Id\times\Id]
\efig
$$
We see $j_i$ is holomorphic. For the second case we will need to be slightly more subtle, producing actual charts for $\mathrm{Sym}^{k_i}(U_i)$ and $\mathrm{Sym}^{2k_i}(\pi^{-1}(U_i))$. We can assign $\mathrm{Sym}^{k_i}(U_j) \cong \mathrm{Sym}^{k_i}(D)$ a holomorphic chart using the familiar biholomorphism \eqref{biholomorphism} which maps a point $(r_1\ldots r_{k_i})$ in $\mathrm{Sym}^{k_i}(\mathbb D)$ to the $k_i$ elementary symmetric functions of its coordinates in $D$. Let's see what this produces in our particular case. Let $\phi_{k_i}(D) \subset \mathbb C^{k_i}$ be the image of $\mathrm{Sym}^{k_i}(D)$ under the map \eqref{biholomorphism} from $\mathrm{Sym}^{k_i}(\mathbb C) \rightarrow \mathbb C^{k_i}$, and similarly for $\phi_{2k_i}(D)$.
\begin{align*}
\xymatrix{
\mathrm{Sym}^{k_i}(U_i) \ar[r]^{h_i} \ar[d]^{\mathrm{Sym}^{k_i}(f_i)}  & \mathrm{Sym}^{2k_i}(\pi^{-1}(U_i)) \ar[d]^{\mathrm{Sym}^{2k_i}(g_i)}\\
\mathrm{Sym}^{k_i}(D) \ar[d]^-{\phi_{k_i}} \ar[r] & \mathrm{Sym}^{2k_i}(\mathbb C) \ar[d]^-{\phi_{2k_i}} \\
\phi_{k_i}(D) \ar[r] & \phi_{2k_i}(D)
}
\end{align*}
Let $(y_1' \ldots y_{k_i}')$ be an arbitrary point of $U_i$, so that
$$h_i(y_1' \ldots y_{k_i}') = (\wtilde{y}_1'\tau(\wtilde{y}_1') \ldots
\wtilde{y}_{k_i}'\tau(\wtilde{y}_{k_i}').$$
Then the middle horizontal map
carries
$(r_1 \ldots r_{k_i}) = (f_i(y_1')\ldots f_i(y_{k_i}')$ to
$$(g_i(\wtilde{y}_1')g_i(\tau(\wtilde{y}_1')) \ldots
g_i(\wtilde{y}_{k_i}')g_i(\tau(\wtilde{y}_{k_i}')) =
(\sqrt{r_1}-\sqrt{r_1}\ldots \sqrt{r_{k_i}}-\sqrt{r_{k_i}}).$$
Taking symmetric functions of both sides reveals that the bottom horizontal map is expressed in coordinates as
\begin{align*}
\phi_{k_i}(D) &\rightarrow \phi_{2k_i}(D)\\
(\sigma_1(r_1,\ldots ,r_{k_i}),\ldots ,\sigma_{k_i}(r_1,\ldots ,r_{k_i})) &\mapsto  (\sigma_1(\sqrt{r_1}, -\sqrt{r_1},\ldots , \sqrt{r_{k_i}}, -\sqrt{r_{k_i}}),\ldots, 
\\ & \hspace{44pt}\sigma_{2k_i}(\sqrt{r_1}, -\sqrt{r_1},\ldots ,\sqrt{r_{k_i}}, -\sqrt{r_{k_i}}))
\end{align*}
Let's consider the symmetric functions $\sigma_j$ of
$$(a_1,\ldots ,a_{2k_i}) = (\sqrt{r_1}, -\sqrt{r_1},\ldots ,
\sqrt{r_{k_i}}, -\sqrt{r_{k_i}})$$
of our set of complex roots and their opposites. Recall from \fullref{HomotopyCohomologySection} that the functions $\sigma_j$ are defined to be the sums
\begin{align*}
\sigma_j(a_1,a_2,\ldots ,a_{2k_i}) = \sum_{1\leq i_1 <\cdots <i_j \leq 2k_i} a_{i_1} \ldots a_{i_j}.
\end{align*}
If the set of indices $(i_1,\ldots ,i_j) \subset (i_1,\ldots ,i_{2k})$ contains only one of $i_{2t-1}$ and $i_{2t}$ for any $r$, then there is a set of indices $(i_1',\ldots ,i_j')$ identical to $(i_1,\ldots ,i_j)$ except that either $i_{2t}$ is replaced with $i_{2t-1}$ or vice versa. Moreover $a_{i_1}\ldots a_{i_j} = -a_{i_1'}'\ldots a_{i_j'}$, so these two terms cancel each other out in the sum which comprises $\sigma_j$. Therefore $\sigma_j(a_1,\ldots ,a_{2k_i})$ is a sum of terms $a_{i_1}\ldots a_{i_j}$ for sets $1 \leq i_1 < \cdots  < i_j < 1$ which contain either both $i_{2t-1}$ and $i_{2t}$ or neither, for every $1 \leq t \leq \frac{j}{2}$. In particular, $\sigma_j(\sqrt{r_1}, -\sqrt{r_1},\ldots , \sqrt{r_{k_i}}, -\sqrt{r_{k_i}}) = 0$ for $j$ odd. When $j$ is even we may make the following computation.
\begin{align*}
\sigma_j(a_1,-a_1,\ldots ,a_k,-a_k) &= \sum_{1 \leq i_1 < \cdots <i_j \leq k_i} a_{2i_1 -1}a_{2i_1} \ldots a_{2i_j -1}a_{2i_j} \\
							&=\sum_{1 \leq i_1 < \cdots <i_j \leq k_i} (\sqrt{r_{i_1}})(-\sqrt{r_{i_1}}) \ldots (\sqrt{r_{i_j}})(-\sqrt{r_{i_j}}) \\
							&= \sum_{1 \leq i_1 < \cdots <i_j \leq k_i} (-1)^j r_{i_1} \ldots r_{i_j} \\
							&= (-1)^j \sigma_j(r_1,\ldots ,r_{k_i})
\end{align*}
So the bottom horizontal map in the diagram above is of the form
\begin{align*}
\phi_{k_i}(D) &\rightarrow \phi_{2k_i}(D) \\
(\sigma_1(r_1,\ldots ,r_{k_i}),\ldots ,\sigma_{k_i}(r_1,\ldots ,r_{k_i}))  &\mapsto \\(0, -&\sigma_1(r_1,\ldots ,r_{k_i}), 0, \ldots , (-1)^{k_i}\sigma_{k_i}(r_1,\ldots ,r_{k_i}))
\end{align*}
This is holomorphic, implying that $h_i$ is as well. Since all the maps $h_i$ are holomorphisms, and $h$ is locally the product $h_1 \times \cdots \times h_{\ell}$, $h$ is holomorphic.

\bibliographystyle{gtart}
\bibliography{bib}

\end{document}